\DeclareMathAlphabet{\mathbbb}{U}{bbold}{m}{n}
\theoremstyle{plain}
\newtheorem{prop}{Proposition}[section]
\newtheorem{theorem}[prop]{Theorem}
\newtheorem{maintheorem}[prop]{Main Theorem}
\newtheorem{corollary}[prop]{Corollary}
\theoremstyle{definition}
\newtheorem{df}[prop]{Definition}
\newtheorem{exa}[prop]{Example}
\newtheorem{lemma}[prop]{Lemma}
\newtheorem{remark}[prop]{Remark}
\newtheorem{construction}[prop]{Construction}
\newtheorem{recoll}[prop]{Recollection}
\newtheorem{notation}[prop]{Notation}
\newtheorem{alert}[prop]{Caveat}
\DeclareMathOperator{\map}{Map}
\DeclareMathOperator{\alg}{Alg}
\DeclareMathOperator{\m}{Mod}
\newcommand{\cotr}{\cot_{\mathit{R//B}}}
\newcommand{\sqzr}{\sqz_{\mathit{R//B}}}
\newcommand{\tbar}{\overline{T}}
\newcommand{\rmfib}{\mathrm{fib}}
\newcommand{\rmfor}{\mathrm{forget}}
\newcommand{\cwafp}{\mathcal{C}^{\text{wafp}}}
\newcommand{\cwafpf}{\mathcal{C}^{\text{wafpf}}}
\newcommand{\sig}{\Sigma}
\newcommand{\infcat}{$\infty$-category}
\DeclareMathOperator{\pr}{\mathcal{P}\mathit{r}}
\DeclareMathOperator{\st}{St}
\DeclareMathOperator{\com}{Com}
\DeclareMathOperator{\gen}{gen}
\DeclareMathOperator{\qf}{qf}
\DeclareMathOperator{\tc}{\widetilde{\mathit{C}}^*}
\DeclareMathOperator{\afp}{afp}
\DeclareMathOperator{\cpl}{cpl}
\DeclareMathOperator{\fil}{Fil}
\DeclareMathOperator{\fr}{free}
\DeclareMathOperator{\adic}{\mathrm{adic}}
\DeclareMathOperator{\calg}{CAlg}
\DeclareMathOperator{\ce}{CE}
\DeclareMathOperator{\cet}{\widetilde{\ce}}
\DeclareMathOperator{\sq}{St}
\DeclareMathOperator{\klie}{Lie^{\pi}_{\mathit{k},\mathbb{E}^{nu}_{\infty}}}
\DeclareMathOperator{\kdlie}{Lie^{\pi}_{\mathit{k},\Delta}}
\DeclareMathOperator{\bdlie}{Lie^{\pi}_{\mathit{B},\Delta}}
\DeclareMathOperator{\lie}{Lie^{\pi}_{\mathit{R},\mathbb{E}^{nu}_{\infty}}}
\DeclareMathOperator{\aplie}{Lie^{\pi,dap}_{\mathit{R},\mathbb{E}^{nu}_{\infty}}}
\DeclareMathOperator{\dlie}{Lie^{\pi}_{\mathit{R},\Delta}}
\DeclareMathOperator{\apdlie}{Lie^{\pi,dap}_{\mathit{R},\Delta}}
\DeclareMathOperator{\apbdlie}{Lie^{\pi,dap}_{\mathit{B},\Delta}}
\newcommand{\fol}{\mathcal{F}ol}
\DeclareMathOperator{\ch}{\mathbf{Ch}}
\DeclareMathOperator{\dap}{dap}
\DeclareMathOperator{\ap}{ap}
\DeclareMathOperator{\sqz}{\mathrm{sqz}}
\DeclareMathOperator{\perf}{\mathrm{Perf}}
\DeclareMathOperator{\tot}{\mathrm{Tot}}
\DeclareMathOperator{\gr}{\mathrm{Gr}}
\DeclareMathOperator{\neoo}{\mathbb{E}^{nu}_{\infty}}
\DeclareMathOperator{\sym}{\mathrm{Sym}}
\DeclareMathOperator{\lsym}{\mathrm{LSym}}
\DeclareMathOperator{\br}{\mathrm{Bar}}
\DeclareMathOperator{\cobr}{\mathrm{CoBar}}
\DeclareMathOperator{\colim}{\mathrm{colim}}
\DeclareMathOperator{\dD}{\mathbb{D}}
\DeclareMathOperator{\triv}{triv}
\DeclareMathOperator{\kd}{\mathrm{KD}^{pd}}
\DeclareMathOperator{\sseq}{\mathrm{sSeq}}
\DeclareMathOperator{\const}{\mathrm{const}}
\DeclareMathOperator{\vect}{Vect}
\DeclareMathOperator{\aperf}{APerf}
\DeclareMathOperator{\bcirc}{\bar{\circ}}
\DeclareMathOperator{\coh}{Coh}
\DeclareMathOperator{\scoh}{coh}
\DeclareMathOperator{\scr}{SCR}
\DeclareMathOperator{\laft}{laft}
\DeclareMathOperator{\QC}{QC}
\DeclareMathOperator{\fin}{Fin}
\DeclareMathOperator{\levtimes}{\otimes_{lev}}
\DeclareMathOperator*{\bigamalg}{\scalerel*{\amalg}{\sum}}
\DeclareMathOperator{\red}{red}
\DeclareMathOperator{\op}{Op}
\DeclareMathOperator{\dalg}{DAlg}
\DeclareMathOperator{\pd}{pd}
\DeclareMathOperator{\eoo}{\mathbb{E}_{\infty}}
\DeclareMathOperator{\lagd}{LieAlgd^{\pi}_{\textit{B}/\textit{R},\Delta}}
\DeclareMathOperator{\lagdf}{LieAlgd^{\pi,\fil}_{\textit{B}/\textit{R},\Delta}}
\DeclareMathOperator{\aplagd}{LieAlgd^{\pi,dap}_{\textit{B}/\textit{R},\Delta}}
\DeclareMathOperator{\aplagdX}{LieAlgd^{\pi,dap}_{\textit{X}/\textit{R},\Delta}}
\DeclareMathOperator{\aplagdf}{LieAlgd^{\pi,\fil,dap}_{\textit{B}/\textit{R},\Delta}}
\DeclareMathOperator{\Sp}{Sp}
\DeclareMathOperator{\sgr}{gr}
\DeclareMathOperator{\sfil}{fil}
\DeclareMathOperator{\infsus}{\Sigma^{\infty}_{+}}
\DeclareMathOperator{\infcoho}{F^H_*\mathbbb{\Pi}^{\mathit{or}}_{\textit{-/R}}}
\DeclareMathOperator{\infcohgr}{\widehat{F^H_*\mathbbb{\Pi}}_{\textit{-/R}}^{\mathit{or},\gen}}
\DeclareMathOperator{\infcohR}{F^H_*\mathbbb{\Pi}_{\textit{B/R}}}
\DeclareMathOperator{\infcohRo}{F^H_*\mathbbb{\Pi}^{\mathit{or}}_{\textit{B/R}}}
\DeclareMathOperator{\infcohnewfunctor}{F^H_*\mathbbb{\Pi}}
\DeclareMathOperator{\infcohaa}{F^H_*\mathbbb{\Pi}^{or}_{\mathit{A}_1/\mathit{A}_0}}
\newcommand{\ul}{\underline}
\newcommand{\dT}{\mathbb{T}}
\newcommand{\dL}{\mathbb{L}}
\newcommand{\ooop}{$(\infty,1)$-operad}
\newcommand{\clm}{\mathcal{LM}}
\newcommand{\infcats}{$\infty$-categories}
\newcommand{\ed}{\text{\textexclamdown}}
\newcommand{\fD}{\mathfrak{D}}
\newcommand{\dst}{\mathbf{dSt}_{\mathit{R}}}
\newcommand{\cH}{\mathcal{H}_{\pi}}
\DeclareMathOperator{\cotrfil}{\cot_{\mathit{R//B}}^{\fil}}
\DeclareMathOperator{\sqzrfil}{\sqz_{\mathit{R//B}}^{\fil}}
\DeclareMathOperator{\aq}{AQ}
\DeclareMathOperator{\aqf}{\widetilde{AQ}}
\DeclareMathOperator{\dafilr}{\mathcal{D}^{\fil}_\textit{/R}}
\DeclareMathOperator{\dafilbr}{\mathcal{D}^{\fil}_\textit{B/R}}
\newcommand{\spnc}{\mathrm{Spec}^{\textit{nc}}}
\newcommand{\spec}{\mathrm{Spec}}
\DeclareMathOperator{\dg}{DG}
\newcommand{\fh}{\mathfrak{h}}
\newcommand{\fg}{\mathfrak{g}}
\title{A duality between Lie algebroids and infinitesimal foliations}
\author{Jiaqi Fu}
\date{}
\begin{document}
	\maketitle
	\begin{abstract}
		There are two natural analogues of algebraic foliations in derived algebraic geometry, called partition Lie algebroids and infinitesimal derived foliations, and both make sense in general characteristics. We construct an equivalence between these two notions under some finiteness conditions. Our method is refining the PD Koszul duality in \cite{BM}\cite{BCN} using the (completed) Hodge filtration.
	\end{abstract}
	\section{Introduction}
	Let $X$ be a smooth variety of dimension $n$ over $\mathbb{C}$. A \textit{holomorphic foliation on $X$} consists of a holomorphic atlas $\{\phi_i:U_i^{\subset X}\to V_i^{\subset \mathbb{C}^k}\times W_i^{\subset\mathbb{C}^{n-k}}\}$ such that the transition maps are of the form $\phi_j\circ\phi^{-1}_i(x,y)=(f(x),g(x,y))$ with $x\in V_i$ and $y\in W_i$. The level set of each $x_0\in V_i$ is a complex submanifold of $U_i$, and gluing up the overlapping ones gives rise to immersed complex submanifolds of $X$, which are called \textit{leaves}. Holomorphic foliations have two natural algebraic analogues: one is given by smooth Lie algebroids, i.e.\ $k$-codimensional subbundles $\mathfrak{g}\hookrightarrow T_{X/k}$ of the tangent bundle which are closed under the Lie bracket $[-,-]$ \cite{pradines1967theorie}; the other is given by smooth algebraic foliations, namely quotient bundles $\Omega_{X/k}\twoheadrightarrow\Omega_{\mathscr{F}}$ with a rank $k$ kernel $I$ being a differential ideal \cite{Ayoub}, i.e.\ the composition $\bar{d}_{DR}:\mathcal{O}_{X}\to\Omega_{X/k}\twoheadrightarrow \Omega_{\mathscr{F}}$ induces a quotient map of \textit{commutative differential graded algebras} $({\wedge}^\bullet_R \Omega_{X/k},d_{DR})\twoheadrightarrow ({\wedge}^\bullet_R \Omega_{\mathscr{F}},\bar{d}_{DR})$. These two notions are shown to be equivalent using Chevalley-Eilenberg algebras \cite[Proposition 12.4]{liediff}. However, we often need to consider their generalization with singularities, i.e.\ drop the locally free condition on $\mathfrak{g}$ or $\Omega_\mathscr{F}$, where a disharmony appears between the tangent formalism (e.g.\ \cite[\S3]{Miyaoka}) and the cotangent formalism (e.g.\ \cite[\S1]{Baum}, \cite[\S3]{malgrange1977frobenius}).
	
	A well-known philosophy suggests that many algebro-geometric objects have well-behaved derived analogues. This paper is devoted to exploring the connection between the above two formalisms of algebraic foliations in derived algebraic geometry.
	\subsection{Formal geometry of foliations}
	An algebraic leaf of some algebraic foliation $\mathscr{F}$ on $X/R$ is a subvariety $i:Z\hookrightarrow X$ such that $\Omega_{Z/R}$ and $i^*\Omega_{\mathscr{F}}$ are isomorphic as quotient sheaves of $i^*\Omega_{X/R}$.\ Unfortunately, even a smooth algebraic foliation over $\mathbb{C}$ might not admit any algebraic leaves. For instance, the smooth closed $1$-form $dx/x+dy$ on $\spec(\mathbb{C}[x,x^{-1},y])$, generating a subbundle differential ideal of rank $1$, has only transcendental leaves in the form of $\{xe^{y}=c\}$. When one works on a singular scheme or over a base ring without embedding into $\mathbb{C}$, even the definition of leaves can be tricky, not to mention their existence or uniqueness. One approach to fixing this is finding a formal version of Frobenius theorem, i.e.\ determining the infinitesimal deformations along the foliations. Some smooth cases of this problem were solved before the rise of derived geometry, e.g. \cite[Proposition 2.4]{Eke} and \cite[\S6]{Miyaoka}.\ However, proper complex varieties  rarely foliate without singularity (e.g. $\mathbb{P}^n_\mathbb{C}$), while the singular foliations exist in great abundance \cite[p.287]{baum1972singularities}.
	
	\textbf{Complex derived foliations.}
	The notion of \textit{derived foliations}, recently proposed by To\"en and Vezzosi in \cite[\S1.2]{TV}, offers a new framework for studying singular algebraic foliations over $\mathbb{C}$. Their definition could be motivated by the hidden smoothness philosophy: a reasonable singular foliation should be the truncation of some ``regular'' higher notion of foliations.
	
	Recall that an algebraic foliation $\mathscr{F}$ \cite[\S6.1]{Ayoub} on a scheme $\spec(R)/\mathbb{C}$ is determined by a quotient of quasi-coherent sheaves $f:\Omega_{R/\mathbb{C}}\twoheadrightarrow \Omega_{\mathscr{F}}$, with the ``integrability'' condition that $f$ gives rise to a \textit{commutative differential graded algebra} (cdga) over $\mathbb{C}$ 
	\[R\xrightarrow{\bar{d}_{DR}}\Omega_{\mathscr
		F}\xrightarrow{\bar{d}_{DR}}\wedge^2_R\Omega_{\mathscr
		F}\xrightarrow{\bar{d}_{DR}}\wedge^3_R\Omega_{\mathscr
		F}\xrightarrow{\bar{d}_{DR}}\ldots,\]
	whose cdga structure is inherited from the de Rham algebra $(\wedge^\bullet_R \Omega_{R/\mathbb{C}},d_{DR})$.
	
	In derived geometry in characteristic 0, the ambient categories of rings and modules are replaced by the \infcat\ of cdgas and their derived \infcats \cite{HAG2}.\ To express a foliation structure here, the strict cdga in modules as above should be replaced by a notion of \textit{homotopy-coherent cdgas} in $\m_\mathbb{C}$, the derived \infcat\ of chain complexes over $\mathbb{C}$. Here a \textit{homotopy-coherent cochain complex} (Definition \ref{df dg}) refers to $C^\bullet$ a $\mathbb{Z}$-indexed collection of chain complexes and morphisms $d:C^\bullet\to C^{\bullet+1}$ in $\m_\mathbb{C}$, where $d^2$ vanishes up to a coherent homotopy. The homotopy-coherent cdgas are then $\eoo$-algebras in homotopy-coherent chain complexes by definition. A primary example is the \textit{derived de Rham algebra} $(\wedge^\bullet_{R}\dL_{R/k},d_{DR})$ in \cite{bhatt2012completions}\cite[\S1.2]{TV}.
	\begin{df}\label{df1.1}\cite[\S1.2]{TV}
		Let $\spec(R)/\mathbb{C}$ be an affine derived scheme of finite presentation. A \textit{derived foliation} $\mathscr{F}$ on $\spec(R)$ is a homotopy-coherent cdga \footnote{It is modeled as a \textit{graded mixed cdga} in \cite{TV}.} $(A^\bullet, d)$ over $\mathbb{C}$ such that $A^0\simeq R$, $\dL_{\mathscr{F}}:=A^1$ is a connective perfect $R$-module, and there is a natural equivalence of graded cdgas $\sym_R(\dL_{\mathscr{F}}[-1])\simeq A^\bullet[-\text{\small\textbullet}]$ (via \ref{thm dg_-}). The $R$-module $\dL_{\mathscr{F}}$ is called the \textit{cotangent complex} of $\mathscr{F}$.
	\end{df}
	The derived de Rham algebra $(\wedge^\bullet_{R}\dL_{R/k},d_{DR})$ is initial among the derived foliations in this sense. 
	If the truncation map $\Omega_{R/\mathbb{C}}\to \pi_0(\dL_{\mathscr{F}})=\Omega_{\mathscr{F}}$ is surjective, then $\Omega_{\mathscr{F}}$ determines an algebraic foliation, and $\mathscr{F}$ could be thought as a derived enhancement.
	
	\textbf{Formal deformation theory.} A principle introduced by Deligne and Drinfeld postulates that every formal moduli problem in characteristic $0$ is controlled by a dg-Lie algebra.\ It was then further developed in \cite{GM} \cite{Kont} \cite{Man}, and eventually was upgraded to an equivalence by the work of Hinich \cite{Hin}, Pridham \cite{pridham2010unifying}, and Lurie \cite{lurie2011derived}. This equivalence is generalized into a ``many-object'' version using \textit{dg-Lie algebroids} \cite{gaitsgory2019study}\cite{Nuiten}.
	
	Meanwhile, the existence of a (2-term) derived enhancement is related to the integrability of algebraic foliations \cite[Corollary 1.5.4, 2.3.3]{TV} by formal schemes. This also suggests that the higher structure of a derived foliation should determine the formal deformation along the ``imaginary leaves''. Here, an advantage of $\mathscr{F}$ compared to its truncation $\Omega_{\mathscr{F}}=\pi_0(\dL_{\mathscr{F}})$ is that $\mathscr{F}$ could be related to dg-Lie algebroids.\ Given a dg-Lie algebroid $\rho:\mathfrak{g}\to \dT_{R/\mathbb{C}}$  \cite[Definition 2.1]{Nuiten} such that the underlying $R$-module $\mathfrak{g}$ is perfect and of Tor-amplitude $(-\infty,0]$, the graded cdga $\sym_R(\mathfrak{g}^\vee[-1])$ admits a natural derived foliation structure, whose extra differential is defined in terms of the Lie bracket and $\rho$. To\"en and Vezzosi conjecture that this assignment is fully faithful \cite[\S1.3.1]{TV}.\ Our goal is to prove this conjecture, as well as a more general statement in arbitrary characteristics.
	
	\textbf{Interlude: Some Conventions.}
	Until now, everything happens in characteristic 0. However, once we are equipped with some more refined machinery, most of the discussion also holds in general characteristics. We will return to this in \S\ref{sec1.3} with more details.\ Here, we only make necessary clarification and variation on the notions to state our main theorem.
	
	The \textit{derived commutative algebras}  are chosen as our preferred generalization of cdgas, which are the algebras of the monad $\lsym$, cf. \cite[I\S4]{illusie2006complexe} \cite{kaledin2015trace} for a definition using simplicial-cosimplicial algebras, or \cite[\S4.2]{Raksit} applying the method of left-right extension in \cite[\S3]{BM} to $\sym$.\ For a map of simplicial commutative rings $R\to B$, a \textit{foliation-like algebras} $A$ over $B$ relative to $R$ is a completely filtered derived $R$-algebra such that $\gr_0A\simeq B$ and the natural map $\lsym_B(\gr_1A)\to\gr A$ is an equivalence of graded algebras, see Notation \ref{n4.14} (2).
	
	Alternatively, Proposition \ref{prop5.1n} shows that each foliation-like algebra $A$ can be regarded as
	\begin{equation}
		B\xrightarrow{d}\dL_{A}\xrightarrow{d}\lsym_B^2(\dL_{A}[-1])[2]\xrightarrow{d}\ldots
	\end{equation}
	a free graded derived algebra with a compatible homotopy-coherent differential $d$, where $\dL_{A}:=\gr_1A[1]$ is the \textit{cotangent complex} of $A$. The foliation-like algebras are then a generalization of To\"en and Vezzosi's derived foliations (Remark \ref{rk5.4}(2)) to general characteristics.\ Let $\dalg^{\fol}_{B/R,\ap}$ be the \infcat\ of foliation-like algebras whose cotangent is an almost perfect $B$-module.
	
	Write $\lagd$ for the \infcat\ of \textit{(derived) partition Lie algebroids} over $B$ relative to $R$ (Definition \ref{df4.4}), which generalizes dg-Lie algebroids into general characteristics. One application of $\lagd$ is to generalize Lurie and Pridham's theorem into general characteristics \cite{BM} \cite{BMN}.
	\begin{maintheorem}[Affine case, \ref{thm4.25}]\label{thm1.1}
		Let $R\to B$ be a morphism of coherent simplicial commutative rings such that $\dL_{B/R}$ is an almost perfect  $B$-module. Then, there is an equivalence
		\[\aplagd\xrightarrow{\simeq}\dalg^{\fol,op}_{B/R,\ap},\]
		from the \infcat\ of \textnormal{dually almost perfect} (\ref{df2.25}) partition Lie algebroids to the \infcat\ of \textnormal{almost perfect} foliation-like algebras.
	\end{maintheorem}
	\begin{remark}
		Here $B/R$ is not necessarily almost of finite presentation. For instance, the inclusion $k\hookrightarrow k^{\mathrm{sep}}$ from a field into its separable closure satisfies the conditions.
	\end{remark}
	
	\textbf{Infinitesimal derived foliations.} We will now globalize Theorem \ref{thm1.1} to a geometric statement. Let $\mathbb{V}(\dL_{\mathscr{F}}[-1])$ be the \textit{non-connective affine space} corepresented by $\lsym_B(\dL_{\mathscr{F}}[-1])$. As a ``stacky vector bundle'', it admits a natural $\mathbb{G}_m$-action by scaling. Meanwhile, a $R$-linear complete filtration could be regarded as a comultiplication by $R[\epsilon]$ (with $|\epsilon|=1$ and the Hopf structure $\Delta \epsilon=\epsilon\otimes 1+1\otimes\epsilon$), i.e. an action of $\Omega_0\mathbb{G}_a:=\spec(R[\epsilon])$, see \cite[Remark 1.1]{pantev2013shifted}.
	
	Motivated by these observations, an \textit{infinitesimal derived foliations} \cite[\S2]{Toen2023} (or Definition \ref{df5.5}) $\mathscr{F}$ is by definition a $\mathbb{G}_m\ltimes\Omega_0\mathbb{G}_a$-equivariant $R$-stack such that the underlying graded stack being the $\mathbb{V}(\dL_{\mathscr{F}}[-1])$ for some $B$-module $\dL_{\mathscr{F}}$. Let $\fol^{\pi}_{\ap}(\spec(B)/R)$ denote the \infcat\  of infinitesimal derived foliations for which $\dL_{\mathscr{F}}$ is \textit{almost perfect}.\ Taking the non-connective affine space induces a categorical equivalence $\dalg^{\fol,op}_{B/R,\ap}\simeq\fol^{\pi}_{\ap}(\spec(B)/R)$ (Theorem \ref{thmn5.11}), which shows that our switch of convention causes no confusion.
	
	The notion $\fol^{\pi}_{\ap}(-/R)$ can be extended to \textit{quasi-compact and quasi-separated} (qcqs) derived schemes $X$ with $\dL_{X/R}$ almost perfect \cite[Definition 2.8.4.4]{SAG}, by considering the ``stacky vector bundle'' $\mathbb{V}(E)$ for quasi-coherent sheaves $E$ and global $\mathbb{G}_m\ltimes\Omega_0\mathbb{G}_a$-actions on $\mathbb{V}(E)$ compatible with the scaling.

	\begin{maintheorem}[Global case, \ref{thmn5.13}]\label{thm1.4}
		Let $X$ be a locally coherent qcqs derived scheme over an eventually coconnective coherent simplicial commutative ring $R$, whose cotangent complex $\dL_{X/R}$ is an almost perfect quasi-coherent sheaf. Then, there is an equivalence
		\[\aplagdX\xrightarrow{\simeq}\fol^{\pi}_{\ap}(X/R),\]
		from the \infcat\ of \textnormal{dually almost perfect} partition Lie algebroids to the \infcat\ of infinitesimal derived foliations with \textnormal{almost perfect cotangent complexes}.
	\end{maintheorem}
		\subsection{Strategy of proof}\label{sec:1.2}
	Our main tool is the right-left extension and divided power Koszul duality presented in \cite{BM}\cite{BCN}.

	We first review filtrations in stable \infcats\ and the theory of pro-coherent modules in \S\ref{sec2}. Let $R$ be a coherent simplicial commutative ring \cite[Definition 7.2.4.16]{HA}. The pro-coherent module category $\QC^\vee_R$ enjoys the pleasant property that the $R$-linear dual $(-)^\vee$ induces a symmetric monoidal equivalence of full subcategories between $\aperf_R$ consisting of almost perfect modules and $\aperf^\vee_R$ its essential image under $(-)^\vee$.	We then recall the theory of (derived) $\infty$-operads (Definition \ref{df3.22}) in the pro-coherent context, and construct the \infcats\ of filtered and graded algebras.
	
		Section \ref{sec3} is devoted to a filtered PD Koszul duality. Let $\mathcal{P}$ be a reduced derived $\infty$-operad (Proposition \ref{prop3.9n}) with PD Koszul dual $\kd(\mathcal{P})$. We construct an adjunction of filtered algebras 
	\[\kd:\alg_{\mathcal{P}}(\fil_{\ge 1}\QC^\vee_R)\rightleftarrows \alg_{\kd(\mathcal{P})}(\fil_{\le -1}\QC^\vee_R)^{op}:\aq,\]where $\aq$ takes an increasingly filtered $\kd(\mathcal{P})$-algebra $L$ to a decreasingly filtered $\mathcal{P}$-algebra $\aq(L)$, which is automatically complete (\ref{dualfil}).\ Applying $\aq$ to the constantly filtered algebras gives rise to a sifted-colimit-preserving functor $\aqf:\alg_{\kd(\mathcal{P})}(\QC^\vee_R)\to  \alg_{\mathcal{P}}(\fil_{\ge1}\QC^\vee_R)^{op}$.
	
	When $\mathcal{P}$ is the derived $\infty$-operad of \textit{non-unital derived commutative algebras} $\com^{nu}$, and its Koszul dual is by definition the derived PD $\infty$-operad of \textit{(derived) partition Lie algebras} $\dlie$, the functor $\aqf$ is preferably written as $\cet$ for (completed) Hodge filtered Chevalley-Eilenberg complex. If $R$ is a regular $\mathbb{Q}$-algebra, $\dlie$ is the operad of (shifted) dg-Lie algebras over $R$, and $\cet$ is the ordinary Chevalley-Eilenberg functor filtered by the degree of forms. For each $R$-augmented algebra $S$, $\cet(\kd(S))$ is exactly the Hodge-completed de Rham cohomology $\widehat{\mathrm{DR}}(R/S)$ (Proposition \ref{prop4.20old}).
	\begin{theorem}[Theorem \ref{thm3.24n}]\label{thmintro2}
		If $\mathcal{P}$ is \textnormal{almost finitely presented} \cite[Definition 3.52]{BCN} so that $\kd(\mathcal{P})$ is dually almost perfect, then the functor $\aqf$ restricts to a fully faithful embedding \[\alg_{\kd(\mathcal{P})}(\aperf^\vee_R)\hookrightarrow \alg_{\mathcal{P}}(\fil_{\ge1}\QC^\vee_R)^{op},\]whose essential image consists of complete $\mathcal{P}$-algebras $A$ such that the graded pieces form a free graded $\mathcal{P}$-algebra generated by an almost perfect $R$-module $\gr_1A$ concentrating at weight $1$.	
	\end{theorem}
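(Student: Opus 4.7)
The plan is to reduce Theorem~\ref{thmintro2} to two inputs: the filtered PD Koszul adjunction $\kd\dashv\aq$ already constructed in Section~\ref{sec3}, and the unfiltered PD Koszul duality of \cite{BCN}, which asserts that on almost finitely presented $\mathcal{P}$ the unit and counit become equivalences on the dually almost perfect loci. The first observation is tautological: by definition $\aqf = \aq\circ\const$, where $\const\colon \alg_{\kd(\mathcal{P})}(\QC^\vee_R)\to\alg_{\kd(\mathcal{P})}(\fil_{\le-1}\QC^\vee_R)^{op}$ sends an algebra to the constantly filtered one (with $\fil_{\le -1}$ understood via the decreasing convention of the target). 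So everything boils down to analyzing how $\aq$ behaves on constantly filtered inputs.

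For full faithfulness on $\alg_{\kd(\mathcal{P})}(\aperf^\vee_R)$, I would show that for dually almost perfect $L$, the unit map $L\to\kd(\aqf(L))$ of the composite adjunction (restricted to constant filtrations) is an equivalence. The strategy is to identify $\kd$ on a complete filtered $\mathcal{P}$-algebra with the result of computing $\kd$ weight-by-weight in a suitable graded sense, i.e.\ to reduce to the associated graded; then the claim follows by assembling the unfiltered equivalence of \cite{BCN} applied to $L^\vee$ in each weight. Since $\kd$ and $\aq$ are both compatible with the forgetful functors to $\fil_{\ge 1}\QC^\vee_R$ and $\fil_{\le -1}\QC^\vee_R$ respectively, and since the constant filtration is by construction almost perfect in each weight, the filtered unit agrees with the unfiltered unit after passing to weights, yielding an equivalence.

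For the essential image, I need to compute $\gr\aqf(L)$ for dually almost perfect $L$ and check it is a free graded $\mathcal{P}$-algebra on $L^\vee\in\aperf_R$ sitting in weight $1$. The cleanest route is to observe that $\aq$ on a constantly filtered object is modeled by the cobar-type construction underlying filtered Koszul duality, whose associated graded unwinds to the free $\mathcal{P}$-algebra on the Koszul-dual-of-$\kd(\mathcal{P})$ applied to $L$, placed in the appropriate weight. Combined with $\kd(\kd(\mathcal{P}))\simeq\mathcal{P}$ and the identification of the generating module as the $R$-linear dual $L^\vee$ (using that $L$ is dually almost perfect so $L^\vee\in\aperf_R$), this gives $\gr\aqf(L)\simeq\mathrm{Free}_{\mathcal{P}}(L^\vee\langle 1\rangle)$ with the generators concentrated in weight one. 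Completeness of $\aqf(L)$ is automatic from the argument of \ref{dualfil}. Conversely, given a complete $A\in\alg_{\mathcal{P}}(\fil_{\ge 1}\QC^\vee_R)$ whose graded is free on an almost perfect $\gr_1 A$, I would define a candidate preimage by $L := (\gr_1 A)^\vee\in\aperf^\vee_R$ equipped with the $\kd(\mathcal{P})$-algebra structure extracted from $\kd(A)$; the matching unit and counit verifications are then formal from the adjunction together with the free-graded hypothesis, since on such $A$ the filtered Koszul dual collapses to the constant filtration on $L$.

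The main obstacle, and the step that requires genuine care, is the explicit computation $\gr\aqf(L)\simeq\mathrm{Free}_{\mathcal{P}}(L^\vee)$ with the correct weight placement. The difficulty is book-keeping: the conventions on increasing versus decreasing filtrations, the weight shift $[-\bullet]$ entering the free $\mathcal{P}$-algebra, and the interaction between Koszul duality and dualization all need to be reconciled simultaneously. I expect the cleanest approach is to first verify the statement on the universal example $\mathcal{P}=\com^{nu}$, where $\aqf$ specializes to $\cet$ and the computation matches the Hodge filtration on (completed) derived de Rham cohomology as in Proposition~\ref{prop4.20old}, and then bootstrap to general almost finitely presented $\mathcal{P}$ by functoriality in the operad.
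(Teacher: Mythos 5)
Your high-level plan — compute $\gr\aqf(L)$, use the freeness and completeness of the graded pieces to drive both full faithfulness and essential surjectivity — does match the paper's strategy. But several of the steps you propose either fail or are missing the key technical content.

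First, the appeal to an ``unfiltered PD Koszul duality of \cite{BCN}'' applied ``to $L^\vee$ in each weight'' is misdirected. The paper is explicit that the unfiltered Chevalley--Eilenberg functor does \emph{not} embed dually almost perfect algebras (the $\mathfrak{su}(2)$ example is designed to show exactly this failure); the unfiltered result of \cite{BM} holds only under coconnectivity, and convergence there is a homological-degree phenomenon. What actually makes the theorem work is a \emph{weight/arity} argument: for a reduced $\infty$-cooperad $\mathcal{Q}$, the cobar construction $\cobr^\bullet(1,\mathcal{Q},1)$ is locally skeletal, so at each fixed arity $p$ it is a finite totalization (the paper's Lemma~\ref{inlemma3}, plus the $\Delta_{sur}$-skeleta Lemma~\ref{inlemma4}). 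Since $\gr(C)\simeq[L^\vee]_1$ is concentrated in weight $1$, weight $p$ of $\gr\aqf(L)$ only sees arities $\le p$, which is what lets the graded cobar construction converge without any connectivity hypothesis. Your proposal doesn't identify this mechanism, and it is the crux of the proof.

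Second, to get $\gr\aqf(L)\simeq \fr_{\mathcal{P}}[L^\vee]_1$ you also need to pin down the $\br(\mathcal{P})$-coalgebra structure on $\gr(C)$. The paper shows, via a dual of \cite[Theorem 4.7.1.34]{HA} and the fact that $\mathcal{C}_{\red}[[L^\vee]_1]$ splits, that the space of such coalgebra structures on a weight-$1$ generator is contractible, hence the coalgebra is the ``trivial'' one lying over the coaugmentation $\mathbbb{1}\to\br(\mathcal{P})$. Without this, you cannot claim the associated graded of the cobar construction is the free $\mathcal{P}$-algebra (Lemma~\ref{inlemma2} is then what converts the cobar of this cofree-looking object into $\fr_{\mathcal{P}}$ via the monad comparison $\cobr(\mathcal{Q})\circ(-)\simeq e^*e_!$). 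Your phrase ``free $\mathcal{P}$-algebra on the Koszul-dual-of-$\kd(\mathcal{P})$ applied to $L$'' doesn't parse into this and suggests a misidentification of the generating module.

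Third, the proposed bootstrap — first verify the statement for $\mathcal{P}=\com^{nu}$ via Proposition~\ref{prop4.20old}, then ``bootstrap to general almost finitely presented $\mathcal{P}$ by functoriality in the operad'' — is circular and also not available. Proposition~\ref{prop4.20old} is proved downstream of Theorem~\ref{thm4.25}, which in turn rests on Corollary~\ref{cor3.31n}, a special case of Theorem~\ref{thm3.24n}; so you would be using the theorem to prove the theorem. Moreover, there is no natural operad map relating an arbitrary reduced $\mathcal{P}$ to $\com^{nu}$ that would allow a transfer. The paper's proof is deliberately uniform in $\mathcal{P}$ satisfying (\textdagger) precisely because no such bootstrap exists.

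Finally, small but real: your unit map should be $L\to \colim\circ\kd(\aqf(L))$, not $L\to\kd(\aqf(L))$, since $\kd(\aqf(L))$ lands in $\fil_{\le-1}$-algebras and a colimit is needed to forget the filtration before comparing with $L$. This is the $\dD=\colim\circ\kd$ of the paper.

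In short: the skeleton of your plan is right (compute associated graded, prove freeness in weight $1$, use completeness, conclude), but the load-bearing steps — the arity-indexed convergence of the graded cobar, the contractibility of coalgebra structures on $[L^\vee]_1$, and the monad comparison — are absent, and the proposed shortcut through $\com^{nu}$ is both circular and unavailable.
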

	
	Theorem 1.15 in \cite{BM} establishes a non-filtered Koszul duality for coconnective partition Lie algebras (of finite type). The novelty here is that we do not assume coconnectivity, for which the filtered structure plays a crucial role (even in characteristic $0$).
	
	\begin{exa}
		Over $\mathbb{R}$, the Lie algebra cohomology of $\mathfrak{su}(2)$ (or $\mathfrak{su}(2)[1]$ as a shifted dg-Lie algebra) is the free cdga $\ce(\mathfrak{su}(2))\simeq\wedge_{\mathbb{R}}(\mathbb{R}.e_3)$ with $|e_3|=-3$, cf.\cite[Vol.I Theorem 6.5.(2)]{mimura1991topology} \cite[Theorem 15.2]{chevalley1948cohomology}. Therefore, the corresponding shifted dg-Lie algebra of $\ce(\mathfrak{su}(2))$ is abelian and has the underlying module $\mathbb{R}[3]$, which differs from $\mathfrak{su}(2)[1]$. It means that $\ce$ without filtration does not embed the ordinary Lie algebras into the cdgas. At the same time, $\cet(\mathfrak{su}(2)[1])$ as a graded mixed complex, is the strict cochain complex of the Lie algebra cohomology of $\mathfrak{su}(2)$, which loses no information.
		
		From a geometric perspective, the formal moduli problem determined by $\mathfrak{su}(2)[1]$ is $B\widehat{\mathrm{SU}}(2)$, the classifying stack of the formal group associated to $\mathrm{SU}(2)$. As a highly stacky object, $B\widehat{\mathrm{SU}}(2)$ is not (pro-)corepresented by $\Gamma(B\widehat{\mathrm{SU}}(2),\mathcal{O})\simeq \ce(\mathfrak{su}(2)[1])$. However, as an infinitesimal derived foliation, the $\mathbb{G}_m\ltimes\Omega_0\mathbb{G}_a$-action on $B\widehat{\mathrm{SU}}(2)$ induces a complete filtered structure \cite{Mou}, where the graded stack is an affine stack corepresented by $\sym_{\mathbb{R}}(\mathbb{R}^{\oplus 3}[-1])$.
		
	\end{exa}
	
	In \S\ref{sec4}, we construct $\lagd$ the \infcat\ of \textit{partition Lie algebroids over $B$ (relative to $R$)} for $R\to B$ a morphism of simplicial commutative rings with some finiteness conditions, in the spirit of \cite{BW}. When $R$ and $B$ are regular $\mathbb{Q}$-algebras, the \infcat\ $\lagd$ is equivalent to the \infcat\ of dg-Lie algebroids in \cite[\S3]{youngNuiten}.
	\begin{theorem}[Theorem \ref{thm4.25}]\label{thmintro3}
		Let $R\to B$ be a map of coherent simplicial commutative rings such that $\dL_{B/R}$ is almost perfect as an $B$-module. There is a fully faithful embedding, by a ``many-object'' variant of $\cet$,
		\[\tc:\aplagd\hookrightarrow (\dafilbr)^{op}\]
		from dually almost perfect partition Lie algebroids,\ to filtered $R$-algebras $A$ with a natural augmentation $\gr_0A\simeq B$. Moreover, its essential image consists of almost perfect foliation-like algebras.
	\end{theorem}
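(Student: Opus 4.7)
The plan is to derive Theorem \ref{thmintro3} as a ``many-object'' refinement of the absolute filtered PD Koszul duality from Theorem \ref{thmintro2}. First I would set up $\lagd$ as algebras for a derived $\infty$-operad (or monad) $\mathcal{Q}_{B/R}$ on pro-coherent $B$-modules that refines the absolute partition Lie operad by incorporating the anchor $\rho:\fg\to\dT_{B/R}$, in the spirit of the many-object constructions of \cite{Nuiten} and \cite{BW} transposed to the partition-Lie setting of \cite{BM}. The filtered Chevalley--Eilenberg functor $\tc$ is then defined by the analog of the formula for $\cet$ applied to this relative operad, producing a filtered $R$-algebra $\tc(\fg)$ whose weight-$0$ piece is canonically $B$, whose weight-$1$ piece is $\fg^\vee[-1]$ (where $(-)^\vee$ is the pro-coherent dual over $B$), and whose associated graded is $\lsym_B(\fg^\vee[-1])$.

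Full faithfulness would be established by reducing mapping spaces on both sides to the absolute setting. A morphism $\fg_1\to\fg_2$ in $\aplagd$ is a morphism of underlying dually almost perfect $B$-modules compatible with the anchor, the $B$-action, and the partition-Lie bracket, while a morphism in $\dafilbr$ preserving $\gr_0\simeq B$ can be tested weight-by-weight. On weights $\ge 1$, the absolute Theorem \ref{thmintro2} provides a bijection of mapping spaces, and the compatibility of the universal derivation $B\to\gr_1\tc(\fg)\simeq\fg^\vee[-1]$ with the $B$-augmentation matches precisely the anchor compatibility on the algebroid side.

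For the essential image, $\tc(\fg)$ is automatically complete by the analog of \ref{dualfil} with $\gr_0\tc(\fg)\simeq B$, and its associated graded is free as a graded $\lsym_B$-algebra on $\fg^\vee[-1]$ concentrated in weight $1$. Since $\fg$ is dually almost perfect, $\fg^\vee[-1]$ is almost perfect over $B$, which exhibits $\tc(\fg)$ as an almost perfect foliation-like algebra in the sense of Notation \ref{n4.14}(2). Conversely, given any almost perfect foliation-like algebra $A$, the essential image statement of Theorem \ref{thmintro2} applied to the weights $\ge 1$ part produces a dually almost perfect $B$-module with partition Lie algebra structure, and the $B$-augmentation on $\gr_0 A$ together with the derivation $B\to\gr_1 A$ supplies the anchor, yielding a preimage in $\aplagd$.

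The main obstacle is the first step: constructing the relative operad or monad $\mathcal{Q}_{B/R}$ so that the absolute filtered PD Koszul duality applies cleanly and so that the anchor is faithfully captured by the Koszul-dual side. In particular, one must verify the analog of the ``almost finitely presented'' hypothesis of \cite[Definition 3.52]{BCN} in the relative setting; this uses in an essential way that $\dL_{B/R}$ is almost perfect, so that $\dT_{B/R}$ is dually almost perfect and the anchor $\rho$ fits naturally into the pro-coherent Koszul framework over $B$.
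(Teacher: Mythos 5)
Your high-level plan --- set up a monad encoding the algebroid structure, and then leverage the absolute filtered Koszul duality --- is the right intuition and does match the spirit of the paper's proof, but the proposal leaves the two hardest steps unresolved, and the path you sketch for each would not actually go through as written.

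First, you propose to realize $\lagd$ as algebras over a \emph{derived $\infty$-operad} $\mathcal{Q}_{B/R}$ and then ``apply Theorem \ref{thmintro2}.'' This cannot work directly: Theorem \ref{thmintro2} (Theorem \ref{thm3.24n}) requires a reduced derived $\infty$-operad in the symmetric-sequence sense, satisfying condition (\textdagger). The monad $T$ governing partition Lie algebroids is \emph{not} built from a symmetric sequence --- it is defined (Construction \ref{c4.2}, Definition \ref{df4.4}) by right-left extending the nonlinear functor $\overline{T}=(-)^\vee\circ\cotr\circ\sqzr\circ(-)^\vee$, and Lemma \ref{lemma4.3} shows it sits in a split fibre sequence $\bdlie(V)\to \overline{T}(V)\to \dT_{B/R}$ in which the constant summand $\dT_{B/R}$ destroys the arity grading. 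The paper therefore never applies Theorem \ref{thmintro2} to a relative operad. Instead it introduces an auxiliary filtered monad $S$ (Construction \ref{c4.20}), uses the monadic adjunction $\rho_0\dashv\rmfib$ between $\alg_{\bdlie}$ and $\lagd$ (Corollary \ref{cor4.11}), and writes a general $L\in\aplagd$ as a geometric realization $|\br_\bullet(\rho_0\circ\rmfib,\rho_0\circ\rmfib,L)|$ of \emph{pointed} algebroids; only on those is Corollary \ref{cor3.31n} (the many-object input you want) invoked, together with Corollary \ref{totalg} to commute the realization with $\lsym$.

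Second, your proposal never mentions the (completed) Hodge-filtered derived infinitesimal cohomology $\infcohnewfunctor$, which is the technical backbone of the whole construction: $\dafilbr$ is defined via $\gr_0$, its initial object is $(\infcohnewfunctor_{B/R})^\wedge$ rather than $B$ with a degenerate filtration (Corollary \ref{cor4.18}), and Lemma \ref{lemma4.16} is what lets one compute $\gr\infcohnewfunctor(B/S)\simeq\lsym_B[\dL_{B/S}[-1]]_1$ and hence set up the filtered adjunction at all. Without this, the statement that ``the weight-$0$ piece is canonically $B$'' gives you no handle on the $R$-linear structure, and the argument that ``on weights $\ge 1$, Theorem \ref{thmintro2} provides a bijection of mapping spaces'' is a non sequitur: the filtration on $\tc(L)$ is a filtration of an $R$-algebra augmented over $B$, not a filtration internal to $B$-modules, so the anchor $\rho:\fg\to\dT_{B/R}[1]$ cannot be read off weight-by-weight. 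The paper resolves exactly this by the chain of splitting fibre sequences (\ref{e:two fibre sequeces}) relating $S$, $T$, and $\bdlie$, and by Proposition \ref{prop4.23} which carefully establishes compatibility of $C^*_{\fil}$ with the unfiltered $C^*$ via Lemma \ref{lemma4.10}. Your conversion of a foliation-like $A$ into an algebroid has the same gap: you cannot extract the anchor just from the weight-$\ge 1$ piece plus ``the derivation $B\to\gr_1 A$''; the paper instead sets $L_A:=\colim\circ\fD_{\fil}(A)$ and verifies $\delta_A:A\to\tc(L_A)$ by first checking it on associated graded pieces and then invoking completeness.
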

	\noindent 
	
	Then we construct in \S\ref{sec5} a functor $\spnc$ realizing foliation-like algebras as infinitesimal derived foliations, which relies on the fact that taking non-connective spectra $\dalg(\m_R)\to (\dst)^{op}$ is symmetric monoidal with respect to the cocartesian monoidal structures. Finally, we use a reasoning inspired by \cite{Monier} to prove that:
	\begin{theorem}[Theorem \ref{thmn5.11}]\label{thm1.5}
		Further assuming that $R$ is eventually coconnective, there is a categorical equivalence
		\[\spnc:\dalg^{\fol,op}_{B/R,\ap}\simeq \fol^{\pi}_{\ap}(\spec(B)/R)\]between almost perfect foliation-like algebras and almost perfect infinitesimal derived foliations.
	\end{theorem}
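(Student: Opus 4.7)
The plan is to show $\spnc$ is an equivalence by exploiting the symmetric monoidality of $\spnc \colon \dalg(\m_R) \to \dst^{op}$ with respect to cocartesian structures, and then transferring the filtered-graded data on algebras to equivariant data on stacks. The key dictionary, already emphasized in the introduction, is that a $\mathbb{Z}$-grading on an $R$-algebra corresponds via $\spec$ to a $\mathbb{G}_m$-action, while an $R$-linear complete filtration corresponds by the Rees/Hopf construction to a coaction of $R[\epsilon]$ with $|\epsilon|=1$, i.e.\ an $\Omega_0\mathbb{G}_a$-action; compatibility between grading and filtration matches the semidirect product $\mathbb{G}_m \ltimes \Omega_0\mathbb{G}_a$.

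First I would check that $\spnc$ restricts to a functor $\dalg^{\fol,op}_{B/R,\ap} \to \fol^{\pi}_{\ap}(\spec(B)/R)$. For a foliation-like algebra $A$ with $\gr_0 A \simeq B$ and $\gr A \simeq \lsym_B(\gr_1 A)$, the graded stack $\spnc(\gr A)$ is the stacky vector bundle $\mathbb{V}(\gr_1 A) = \mathbb{V}(\dL_A[-1])$, and the complete filtration enhances this to a $\mathbb{G}_m \ltimes \Omega_0\mathbb{G}_a$-equivariant stack over $\spec(B)$ with almost perfect cotangent complex. For the putative inverse, I would send an infinitesimal derived foliation $\mathscr{F}$ to the complete filtered derived algebra whose associated graded is $\lsym_B(\dL_{\mathscr{F}}[-1])$ and whose filtration is read off from the $\Omega_0\mathbb{G}_a$-action via the Rees equivalence.

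Fully faithfulness reduces to two steps: that $\spnc$ is fully faithful on almost perfect derived algebras augmented over $B$, and that the $\mathbb{G}_m \ltimes \Omega_0\mathbb{G}_a$-equivariant structure is preserved and reflected by $\spnc$. The first uses the pro-coherent duality $\aperf_B \simeq \aperf^\vee_B$ from \S\ref{sec2} together with the eventual coconnectivity of $R$ to ensure that $A$ is recovered from the global sections of $\spnc(A)$; the second is formal from the symmetric monoidality of $\spnc$, once one checks it on the Hopf algebras $\mathcal{O}(\mathbb{G}_m)$ and $R[\epsilon]$. The main obstacle will be essential surjectivity, and this is where the reasoning inspired by \cite{Monier} enters: given $\mathscr{F} \in \fol^\pi_{\ap}(\spec(B)/R)$, one must produce a foliation-like $A$ with $\spnc(A) \simeq \mathscr{F}$ equivariantly. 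I would reduce first to the underlying graded stack $\mathbb{V}(\dL_{\mathscr{F}}[-1])$, which equals $\spnc(\lsym_B(\dL_{\mathscr{F}}[-1]))$ because $\mathbb{V}(-)$ is fully faithful on almost perfect modules, and then transport the $\Omega_0\mathbb{G}_a$-action back along $\spnc$ to an $R[\epsilon]$-coaction on $\lsym_B(\dL_{\mathscr{F}}[-1])$, equivalently to a complete filtered derived algebra structure. Completeness is automatic because the grading is supported in nonnegative weights, and the resulting $A$ is tautologically foliation-like; verifying that the round-trips are the identities is then formal from the monoidality of $\spnc$.
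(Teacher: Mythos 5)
The overall strategy — symmetric monoidality of $\spnc$, the dictionary grading $\leftrightarrow$ $\mathbb{G}_m$, filtration $\leftrightarrow$ $\Omega_0\mathbb{G}_a$-action, and reducing to the underlying graded stack — is the same as in the paper. However, there is a genuine gap at the step where you recover the algebra from the stack.

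You claim that $\mathbb{V}(\dL_{\mathscr{F}}[-1])=\spnc(\lsym_B(\dL_{\mathscr{F}}[-1]))$ ``because $\mathbb{V}(-)$ is fully faithful,'' and earlier that ``$A$ is recovered from the global sections of $\spnc(A)$'' thanks to pro-coherent duality and eventual coconnectivity. Neither of these is sufficient, and the first identity is just the definition of $\mathbb{V}$, not a consequence of full faithfulness. The real content is the assertion that the unit map
\[
\lsym_B[E]_1 \longrightarrow \mathcal{O}^{\sgr}\circ\spnc\bigl(\lsym_B[E]_1\bigr)
\]
is an equivalence when $E$ is almost perfect. Full faithfulness of $\mathbb{V}|_{\m^{op}_{B,+}}$ from Monier only controls the weight-$1$ piece: unwinding the adjunction, it says the map $E\to\gr_1\mathcal{O}^{\sgr}(\mathbb{V}(E))$ is an equivalence after applying $\map_{\m_B}(E',-)$ for eventually connective $E'$. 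It says nothing about the higher weights, i.e.\ it does not imply that $\mathcal{O}^{\sgr}(\mathbb{V}(E))$ is the free graded algebra on $E$. Likewise pro-coherent duality is a statement about modules, not about recovering a non-connective derived algebra from its non-connective affine spectrum. Without this step, ``transporting the $\Omega_0\mathbb{G}_a$-action back along $\spnc$'' is not formal: you need $\spnc$ (equivalently $\mathcal{O}^{\sgr}$) to restrict to an equivalence between the relevant subcategories before the comodule structures can be pulled across.

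The paper closes this gap with Lemma~\ref{ff for gr stack}, which proves the unit equivalence by an induction on the coconnectivity $n$ with $E\in\aperf_{B,\ge -n}$. The base case $n=0$ is the connective affine case; for $n>0$ one picks a map $s\colon B[-n]^{\oplus m}\to E$ surjective on $\pi_{-n}$, forms the \v Cech conerve $F^\bullet$ of $E\to\mathrm{cofib}(s)$ whose terms are $(n-1)$-coconnective, and matches $\tot\lsym_B F^\bullet$ (via Corollary~\ref{totalg}) against $\tot\,\mathcal{O}^{\sgr}\mathbb{V}(F^\bullet)$, using the fact that $|\mathbb{V}(F^\bullet)|\to\mathbb{V}(E)$ is an epimorphism and $(-1)$-truncated, hence an equivalence. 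This is the Monier-inspired step and it is the actual technical content of the theorem; your proposal treats it as automatic. The remaining parts of your outline (preservation of cotangent complexes, transport of the $\cH$-equivariant structure once the recovery lemma is in hand, and essential surjectivity by showing $\mathcal{O}^{\epsilon\textit{-}\sgr}(\mathscr{F})$ is foliation-like) are correct and align with the paper's argument.
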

	\noindent The required equivalence in the main theorem is gluing up $\spnc\circ\tc$ on each affine chart.
	
		\subsection{Foliations in general characteristics}\label{sec1.3}
	Ordinary algebraic geometry has two inequivalent generalizations in general characteristics, namely \textit{derived algebraic geometry} based on simplicial commutative rings and \textit{spectral algebraic geometry} based on $\eoo$-ring spectra.\ The aforementioned derived algebras serve as a non-connective generalization of simplicial commutative rings. The infinitesimal derived foliations also live in the world of derived algebraic geometry.
	
	Now we exemplify the interest and subtlety of foliation theory away from characteristic $0$:
	\begin{exa}[Kronecker's criterion of root of unity]
		Consider some $x=\exp(2\pi i\lambda)\in\mathbb{C}^{\times}$ with $\lambda\in \bar{\mathbb{Q}}$. Here $x$ is a root of unity (or equivalently $\lambda\in\mathbb{Q}$) precisely when, for almost every non-zero prime $\mathfrak{p}$ of $\mathcal{O}_{\mathbb{Q}(\lambda)}$, the residue class of $\lambda$ in $\kappa(\mathfrak{p})$ belongs to the prime field $\mathbb{F}_p\subset \kappa(\mathfrak{p})$ \cite[\S2]{esnault2023lectures}.
		
		Bost gave a foliation-theoretic proof \cite[Corollary 2.4]{Bost} by observing that, setting $G:=\mathbb{G}_{m,\mathbb{Q}(\lambda)}\times \mathbb{G}_{m,\mathbb{Q}(\lambda)}$, the graph of the multiplication by $\lambda$ defines a sub-Lie algebra $\fh\hookrightarrow\mathrm{Lie}(G)$ and then a smooth algebraic foliation by $\mathrm{Lie}(G)\cong \Gamma(G,T_{G/\mathbb{Q}(\lambda)})$, and $x$ is a root of unity if and only if $\fh$ has an algebraic leaf, since the only codimension $1$ subgroups of $G$ are defined by one equation of the form $X^aY^b=1$. On the other hand, the residue condition on $\lambda$ is translated into, for almost all non-zero prime $\mathfrak{p}\subset \mathcal{O}_{\mathbb{Q}(\lambda)}$, the reduction of $\fh$ to $\kappa(\mathfrak{p})$ is closed under \textit{restriction}. The question is then reduced to a special case of the generalized Grothendieck-Katz p-curvature conjecture. This conjecture is formulated in \cite[conjecture F]{Aconjecture} and proved, under the condition that the analytic leaves contain open subsets satisfying the Liouville property, in \cite[Theorem 2.1]{Bost}.
		
	\end{exa}
	This example shows that, even in smooth case, a good notion of foliations away from characteristic $0$ should include operations beyond the Lie bracket (e.g. the restriction) to fully capture the formal deformation along the ``imaginary leaves''.
	
	Here is an example containing higher restrictions.
	\begin{exa}
		Let $L$ be a derived partition Lie algebra over $\mathbb{F}_2$ such that $\pi_n(L)=0$ for $n\ne 0,1$. The homotopy operations on $L$ consist of $([-,-],(-)^{\{2\}})$ a restricted Lie structure on $\fg_1:=\pi_1(L)$, a $\fg_{1}$-representation structure on $\fg_0:=\pi_0(L)$ and an additive higher restriction $R^1:\fg_1\to \fg_0$, cf. Proposition \ref{EXAprop1}.
		\[\begin{tikzcd}
			& {\fg_1} \\
			{} & {\fg_0}
			\arrow["{[-,-]}", from=1-2, to=1-2, loop, in=150, out=210, distance=5mm]
			\arrow["{(-)^{\{2\}}}", from=1-2, to=1-2, loop, in=330, out=30, distance=5mm]
			\arrow["{R^1}", from=1-2, to=2-2]
			\arrow["{[\fg_1,-]}", from=2-2, to=2-2, loop, in=150, out=210, distance=5mm]
		\end{tikzcd}\]	In particular, the partition Lie algebra $\mathrm{Lie}(\mu_2)^{\{2\}}$ of the Frobenius kernel $\mu_2:=\ker((-)^2:\mathbb{G}_{m,\mathbb{F}_2}\to \mathbb{G}_{m,\mathbb{F}_2})$\footnote{encoding the formal moduli problem $B\mu_2$} has $\fg_i\cong \mathbb{F}_2.D_i$ ($i=0,1$), where the operations are given by $[D_1,D_i]=0$ ($i=0,1$), $(D_1)^{\{2\}}=D_1$ and $R^1(D_1)=D_0$, see Proposition \ref{EXAprop4}.
	\end{exa}
	
	A foliation-like algebra $A\in \dalg^{\fol}_{B/R,\ap}$ captures the higher restrictions in a more subtle way. Following Notation \ref{n4.14} (2), the graded algebra $\gr A$ is $ \lsym_B(\dL_{A}[-1])$ rather than the wedge power $(\wedge_B\dL_{A})[-*]$. The point is that, unlike in characteristic $0$, the derived symmetric power $\lsym$ is no longer compatible with the degree shifting:
	\begin{exa}
		Let $v$ and $\tau$ be the generators of $E:=\dL_{\mu_2/\mathbb{F}_2}$ at degree $1$ and $0$ respectively. One has easily $\wedge_{\mathbb{F}_2}(\mathbb{F}_2.\tau)[-*]\simeq \mathbb{F}_2\oplus \mathbb{F}_2.\tau[-1]$. However, $\pi_*(\lsym_{\mathbb{F}_2}(\mathbb{F}_2[-1].\tau))\cong \mathbb{F}_2[\tau,\st^0\tau,(\st^0)^{\circ2}\tau,\ldots]$ is a polynomial algebra generated by $(\st^0)^{\circ n}\tau$ lies in degree $-1$ and weight $2^n$ \cite[Theorem 4.0.1]{priddy1973mod}.
		
		Considering $d:E\to \lsym^2(E[-1])[2]$ the extra differential from $\tc(\mathrm{Lie}(\mu_2)^{\{2\}})$, the formulas $(D_1)^{2}=D_1$ and $R^1(D_1)=D_0$ are dual to the fact that $d:\tau\mapsto \tau^2$ and $d:v\mapsto \st^0\tau$, Lemma \ref{EXAlemma2}.
		
	\end{exa}
	
		\subsection{Related works}
	\textit{Partition Lie algebroids.}\ The filtered PD Koszul duality in \S\ref{sec3.4} is a continuation of \cite{BM}\cite{BCN}. Here we add a filtered structure on the Koszul duality of (PD) $\infty$-operads and discuss the convergence of arity $0$. Our definition of partition Lie algebroids is in the spirit of \cite{BW}, and agrees with that in \cite{BMN}. In characteristic $0$, a detailed discussion of Lie algebroids in pro-coherent modules could be found in \cite{Nuiten}.
	
	\textit{Derived foliations.}\ There is more than one natural notion of derived foliations away from characteristic $0$. For instance, there is a notion of \textit{derived foliations} in \S2\cite{Toen} other than infinitesimal derived foliations.
	
	The scope of this paper is limited to the small Zariski site of some derived scheme, but most of the content should still hold on the small \'etale site of reasonable Deligne-Mumford stacks. For example, Alfieri proves a theorem similar to Theorem \ref{thm1.5} for relative derived algebraic spaces in characteristic $0$ in an ongoing project.
	
	\textbf{Conventions.} Unless stated conversely, we implicitly fix a Grothendieck universe $\mathbb{U}$ and work with $\mathbb{U}$-small objects.
	
	Every category mentioned will be, by default, an $\infty$-category, for which we mainly use the model of quasicategories detailed in \cite{HTT}.\ Similarly, every notion in algebra or topology should be understood in the \textit{derived} or \textit{homotopical} sense unless stated conversely. For instance, the module category $\m_R$ means the derived \infcat, while $\sym_R$ and $\lsym_R$ refer to the homotopical and derived symmetric power.
	
	Two different notions of higher operads are involved in this paper. To avoid ambiguity, the ones through (generalized) symmetric sequences are referred as (generalized) $\infty$-operads, while the ones through fibrations of \infcats\ are called \ooop s.

	The homotopy-coherent chain complexes appear as the comodules of a derived Hopf algebra, which will take different forms as $R[\epsilon]$, $\dD^\vee_-$ or $1_{\mathcal{C}}\otimes_{1_{\mathcal{C}}[t]}1_{\mathcal{C}}$ depending on the context.
	
	\textbf{Acknowledgments.} It is a pleasure to express our infinite gratitude to Bertrand To\"en and Lukas Brantner for their guidance throughout this project. We are also grateful to Joan Mill\`es and Joost Nuiten for introducing us to the world of homotopical algebras. We want to thank Benjamin Antieau, Tasos Moulinos and Marco Robalo for helpful conversations.
	
	We wish to thank Institut de Mathématiques de Toulouse and LabEx CIMI for their funding and support.
	We also thank Yifei Zhu and Southern University of Science and Technology for their hospitality during my visit in the summer of 2023.

	\section{Preliminaries}\label{sec2}
	
	This section is dedicated to reviewing filtered algebras in stable \infcats\ and fixing notations.
	\subsection{Filtrations in module categories}\label{sec2.1}
	\begin{notation}\label{n2.1}
		For an \infcat\ $\mathcal{C}$, let $\fil\mathcal{C}$ (resp. $\gr\mathcal{C}$) be the \infcat\ of filtered (resp. graded) objects in $\mathcal{C}$. A filtered object can be written as $F_\star X=(\ldots\to F_{1}X\to F_{0}X\to F_{-1}X\to\ldots)$. Evaluating at degree $n$ admits fully faithful left adjoint in both cases, 
		\[(-)_n:\mathcal{C}\hookrightarrow\fil\mathcal{C},\ \ \ \ [-]_n:\mathcal{C}\hookrightarrow\gr\mathcal{C},\]where $(-)_n$ is given by $F_i(X)_n\simeq X$ for $i\le n$ and $F_i(X)_n\simeq 0$ otherwise, $[-]_n$ is the embedding into weight $n$. A detailed discussion of filtrations in stable \infcat\ could be find in \cite[\S2]{gwilliam2018enhancing}.
	\end{notation}
	\begin{df}\label{df2.2}
		Suppose that $\mathcal{C}$ is a stable \infcat, the \textit{functor of associated graded objects}
		\[\gr:\fil\mathcal{C}\to \gr\mathcal{C}\]takes $X=\{F_iX\}_{i\in\mathbb{Z}}$ to $\gr X$ such that $\gr_iX\simeq \mathrm{cofib}(F_{i+1}X\to F_iX)$.
	\end{df}
	Recall that the $\infty$-category $\mathcal{P}r^{L}$ of presentable $\infty$-categories is endowed with a symmetric monoidal structure \cite[Proposition 4.8.1.15]{HA} such that $\calg(\mathcal{P}r^{L})$ consists of presentable symmetric monoidal \infcats. The \infcat\ $\pr^{\st}$ of presentable stable \infcats\ is a symmetric monoidal localization of $\pr^L$ \cite[Proposition 4.8.2.18]{HA}.
	\begin{remark}\label{rk2.3}
		The addition of $\mathbb{Z}$ can be understood as a symmetric monoidal structure on $\mathbb{Z}^{\le}$ \textit{the $1$-category defined by the linear order $\le$} or $\mathbb{Z}^{dis}$ \textit{the discrete $1$-category}. By Day convolution \cite[\S2.2.6]{HA}, we can enhance $\fil(-)$ and $\gr(-)$ into endofunctors of $\calg(\mathcal{P}r^{\st})$, while the embeddings $(-)_0$, $[-]_0$ and associated graded objects $\gr$ can be regarded as natural transformations between $\fil(-)$, $\gr(-)$ and $id_{\calg(\mathcal{P}r^{\st})}$.
	\end{remark}
	\begin{df}\label{df2.4}
		Assume that $\mathcal{C}$ is a stable \infcat\ admitting sequential limits.\\
		(1) Let $\gr_{\ge a}\mathcal{C}\subset \gr\mathcal{C}$ ($\gr_{\le -a}\mathcal{C}\subset \gr\mathcal{C}$) be the full subcategory spanned by $X_\star$ such that $X_i\simeq 0$ for all $i< a$ (resp. $i> -a$).\\(2) A filtered object $\{F_iX\}$ in $\mathcal{C}$  is said to be \textit{complete} if $\lim F_i X\simeq 0$. By $\fil^{\cpl}\mathcal{C}$ denote the full subcategory of $\fil\mathcal{C}$ spanned by the complete filtered objects in $\mathcal{C}$.\\(3) Denote the full subcategory of $\fil\mathcal{C}$ spanned by (complete) $X$ such that $\gr(X)\in \gr_{\ge a}\mathcal{C}$ (resp. $\gr(X)\in \gr_{\le-a}\mathcal{C}$) by $\fil_{\ge a}\mathcal{C}$ (resp. $\fil_{\le -a}\mathcal{C}$). 
	\end{df}
	\begin{remark}\label{rk2.5}
		For each $(\mathcal{C},\otimes)\in\calg(\pr^{\st})$, Proposition 2.2.1.9 in \cite{HA} shows that there is a symmetric monoidal structure $\widehat{\otimes}$ on $\fil^{\cpl}\mathcal{C}$ such that $\gr$ admits a factorization of symmetric monoidal left adjoints
		$\fil\mathcal{C}\xrightarrow{(-)^\wedge}\fil^{\cpl}\mathcal{C}\xrightarrow{\gr|_{\fil^{\cpl}\mathcal{C}}}\gr\mathcal{C}.$
	\end{remark}
	
	Now we interpret $\fil^{\cpl}\mathcal{C}$ as a generalization of cochain complexes. Recall that a cochain complex in some abelian category $\mathscr{A}$ consists of a sequence of objects $C^\bullet$ in $\mathscr{A}$ and morphisms
	\[\ldots\xrightarrow{d} C^{n-1}\xrightarrow{d} C^n\xrightarrow{d}C^{n+1}\xrightarrow{d} \ldots\]such that $d^2=0$. More categorically, one can consider the pointed 1-category $\ch$ given by
	\begin{align*}
		\mathrm{obj}\ch=&\ \mathbb{Z}\cup\{*\},\\
		\ch(n,m)=&\begin{cases}
			\{id,\mathbf{0}\}, &\text{if }m=n;\\
			\{\partial,\mathbf{0}\}, &\text{if }m=n-1;\\
			\{0\}, &\text{otherwise}.
		\end{cases}
	\end{align*}A cochain complex in $\mathscr{A}$ is then equivalent to a contravariant functor $\ch^{op}\to \mathscr{A}$ preserving the zero object. This observation motivates the following definition:
	\begin{df}\label{df dg}
		For a stable \infcat\ $\mathcal{C}$, we define the \infcat\ of \textit{homotopy-coherent cochain complexes} in $\mathcal{C}$ as $\dg_{-}\mathcal{C}:=\mathrm{Fun}_*(N(\ch^{op}),\mathcal{C})$ the \infcat\ of pointed functors.
	\end{df}
	The natural inclusion $\mathbb{Z}^{dis}\hookrightarrow\ch$ induces a conservative functor $u:\dg_-\mathcal{C}\to \gr\mathcal{C}$. If $\mathcal{C}$ is equipped with a t-structure, $\gr\mathcal{C}$ has a \textit{negative t-structure} such that $X_\star\in(\gr\mathcal{C})^{-}_{\ge0}$ precisely when $X_{n}\in \mathcal{C}_{\ge-n}$ for all $n\in\mathbb{Z}$. The preferred t-structure on $\dg_-\mathcal{C}$ is obtained by transferring the negative t-structure on $\gr\mathcal{C}$ along $[-*]\circ u$, where the \textit{shearing functor} $[-*]:\gr\mathcal{C}\to \gr\mathcal{C}$ sends $X_{\star}$ to $X_{\star}[-*]$. In particular, $(\dg_-\mathcal{C})^{\heartsuit}$ is exactly the 1-category of cochain complexes in $\mathcal{C}^{\heartsuit}$.
	\begin{theorem}[Theorem 4.7, Proposition 6.9 and 6.13\cite{ariotta2021coherent}]\label{thm dg_-}
		
		Let $\mathcal{C}$ be a stable \infcat\ with sequential limits. There is an equivalence of \infcats\ $\fil^{\cpl}\mathcal{C}\xrightarrow{\simeq}\dg_{-}\mathcal{C}$ that maps $F_\star X$ to a homotopy-coherent cochain complex
		\[\ldots\xrightarrow{d} \gr_{n-1}X[n-1]\xrightarrow{d} \gr_{n}X[n]\xrightarrow{d}\gr_{n+1}X[n+1]\xrightarrow{d} \ldots,\]where the underlying map of $d$ are given by the cofibre sequences $F_{n-1}X/F_{n+1}X\to\gr_{n-1}X\to \gr_{n}X[1]$.
		Moreover, if $\mathcal{C}$ is presentably symmetric monoidal, then this equivalence induces a symmetric monoidal structure on $\dg_-\mathcal{C}$ and $[-*]\circ u:\dg_-\mathcal{C}\to \gr\mathcal{C}$ is symmetric monoidal.

		Additionally, if $\mathcal{C}$ is endowed with a compatible t-structure, this equivalence identifies the Beilinson t-structure on $\fil^{\cpl}\mathcal{C}$  and the above t-structure on $\dg_{-}\mathcal{C}$.\ The heart $(\fil^{\cpl}\mathcal{C})^{Bei,\heartsuit}$ of Beilinson t-structure is symmetric monoidally equivalent to the 1-category $(\dg_-\mathcal{C})^{\heartsuit}$ of cochain complexes in $\mathcal{C}^{\heartsuit}$ with the ordinary tensor product.
	\end{theorem}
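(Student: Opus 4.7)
The plan is a Postnikov-style comparison between complete filtrations and homotopy-coherent cochain complexes. First I would construct the functor $\Phi:\fil^{\cpl}\mathcal{C}\to\dg_{-}\mathcal{C}$ explicitly. For $F_\star X$, form the gap objects $X^{[a,b]}:=\mathrm{cofib}(F_{b+1}X\to F_a X)$ for $a\le b$, which fit into coherent cofiber sequences $X^{[a+1,b]}\to X^{[a,b]}\to\gr_a X$ and hence assemble into a diagram on the twisted arrow category of $\mathbb{Z}^{\le}$. The three-term gap sequence $\gr_n X\to X^{[n-1,n]}[1]\to \gr_{n-1}X[1]$ produces a boundary $\gr_{n-1}X\to \gr_n X[1]$; shifting by $[n-1]$ gives the desired differential $d_n:\gr_{n-1}X[n-1]\to\gr_n X[n]$. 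Higher coherences witnessing $d_n\circ d_{n-1}\simeq 0$ come from the four-term gap object $X^{[n-2,n]}$, and inductively from longer gaps. Repackaging this total data as a pointed functor $N(\ch^{op})\to\mathcal{C}$ is then a bookkeeping step, using that $\ch^{op}$ encodes exactly the nontrivial arrows together with the null-homotopies of their composites.

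The main step is showing $\Phi$ is an equivalence, for which I would exhibit a quasi-inverse by iterated extension. Given $(C^n, d_n, \ldots)\in\dg_{-}\mathcal{C}$, build truncated objects $Y^{[a,b]}$ inductively as extensions classified by the $d$'s and their higher coherence data, then set $F_a X:=\lim_b Y^{[a,b]}$, which is automatically complete. The technical core, and main obstacle, is verifying that $\Phi$ and this construction are mutually inverse. I would approach this by writing both $\fil^{\cpl}\mathcal{C}$ and $\dg_{-}\mathcal{C}$ as the limit of a common tower of \infcats\ of finite gap diagrams with transition maps given by truncation, and proving that $\Phi$ respects this presentation. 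This is essentially the content of Ariotta's Theorem 4.7, relying on the observation that $\ch^{op}$ is the universal shape parametrizing exactly the data assembled by iterated cofiber sequences in a complete filtered object, with neither redundant nor missing coherence.

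For the monoidal and t-structure statements, transport $\widehat\otimes$ on $\fil^{\cpl}\mathcal{C}$ (Remark \ref{rk2.5}) through $\Phi$ to obtain the symmetric monoidal structure on $\dg_{-}\mathcal{C}$. Under $\Phi$, the composite $[-*]\circ u$ corresponds to the associated-graded $\gr$ followed by the shearing automorphism of $\gr\mathcal{C}$; the former is symmetric monoidal by Remark \ref{rk2.5}, and the shearing is symmetric monoidal under the standard Koszul-sign convention on $\gr\mathcal{C}$. For the t-structure, Beilinson-connectivity of $F_\star X$ is defined by $\gr_n X\in\mathcal{C}_{\ge -n}$ for all $n$, matching the condition for connectivity in $\dg_{-}\mathcal{C}$ via $[-*]\circ u$; hence the two t-structures agree. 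In the hearts the higher coherences trivialize because mapping spaces between heart objects are discrete, so a pointed functor $N(\ch^{op})\to\mathcal{C}^{\heartsuit}$ reduces to an ordinary cochain complex in $\mathcal{C}^{\heartsuit}$, and the transported symmetric monoidal structure unpacks, via Day convolution with the Koszul signs from shearing, to the classical tensor product of cochain complexes.
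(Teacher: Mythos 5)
The paper does not prove this result; it is stated as a recollection and attributed to Ariotta's ``Coherent cochain complexes and Beilinson t-structures'' (Theorem 4.7, Propositions 6.9 and 6.13). So there is no in-paper proof to compare against, but your sketch does track the architecture of the cited source: construct the coherent cochain complex from the tower of gap objects $X^{[a,b]}$, read off the differentials from the rotated cofiber sequences, package higher coherences from longer gaps, and recover the filtration by iterated extensions with a limit to enforce completeness. That is the right shape, and the monoidal and t-structure transport arguments (shearing is symmetric monoidal on $\gr\mathcal{C}$; Beilinson-connectivity $\gr_nX\in\mathcal{C}_{\ge -n}$ translates, after the shift $[n]$, to $C^n\in\mathcal{C}_{\ge 0}$; higher coherences between heart objects are automatically constant) are all sound.

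Two small caveats. First, the displayed ``three-term gap sequence'' $\gr_nX\to X^{[n-1,n]}[1]\to\gr_{n-1}X[1]$ has its shifts misplaced: the relevant rotations of $\gr_nX\to X^{[n-1,n]}\to\gr_{n-1}X$ are $\gr_{n-1}X\to\gr_nX[1]\to X^{[n-1,n]}[1]$, or equivalently the sequence the statement actually quotes, $X^{[n-1,n]}\to\gr_{n-1}X\to\gr_nX[1]$. The boundary map $\gr_{n-1}X\to\gr_nX[1]$ you extract is correct; only the intermediate display is off. Second, the hardest part — that packaging the gap data as a pointed functor out of $N(\ch^{op})$ loses nothing and gains nothing, i.e. that $\ch$ is exactly the right shape — is precisely what requires the careful common-tower presentation, and your proposal acknowledges this but does not carry it out. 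That is appropriate here since the paper itself defers to Ariotta for exactly this step.
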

	\begin{remark}\label{rk2.8: k[t]}
		For $\mathcal{C}\in\calg(\pr^{\st})$, there is another approach to $\dg_{-}\mathcal{C}$ using comodules in \cite{Raksit}. Let $1_{\mathcal{C}}[t]:=1_{\mathcal{C}}[\mathbb{N}]$ be the algebra whose multiplication is given by the commutative monoid $\mathbb{N}$. There is a symmetric monoidal equivalence $\fil\mathcal{C}\simeq \mathrm{LMod}_{1_{\mathcal{C}}[t]}(\gr\mathcal{C})$ \cite[3.4]{Mou} identifying filtered objects with modules over $1_{\mathcal{C}}[t]$. Denote $\dD^\vee_-:=1_{\mathcal{C}}\otimes_{1_{\mathcal{C}}[t]}1_{\mathcal{C}}$, the graded bicommutative algebra given by bar construction. Further, Theorem 3.2.14 in \cite{Raksit} shows that there is a factorization of $\gr$
		\[\begin{tikzcd}[column sep=2cm]
			{\fil^{\cpl}\mathcal{C}} & {\mathrm{LComod}_{\dD^\vee_-}(\gr\mathcal{C})} \\
			& {\gr\mathcal{C}}
			\arrow[from=1-2, to=2-2]
			\arrow["{1_{\mathcal{C}}\otimes_{1_{\mathcal{C}}[t]}-}", from=1-1, to=1-2]
			\arrow["\gr"', from=1-1, to=2-2]
		\end{tikzcd}\]that induces an equivalence $\fil^{\cpl}\mathcal{C}\simeq \mathrm{LComod}_{\dD^\vee_-}(\gr\mathcal{C})$ in $\calg(\pr^{\st})$. Besides, the method of \cite[Theorem 7.8]{ariotta2021coherent} provides an equivalence of these two approaches. In particular, if $\mathcal{C}=\m_k$ for some cdga over $\mathbb{Q}$, $\dg_-\m_k$ has an explicit model given by \textit{graded mixed complexes}, refer to \cite[\S1.1]{PTVV} \cite[\S1]{TV} for details and applications in derived geometry.
	\end{remark}

	Many presentable stable \infcats\ appear as the module categories of certain additive \infcats. Recall that an additive $\infty$-category is an $\infty$-category which admits finite products and finite coproducts and whose homotopy category is an additive category in the classical sense.
	\begin{df}\label{df2.6}
		Let $\mathscr{A}$ be an additive $\infty$-category. The $\infty$-category of \textit{(left) $\mathscr{A}$-modules} is defined as $\m_{\mathscr{A}}:=\text{Fun}_{\oplus}(\mathscr{A}^{op},\Sp)$, the \infcat\ of functors into $\Sp$ that preserve finite direct sums.
	\end{df}
	\begin{remark}\label{rk2.7}
		By Remark C.1.5.9 in \cite{SAG}, the module category $\m_{\mathscr{A}}$ can alternatively be defined as the stablization of $\mathcal{P}_{\Sigma}(\mathscr{A})$, where $\mathcal{P}_{\Sigma}(-)$ is taking the nonabelian derived category \cite[\S5.5.8]{HTT}. Thus the stablized Yoneda functor induces a fully faithful embedding $j:\mathscr{A}\to \m_{\mathscr{A}}$. Additionally, the functor $j$ exhibits $\m_{\mathscr{A}}$ as the initial presentable stable $\infty$-category receiving a functor from $\mathscr{A}$ that preserves finite direct sums  \cite[Proposition 4.8.2.18]{HA}.
	\end{remark}
	The objects in $\mathscr{A}$ with splitting filtration are building blocks to build filtered modules:
	\begin{notation}\label{n2.8}
		Let $\mathscr{A}$ be an additive \infcat\, and $I$ be one of the following integer intervals: $(-\infty,\infty)$, $(-\infty,-a]$ or $[a,\infty)$. Let $\sgr_I\mathscr{A}\subset \gr\mathscr{A}$ be the full subcategory consisting of all $X$ such that $X_i\simeq 0$ for all but finitely many $i\in I$. Define $\sfil_I\mathscr{A}$ as the full subcategory of $\fil_I\mathscr{A}\subset\fil_I\m_{\mathscr{A}}$ spanned by all complete $X$ such that $\gr(X)\in \sgr_I\mathscr{A}$.
	\end{notation}
	\begin{exa}\label{exa2.9}
		Let $R$ be a connective $\mathbb{E}_1$-ring spectrum, and $I$ be as in Notation \ref{n2.8}. The \infcat\ $\vect^{\omega}_R\subset\m_R$ of finitely generated free left $R$-modules is additive. In fact, the ordinary inclusion exhibits $\m_R$ as the module category of $\vect^{\omega}_R$. Moreover, Proposition \ref{prop2.14} will show that the following inclusions\[\sfil_I\vect^\omega_{R}\hookrightarrow\fil_I\m_R,\ \ \ \ \sgr_I\vect^\omega_{R}\hookrightarrow\gr_I\m_R,\]
		can be identified with the stablized Yoneda functors $\sfil_I\vect^\omega_{R}\hookrightarrow\m_{\sfil_I\vect^\omega_{R}}$ and $\sgr_I\vect^\omega_{R}\hookrightarrow\m_{\sgr_I\vect^\omega_{R}}$.
	\end{exa}

	The module category $\m_\mathscr{A}$ is equiped with a t-structure such that the \textit{(co)connective} modules are the functors $\mathscr{A}^{op}\to \Sp$ taking value in $\Sp_{\ge 0}$ ($\Sp_{\le 0}$).
	\begin{df}\label{df2.10}
		Let $\mathscr{A}$ be an additive $\infty$-category. An $\mathscr{A}$-module $M$ is said to be

		(1) \textit{perfect} if it is a compact object in $\m_{\mathscr{A}}$;

		(2) \textit{almost perfect} if for each $n$, there exists an $n$-connected map $P_n\to M$ with $P_n$ being perfect;

		(3) \textit{coherent} if it is almost perfect and eventually coconnective.\\
		The corresponding full subcategories will be denoted as $\perf_{\mathscr{A}}$, $ \aperf_{\mathscr{A}}$ and $\coh_{\mathscr{A}}$ respectively.
	\end{df}
	The basic properties of almost perfect modules are listed in \cite[Proposition 7.2.4.11]{HA}, which holds for $\aperf_{\mathscr{A}}$ without changing the proof.
	\begin{df}\label{df2.11}
		An additive $\infty$-category $\mathscr{A}$ is said to be \textit{left coherent} if $\aperf_{\mathscr{A}}$ inherits a t-structure from $\m_{\mathscr{A}}$, and it is said to be \textit{right coherent} if $\mathscr{A}^{op}$ is left coherent. When $\mathscr{A}$ is both left and right coherent, we say it is \textit{coherent}.
	\end{df}
	\begin{exa}\label{exa2.12}
		Let $R$ be a connective $\mathbb{E}_1$-ring spectrum.\ The additive \infcat\ $\vect^\omega_R$ is coherent precisely when $R$ is coherent as a ring spectrum in the sense of \cite[Definition 7.2.4.16]{HA}.
	\end{exa}
	
	The perfect filtered modules can be detected from their graded pieces. For each $\mathcal{C}\in\pr^{\st}$, write the full subcategory of compact objects as $\mathcal{C}^\omega$. Let $\mathcal{C}_{\sgr}^\omega\subset \gr\mathcal{C}$ denote the full subcategory consisting of all $X$ such that $X_i\simeq 0$ for all but finitely many $i\in\mathbb{Z}$ and $X_i\in \mathcal{C}^\omega$ for each $i$, and $\mathcal{C}_{\sfil}^\omega$ be the full subcategory of $\fil\mathcal{C}$ spanned by complete $X$ such that $\gr(X)\in \mathcal{C}_{\sgr}^\omega$.
	\begin{lemma}\label{lemma2.13}
		For any $\mathcal{C}\in\pr^{\st}$, there are natural equivalences of \infcats
		\[\text{Ind}(\mathcal{C}_{\sfil}^\omega)\xrightarrow{\simeq}\fil\mathcal{C},\ \ \ \ \text{Ind}(\mathcal{C}_{\sgr}^\omega)\xrightarrow{\simeq}\gr\mathcal{C},\]which identitfy $\mathcal{C}_{\sfil}^\omega$ (or $\mathcal{C}_{\sgr}^\omega$) with the \infcat\ of compact filtered (resp. graded) objects in $\mathcal{C}$.
	\end{lemma}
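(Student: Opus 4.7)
The plan is to exhibit both equivalences as instances of compact generation: I will show that $\fil\mathcal{C}$ (resp.\ $\gr\mathcal{C}$) is a compactly generated presentable stable \infcat\ whose subcategory of compact objects coincides with $\mathcal{C}^\omega_{\sfil}$ (resp.\ $\mathcal{C}^\omega_{\sgr}$). Both are presentable as functor categories from small indexing categories into the presentable \infcat\ $\mathcal{C}$, and colimits and limits in them are computed pointwise. In particular, each evaluation $\text{ev}_n$ preserves all (co)limits and is right adjoint to the insertions $(-)_n$ and $[-]_n$ of Notation~\ref{n2.1}; moreover $\{\text{ev}_n\}_{n\in\mathbb{Z}}$ is jointly conservative. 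It follows that $(X)_n$ and $[X]_n$ are compact whenever $X\in\mathcal{C}^\omega$, and that the images of the insertion functors generate the target under colimits.

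The graded case is then essentially immediate, since $\gr\mathcal{C}\simeq\prod_{n\in\mathbb{Z}}\mathcal{C}$, whose compact objects are exactly tuples of objects of $\mathcal{C}^\omega$ with finite support, i.e., precisely $\mathcal{C}^\omega_{\sgr}$; compact generation yields $\gr\mathcal{C}\simeq\mathrm{Ind}(\mathcal{C}^\omega_{\sgr})$.

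For the filtered case, let $\mathcal{E}\subset\fil\mathcal{C}$ denote the smallest idempotent-complete stable full subcategory containing all step functions $(X)_n$ with $X\in\mathcal{C}^\omega$ and $n\in\mathbb{Z}$. The discussion above shows $\mathcal{E}\subseteq(\fil\mathcal{C})^\omega$ and that $\fil\mathcal{C}$ is compactly generated by $\mathcal{E}$, so $\fil\mathcal{C}\simeq\mathrm{Ind}(\mathcal{E})$ and $\mathcal{E}=(\fil\mathcal{C})^\omega$. It remains to identify $\mathcal{E}$ with $\mathcal{C}^\omega_{\sfil}$. Each step function $(X)_n$ lies in $\mathcal{C}^\omega_{\sfil}$, which is closed under finite colimits and retracts in $\fil\mathcal{C}$ because $\gr$ is exact and lands in $\mathcal{C}^\omega_{\sgr}$, while the sequential limit defining completeness preserves fibre sequences and retracts in the stable \infcat\ $\mathcal{C}$. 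Hence $\mathcal{E}\subseteq\mathcal{C}^\omega_{\sfil}$. Conversely, given $Y\in\mathcal{C}^\omega_{\sfil}$, pick $N$ with $\gr_m Y\simeq 0$ for $m>N$: then $F_{m+1}Y\xrightarrow{\simeq}F_m Y$ for $m\geq N$, so $F_N Y\simeq\lim_m F_m Y\simeq 0$ by completeness, and thus $F_m Y\simeq 0$ for all $m\geq N$. The cofibre sequences $F_{m+1}Y\to F_m Y\to\gr_m Y$ on the finitely many indices where $\gr_m Y\not\simeq 0$ then realise $Y$ as a finite iterated extension of step functions built from its compact graded pieces, so $Y\in\mathcal{E}$.

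The only genuinely non-formal step is this last argument: using completeness together with the finite support of the graded pieces to collapse the a priori doubly infinite tower $F_\star Y$ to a bounded, hence finitely iterated, extension of step functions. Everything else reduces to standard compact-generation bookkeeping.
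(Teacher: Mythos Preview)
Your proof is correct and takes a genuinely different route from the paper's. The paper argues by decomposing $\fil\mathcal{C}$ as the filtered colimit $\colim_{m,n}\fil_{[m,n]}\mathcal{C}$ of its bounded pieces, invokes \cite[Proposition~5.3.5.15]{HTT} to identify each bounded piece as $\mathrm{Ind}(\mathcal{C}^{\omega,[m,n]}_{\sfil})$, and then uses that $\mathrm{Ind}(-)$ commutes with filtered colimits. Your argument instead identifies the compact objects directly: you exhibit the step functions $(X)_n$ with $X\in\mathcal{C}^\omega$ as compact generators, and then pin down the thick subcategory they generate by the observation that completeness together with finite graded support forces the tower $F_\star Y$ to be bounded, hence $Y$ is a finite iterated extension of step functions $(\gr_kY)_k$. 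This last step is exactly what the paper's reduction to bounded filtrations encodes, but you make it explicit rather than hiding it in a colimit decomposition. Your approach is more elementary and self-contained; the paper's is more modular and cites a black box for the bounded case. Both arguments implicitly use that $\mathcal{C}$ is compactly generated (you need $\mathcal{C}^\omega$ to detect equivalences so that the step functions generate; the paper needs it for the cited HTT result on the bounded pieces), which is harmless since every instance in the paper has $\mathcal{C}=\m_{\mathscr{A}}$.
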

	\begin{proof}
		Set $\fil_{[m,n]}\mathcal{C}:=\fil_{\ge m}\mathcal{C}\cap \fil_{\le n}\mathcal{C}$ and $\mathcal{C}_{\sfil}^{\omega,[m,n]}:=\mathcal{C}_{\sfil}^\omega\cap \fil_{[m,n]}\mathcal{C}$. For any finite $m,n$, \cite[Proposition 5.3.5.15]{HTT} shows that there is an equivalence
		$\text{Ind}(\mathcal{C}_{\sfil}^{\omega,[m,n]})\simeq \fil_{\ge m}\mathcal{C}\cap\fil_{\le n}\mathcal{C}$. Meanwhile, the construction $\text{Ind}(-)$ preserves colimits \cite[Proposition 5.5.7.10]{HTT}, which means that $\text{Ind}(\mathcal{C}_{\sfil}^\omega)\simeq \text{Ind}(\colim_{m,n}\mathcal{C}_{\sfil}^{\omega,[m,n]})\simeq \colim_{m,n}\fil_{[m,n]}\mathcal{C}\simeq\fil\mathcal{C}$. The proof is done by mutatis mutandis for the graded case.
	\end{proof}
	
	\begin{prop}\label{prop2.14}
		Take an additive \infcat\ $\mathscr{A}$. There are natural equivalences in $\pr^{\st}$
		\[\m_{\sfil_I\mathscr{A}}\xrightarrow{\simeq}\fil_I\m_{\mathscr{A}},\ \ \ \ \m_{\sgr_I\mathscr{A}}\xrightarrow{\simeq}\gr_I\m_{\mathscr{A}},\]where $I$ is one of the following integer intervals: $(-\infty,\infty)$, $(-\infty,-a]$ or $[a,\infty)$.
	\end{prop}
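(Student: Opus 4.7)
The plan is to verify that $\fil_I\m_{\mathscr{A}}$ (resp.\ $\gr_I\m_{\mathscr{A}}$) satisfies the universal property from Remark \ref{rk2.7} characterizing $\m_{\sfil_I\mathscr{A}}$ (resp.\ $\m_{\sgr_I\mathscr{A}}$) as the initial presentable stable $\infty$-category receiving a finite-direct-sum-preserving functor from $\sfil_I\mathscr{A}$ (resp.\ $\sgr_I\mathscr{A}$). Both cases are parallel, so I would focus on the filtered one; the graded case is strictly easier since no non-split extensions intervene.

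First I would observe that $\sfil_I\mathscr{A}$ is additive and that the tautological inclusion $j\colon\sfil_I\mathscr{A}\hookrightarrow \fil_I\m_{\mathscr{A}}$ is fully faithful and preserves finite direct sums. By Remark \ref{rk2.7} this extends uniquely to a colimit-preserving functor $\Phi\colon\m_{\sfil_I\mathscr{A}}\to\fil_I\m_{\mathscr{A}}$ in $\pr^{\st}$. To show $\Phi$ is an equivalence I would compare compact generators: by the bounded-interval analogue of Lemma \ref{lemma2.13}, $\fil_I\m_{\mathscr{A}}$ is the Ind-completion of its full subcategory of compact objects, and by construction $\m_{\sfil_I\mathscr{A}}$ is the Ind-completion of the closure of $\sfil_I\mathscr{A}$ under finite colimits and retracts. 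Hence it suffices to show (a) $j$ lands in the compact objects of $\fil_I\m_{\mathscr{A}}$, and (b) every compact object of $\fil_I\m_{\mathscr{A}}$ is a retract of a finite iterated cofiber of objects in $\sfil_I\mathscr{A}$.

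For (a), any $X\in\sfil_I\mathscr{A}$ is complete with only finitely many nonzero graded pieces, each in $\mathscr{A}\subset(\m_{\mathscr{A}})^\omega$, so compactness follows inductively from the cofiber sequences $F_{n+1}X\to F_nX\to(\gr_nX)_n$ that assemble $X$ out of pure pieces of the form $(a)_n$ with $a\in\mathscr{A}$ and $n\in I$. For (b), a compact filtered $\m_{\mathscr{A}}$-module has finitely many nonzero graded pieces, each perfect in $\m_{\mathscr{A}}$ and hence a retract of a finite iterated cofiber of objects of $\mathscr{A}$; feeding these retract descriptions into the same inductive extension process realizes the module as a retract of a finite colimit in $\fil_I\m_{\mathscr{A}}$ of pure pieces $(a)_n\in\sfil_I\mathscr{A}$. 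I expect the main (mild) obstacle to be organizing this double induction over the interval $I$ and over the cell structure of each graded piece, while keeping track of the idempotent completion implicit in the free construction $\mathscr{A}\leadsto\m_{\mathscr{A}}$; the graded statement is obtained by restricting the same argument to the subcategory where all extension data vanish.
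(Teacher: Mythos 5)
Your proposal is correct and takes essentially the same route as the paper: both invoke the universal property of $\m_{\sfil_I\mathscr{A}}$ from Remark \ref{rk2.7} together with Lemma \ref{lemma2.13}, and then reduce to identifying $(\m_\mathscr{A})^\omega_{\sfil}$ as the thick closure of $\sfil_I\mathscr{A}$ inside $\fil_I\m_\mathscr{A}$. The paper's proof compresses your steps (a) and (b) into the one-line assertion that the inclusion factors through compacts and that $(\m_{\mathscr{A}})_{\sfil}^{\omega}$ is the smallest thick subcategory containing $\sfil_I\mathscr{A}$, citing HTT 5.4.2.4; your double induction over the interval and the cell structure of each graded piece is precisely the unstated content behind that assertion.
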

	\begin{proof}
		The inclusion $\sfil\mathscr{A}\subset \fil\m_{\mathscr{A}}$ factors through $(\m_{\mathscr{A}})_{\sfil}^{\omega}$, and the latter is the smallest thick full subcategory containing $\sfil\mathscr{A}$. Thus, the functor $\m_{\sfil_I\mathscr{A}}\to\fil_I\m_{\mathscr{A}}$ given by the universal property is an equivalence by Lemma \ref{lemma2.13} and \cite[Proposition 5.4.2.4]{HTT}. The other cases are proved in the same way.
	\end{proof}
	Now we characterize the almost perfect filtered or graded modules.
	\begin{prop}\label{prop2.15}
		Suppose that $\mathscr{A}$ is a coherent additive \infcat. 
		Fix $I=(-\infty,\infty)$, $(-\infty,-a]$ or $[a,\infty)$. The \infcat\ $\sgr_I\aperf_{\mathscr{A}}:=\aperf_{\gr_I\mathscr{A}}$ is spanned by all $X\in\gr_I\m_{\mathscr{A}}$ such that $\oplus_i X_i$ lies in $\aperf_{\mathscr{A}}$, while the \infcat\ $\sfil_I\aperf_{\mathscr{A}}:=\aperf_{\sfil_I\mathscr{A}}$ is spanned by all complete $Y\in \fil_I\m_{\mathscr{A}}$ such that $\gr(Y)\in\sgr_I\aperf_{\mathscr{A}}$. .
		
		In particular, the additive \infcats\ $\sfil_I\mathscr{A}$ and $\sgr_I\mathscr{A}$ are coherent as well.
	\end{prop}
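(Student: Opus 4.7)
The plan is to apply Proposition~\ref{prop2.14} to identify $\m_{\sgr_I\mathscr{A}}$ and $\m_{\sfil_I\mathscr{A}}$ with $\gr_I\m_{\mathscr{A}}$ and $\fil_I\m_{\mathscr{A}}$, and then characterize the almost perfect objects on each side using Lemma~\ref{lemma2.13} together with the pointwise nature of the relevant t-structures.

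For the graded case, Lemma~\ref{lemma2.13} identifies the perfect objects of $\gr_I\m_{\mathscr{A}}$ with graded modules having finitely many nonzero pieces, each perfect in $\m_{\mathscr{A}}$. Since the generators of $\sgr_I\mathscr{A}$ are concentrated in a single weight, they detect the t-structure pointwise. Consequently, for $X \in \gr_I\m_{\mathscr{A}}$, admitting an $n$-connective map from a perfect graded object is equivalent to: $X_i$ is $n$-connective for all but finitely many $i$, and each remaining $X_i$ admits an $n$-connective map from a perfect $\mathscr{A}$-module. This is precisely the condition that $\bigoplus_i X_i$ lie in $\aperf_{\mathscr{A}}$.

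For the filtered case, I first check that perfect objects of $\fil_I\m_{\mathscr{A}}$ are complete: they form the thick closure of $\sfil_I\mathscr{A}$, whose members are complete by construction, and completeness is preserved under finite limits, colimits and retracts. The associated graded functor $\gr\colon \fil_I\m_{\mathscr{A}} \to \gr_I\m_{\mathscr{A}}$ is exact, sends $\sfil_I\mathscr{A}$ into $\sgr_I\mathscr{A}$, and is t-exact, so it preserves both perfectness and almost perfectness. Thus if $Y$ is almost perfect filtered then $\gr Y$ is almost perfect graded; moreover, any perfect approximation $P_n \to Y$ with $n$-connective cofibre $C_n$ yields a fibre sequence $\lim_j F_j P_n \to \lim_j F_j Y \to \lim_j F_j C_n$ whose outer terms are, respectively, zero (by completeness of $P_n$) and $(n-1)$-connective (as a limit of $n$-connective spectra), forcing $\lim_j F_j Y$ to be infinitely connective and hence zero.

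The converse is the technical heart of the argument. Given complete $Y$ with $\gr Y$ almost perfect graded, I will build, for each $n$, a perfect filtered $\tilde{P}_n$ together with an $n$-connective map $\tilde{P}_n \to Y$. Starting from a perfect approximation $P_n \to \gr Y$ of the required connectivity, I lift the finitely many generators of $P_n$ to $Y$ weight by weight using the fibre sequences $F_{k+1}Y \to F_k Y \to \gr_k Y$; completeness of $Y$ ensures that the resulting layered extensions glue into an honest filtered object mapping to $Y$, and comparing cones weight by weight shows the induced map is $n$-connective. The main obstacle is controlling the successive lifting obstructions in this inductive construction. The ``in particular'' statement then follows: the graded truncations are pointwise and preserve almost perfectness by coherence of $\mathscr{A}$, while the filtered truncation $\tau_{\le n} Y$ of an almost perfect filtered object remains complete and has $\gr(\tau_{\le n} Y)$ almost perfect graded, so $\sfil_I\mathscr{A}$ and $\sgr_I\mathscr{A}$ inherit t-structures on their almost perfect subcategories and are coherent.
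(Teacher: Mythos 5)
Your forward direction and the graded case are sound, and your completeness argument (Milnor sequence applied to a perfect approximation, using that perfects are complete via the thick-closure of $\sfil_I\mathscr{A}$) is actually a clean alternative to the paper's proof, which instead resolves $Y$ by a simplicial object in $\sfil_I\mathscr{A}$ via (5) of \cite[Proposition 7.2.4.11]{HA} and deduces completeness from the skeleta.

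The converse direction is where there is a real gap, and you flag it yourself: ``the main obstacle is controlling the successive lifting obstructions.'' That obstacle is genuine. To lift a generator of a perfect approximation $P_n \to \gr_k Y$ along $F_k Y \to \gr_k Y$, you must kill the composite $P_n \to \gr_k Y \to F_{k+1}Y[1]$, and you have no a priori connectivity control on $F_{k+1}Y$ at the start of the induction, so the obstruction groups need not vanish and the induction does not close. You also need the lifted pieces to assemble into an \emph{object of} $\m_{\sfil_I\mathscr{A}}$, i.e.\ to come with coherent filtration maps, which is an additional structure the weight-by-weight lifting does not supply for free. This is exactly the kind of obstruction theory the paper avoids.

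The paper's argument for the converse is both shorter and structurally different, and you should adopt it: since $\gr(Y) \in \sgr_I\aperf_{\mathscr{A}}$ has only finitely many nonzero pieces, the complete filtration of $Y$ is concentrated in a finite range $[m,M]$, so each $F_iY$ is a finite iterated extension of the almost perfect $\gr_kY$'s and is therefore almost perfect. Consequently $\tau_{\le n}Y$ is a finite chain of \emph{coherent} $\mathscr{A}$-modules (almost perfect and bounded, using coherence of $\mathscr{A}$), hence lies in $\coh_{\sfil_I\mathscr{A}} \subset \aperf_{\sfil_I\mathscr{A}}$; letting $n\to\infty$ shows $Y$ itself is almost perfect. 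No lifting or obstruction analysis is needed. I would replace your converse paragraph with this reduction to the truncations $\tau_{\le n}Y$ and the coherence of finite chains.
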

	\begin{proof}
		The graded case is straightforward from Lemma \ref{lemma2.13} and (2) of \cite[Proposition 7.2.4.11]{HA}.
		
		For each $Y\in\sfil_I\aperf_{\mathscr{A}}$, (up to a shifting) there is a simplicial object $Y^{\bullet}$ in $\sfil_I\mathscr{A}$ such that $|Y^\bullet|\simeq Y$, as shown in (5) of \cite[Proposition 7.2.4.11]{HA}.\ Then, since $\gr Y^\bullet\in \sgr_*\mathcal{A}$, $\gr Y\simeq |\gr Y^{\bullet}|$ is almost perfect. For completeness, it suffices to note that each term of $Y^\bullet$ is complete, and $\pi_n(Y)\cong \pi_n(Y^{\bullet}_{\le n+1})$ with $Y^{\bullet}_{\le n+1}$ being the $(n+1)$-skeletal object.
		
		Conversely, take any completely filtered $Y$ with $\gr(Y)\in\sgr_I\aperf_\mathscr{A}$. The truncation $\tau_{\le n}Y$ for each $n$ is a finite sequence of coherent modules, which is then almost perfect. This implies that $Y$ itself is almost perfect as well. 
	\end{proof}
	
	\begin{exa}\label{exa2.16}
		If $R$ is a coherent connective $\mathbb{E}_1$-ring spectrum, then $\sfil_I\vect^\omega_R$ and $\sgr_I\vect^\omega_R$ are coherent.
	\end{exa}
	We close this subsection with a recollection of spectral Mackey functors, which are useful for constructing (filtered) derived PD $\infty$-operads. Let $G$ be a finite group. Its \textit{effective Burnside \infcat}\ $A^{\mathrm{eff}}(G)$ has finite $G$-sets as objects, and the morphism spaces are the $\eoo$-spaces of spans $X\leftarrow Z\rightarrow Y$ of finite $G$-sets \cite[3.6]{bar1}. The effective Burnside \infcat\ $A^{\mathrm{eff}}(G)$ is pointed by the empty set, and it admits direct products calculated by the sums of $G$-sets. In the light of \cite[2.15]{bar2}, the product of $G$-sets gives rise to a symmetric monoidal structure on $A^{\mathrm{eff}}(G)$. Additionally, each span $X\leftarrow Z\rightarrow Y$ can be regarded as a point in $\map_{A^{\mathrm{eff}}(G)}(X,Y)$ as well as $\map_{A^{\mathrm{eff}}(G)}(Y,X)$. This induces a natural equivalence $A^{\mathrm{eff}}(G)^{op}\simeq A^{\mathrm{eff}}(G)$.
	\begin{df}\label{df2.18}
		The \infcat\ $\Sp^G$ of \textit{spectral Mackey functors of $G$} or \textit{genuine $G$-spectra} 
		\[\Sp^G:=\text{Fun}^\oplus(A^{\mathrm{eff}}(G)^{op},\Sp)\subset\text{Fun}(A^{\mathrm{eff}}(G)^{op},\Sp)\]is defined as the full subcategory of direct-product-preseriving functors from $A^{\mathrm{eff}}(G)$ to $\Sp$.
	\end{df}
	The \textit{Burnside \infcat\ $A(G)$} of $G$ is the universal additive \infcat\ receiving a functor from $A^{\mathrm{eff}}(G)$ that preserves finite direct sums. The \infcat\  A(G) has finite $G$-sets as objects, and the mapping space $\map_{A(G)}(X,Y)$ is the group completion $\Omega B\map_{A^{\mathrm{eff}}(G)}(X,Y)$. There are natural equivalences $\Sp^G\simeq \m_{A(G)}$ and $\Sp^G_{\ge0}\simeq \mathcal{P}_{\Sigma}(A(G))$ by the universal properties. Additionally, the adjunction of inclusion and truncation
	\[i:\mathcal{P}_\Sigma(A(G))\rightleftarrows \Sp^G:\tau_{\ge0}\]can be identified with the adjunction of Mackey stablization \cite[7.1]{bar1}. For this reason, we write the spectral Mackey functor \textit{represented by some finite $G$-set $X$} as $\infsus X$.
	
	Following \cite[3.8]{bar2}, the \infcat\ $\Sp^G$ is also endowed with a symmetric monoidal structure. For each finite $G$-set $X$, the genuine $G$-spectrum $\sig^{\infty}_+ X$ has itself as a dual in $\Sp^G$, and the coevaluation and evaluation are given by the following spans
	\[*\leftarrow X\xrightarrow{\text{diag}}X\times X,\ \ \ \ X\times X\xleftarrow{\text{diag}} X\rightarrow*.\]
	\begin{notation}\label{n2.19}
		Let $G$ be a finite group and $A$ an abelian group. The \textit{constant Mackey functor $\ul{A}$} sends finite $G$-set $X$ to $\map(X/G,A)$, the abelian group of $A$-valued function on the orbit set. Given a span $f$ of finite $G$-sets $X\leftarrow Z\rightarrow Y$, $\ul{A}(f)$ is the composite of the pullback along $Z\to X$ and the summation on the fibres of $Z\rightarrow Y$. This assignment is promoted to a sifted-colimit-preserving functor $\m_{\mathbb{Z},\ge0}\to \Sp^G$ by left Kan extension, which is lax symmetric monoidal \cite[Lemma 2.14]{BCN}.
	\end{notation}
	\begin{df}\label{df2.20}
		Given a connective $\mathbb{E}_1$-ring spectrum $R$ over $\mathbb{Z}$, by $\m^G_{\ul{R}}$ denote \textit{the \infcat\ of left $\ul{R}$-modules} in $\Sp^G$. The \infcat\ $R[\mathcal{O}_G]$ of \textit{finitely generated free $\ul{R}$-modules} is the full subcategory of $\m^G_{\ul{R}}$ consisting of the objects of the form $\ul{R}\otimes \infsus X$, where $X$ is a finite $G$-set.
	\end{df}
	\begin{remark}\label{rk2.21}
		The objects of $R[\mathcal{O}_G]$ projectively generate $\m^G_{\ul{R},\ge 0}$. There are equivalences \[\m_{R[\mathcal{O}_G]}\xrightarrow{\simeq} \m^G_{\ul{R}},\ \ \ \ \m_{\sfil_I R[\mathcal{O}_G]}\simeq \fil_I\m^G_{\ul{R}},\ \ \ \ \m_{\sgr_I R[\mathcal{O}_G]}\simeq \gr_I\m^G_{\ul{R}}.\]Supposing that $R$ is coherent, the additive \infcats\ $R[\mathcal{O}_G]$, $\sfil_I R[\mathcal{O}_G]$ and $\sgr_I R[\mathcal{O}_G]$ are all coherent due to \cite[Lemma 2.11]{BCN} and Proposition \ref{prop2.14}.
	\end{remark}
	\subsection{Pro-coherent modules and right-left extension}\label{sec2.2}
	Now we sketch out the framework of pro-coherent modules, following \cite{BM}\cite[\S2]{BCN}, which provides us with a good duality theory of almost perfect modules. We also emphasize its compatibility with filtrations.
	\begin{df}\label{df2.21}
		For a coherent additive \infcat\  $\mathscr{A}$, the \infcat\ of \textit{(left) pro-coherent $\mathscr{A}$-modules} is defined as $\QC^\vee_{\mathscr{A}}:=\text{Ind}(\coh^{op}_{\mathscr{A}^{op}})$.
	\end{df}
	\begin{remark}\label{rk2.22}
		(1) A functor $F:\aperf_{\mathscr{A}^{op}}\to \Sp$ is said to be \textit{convergent} if $F(X)\simeq \lim_nF(\tau_{\le n}X)$ holds for every $X\in\aperf_{\mathscr{A}^{op}}$. The \infcat\ $\QC^\vee_{\mathscr{A}}$ can be identified with $\text{Fun}_{ex,conv}(\aperf_{\mathscr{A}^{op}},\Sp)$, the \infcat\ of covergent exact functors. Following this, $\QC^\vee_{\mathscr{A}}$ carries a left complete, accessible $t$-structure such that $M\in\QC^\vee_{\mathscr{A}}$ is connective if and only if the associated exact convergent functor $M:\aperf_{\mathscr{A}^{op}}\to \Sp$ is right exact.\\
		(2) The evaluation functor $\aperf_{\mathscr{A}^{op}}\times \mathscr{A}\to \Sp$ induces an adjunction
		\[\iota: \m_{\mathscr{A}}\rightleftarrows \QC^\vee_{\mathscr{A}}:\nu,\]which exhibits $\m_{\mathscr{A}}$ as a right completion of $\QC^\vee_{\mathscr{A}}$\cite[Proposition 2.26]{BCN}.
	\end{remark}
	\begin{exa}\label{exa2.23}
		Set $\QC^\vee_R:=\QC^\vee_{\vect^\omega_R}$ for $R$, a coherent connective $\mathbb{E}_1$-ring spectrum. By Lemma \ref{lemma2.13} and Proposition \ref{prop2.15}, there are natural categorical equivalences
		\[\QC^\vee_{\sfil_I\vect^\omega_R}\simeq \fil_I\QC^\vee_R,\ \ \ \ \QC^\vee_{\sgr_I\vect^\omega_R}\simeq \gr_I\QC^\vee_R.\]When $R$ is eventually coconnective, $\perf_{R^{op}}$ can be identified with a full subcategory of $\coh_{R^{op}}$, then $\iota:\m_R\hookrightarrow\QC^\vee_R$ is fully faithful. The same thing holds in filtered or graded context as well.
	\end{exa}
	\begin{exa}\label{exa2.24}
		Take a coherent connective $\mathbb{E}_1$-algebra $R$ over $\mathbb{Z}$ and a finite group $G$. There is
		\[\QC^\vee_{\sfil_I R[\mathcal{O}_G]}\simeq \fil_I\QC^\vee_{R[\mathcal{O}_G]},\ \ \ \ \QC^\vee_{\sgr_I R[\mathcal{O}_G]}\simeq \gr_I\QC^\vee_{R[\mathcal{O}_G]}.\] 
		Observe that for finite $G$-sets $X$ and $Y$, the mapping space between the free $\ul{R}$-modules
		\begin{align*}
			\map_{\m^G_{\ul{R}}}(\ul{R}\otimes \infsus X,\ul{R}\otimes \infsus Y)\simeq R^{(X\times Y)/G}	\end{align*}
		is a product of finite copies of $R$. As a result, if $R$ is again eventually coconnective, there is an inclusion $\perf_{R[\mathcal{O}_G]^{op}}\hookrightarrow \coh_{R[\mathcal{O}_G]^{op}}$. It implies that the functor $\iota:\m^G_{\ul{R}}\to \QC^\vee_{{R[\mathcal{O}_G]}}$ is fully faithful.
		If $R$ is discrete, $R[\mathcal{O}_G]$ is equivalent with the full additive subcategory of ordinary $R[G]$-modules spanned by $R[G/H]$ for $H<G$.
	\end{exa}
	
	Right-left extension is the fundamental method of constructing operations in pro-coherent context. Here we quote some useful results from \cite{BM} \cite{BCN} for the convenience of readers:
	
	\begin{df}\label{df2.25}
		Let $\mathscr{A}$ be a coherent additive \infcat. The \infcat\ $\aperf^\vee_{\mathscr{A}}$ of \textit{dually almost perfect $\mathscr{A}$-modules} is defined as the essential image of the Yoneda embedding $\aperf_{\mathscr{A}^{op}}^{op}\hookrightarrow \QC^\vee_{\mathscr{A}}$. By $\aperf_{\mathscr{A},\eqslantless 0}^\vee$ denote the full subcategory of $\aperf^\vee_{\mathscr{A}}$ consisting of the functors co-represented by some connective $X\in \aperf^{op}_{\mathscr{A}^{op}}$. 
	\end{df}
	Every $F:\aperf^\vee_{\mathscr{A},\eqslantless0}\to \mathcal{V}$ is equivalent to $F^{op}:\aperf_{\mathscr{A},\ge0}\simeq \aperf^{\vee,op}_{\mathscr{A},\eqslantless0}\xrightarrow{F}\mathcal{V}^{op}$. Here, $F$ is said to be \textit{regular} if $F^{op}$ is convergent. We recall Proposition 2.40 in \cite{BCN}:
	
	\begin{prop}\label{prop2.29}
		Let $\mathscr{A}$ be a coherent additive \infcat\ and $\mathcal{V}$ an \infcat\ with sifted colimits. Then the restriction induces an equivalence \[\mathrm{Fun}_{\Sigma}(\QC^\vee_{\mathscr{A}},\mathcal{V}) \xrightarrow{\simeq} \mathrm{Fun}_{\sigma,reg}(\aperf^\vee_{\mathscr{A},\eqslantless0},\mathcal{V}),\]where the right-hand side consists of \textit{regular functors} that preserves finite geometric realizations, and the inverse is left Kan extension.
		
	\end{prop}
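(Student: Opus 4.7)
The plan is to obtain the claimed equivalence as a composition of two left-Kan-extension equivalences, factoring through the intermediate subcategory $\coh^{op}_{\mathscr{A}^{op}}$ of compact generators of $\QC^\vee_{\mathscr{A}}$. The first step uses the universal property of Ind-completion; the second uses Postnikov tower convergence, which is encoded by the regularity condition.

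For the first step, since $\QC^\vee_{\mathscr{A}} = \mathrm{Ind}(\coh^{op}_{\mathscr{A}^{op}})$ by definition, the universal property of Ind-completion identifies filtered-colimit-preserving functors out of $\QC^\vee_{\mathscr{A}}$ with arbitrary functors out of $\coh^{op}_{\mathscr{A}^{op}}$, the equivalence realized by left Kan extension along the Yoneda embedding. Decomposing sifted colimits into filtered colimits and finite geometric realizations, this restricts to
\[
\mathrm{Fun}_{\Sigma}(\QC^\vee_{\mathscr{A}}, \mathcal{V}) \;\xrightarrow{\simeq}\; \mathrm{Fun}_{\sigma}(\coh^{op}_{\mathscr{A}^{op}}, \mathcal{V}).
\]

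For the second step, observe that every connective almost perfect $X \in \aperf_{\mathscr{A}^{op},\ge 0}$ is the limit of its Postnikov tower $\{\tau_{\le n} X\}_n$, and each truncation is connective, almost perfect, and eventually coconnective, hence coherent. Dualizing, $X^\vee \in \aperf^\vee_{\mathscr{A}, \eqslantless 0}$ is the sequential colimit $\colim_n (\tau_{\le n} X)^\vee$ inside $\QC^\vee_{\mathscr{A}}$, so regularity of $G : \aperf^\vee_{\mathscr{A}, \eqslantless 0} \to \mathcal{V}$ is precisely the statement $G(X^\vee) \simeq \colim_n G((\tau_{\le n} X)^\vee)$; equivalently, $G$ is left Kan extended from its restriction to the connective coherent duals $\coh^{op}_{\mathscr{A}^{op}, \ge 0}$. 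Combined with preservation of finite geometric realizations, this yields an equivalence $\mathrm{Fun}_{\sigma, reg}(\aperf^\vee_{\mathscr{A}, \eqslantless 0}, \mathcal{V}) \simeq \mathrm{Fun}_\sigma(\coh^{op}_{\mathscr{A}^{op}, \ge 0}, \mathcal{V})$. Verifying $\mathrm{Fun}_{\sigma}(\coh^{op}_{\mathscr{A}^{op}}, \mathcal{V}) \simeq \mathrm{Fun}_{\sigma}(\coh^{op}_{\mathscr{A}^{op}, \ge 0}, \mathcal{V})$ via a shift-and-cofiber argument using the boundedness of coherent objects then assembles the chain, and tracing through identifies the inverse of restriction with left Kan extension along $\aperf^\vee_{\mathscr{A}, \eqslantless 0} \hookrightarrow \QC^\vee_{\mathscr{A}}$.

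The main obstacle is this last compatibility check: one must verify that $\sigma$-continuous functors on all coherent duals are determined by their restriction to the connective part. This requires combining the accessible left complete $t$-structure on $\QC^\vee_{\mathscr{A}}$ from Remark \ref{rk2.22} with the observation that every coherent $Y$ sits in a finite iterated cofiber sequence built from shifts of connective coherent objects; preservation of finite geometric realizations then transfers values of the functor across this sequence. Once this is in place the two universal properties line up over the common test subcategory $\coh^{op}_{\mathscr{A}^{op}, \ge 0}$, producing the claimed equivalence.
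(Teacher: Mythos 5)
The paper does not prove Proposition~\ref{prop2.29}; it is imported verbatim as [BCN, Proposition~2.40] and used as a black box, so there is no proof in this paper against which to measure your argument. That said, the route you propose --- use the universal property of $\mathrm{Ind}(\coh^{op}_{\mathscr{A}^{op}})$ to identify $\mathrm{Fun}_{\Sigma}(\QC^\vee_{\mathscr{A}},\mathcal{V})$ with $\mathrm{Fun}_{\sigma}(\coh^{op}_{\mathscr{A}^{op}},\mathcal{V})$, and use the cofinality of the Postnikov truncations $\{\tau_{\le n}X\}$ among coherent objects receiving a map from a fixed $X\in\aperf_{\mathscr{A}^{op},\ge 0}$ to reinterpret the regularity condition as ``left Kan extended from $\coh^{op}_{\mathscr{A}^{op},\ge0}$'' --- is the natural strategy and is close in spirit to how \cite{BCN} proceeds. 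The first step is sound (an $n$-skeletal simplicial object of $\mathrm{Ind}(\mathcal{C})$ is a filtered colimit of $n$-skeletal simplicial objects of $\mathcal{C}$, since $\Delta_{\le n}$ is finite), and the cofinality argument is also correct because a coherent target is bounded above, so any map $X\to Y$ factors through $\tau_{\le n}X$ for $n\gg 0$.

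The gap is exactly where you locate it, but the fix you sketch does not work as stated. The problem with $\mathrm{Fun}_{\sigma}(\coh^{op}_{\mathscr{A}^{op}},\mathcal{V})\simeq\mathrm{Fun}_{\sigma}(\coh^{op}_{\mathscr{A}^{op},\ge 0},\mathcal{V})$ is a variance issue: under $j\colon\aperf^{op}_{\mathscr{A}^{op}}\hookrightarrow\QC^\vee_{\mathscr{A}}$, the suspension of $\QC^\vee_{\mathscr{A}}$ corresponds to $[-1]$ on $\aperf_{\mathscr{A}^{op}}$, so $\coh^{op}_{\mathscr{A}^{op},\ge0}$ (and likewise $\aperf^\vee_{\mathscr{A},\eqslantless 0}$) is closed under $\Omega$ but \emph{not} under $\Sigma$, hence not under cofibers, hence not even under $1$-skeletal geometric realizations. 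Consequently ``$\sigma$-continuity'' on $\coh^{op}_{\mathscr{A}^{op},\ge 0}$ is a much weaker, conditional property than on the stable category $\coh^{op}_{\mathscr{A}^{op}}$. When you say ``preservation of finite geometric realizations then transfers values of the functor across this sequence,'' the realizations in question start in $\coh^{op}_{\mathscr{A}^{op},\ge0}$ but land outside it, so they are not in the domain of the functor whose $\sigma$-continuity you would like to invoke; the statement begs the question. What one actually has to do is build the extension from the connective part to all of $\coh^{op}_{\mathscr{A}^{op}}$ by hand (e.g.\ via the shift-and-cofiber presentation you mention), and then prove that the result is independent of the presentation and is again $\sigma$-continuous --- a genuine coherence problem, and the real content of the step, which the sketch asserts but does not supply.
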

	\begin{remark}\label{rk2.29}
		Let $\text{End}^{\dap,\eqslantless0}_{\Sigma}(\QC^\vee_{\mathscr{A}})$ be the \infcat\ of sifted-colimit-preserving endofunctors that preserve $\aperf^\vee_{\mathscr{A},\eqslantless0}$. The former proposition induces a strictly monoidal equivalence \[\text{End}^{\dap,\eqslantless0}_{\Sigma}(\QC^\vee_{\mathscr{A}})\simeq\text{End}_{\sigma,reg}(\aperf^\vee_{\mathscr{A},\eqslantless0}),\]which is crucial for the construction of the partition Lie algebroid monad.
	\end{remark}
	Many sifted-colimit-preserving functors are induced from \textit{locally polynomial functors} using the above proposition and left Kan extension.
	
	\begin{df}
		A functor $F:\mathscr{A}\to \mathscr{B}$ between two additive $\infty$-categories is said to be
		
		(1) \textit{of degree $\le0$ }if $F$ is constant;
		
		(2) \textit{of degree $\le n$} if the difference  $\text{fib}(F(X\oplus -)\to F(-))$ is of degree $\le (n-1)$ for any $X\in\mathscr{A}$;
		
		(3) \textit{locally polynomial} if $F$ arises as the colimit of a sequence $F_1\to F_2\to \ldots$ of functors in $\text{Fun}(\mathscr{A},\mathscr{B})$, such that each $F_i$ is of finite degree, and that the sequence $F_1(X)\to F_2(X)\to\ldots$ is eventually constant for any $X\in \mathscr{A}$.
	\end{df}
	Given a locally polynomial functor $F:\mathcal{A}\to \mathcal{B}\subset\QC^\vee_\mathcal{B}$, right Kan extension gives rise to $F^R\in\text{Fun}_{\sigma,reg}(\aperf^\vee_{\mathscr{A},\eqslantless0},\QC^\vee_{\mathscr{B}})$ \cite[Proposition 2.46]{BCN}.\ Then, left Kan extension gives rise to a sifted-colimit-preserving functor $F^{RL}:\QC^\vee_{\mathscr{A}}\to \QC^\vee_{\mathscr{B}}$ by Proposition \ref{prop2.29}, which is called the \textit{right-left derived functor} of $F$.
	
	Right-left extension is a functorial procedure. Write $\mathcal{A}dd^{\text{coh,poly}}\subset \mathcal{C}at_\infty$ the \infcat\ of coherent additive $\infty$-categories and locally polynomial functors, and $\mathcal{P}r^{\text{st},\Sigma}\subset \mathcal{C}at_\infty$ the \infcat\ of presentable stable $\infty$-categories and sifted-colimit-preserving functors (strictly larger than $\pr^{\st}$). Then, we recall Theorem 2.52 in \cite{BCN}:
	\begin{theorem}\label{theorem2.29}
		There is a natural transformation of symmetric monoidal functors
		\[\begin{tikzcd}
			{\mathcal{A}dd^{\coh,\mathrm{poly}}} && {\mathcal{P}r^{\st,\Sigma}}
			\arrow[""{name=0, anchor=center, inner sep=0}, "\m", shift left=1, bend left, from=1-1, to=1-3]
			\arrow[""{name=1, anchor=center, inner sep=0}, "{\QC^\vee}"', shift right=1, bend right, from=1-1, to=1-3]
			\arrow["\iota", shorten <=2pt, shorten >=2pt, Rightarrow, from=0, to=1]
		\end{tikzcd}\]sending each locally polynomial functor to its right-left derived functor.
	\end{theorem}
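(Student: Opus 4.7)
The plan is to build $\m$ and $\QC^\vee$ separately as symmetric monoidal functors $\mathcal{A}dd^{\coh,\mathrm{poly}}\to\mathcal{P}r^{\st,\Sigma}$ and then upgrade the pointwise adjunction unit $\iota_{\mathscr{A}}\colon\m_\mathscr{A}\to\QC^\vee_\mathscr{A}$ of Remark \ref{rk2.22}(2) to a symmetric monoidal natural transformation. For a locally polynomial $F\colon\mathscr{A}\to\mathscr{B}$, I would produce $F^L\colon\m_\mathscr{A}\to\m_\mathscr{B}$ by extending $j_\mathscr{B}\circ F$ along $j_\mathscr{A}\colon\mathscr{A}\hookrightarrow\m_\mathscr{A}$: first to connective modules via the universal property of $\mathcal{P}_\Sigma(\mathscr{A})\simeq\m_{\mathscr{A},\ge 0}$, and then to all of $\m_\mathscr{A}$ using the stabilization presentation, with the locally polynomial hypothesis ensuring the extension is well-defined and preserves sifted colimits. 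On the pro-coherent side, $F^{RL}$ is supplied by Proposition 2.46 of \cite{BCN} combined with Proposition \ref{prop2.29}. Composition preservation $(GF)^L\simeq G^LF^L$ follows from the universal property of sifted-colimit extensions; on the pro-coherent side both $(GF)^{RL}$ and $G^{RL}F^{RL}$ preserve sifted colimits and agree on $\mathscr{A}\subset\aperf^\vee_{\mathscr{A},\eqslantless 0}$, so Proposition \ref{prop2.29} forces them to coincide.

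For the natural transformation $\iota_\mathscr{B}\circ F^L\simeq F^{RL}\circ\iota_\mathscr{A}$, both composites preserve sifted colimits ($\iota$ is a left adjoint by Remark \ref{rk2.22}(2), while $F^L$ and $F^{RL}$ are by construction) and manifestly agree on $\mathscr{A}\subset\m_\mathscr{A}$, each sending $X$ to the embedding of $F(X)\in\mathscr{B}$ into $\QC^\vee_\mathscr{B}$. Since $\mathcal{P}_\Sigma(\mathscr{A})$ is the sifted cocompletion of $\mathscr{A}$, the equivalence propagates to all connective modules, and from there to $\m_\mathscr{A}$ by compatibility of both sides with suspension and the right-completion structure of the inherited $t$-structure.

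The remaining step is the symmetric monoidal upgrade. I would equip $\mathcal{A}dd^{\coh,\mathrm{poly}}$ with the tensor product representing bilinear locally polynomial functors, and $\mathcal{P}r^{\st,\Sigma}$ with the Lurie tensor product; universal properties then yield $\m_{\mathscr{A}\otimes\mathscr{B}}\simeq\m_\mathscr{A}\otimes\m_\mathscr{B}$ and $\QC^\vee_{\mathscr{A}\otimes\mathscr{B}}\simeq\QC^\vee_\mathscr{A}\otimes\QC^\vee_\mathscr{B}$, making both functors symmetric monoidal and forcing $\iota$ to be strong monoidal on the generators $\mathscr{A}\otimes\mathscr{B}$, hence everywhere by a final sifted-colimit argument. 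The main obstacle is coherence: the assignment $F\mapsto F^{RL}$ must be promoted to an $\infty$-functor of symmetric monoidal \infcats, not merely a pointwise assignment of monoidal functors between fixed target categories. Pushing this through requires an operadic enhancement of Proposition \ref{prop2.29}, so that the right-left extension procedure is transported coherently through the symmetric monoidal structure; this is the technical heart of what \cite[\S2]{BCN} packages, and everything else in the argument ultimately rests on it.
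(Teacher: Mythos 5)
The paper does not prove Theorem~\ref{theorem2.29}; it is explicitly a recollection of Theorem~2.52 from~\cite{BCN} (the preceding sentence reads ``Then, we recall Theorem 2.52 in \cite{BCN}''). So there is no internal proof to compare against, and your task was effectively to reconstruct the BCN argument. Your outline correctly identifies the overall shape --- build $\m$ and $\QC^\vee$ as symmetric monoidal functors, check that $\iota$ intertwines them by agreement on generators plus preservation of sifted colimits, and then acknowledge that the genuine work is promoting the pointwise assignment $F\mapsto F^{RL}$ to a coherent functor of (symmetric monoidal) $\infty$-categories, which is what~\cite[\S2]{BCN} is about.

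There is, however, a real gap in your construction of $F^L\colon\m_{\mathscr{A}}\to\m_{\mathscr{B}}$. You propose to left Kan extend $j_{\mathscr{B}}\circ F$ to $\mathcal{P}_\Sigma(\mathscr{A})\simeq\m_{\mathscr{A},\ge0}$ and then pass to $\m_{\mathscr{A}}$ ``using the stabilization presentation.'' The universal property of stabilization only hands you extensions for \emph{exact} functors out of $\mathcal{P}_\Sigma(\mathscr{A})$, and a locally polynomial functor (say $\lsym^2$) is deliberately not exact --- it does not commute with suspension. So ``the locally polynomial hypothesis ensuring the extension is well-defined'' does not apply here; that hypothesis is exactly what makes the na\"ive stabilization step fail. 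The same slip re-enters your naturality argument, where you propagate the equivalence $\iota_{\mathscr{B}}\circ F^L\simeq F^{RL}\circ\iota_{\mathscr{A}}$ from $\m_{\mathscr{A},\ge0}$ to all of $\m_{\mathscr{A}}$ ``by compatibility of both sides with suspension'': $\iota$ is exact, but $F^L$ and $F^{RL}$ are not, so this step cannot be argued via $\Sigma$-compatibility. The theorem's own phrasing (``sending each locally polynomial functor to its \emph{right-left derived functor}'') signals the fix: on the module side as well, one first \emph{right} Kan extends $F$ from $\mathscr{A}$ to a generating subcategory (the analogue of $\aperf^\vee_{\mathscr{A},\eqslantless 0}$ in Proposition~\ref{prop2.29}, living inside $\m_{\mathscr{A}}$ through $\iota$'s right completion), and only then left Kan extends along sifted colimits. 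Once that correction is made, your naturality argument goes through verbatim because both composites are sifted-colimit-preserving and agree on that generating subcategory, with no appeal to suspension-compatibility. Your description of the symmetric monoidal upgrade is reasonable in spirit, though ``Lurie tensor product'' should be understood as the tensor product on $\pr^{\st,\Sigma}$ adapted to sifted colimits rather than the standard one on $\pr^L$; and, as you rightly flag, the coherent operadic enhancement of the $F\mapsto F^{RL}$ assignment is the part that cannot be waved away and is exactly what~\cite{BCN} supplies.
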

	Proposition 2.55 in \cite{BCN} gives a good duality theory of almost perfect modules:
	\begin{prop}\label{prop2.30}
		Let $\mathscr{A}$ be a coherent additive \infcat\ endowed wih a non-unital symmetric monoidal structure $\otimes$ preserving finite direct sums. Assume that the induced $\otimes$ on $\QC^\vee_{\mathscr{A}}$ admits an eventually connective unit $\mathbbb{1}$, and each object in $\mathscr{A}$ is dualizable with the dual object existing in $\mathscr{A}$. Then taking duals gives rise to an equivalence,
		\[(-)^\vee:=\hom_{\QC^\vee_{\mathscr{A}}}(-,\mathbbb{1}):\aperf_{\mathscr{A}}\xrightarrow{\simeq} \aperf^{\vee,op}_{\mathscr{A}}.\]
		
	\end{prop}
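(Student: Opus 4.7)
The plan is to bootstrap the duality from $\mathscr{A}$ itself to the pro-coherent setting. First, observe that the dualizability hypothesis makes $(-)^\vee = \hom_{\QC^\vee_\mathscr{A}}(-,\mathbbb{1})$ land in $\mathscr{A}$ when restricted to $\mathscr{A}$, and the triangle identities for dualizable objects yield a natural equivalence $X^{\vee\vee} \simeq X$. Hence $(-)^\vee : \mathscr{A} \xrightarrow{\simeq} \mathscr{A}^{op}$ is an equivalence of coherent additive $\infty$-categories (with itself as quasi-inverse), and in particular $\mathscr{A}^{op}$ is also coherent.

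Next, I would transport this equivalence to almost perfect modules. Since the assignment $\mathscr{B} \mapsto \aperf_{\mathscr{B}}$ is functorial under equivalences of additive $\infty$-categories, precomposition with $(-)^\vee$ induces $\aperf_\mathscr{A} \simeq \aperf_{\mathscr{A}^{op}}$. On the other hand, the Yoneda embedding $\aperf^{op}_{\mathscr{A}^{op}} \hookrightarrow \QC^\vee_\mathscr{A}$ is fully faithful onto $\aperf^\vee_\mathscr{A}$ by definition, giving $\aperf^{\vee,op}_\mathscr{A} \simeq \aperf_{\mathscr{A}^{op}}$. Composing yields a candidate equivalence $\Phi : \aperf_\mathscr{A} \xrightarrow{\simeq} \aperf^{\vee,op}_\mathscr{A}$.

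The remaining task is to identify $\Phi$ with $\hom_{\QC^\vee_\mathscr{A}}(-,\mathbbb{1})$. For $X \in \mathscr{A}$, dualizability in the closed symmetric monoidal $\QC^\vee_\mathscr{A}$ gives $\hom(X,\mathbbb{1}) \simeq X^\vee$, matching $\Phi(X)$. To propagate this to every almost perfect module, I would use that each $M \in \aperf_\mathscr{A}$ admits (up to a shift) a simplicial resolution by finite direct sums of objects of $\mathscr{A}$ via \cite[Proposition 7.2.4.11]{HA}, and that the internal hom $\hom(-,\mathbbb{1})$ carries geometric realizations to totalizations. Since $\Phi$ is obtained from a functorial construction that equally preserves these presentations, the two functors agree on generators and on the structural colimits used to build $\aperf_\mathscr{A}$, hence coincide.

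The main obstacle is checking that $\hom(M,\mathbbb{1})$ genuinely lies in $\aperf^\vee_\mathscr{A}$ rather than merely in $\QC^\vee_\mathscr{A}$; a priori the totalization computing it could fail to be convergent or to land in the essential image of Yoneda. This is precisely where the hypothesis that $\mathbbb{1}$ is eventually connective is used: combined with the convergence characterization of $\QC^\vee_\mathscr{A}$ in Remark \ref{rk2.22}, it guarantees that the inverse limits computing $\hom(M,\mathbbb{1})$ are uniformly bounded below, so the resulting pro-coherent object is corepresented by an almost perfect $\mathscr{A}^{op}$-module. This closes the argument.
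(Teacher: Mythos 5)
The paper does not prove this proposition — it cites it as Proposition~2.55 of \cite{BCN} — so there is no ``paper proof'' to compare against; I can only assess the argument on its own terms.

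Your first two steps are sound and well organized: dualizability of every object of $\mathscr{A}$ inside $\QC^\vee_\mathscr{A}$, together with the dual landing back in $\mathscr{A}$, does give a contravariant self-equivalence $(-)^\vee : \mathscr{A} \simeq \mathscr{A}^{op}$; precomposition then gives $\aperf_\mathscr{A} \simeq \aperf_{\mathscr{A}^{op}}$; and Definition~\ref{df2.25} identifies $\aperf_{\mathscr{A}^{op}}$ with $\aperf^{\vee,op}_\mathscr{A}$ on the nose. Composing produces a genuine candidate equivalence $\Phi$, and on objects of $\mathscr{A}$ it agrees with $\hom_{\QC^\vee_\mathscr{A}}(-,\mathbbb{1})$ by the dualizability relation. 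All of this is correct.

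The gap is in the propagation from $\mathscr{A}$ to $\aperf_\mathscr{A}$, which you treat by saying both functors ``preserve the presentations.'' For $\hom(-,\mathbbb{1})$ this is a triviality (a right adjoint turns colimits into limits, computed in $\QC^\vee_\mathscr{A}$). But for $\Phi$ it is not automatic: $\Phi$ is an equivalence onto $\aperf^{\vee,op}_\mathscr{A}$, so it sends geometric realizations in $\aperf_\mathscr{A}$ to totalizations computed \emph{inside} $\aperf^{\vee,op}_\mathscr{A}$, and you still need to check that the fully faithful embedding $\aperf^\vee_\mathscr{A} \hookrightarrow \QC^\vee_\mathscr{A}$ (i.e.\ the extended Yoneda functor on $\text{Ind}(\coh_{\mathscr{A}^{op}}^{op})$) is compatible with these \emph{infinite} totalizations. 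This is precisely where convergence must be verified — your remark about ``uniformly bounded below'' inverse limits is pointing in the right direction, but as stated it is an assertion rather than an argument. A more careful proof would either (a) show directly, using the eventually connective hypothesis and the characterization $\QC^\vee_\mathscr{A} \simeq \text{Fun}_{ex,conv}(\aperf_{\mathscr{A}^{op}},\Sp)$ of Remark~\ref{rk2.22}, that for each $n$ the truncation $\tau_{\ge -n}\hom(M,\mathbbb{1})$ depends only on a finite skeleton of the simplicial resolution of $M$, so that $\hom(M,\mathbbb{1})$ is the convergent limit of its coherent truncations and hence lands in $\aperf^\vee_\mathscr{A}$; or (b) realize $\hom(-,\mathbbb{1})$ as the right Kan extension of $(-)^\vee : \mathscr{A} \to \mathscr{A}^{op} \subset \QC^\vee_\mathscr{A}$ (in the spirit of Proposition~\ref{prop2.29} and the right--left extension formalism of Theorem~\ref{theorem2.29}), so that uniqueness of the extension forces $\hom(-,\mathbbb{1}) \simeq \Phi$. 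Either route would also clarify exactly how the eventually connective unit enters; in your current draft it is invoked but not actually used.
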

	\begin{exa}\label{exa2.33}
		Consider a coherent additive \infcat\ $\mathscr{A}$, e.g.\ $\vect^\omega_R$ or $R[\mathcal{O}_G]$, the functors $(-)_n$, $[-]_n$, $F_n$ and $\gr$ are extended to the pro-coherent context. The extended functors coincide with those obtained directly from filtration and grading by Example \ref{exa2.23} or \ref{exa2.24}.
	\end{exa}
	\begin{exa}\label{exa2.34}
		Fix a finite group $G$ and a coherent connective $\mathbb{E}_1$ ring $R$ over $\mathbb{Z}$. The\textit{ genuine $G$-fixed points functor} $(-)^G:\m^G_{\ul{R}}\to \m_R$ is evaluating at $*\cong G/G$. It admits a left adjoint $\triv_G:\m_R\to \m^G_{\ul{R}}$, which is defined by $\triv_G(M)\simeq \ul{R}\otimes_R M$. One can check pointwisely that $\triv_G$ preserves small limits. Then it has a left adjoint $(-)_G$ from the adjoint functor theorem, which is addressed as \textit{derived $G$-orbits functor}. The three functors can be extended to pro-coherent context as shown in  \cite[Example 2.51]{BCN}, and the right-left extension is compatible with filtration and grading.
		
	\end{exa}
	\begin{exa}\label{exa2.36}
		Let $\mathscr{O}^\otimes\to N(\mathcal{F}in_*)$ be an \ooop\ in the sense of  \cite[Definition 2.1.1.10]{HA}, and $\mathscr{A}$ be an $\mathscr{O}$-algebra object in $\mathcal{A}dd^{\text{coh,poly}}$. Then one obtains the following $\mathscr{O}$-monoidal functors
		\[\QC^\vee_\mathscr{A}\xrightarrow{(-)_0}\fil\QC^\vee_\mathscr{A}\xrightarrow{(-)^\wedge}\fil^{\cpl}\QC^\vee_\mathscr{A}\xrightarrow{\gr}\gr\QC^\vee_\mathscr{A}.\]
		For example, taking $\mathscr{A}$ as $\vect^\omega_R$ for $R$ a coherent connective $\eoo$-ring spectrum and $\mathscr{O}^\otimes$ as $N(\mathcal{F}in_*)$, we obtain symmetric monoidal structure on different pro-coherent $R$-module categories.
	\end{exa}

	This section is closed with a criterion for a given $V\in \fil_I\QC^\vee_R$ being presented by an ordinary module. We use it to transfer our results in the pro-coherent context back to the usual context.
	\begin{prop}\label{prop2.38}
		Assume that $R$ is an eventually coconnective coherent $\mathbb{E}_1$-ring spectrum, so that $\fil\m_{R}$ can be regarded as a full subcategory of $\fil\QC^\vee_R$ by the embedding $\iota$. Then a filtered pro-coherent $R$-module $V$  lies in $\fil\m_{R}$ if it is (filtration) complete and $\gr(V)$ lies in $\gr\m_{R}$
	\end{prop}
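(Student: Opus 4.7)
The plan is to reduce the claim to showing each individual piece $F_iV$ lies in $\m_R$, express $F_iV$ as a sequential limit of finite quotients via completeness, and then verify that this limit, computed in $\QC^\vee_R$, still lies in the essential image of $\iota$.

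Under the eventual coconnectivity hypothesis on $R$, Example \ref{exa2.23} gives that $\iota:\m_R\hookrightarrow\QC^\vee_R$ is fully faithful, and this lifts objectwise to a fully faithful inclusion $\fil\m_R\hookrightarrow\fil\QC^\vee_R$. So the claim reduces to showing each $F_iV$ lies in the essential image of $\iota$. Fix $i\in\mathbb{Z}$. For every $j>i$ the finite quotient $F_iV/F_jV$ is built as an iterated extension of the graded pieces $\gr_kV$ for $i\le k<j$, via the cofiber sequences $\gr_kV\to F_iV/F_{k+1}V\to F_iV/F_kV$. Each $\gr_kV$ lies in $\m_R$ by hypothesis, and $\m_R\subset\QC^\vee_R$ is a stable full subcategory, hence closed under finite extensions, so $F_iV/F_jV\in\m_R$. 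Combining completeness of $V$ (i.e., $\lim_j F_jV\simeq 0$) with the fiber sequences $F_jV\to F_iV\to F_iV/F_jV$ and the fact that limits of fiber sequences are fiber sequences gives the equivalence $F_iV\simeq \lim_{j\to\infty}F_iV/F_jV$ in $\QC^\vee_R$.

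The remaining and most delicate step is to show this sequential limit lies in $\m_R$. I would form the limit $W_\infty\in\m_R$ of the same tower $\{F_iV/F_jV\}_j$ computed in $\m_R$, which exists by presentability, and show that the canonical comparison map $\iota(W_\infty)\to F_iV$ in $\QC^\vee_R$ is an equivalence. By Remark \ref{rk2.22}, both $\m_R$ and $\QC^\vee_R$ carry left complete $t$-structures, and under eventual coconnectivity $\iota$ is $t$-exact and induces an equivalence on hearts. The Milnor short exact sequences for $\pi_m$ of a sequential limit then express $\pi_m(\iota W_\infty)$ and $\pi_m(F_iV)$ via the same data, namely the pro-system $\{\pi_*(F_iV/F_jV)\}_j$ in the common heart, so the comparison map is an isomorphism on every $\pi_m$. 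Left completeness of the $t$-structure on $\QC^\vee_R$ upgrades this to a genuine equivalence, yielding $F_iV\simeq \iota(W_\infty)\in\m_R$.

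The main obstacle is this final comparison: identifying the sequential limit in the ambient pro-coherent category with the one formed in $\m_R$. The rest of the argument is formal, but this last step genuinely uses the interplay between the completeness of the filtration, the left completeness of both $t$-structures, and the eventual coconnectivity of $R$, which together ensure that the two categories agree on the bounded truncations that Milnor uses to reconstruct the limit.
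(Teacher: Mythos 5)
Your argument is correct, but it takes a different route from the paper's. Both proofs ultimately establish that the counit $\iota\nu V\to V$ is an equivalence (in your version $W_\infty\simeq\nu(F_iV)$ and the comparison $\iota W_\infty\to F_iV$ is precisely the counit at level $i$), but you work termwise, approximating each $F_iV$ by the tower of finite quotients $\{F_iV/F_jV\}_j$ and comparing the sequential limit of this tower computed in $\m_R$ with the one computed in $\QC^\vee_R$ via the Milnor exact sequence. The paper instead argues with the filtered object $V$ as a whole: it invokes the explicit formula $\iota\nu V\simeq \colim_i\tau_{\ge i}V$, shows this colimit is a complete filtered object by a Milnor argument along the filtration, identifies $\gr(\iota\nu V)$ with $\gr V$ using the hypothesis $\gr V\in\gr\m_R$, and concludes from conservativity of $\gr$ on complete filtered objects. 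Your route avoids the conservativity step and the formula for $\iota\nu$, which is appealing, but it requires that $\iota$ be $t$-exact with $\iota^{\heartsuit}$ an equivalence, and that the $t$-structure on $\QC^\vee_R$ be compatible with countable products so that the Milnor sequence applies there; you attribute the first two to Remark~\ref{rk2.22}, but that remark does not actually state them. They do hold --- $t$-exactness and the heart equivalence follow from full faithfulness of $\iota$ together with the fact that $\iota$ restricts to an equivalence $(\m_R)_{\ge 0}\simeq(\QC^\vee_R)_{\ge 0}$ (so that $(\QC^\vee_R)_{\ge 1}$ consists entirely of objects in the image of $\iota$, and one can test coconnectivity of $\iota M$ against such objects using full faithfulness) --- but these should be recorded as separate lemmas rather than read off from the cited remark.
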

	\begin{proof}
		Since $\iota$ exhibits $\fil\m_{R}$ as a right completion of $\fil\QC^\vee_R$, it suffices to prove that the natural morphism
		$\iota\nu V\simeq \colim_i \tau_{\ge i}V\to V$ in $\fil\QC^\vee_R$ is an equivalence.
		
		First note that, using Milnor exact sequence, $\tau_{\ge i}V$ is complete and eventually stable on each $\pi_n$ varying $i$, which means that $\colim_i \tau_{\ge i}V$ is also complete. Next, we prove that $\gr\iota\nu V\simeq \gr V$. For each $n$, there is a cofibre sequence of exact functors
		$F_{n+1}\to F_{n}\to \gr_n$, which are the right-left derived functors of their restrictions to $\fil\vect^\omega_R$. Here $F_n$ commutes with $\tau_{\ge i}$. Applying $\colim_i$ on
		$\tau_{\ge i}F_{n+1}(V)\to \tau_{\ge i}F_{n}(V)\to \gr_n(\tau_{\ge i} V)$, there is $\colim_i\gr_n(\tau_{\ge i}V)\simeq \iota\mu\gr_nV\simeq \gr_nV$. Meanwhile, $\colim_i\gr_n(\tau_{\ge i}V)\simeq \gr_n(\colim_i\tau_{\ge i}V)\simeq \gr_n(\iota\nu V)$.
		
	\end{proof}

	\subsection{PD $\infty$-operads and filtered algebras}
	
	Operad theory is a general framework for studying various types of algebras with homotopy-coherent higher operations (e.g. $\eoo$-algebras or $L_\infty$-algebras) by describing all the $n$-ary operations and their relations\cite{loday2012algebraic}.\ The $\infty$-operads are a fully homotopy-coherent version of operads that parametrise algebras in general presentable symmetric monoidal \infcats, see \cite[\S4.1.2]{brantner2017lubin} or \cite{haugseng2022operads}. In this subsection, we briefly recall the basics of $\infty$-operads and then pass to their divided power analogues, \textit{spectral} or \textit{derived PD $\infty$-operads} (introduced in \cite[\S3]{BCN}), which permit us to define commutative algebras and the corresponding ``Lie algebras'' in pro-coherent modules. We emphasize the functoriality of the constructions to obtain the theory of filtered algebras via PD $\infty$-operads. 
	\begin{construction}[Recollection of symmetric sequences]\label{c2.35n}
		Let $\text{B}\Sigma:=(\fin^{\simeq},\amalg)$ be the category of finite sets and bijections, together with the symmetric monoidal structure given by disjoint unions. For any $\mathcal{E}\in \calg(\mathcal{P}r^{L})$, the \textit{$\infty$-category of $\mathcal{E}$-valued symmetric sequences} \[\sseq(\mathcal{E}):=\text{Fun}(\text{B}\Sigma,\mathcal{E})\] is endowed with a symmetric monoidal structure $\otimes$ by Day convolution.
		
		We signify by $\mathbbb{1}\in \sseq(\mathcal{E})$ the functor that sends $[1]\in B\sig$ to the unit of $\mathcal{E}$ and $[n]\in B\sig$ to the initial object for all $n\ne 1$. The symmetric monoidal \infcat\ $\sseq(\mathcal{E})$ is freely generated by $\mathbbb{1}$ in $\calg_{\mathcal{E}/}(\mathcal{P}r^{L})$. Thus, there is an equivalence $\mathrm{End}^{L,\otimes}_{\mathcal{E}/}(\sseq(\mathcal{E}))\xrightarrow{\simeq} \sseq(\mathcal{E})$ obtained by evaluating at $\mathbbb{1}$. The \textit{composition product} $\circ$ on $\sseq(\mathcal{E})$ is defined as the opposite of functor composition.
		
		There is another symmetric monoidal structure, denoted as $\levtimes$, on $\sseq(\mathcal{E})$ defined \textit{levelwise}. The composition product can be expressed by the following formula
		\begin{equation}\label{composite formula}
			M\circ N\simeq \mathop{\bigamalg}\limits_{r\in\mathbb{N}} (M(r)\otimes N^{\otimes r})_{h\Sigma_r}.
		\end{equation}
		This formula shows that the inclusion $\mathcal{E}\subset\sseq(\mathcal{E})$ into pure arity $0$ equips $\mathcal{E}$ with a left $(\sseq(\mathcal{E}),\circ)$-tensored structure. Additionally, those symmetric sequences $M$ with $M(0)$ being initial in $\mathcal{C}$ form a monoidal full subcategory $\sseq^{\ge1}(\mathcal{C})\subset\sseq(\mathcal{C})$ with respect to $\circ$.
		
		The assignments $(\sseq(-),\otimes)$ and $(\sseq(-),\levtimes)$ give rise to endofunctors of $\calg(\mathcal{P}r^{L})$, due to the universal property. Similarly, there is a functor $(\sseq(-),\circ):\calg(\mathcal{P}r^{L})\to \alg(\mathcal{P}r^{\sig})$.
	\end{construction}

	\begin{exa}\label{exa2.36n}
		When $\mathcal{E}$ equals to $\m_R$ for certain $\eoo$-ring spectrum $R$, the \infcat\ $\sseq_R:=\sseq(\m_R)$ consists of ordinary symmetric sequences of $R$-module spectra.\ By the functoriality of $\sseq(-)$, the  symmetric monoidal functors $(-)_0$ and $\gr$ induces a sequence of functors
		\[\sseq_R\to \sseq(\fil\m_R)\to \sseq(\gr\m_R),\]which is symmetric monoidal with respect to $\otimes$ and $\levtimes$, and monoidal with respect to $\circ$. In particular, the \infcats\ $\fil_I\m_R$ and $\gr_I\m_R$ are endowed with natural left $(\sseq^{\ge1}_R,\circ)$-tensored structure, where $I=(-\infty,\infty)$, $(-\infty,-a]$ or $[a,\infty)$ for $a\ge 0$. The functors of modules $(-)_0$, $[-]_0$ and $\gr_0$ are enhanced into $(\sseq^{\ge1}_R,\circ)$-linear functors.
		
		Similarly, the symmetric monoidal functors $F_0:\fil_{\ge0}\m_R\to \m_R$ and $\colim:\fil_{\le0}\m_R\to \m_R$ are endowed with natural $(\sseq_R,\circ)$-linear structures, and can be restricted to $(\sseq^{\ge1}_R,\circ)$-linear functors $F_1:\fil_{\ge1}\m_R\to\m_R$ and $\colim:\fil_{\le-1}\m_R\to \m_R$.
	\end{exa}
	\begin{df}\label{df2.37n}
		An \textit{$\infty$-operad over $R$} is an associative algebra object in $(\sseq_R,\circ)$. The $\infty$-category of $\infty$-operads over $R$ is written as $\op_R$. For a left $(\sseq_R,\circ)$-tensored \infcat\ $\mathcal{M}$ and an $\infty$-operad $\mathcal{P}$, \textit{a $\mathcal{P}$-algebra in $\mathcal{M}$} is a left $\mathcal{P}$-module object in $\mathcal{M}$.\ Let $\alg_{\mathcal{P}}(\mathcal{M})$ be the \infcat\ of $\mathcal{P}$-algebras in $\mathcal{M}$.
	\end{df}
	\begin{exa}\label{exa2.38n}
		The unit of $\levtimes$ in $\sseq_R$ is the $\infty$-operad $\eoo$ of (unital) commutative algebra, where $\eoo(i)\simeq R$ for each $i\in\mathbb{N}$. Similarly, the $\infty$-operad $\neoo$ of non-unital commutative algebra is the same with $\eoo$ at arity $\ge 1$, but $\neoo(0)\simeq 0$. Write $\alg_{\neoo}(-)$ as $\calg^{nu}(-)$ for short. There are adjunctions of $\neoo$-algebras following Example \ref{exa2.36n}
		\[\begin{tikzcd}
			{\calg^{nu}(\m_R)} & {\calg^{nu}(\fil_{\ge1}\m_R)} & {\calg^{nu}(\gr_{\ge1}\m_R)}
			\arrow["\adic", shift left=1, from=1-1, to=1-2]
			\arrow["{F_1}", shift left=1, from=1-2, to=1-1]
			\arrow["\gr", shift left=1, from=1-2, to=1-3]
			\arrow[shift left=1, from=1-3, to=1-2]
		\end{tikzcd},\]where $\adic$ is formally defined by the adjoint functor theorem. This diagram satisfies a pleasant property that $\gr\circ\adic(A)$ is freely generated by $\gr_1\circ \adic(A)$ for each $A\in \calg^{nu}_R(\m_R)$.
	\end{exa}
	Before diving into PD $\infty$-operads, it is worth recalling that commutative rings have two natural generalizations in derived geometry, $\eoo$-ring spectra and simplicial commutative rings. We start by reviewing the \textit{spectral PD $\infty$-operads} that are suitable for $\eoo$-ring spectra.
	\begin{construction}[$\mathscr{A}$-pro-coherent symmetric sequences]\label{c2.39n}
		Let $\mathscr{A}$ be an additive \infcat. For any $r\ge 0$, there is a free-forgetful adjunction of the permutation group $\sig_r$-actions
		\[(-)[\Sigma_r]:\mathscr{A}\rightleftarrows\text{Fun}(\Sigma_r,\mathscr{A}):\text{forget}\] such that $\text{forget}(A[\Sigma_r])\simeq A^{\oplus r}$ if $r\ge 1$ and $\text{forget}(A[\Sigma_0])\simeq A$. The full subcategory of $\text{Fun}(\Sigma_r,\mathscr{A})$ spanned by all the free $\Sigma_r$-modules is denoted as $\mathscr{A}[\Sigma_r]$.
		
		Next, assume that $\mathscr{A}$ is coherent and endowed with a symmetric monoidal structure $\otimes_{\mathscr{A}}$ which preserves finite direct sums in each variable. The module category $\m_{\mathscr{A}}$ is promoted into an object in $\calg(\mathcal{P}r^{L})$, which means that $\sseq_{\mathscr{A}}:=\sseq(\m_{\mathscr{A}})$ makes sense. Moreover, the \infcat\ $\mathscr{A}[\Sigma_r]$ can be naturally embedded into $\sseq_{\mathscr{A}}$ by regarding any object in $\mathscr{A}[\Sigma_r]$ as a symmetric sequence concentrating at arity $r$.
		
		Denote by $\mathscr{A}[\Sigma]$ the full subcategory $\oplus_r\mathscr{A}[\Sigma_r]\subset\sseq_{\mathscr{A}}$. It is coherent as an additive \infcat, because the t-structure of $\aperf_\mathscr{A}$ can be transferred to $\aperf_{\mathscr{A}[\sig_r]}$. The pro-coherent modules of $\mathscr{A}[\Sigma]$ are called \textit{$\mathscr{A}$-pro-coherent symmetric sequences}. Write $\QC^\vee_{\mathscr{A}[\Sigma]}$ as $\sseq^\vee_{\mathscr{A}}$.
		
		
	\end{construction}
	\begin{lemma}\label{lemma2.40n}
		The full subcategory $\mathscr{A}[\Sigma]\subset \sseq_{\mathscr{A}}$ is closed under the tensor product of Day convolution $\otimes$, the levelwise tensor product $\levtimes$ and the composition product $\circ$. Furthermore, $\otimes$ and $\levtimes$ are additive in each variable, and $\circ$ is additive in the first variable and locally polynomial.
	\end{lemma}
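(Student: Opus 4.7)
The plan is to verify closure for each of the three monoidal structures separately, using explicit formulas for the operations on symmetric sequences, and then deduce additivity and local polynomiality from the same formulas. The key algebraic fact underlying all three closure claims is that for a finite group $G$, a tensor product involving free $G$-modules remains free after the natural $G$-action (be it diagonal, induced, or coming from orbits), so I will isolate this as the first observation.

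For the levelwise tensor $\levtimes$, the formula $(M \levtimes N)(r) = M(r) \otimes_{\mathscr{A}} N(r)$ with the diagonal $\Sigma_r$-action reduces closure to showing that $A[\Sigma_r] \otimes_{\mathscr{A}} B[\Sigma_r]$ is a free $\Sigma_r$-module on $A \otimes B$, which follows from the bijection between $\Sigma_r$-orbits on $\Sigma_r \times \Sigma_r$ (diagonal action) and $\Sigma_r$ itself. For Day convolution $\otimes$, I use $(M \otimes N)(n) = \bigoplus_{i+j=n} \mathrm{Ind}_{\Sigma_i \times \Sigma_j}^{\Sigma_n}(M(i) \otimes N(j))$: inducing a free $\Sigma_i \times \Sigma_j$-module up to $\Sigma_n$ gives a free $\Sigma_n$-module, so the result sits in $\mathscr{A}[\Sigma_n]$. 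For the composition product $\circ$, I will apply formula (\ref{composite formula}): when $M(r) \simeq A_r[\Sigma_r]$ is $\Sigma_r$-free, the homotopy orbits $(M(r) \otimes N^{\otimes r})_{h\Sigma_r}$ agree with strict orbits and simplify to $A_r \otimes N^{\otimes r}$; the latter lives in $\mathscr{A}[\Sigma]$ by iterated use of the Day convolution case.

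Additivity of $\otimes$ and $\levtimes$ in each variable is immediate from the formulas above: the Day convolution is built from induction and the symmetric monoidal structure on $\mathscr{A}$, both of which preserve finite direct sums by assumption. Additivity of $\circ$ in the first variable is also direct from (\ref{composite formula}), since only $M(r)$ appears in the $r$th summand. For local polynomiality of $\circ$ in the second variable, I will exhibit the explicit filtration $M \circ N \simeq \colim_s F_s(N)$ where $F_s(N) := \bigoplus_{r \leq s} (M(r) \otimes N^{\otimes r})_{h\Sigma_r}$; each $F_s$ is polynomial of degree $\leq s$ because $N \mapsto N^{\otimes r}$ is of degree $r$, and the local polynomiality condition is checked arity by arity in the target symmetric sequences.

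The main subtlety I expect is in the composition product argument: one must be careful that the strict orbit calculation for free $\Sigma_r$-modules is compatible with the implicit derived (homotopy) colimits in $\sseq_{\mathscr{A}}$, and that $A_r[\Sigma_r]/\Sigma_r \simeq A_r$ occurs inside $\m_{\mathscr{A}}$ rather than just after taking homotopy categories. This is where Construction \ref{c2.39n} is essential: the freeness of the $\Sigma_r$-action on $A_r[\Sigma_r]$ is built in at the level of the additive category $\mathscr{A}[\Sigma_r]$, so the strict orbit object represents the derived orbit object via the standard free-forgetful adjunction. Given this, the verifications become combinatorial manipulations of coproducts of induced representations, and assembling the three cases gives the lemma.
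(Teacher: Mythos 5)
Your proposal follows essentially the same route as the paper's proof: compute the free-module formulas $V[\Sigma_m]\otimes U[\Sigma_n]\simeq (V\otimes_\mathscr{A}U)[\Sigma_{m+n}]$ (via induction of representations) and $V[\Sigma_n]\levtimes U[\Sigma_n]$ (via diagonal orbits) to get closure under $\otimes$ and $\levtimes$, deduce closure under $\circ$ from formula (\ref{composite formula}), read off additivity from the same formulas, and exhibit local polynomiality via the arity-$\le s$ truncation filtration. The only minor slip is in the $\levtimes$ case: the result is a free $\Sigma_r$-module on $|\Sigma_r|$ copies of $A\otimes B$ (one per diagonal orbit), not on $A\otimes B$ itself, but this does not affect the conclusion that it lies in $\mathscr{A}[\Sigma_r]$.
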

	\begin{proof}
		Note that $V[\Sigma_m]\otimes U[\Sigma_n]\simeq (V\otimes_{\mathscr{A}}U)[\Sigma_{m+n}]$. Thus $\mathscr{A}[\Sigma]$ is closed under $\otimes$. At the same time, if $m=n$, $V[\Sigma_n]\levtimes U[\Sigma_n]$ is a $n$ copies of $(V\otimes_\mathscr{A} U)[\Sigma_n]$, so $\mathscr{A}[\Sigma]$ is also closed under $\levtimes$. The closedness of $\mathscr{A}[\Sigma]$ under $\circ$ is now implied by the formula (1).
		
		The tensor products $\otimes$ and $\levtimes$ are additive in each variable by their nature. The formula (1) shows that $\circ$ is additive in the first variable. Finally, $\circ$ is locally polynomial by
		\[M\circ N\simeq \colim_n (\mathop{\amalg}_{0\le r\le n} (M(r)\otimes_{\mathscr{A}} N^{\otimes r})_{\Sigma_r}).\]
	\end{proof}
	Let $\mathcal{A}dd^{\text{coh},\oplus}$ be the \infcat\ of coherent additive \infcats\ with additive functors.
	\begin{prop}\label{prop2.41n}
		Construction \ref{c2.39n} gives rise to a functor $\sseq^\vee_{-}:\calg(\mathcal{A}dd^{\text{coh},\oplus})\to \mathcal{P}r^{\st}$, which is functorially endowed with: two closed symmetric monoidal structures $\otimes$ and $\levtimes$, and a monoidal structure $\circ$ preserving sifted colimits in each variable and small colimits in the first variable.
		
		Additionally, there is a lax and oplax monoidal structure on  $\levtimes:\sseq^\vee_{-}\times\sseq^\vee_{-}\to \sseq^\vee_{-}$ with respect to $\circ$.
	\end{prop}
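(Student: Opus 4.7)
The plan is to obtain $\sseq^\vee_-$ and all its monoidal structures by functorially applying Construction \ref{c2.39n} and then invoking right-left extension. First, an additive symmetric monoidal functor $F\colon \mathscr{A}\to\mathscr{B}$ sends free $\Sigma_r$-modules to free $\Sigma_r$-modules (additivity being exactly what is needed), so $F$ extends to $F[\Sigma]\colon \mathscr{A}[\Sigma]\to\mathscr{B}[\Sigma]$, and this assignment upgrades $\mathscr{A}\mapsto\mathscr{A}[\Sigma]$ to an endofunctor of $\calg(\mathcal{A}dd^{\mathrm{coh},\oplus})$. Post-composing with the right-left extension $\QC^\vee$ (via the natural inclusion $\mathcal{A}dd^{\mathrm{coh},\oplus}\subset\mathcal{A}dd^{\mathrm{coh,poly}}$) produces the desired functor $\sseq^\vee_-$ landing in $\mathcal{P}r^{\mathrm{st},\Sigma}$, and ultimately in $\mathcal{P}r^{\mathrm{st}}$ once colimit preservation is established below.

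Next, I would transfer the three tensor structures to $\sseq^\vee_{\mathscr{A}}$ via the symmetric monoidality of Theorem \ref{theorem2.29}. Lemma \ref{lemma2.40n} shows that $\otimes$ and $\levtimes$ preserve $\mathscr{A}[\Sigma]$ and are additive in each variable, so they exhibit $\mathscr{A}[\Sigma]$ as a commutative algebra object in $\mathcal{A}dd^{\mathrm{coh},\oplus}$, functorially in $\mathscr{A}$. Applying the symmetric monoidal functor $\QC^\vee$ yields commutative algebra structures on $\sseq^\vee_{\mathscr{A}}$, and additivity in each variable upgrades sifted-colimit preservation to preservation of all small colimits in each variable, so we land in $\calg(\mathcal{P}r^{\mathrm{st}})$. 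Closedness then follows from the presentable adjoint functor theorem. For $\circ$, Lemma \ref{lemma2.40n} shows that it is locally polynomial, promoting $\mathscr{A}[\Sigma]$ to an associative algebra in $\mathcal{A}dd^{\mathrm{coh,poly}}$; $\QC^\vee$ again produces a monoidal structure preserving sifted colimits in each variable, and additivity in the first variable lifts this to preservation of all small colimits there.

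For the final claim, I would first produce on the nose, using formula (\ref{composite formula}), canonical natural comparison maps
\[
(M_1\circ N_1)\levtimes (M_2\circ N_2)\;\rightleftarrows\; (M_1\levtimes M_2)\circ (N_1\levtimes N_2)
\]
on $\mathscr{A}[\Sigma]$: one direction uses that $\levtimes$ distributes over direct sums (additivity) and that the symmetry $N_1^{\otimes r}\levtimes N_2^{\otimes r}\simeq (N_1\levtimes N_2)^{\otimes r}$ is $\Sigma_r$-equivariant; the other direction uses the projection of a levelwise product onto its summands. These maps assemble into both lax and oplax monoidal structures on $\levtimes$ with respect to $\circ$ at the level of $\mathscr{A}[\Sigma]$, and right-left extension transports them to $\sseq^\vee_{\mathscr{A}}$ while preserving the functoriality in $\mathscr{A}$.

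The main obstacle will be the coherence bookkeeping: ensuring that the symmetric monoidality of $\QC^\vee$ in Theorem \ref{theorem2.29} transfers algebra-type structures in the appropriate $\infty$-categorical sense, and that the distributivity comparison maps on $\mathscr{A}[\Sigma]$ assemble into genuine (op)lax monoidal functors of $\infty$-categories rather than merely homotopy-compatible data. This is reduced to the statement that the $\otimes$-symmetric-monoidal and $\circ$-monoidal structures together with their distributivity are already encoded functorially at the level of $\calg(\mathcal{A}dd^{\mathrm{coh,poly}})$, so that the symmetric monoidality of right-left extension does the rest.
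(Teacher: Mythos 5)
Your first two paragraphs follow the paper's proof: $\mathscr{A}\mapsto\mathscr{A}[\Sigma]$ is a functor to the relevant coherent additive categories, and Theorem \ref{theorem2.29} (right-left extension) transports $\otimes$, $\levtimes$, $\circ$ from $\mathscr{A}[\Sigma]$ to $\sseq^\vee_{\mathscr{A}}$. So far, same route as the paper. However there are two genuine gaps.

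First, you assert that $\otimes$ and $\levtimes$ "exhibit $\mathscr{A}[\Sigma]$ as a commutative algebra object in $\mathcal{A}dd^{\mathrm{coh},\oplus}$." This fails for $\levtimes$: its unit is the symmetric sequence with $R$ in \emph{every} arity ($\com$, or $\eoo$ in the spectral case), which is not a finite direct sum and therefore does \emph{not} lie in $\mathscr{A}[\Sigma]=\bigoplus_r\mathscr{A}[\Sigma_r]$. So $(\mathscr{A}[\Sigma],\levtimes)$ is only a \emph{non-unital} commutative algebra. Right-left extension then yields only a non-unital symmetric monoidal structure on $\sseq^\vee_{\mathscr{A}}$, and you have not supplied the unit. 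The paper handles this separately: it observes that the natural transformation $\iota:\sseq_{-}\to\sseq^\vee_{-}$ carries the genuine $\levtimes$-unit $\eoo$ (resp. $\com$) to a unit for the extended $\levtimes$. This step is missing from your argument.

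Second, your construction of the lax/oplax interchange relies on "the symmetry $N_1^{\otimes r}\levtimes N_2^{\otimes r}\simeq (N_1\levtimes N_2)^{\otimes r}$." This is not an equivalence. Unwinding the Day convolution $\otimes$ and the levelwise $\levtimes$: at arity $k$, $(N_1\otimes N_1)\levtimes(N_2\otimes N_2)$ is a sum over pairs of decompositions $(p+q=k,\ p'+q'=k)$, while $(N_1\levtimes N_2)^{\otimes 2}$ only sees the diagonal $p=p'$, $q=q'$. There is only a canonical map $(N_1\levtimes N_2)^{\otimes r}\to N_1^{\otimes r}\levtimes N_2^{\otimes r}$ (the $\Sigma_r$-equivariant diagonal of the inductions), not an isomorphism, so it cannot feed both the lax and the oplax data in the way you describe; the "other direction via projections" is also unclear, since the coproduct indexing (over arities $r$) tangles with $\levtimes$. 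The paper avoids this by simply invoking \cite[Proposition 3.9]{BCN}, where these compatibility maps are constructed carefully; if you want to redo it from scratch, you would need the actual distributor maps rather than a nonexistent symmetry.
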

	\begin{proof}

		The symmetric sequence categories $\sseq_{-}\simeq\m_{-[\Sigma]}$ are equiped with functorial operations from Construction \ref{c2.35n}.\ Applying Theorem \ref{theorem2.29} to them, the right-left extension produces the desired $\otimes$ and $\circ$, and a non-unital symmetric monoidal structure $\levtimes$. Besides, the transformation $\iota:\sseq_{-}\to \sseq^\vee_{-}$ sends $\eoo$ to a $\levtimes$-unit in $\sseq_{-}$.\ The lax and oplax structures on $\levtimes$ are given by Proposition 3.9 in \cite{BCN}, whose proof also works here without extra effort.
	\end{proof}
	The functoriality of $\sseq^\vee_{-}$ gives rise to natural actions of $\sseq^\vee_R$ on filtered and graded modules.

	\begin{notation}\label{n2.42n}
		Let $R$ be a coherent connective $\eoo$-algebra. Define the \infcat\ of \textit{pro-coherent symmetric sequences over $R$} as $\sseq^\vee_R:=\sseq^\vee_{\vect^\omega_R}$, which agrees with the definition in \cite{BCN}. Similarly, we call $\sseq_{\fil,R}^\vee:=\sseq^\vee_{\sfil\vect^\omega_R}$ (resp. $\sseq_{\gr,R}^\vee:=\sseq^\vee_{\sgr\vect^\omega_R}$) the \infcat\ of \textit{ filtered (resp. graded) pro-coherent symmetric sequences over $R$}. Besides, let $\sseq^{\vee,\ge1}_R\subset\sseq^\vee_R$ denote the full subcategory spanned by the pro-coherent symmetric sequences without nullary operations.
	\end{notation}
	\begin{construction}\label{c2.43n}
		Following Example \ref{exa2.36n}, the right-left derived functors
		\[\sseq^\vee_R\to \sseq_{\fil,R}^\vee\to\sseq_{\gr,R}^\vee\] are monoidal with respect to $\otimes$, $\levtimes$ and then $\circ$. This exhibits $\QC^\vee_R$, $\fil_I\QC^\vee_R$ and $\gr_I\QC^\vee_R$ as left $(\sseq^{\vee,\ge1}_R,\circ)$-tensored \infcats, where $I=(-\infty,\infty)$, $(-\infty,-a]$ or $[a,\infty)$ for $a\ge0$. Moreover, we have the $\sseq^{\vee,\ge1}_R$-linear functors $(-)_0$, $[-]_0$, $\gr$ and
		\[F_1:\fil_{\ge1}\QC^\vee_R\to \QC^\vee_R,\ \ \ \ \colim:\fil_{\le-1}\QC^\vee_R\to \QC^\vee_R.\]
		
	\end{construction}
	\begin{df}\label{df2.44n}
		A \textit{spectral PD $\infty$-(co)operad over $R$} is an (co)associative (co)algebra object in $(\sseq^\vee_R,\circ)$. The $\infty$-category of PD $\infty$-(co)operads over $R$ is $\op^{\pd}_R$ (resp. $co\op^{\pd}_R$).
		
		Let $\mathcal{M}$ be a left $(\sseq^\vee_R,\circ)$-tensored \infcat. For a PD $\infty$-(co)operad $\mathcal{P}$ ($\mathcal{Q}$), the \infcat\ $\alg_{\mathcal{P}}(\mathcal{M})$ ($co\alg_\mathcal{Q}(\mathcal{M})$) of \textit{$\mathcal{P}$-algebras ($\mathcal{Q}$-coalgebras) in $\mathcal{M}$} is defined as the \infcat\ of left $\mathcal{P}$-modules (resp. left $\mathcal{Q}$-comodules) objects in $\mathcal{M}$.
	\end{df}
	\begin{exa}\label{exa2.45n}
		The functor $\iota$ (\ref{rk2.22}) sends $\neoo$ to a PD $\infty$-operad (still denoted as $\neoo$). It induces a variant of Example \ref{exa2.38n}
		\[\begin{tikzcd}
			{\calg^{nu}(\QC^\vee_R)} & {\calg^{nu}(\QC^\vee_{\ge1}\m_R)} & {\calg^{nu}(\gr_{\ge1}\QC^\vee_R)}
			\arrow["\adic", shift left=1, from=1-1, to=1-2]
			\arrow["{F_1}", shift left=1, from=1-2, to=1-1]
			\arrow["\gr", shift left=1, from=1-2, to=1-3]
			\arrow[shift left=1, from=1-3, to=1-2]
		\end{tikzcd}.\]
	\end{exa}
	Next, we go to \textit{derived PD $\infty$-operads} is suitable for derived algebras.

	\begin{df}\label{df2.46}
		Let $R$ be a simplicial  commutative ring and $R[\mathcal{O}_\Sigma]:=\oplus_{r\ge 0}R[\mathcal{O}_{\Sigma_r}]$ (Definition \ref{df2.20}) be the sum additive \infcat. The \textit{\infcat\ of derived symmetric sequences} is defined as
		\[\sseq^{\gen}_{\ul{R}}:=\m_{R[\mathcal{O}_\Sigma]}\simeq \prod_{r\ge0}\m^{\Sigma_r}_{\ul{R}}.\]
		Similarly, define the \textit{\infcat\ of pro-coherent derived symmetric sequences} as
		$\sseq^{\gen,\vee}_{\ul{R}}:=\QC^\vee_{R[\mathcal{O}_\Sigma]}$, while the \infcats\ of \textit{filtered or graded} pro-coherent derived symmetric sequences refer to
		\[\sseq^{\gen,\vee}_{\fil,\ul{R}}:=\QC^\vee_{\sfil R[\mathcal{O}_\Sigma]}\simeq \fil\sseq^{\gen,\vee}_{\ul{R}},\ \ \ \ \sseq^{\gen,\vee}_{\gr,\ul{R}}:=\QC^\vee_{\sgr R[\mathcal{O}_\Sigma]}\simeq \gr\sseq^{\gen,\vee}_{\ul{R}}.\]
	\end{df}
	The following technical lemma is useful for constructing operations on $\sseq^{\gen}_R$ and its variants.
	\begin{lemma}\label{lemma2.47}\footnote{We give an alternative proof fixing  \cite[Lemma 3.59]{BCN} with the help of Nuiten.}
		The assignment $R\mapsto R[\mathcal{O}_\Sigma]$ determines a sifted-colimit-preserving functor $\mathrm{SCR}\to \mathcal{A}dd^{\oplus}$.
	\end{lemma}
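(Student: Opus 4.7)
The strategy is to make the functoriality explicit via base change and then reduce sifted-colimit preservation to the well-known fact that sifted colimits in $\scr$ commute with finite products on underlying spaces, using a category-object-in-spaces model of $R[\mathcal{O}_\Sigma]$ whose combinatorial skeleton is $R$-independent.

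\emph{Construction of the functor.} The constant Mackey functor construction $\ul{(-)} \colon \m_{\mathbb{Z},\ge 0} \to \Sp^{\Sigma_r}$ of Notation \ref{n2.19} is lax symmetric monoidal, hence sends $\scr \simeq \calg(\m_{\mathbb{Z},\ge0})$ to $\eoo$-algebras in $\Sp^{\Sigma_r}$. A morphism $R \to R'$ in $\scr$ then induces base change $\ul{R'} \otimes_{\ul{R}} (-)$ on left $\ul{R}$-modules, which carries $\ul{R} \otimes \infsus X$ to $\ul{R'} \otimes \infsus X$ and hence restricts to an additive functor $R[\mathcal{O}_{\Sigma_r}] \to R'[\mathcal{O}_{\Sigma_r}]$. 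Summing over $r \ge 0$ produces the functor $\scr \to \mathcal{A}dd^\oplus$, $R \mapsto R[\mathcal{O}_\Sigma]$.

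\emph{Sifted colimit preservation.} Fix a skeleton of $R[\mathcal{O}_\Sigma]$ so that the set $S$ of objects, indexed by isomorphism classes of finite $\Sigma$-sets, is discrete and independent of $R$. By Example \ref{exa2.24}, the mapping space
\[
\map_{R[\mathcal{O}_\Sigma]}(\ul{R}\otimes \infsus X,\; \ul{R}\otimes \infsus Y) \simeq R^{(X\times Y)/\Sigma_r}
\]
is a finite product of copies of the underlying space of $R$, with the number of factors depending only on $X$ and $Y$. We thus model $R[\mathcal{O}_\Sigma]$ as a category object in spaces $\mathcal{X}_\bullet(R)$ with $\mathcal{X}_0 = S$ and
\[
\mathcal{X}_n(R) \;\simeq\; \coprod_{(s_0,\ldots,s_n)\in S^{n+1}} \;\prod_{i=0}^{n-1} \map(s_i, s_{i+1})(R).
\]
Since $\scr \to \mathcal{S}$ is monadic (hence sifted-colimit-preserving) and sifted colimits in $\mathcal{S}$ commute both with finite products and with set-indexed coproducts, each $\mathcal{X}_n(-) \colon \scr \to \mathcal{S}$ preserves sifted colimits. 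The Segal condition is built into the formula and the object set is $R$-independent, so the associated $\infty$-category preserves sifted colimits into $\mathcal{C}at_\infty$. Finally, the direct sum on $R[\mathcal{O}_\Sigma]$ is given by disjoint union of the underlying $\Sigma$-sets and is therefore $R$-independent, so the lift to $\mathcal{A}dd^\oplus$ preserves sifted colimits as well.

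\textbf{Main obstacle.} The delicate technical point is reconciling the skeletal Segal space model above, whose $\mathcal{X}_0$ is the discrete set $S$ but which is \emph{not} complete (each object has non-trivial automorphisms living in the mapping spaces), with the complete Segal space presentation of $\mathcal{C}at_\infty$. This is handled by observing that sifted colimits in $\mathcal{C}at_\infty$ can be computed by first taking sifted colimits of (not-necessarily-complete) Segal spaces and then applying Rezk completion, and that Rezk completion preserves sifted colimits. Because our family has the same discrete set of objects $S$ throughout and all automorphism data is packaged into our finite-product formulas, no new identifications are introduced by completion, so the resulting sifted colimit in $\mathcal{C}at_\infty$ coincides with the levelwise sifted colimit of the $\mathcal{X}_n$. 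An alternative route would exhibit the whole family as a single associative algebra in $(\mathrm{Fun}(S \times S, \mathcal{S}), \cdot)$ depending sifted-continuously on $R$, which bypasses completeness altogether.
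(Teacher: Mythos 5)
Your construction of the functor via base change is the intended one, and your ``alternative route'' (encoding the family as an associative algebra in endospans of the discrete object set $S$) is essentially what the paper does: it works arity by arity with Dwyer--Kan's model category of simplicial categories with fixed object set $\mathfrak{S}_r$, identifies its nerve with $\alg_{\mathfrak{S}^{cat}_r}(\mathcal{S})$ for a colored $\infty$-operad of catenary composition, and applies \cite[Proposition 3.2.3.1]{HA}. This avoids the completeness question that worries you. Your main route (incomplete Segal spaces plus Rezk completion) is also viable, but the ``no new identifications are introduced'' discussion is a detour; the clean point is simply that the localization $\mathrm{Fun}(\Delta^{op},\mathcal{S})\to\mathcal{C}at_\infty$ is a left adjoint and therefore commutes with all colimits, while each $\mathcal{X}_n(-)$ preserves sifted colimits for the reasons you give, and $\mathcal{X}_\bullet(\colim R)\simeq\colim\mathcal{X}_\bullet(R)$ is then decided levelwise.

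The genuine gap is the last sentence. Knowing that $R\mapsto R[\mathcal{O}_\Sigma]$ preserves sifted colimits as a functor into $\mathcal{C}at_\infty$, and that each $R[\mathcal{O}_\Sigma]$ is additive with additive base-change maps, does not by itself give the conclusion for the functor valued in $\mathcal{A}dd^\oplus$. The issue is not whether the direct sum is ``$R$-independent'' but whether the sifted colimit computed in $\mathcal{C}at_\infty$ is still additive and receives additive cocone maps, i.e.\ whether the inclusion $\mathcal{A}dd^\oplus\hookrightarrow\mathcal{C}at_\infty$ creates sifted colimits. This is a non-trivial general statement and the paper spends roughly half of the proof establishing it: it factors the inclusion through $\mathcal{C}at^\pi_\infty$ (small $\infty$-categories with finite products and finite-product-preserving functors), observes via \cite[Corollary 2.10]{GGN} that the first factor is a colocalization, and then shows the second factor creates sifted colimits by encoding ``has finite products'' as an internal adjunction between $\phi$, $\phi\times\phi$ and $*_K$, which is a finite diagram in $\mathcal{C}at^K_\infty$ and hence commutes with sifted colimits. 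Without that (or some replacement, e.g.\ a direct semiadditivity argument), your final assertion does not follow; you should supply it.
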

	\begin{proof}
		
		It suffices to show that at each arity $r$, the assignment $R\mapsto R[\mathcal{O}_{\sig_r}]$ can be enhanced into a functor $f_r:\mathrm{SCR}\to \mathcal{A}dd^{\oplus}$ preserving sifted colimits. Fix $r$. Let $\mathfrak{S}_r$ be the set of isomorphism classes of finite $\sig_r$-sets. Observe that $R[\mathcal{O}_{\sig_r}]$ has $\mathfrak{S}_r$ as the set of objects for each $R\in \mathrm{SCR}$, and recall that the category $\mathcal{C}at^{\mathfrak{S}_r}_\Delta$ of small simplicial categories with fixed objects set $\mathfrak{S}_r$ admits a simplicial model structure from \cite[7.2]{dk}.\ Thus the assignment $R\mapsto R[\mathcal{O}_{\sig_r}]$ gives rise to a functor $F_1:\mathrm{SCR}\to N_\Delta(\mathcal{C}at^{\mathfrak{S}_r}_\Delta)$ of \infcats, where $N_\Delta(-)$ is taking the simplicial nerve.
		
		The functor $F_1$ is actually sifted-colimit-preserving. Indeed, recall that for each $R\in \mathrm{SCR}$ and $X,Y$ finite $\sig_r$-sets, the mapping space $\map_{R[\mathcal{O}_{\sig_r}]}(\ul{R}\otimes\infsus X,\ul{R}\otimes\infsus Y)\simeq R^{(X\times Y)/\sig_r}$ preserves sifted colimits with respect to $R$. At the same time, the \infcat\ $N_\Delta(\mathcal{C}at^{\mathfrak{S}_r}_\Delta)$ is naturally equivalent with $\alg_{\mathfrak{S}_r^{cat}}(\mathcal{S})$, where $\mathfrak{S}_r^{cat}$ is the $\mathfrak{S}_r\times \mathfrak{S}_r$-colored \ooop\ of catenary composition of ordered pairings in $\mathfrak{S}_r$ \cite[2.1]{GH}. Therefore, it  suffices to apply \cite[Proposition 3.2.3.1]{HA}.

		Write $\mathcal{C}at_\Delta$ Bergner's model category of small simplicial categories \cite[Theorem 1.1]{Bergner}, and $\mathcal{C}at^{\mathfrak{S}_r/}_\Delta$ the coslice model category of small simplicial categories receiving a functor from $\mathfrak{S}_r$ regarded as a discrete category. There is a left Quillen functor $\mathcal{C}at^{\mathfrak{S}_r}_\Delta\to \mathcal{C}at^{\mathfrak{S}_r/}_\Delta$. As a result, the functor
		\[F_2:\mathrm{SCR}\xrightarrow{F_1} N_\Delta(\mathcal{C}at^{\mathfrak{S}_r}_\Delta)\to N(\mathcal{C}at^{\mathfrak{S}_r/}_\Delta)\simeq (\mathcal{C}at_{\infty})_{N(\mathfrak{S}_r)/}\to\mathcal{C}at_{\infty}\] is sifted-colimit-preserving by Proposition 4.4.2.9 in \cite{HTT}.
		
		It only remains to prove that the inclusion $i:\mathcal{A}dd^{\oplus}\to \mathcal{C}at_{\infty}$ creates sifted colimits, since $F_2$ is equivalent with $i\circ f_r$. Write $\mathcal{C}at^{\pi}_{\infty}$ the (not full) subcategory of small \infcats\ admitting finite products with finite-product-preserving functors. The inclusion $i$ factors through $\mathcal{C}at^{\pi}_{\infty}$,
		\[i:\mathcal{A}dd\xrightarrow{j'}\mathcal{C}at^{\pi}_{\infty}\xrightarrow{j''} \mathcal{C}at_{\infty}\]where $j'$ is a colocalization \cite[Corollary 2.10]{GGN}, $j''$ is the inclusion. The question is reduced to showing that $j''$ creates sifted colimits.
		
		Take an arbitrary sifted diagram $\phi:K\to \mathcal{C}at^{\pi}_{\infty}$. The existence of finite products in each term can be internally reformulated as the following map of $K$-diagrams
		\[\phi\rightleftarrows*_K,\ \ \ \ \Delta:\phi\rightleftarrows\phi\times\phi:-\times-,\](where $*_K$ is the singleton valued constant diagram) together with the unit and counit transformations which satisfy the triangle identities, see \cite[\S1]{riehl2016homotopy}. In other words, termwise having finite products is encoded by a finite diagram in $\mathcal{C}at^{K}_{\infty}$ compatible with sifted colimits. Thus the colimit of $j''\circ\phi$ has a natural lifting in $\mathcal{C}at^{\pi}_{\infty}$, which is nothing but $\colim_K \phi$.

	\end{proof}
	
	\begin{construction}[Monoidal structures on $\sseq^{\gen,\vee}_R$]\label{c2.49n}
		When $R$ is a discrete commutative ring, $R[\mathcal{O}_{\Sigma}]$ is a full subcategory of  $\sseq(\m^{\heartsuit}_R)\subset\sseq_R$. Then one can check that $R[\mathcal{O}_{\Sigma}]$ is closed under the operations $\otimes$, $\levtimes$ and $\circ$. The composition product $\circ$ is expressed as
		\[X\circ Y\simeq \bigoplus_{r\ge 0}(X(r)\otimes_{R} Y^{\otimes r})_{\sig_r},\]where $(-)_{\sig_r}$ is taking ordinary orbits. At each arity $r$, taking $R$-linear dual determines a contravariant autoequivalence $R[\mathcal{O}_{\sig_r}]\simeq R[\mathcal{O}_{\sig_r}]^{op}$. It induces another monoidal structure as follows
		\[X\bcirc Y:=(X^\vee\circ Y^\vee)^\vee\simeq \bigoplus_{r\ge 0}(X(r)\otimes_{R} Y^{\otimes r})^{\sig_r},\]where $(-)^{\sig_r}$ is taking the usual fixed points.\ For each morphism $R\to S$, the transition functor $S\otimes_R(-):R[\mathcal{O}_{\Sigma}]\to S[\mathcal{O}_{\Sigma}]$ is compatible with all the operations $\otimes$, $\levtimes$, $\circ$ and $\bcirc$. Recall that $\mathrm{SCR}\simeq \mathcal{P}_{\Sigma}(\mathrm{Poly})$ where $\mathrm{Poly}$ is the 1-category of finitely generated polynomial rings over $\mathbb{Z}$. Thus, using Lemma \ref{lemma2.47}, the operations can be defined on $R[\mathcal{O}_{\Sigma}]$ for any $R\in \mathrm{SCR}$ by the same manipulation on bases. Moreover, they can be extended to $\sfil R[\mathcal{O}_{\Sigma}]$ and $\sgr R[\mathcal{O}_{\Sigma}]$ as well.
		
		Applying Theorem \ref{theorem2.29} to a coherent simplicial commutative ring $R$, we encounter an abundance of monoidal structures $\otimes$, $\levtimes$, $\circ$ and $\bcirc$ on the following \infcats,
		\begin{equation}\label{esseq}
			\sseq^{\gen}_R\xrightarrow{\iota} \sseq^{\gen,\vee}_R\xrightarrow{(-)_0} \sseq^{\gen,\vee}_{R,\fil}\xrightarrow{\gr}\sseq^{\gen,\vee}_{R,\gr}.
		\end{equation}Besides, these functors are symmetric monoidal with respect to $\otimes$ and $\levtimes$ and monoidal with respect to $\circ$ and $\bcirc$. In particular, the following functors of pro-coherent modules are equiped with both $(\sseq^{\gen,\vee,\ge1}_R,\circ)$ and $(\sseq^{\gen,\vee,\ge1}_R,\bcirc)$ linear structures:
		$(-)_0:\QC^\vee_R\to \fil\QC^\vee_R$, $[-]_0:\QC^\vee_R\to \gr\QC^\vee_R$, $\gr:\fil\QC^\vee_R\to\gr\QC^\vee_R$, $F_1:\fil_{\ge1}\QC^\vee_R\to \QC^\vee_R$ and $\colim: \fil_{\le-1}\QC^\vee_R\to \QC^\vee_R$.
		
	\end{construction}
	
	\begin{remark}
		The levelwise product $\levtimes$ admits a unit $\com$ in $\sseq^{\gen}_R$ which is given by $\com(r)\simeq \ul{R}$ for every $r\in\mathbb{N}$. Then $\com$ is sent to $\levtimes$-units in different contexts by line (\ref{esseq}). In each context, the operation $\levtimes$ is both lax and oplax with respect to $\circ$ and $\bcirc$.
	\end{remark}
	\begin{df}
		For a simplicial commutative ring $R$, the $\infty$-category of derived $\infty$-operads is defined as $\alg\big(\sseq^{\gen}_R,\circ\big)$. Similarly, restricted derived $\infty$-operads form an \infcat\ $\alg\big(\sseq^{\gen}_R,\bcirc\big)$.
	\end{df}
	\begin{exa}\label{exa2.52}
		The $\levtimes$-unit object $\com$ admits a natural derived $\infty$-operad structure, whose algebras in $\m_R$ are nothing but \textit{derived algebras} over $R$ in the sense of \cite[\S4.2]{Raksit}. When $\com$ is regarded as a restricted operad, it parametrises \textit{derived divided power algebras}.
		
		The \infcat\ of $R$-derived algebras (in ordinary, filtered or graded modules) admits a cocartesian symmetric monoidal structure, which is given by $\otimes_R$ on the level of underlying modules, \cite[Proposition 4.2.27]{Raksit}. The same claim also applies to pro-coherent derived algebras.
	\end{exa}
	\begin{df}\label{df3.22}
		Let $R$ be a coherent simplicial commutative ring.\ The \infcat\ $\op^{\gen,\vee}_R$ ($co\op^{\gen,\vee}_R$) of \textit{derived pro-coherent  $\infty$-(co)operads} is defined as the \infcat\ of the (co)associative (co)algebras in  $(\sseq^\vee_R,\circ)$. Parallelly, define the \infcat\ $\op^{\gen,\pd}_R$ ($co\op^{\gen,\pd}_R$) of \textit{derived PD $\infty$-(co)operads} as the \infcat\ of the (co)associative (co)algebras in $(\sseq^\vee_R,\bcirc)$.
	\end{df}
	

		\section{Filtered PD Koszul duality}\label{sec3}
	Recall that \textit{derived partition Lie algebras} are sophisticated algebraic objects designed to classify derived formal moduli problems in general characteristics \cite[\S4.2, \S5.2]{BM}. In this section, we present a filtered PD Koszul duality for derived partition Lie algebras in two steps: First, we use a categorical bar-cobar adjunction to obtain the adjunction between filtered non-unital derived algebras and filtered $\br(\com^{nu})$-coalgebras  (\S\ref{sec3.1}). Then we show that taking $R$-linear dual induces an adjunction between filtered $\br(\com^{nu})$-coalgebras and filtered $\br(\com^{nu})^\vee$-algebras (\S\ref{sec3.2}), namely \textit{filtered derived partition Lie algebras}. It turns out that, the \textit{dually almost perfect} derived partition Lie algebras (with the constant filtrations) are fully faithfully embedded into the \infcat\ of
	filtered non-unital derived algebras (Corollary \ref{cor3.31n}).

	Our method in this section is very formal so that it applies to every (spectral PD, derived pro-coherent or derived PD) $\infty$-operad satisfying condition ($\dagger$) in Construction \ref{c3.19n}. As a by-product, we also classify \textit{dually almost perfect} spectral partition Lie algebras  using filtered $\neoo$-algebras (Corollary \ref{cor3.30n}).
	\subsection{Refined bar-cobar adjunction for PD $\infty$-operads}\label{sec3.1}
	
	Let $\mathcal{C}$ be a pointed monoidal \infcat\ and $\mathcal{M}$ be a left $\mathcal{C}$-tensored \infcat. Suppose that both $\mathcal{C}$ and $\mathcal{M}$ have all geometric realizations and totalizations. A categorical bar-cobar construction established in \cite[\S5.2.2]{HA} and refined in \cite[\S3.4]{BCN} produces a commutative diagram
\[\begin{tikzcd}
	{\mathrm{LMod}(\mathcal{M})} & {\mathrm{LComod}(\mathcal{M})} \\
	{\alg(\mathcal{C})} & {co\alg(\mathcal{C})}
	\arrow["\br", shift left, from=1-1, to=1-2]
	\arrow["\pi"', from=1-1, to=2-1]
	\arrow["\cobr", shift left, from=1-2, to=1-1]
	\arrow["\pi", from=1-2, to=2-2]
	\arrow["\br", shift left, from=2-1, to=2-2]
	\arrow["\cobr", shift left, from=2-2, to=2-1]
\end{tikzcd},\]
	where $\br(A,M)$ is given by $(1\otimes_A 1,1\otimes_A M)$ for a left module object $(A,M)\in \mathrm{LMod}(\mathcal{M})$. By the virtue of its functoriality, we can establish a connection between filtered algebras and coalgebras. In this section, we use another notion of higher operads introduced in \cite[\S2]{HA} and refer them as \ooop s to avoid ambiguity.
	
	\begin{construction}[Following Construction \ref{c2.43n}, \ref{c2.49n}]\label{c3.1n}
		Let $\mathcal{LM}^\otimes$ be the \ooop\ in \cite[Definition 4.2.1.7]{HA}, whose categorical algebra $(\mathcal{C},\mathcal{M})$ consists of a monoidal \infcat\ $\mathcal{C}$ and a left $\mathcal{C}$-tensored \infcat\ $\mathcal{M}$. The underlying category of $\mathcal{LM}^{\otimes}$ is a discrete set $\{\mathfrak{a},\mathfrak{m}\}$.
		
		In this section, suppose that $R$ is a coherent connective $\eoo$-ring spectrum, and $\mathcal{C}$ is a pointed monoidal \infcat\ that is equivalent to either $(\sseq^{\vee,\ge1}_{R,1//1},\circ)$, $(\sseq^{\gen,\vee,\ge1}_{\ul{R},1//1},\circ)$ or $(\sseq^{\gen,\vee,\ge1}_{\ul{R},1//1},\bcirc)$ ($R$ is further assumed to be a simplicial commutative ring in the latter two cases).  The action of $\mathcal{C}$ on $\QC^\vee_R$ can be encoded by a cocartesian fibration of \ooop s	$\mathscr{C}^{\otimes}\to \clm^\otimes$ such that $\mathscr{C}_\mathfrak{a}\simeq \mathcal{C}$ and $\mathscr{C}_\mathfrak{m}\simeq \QC^\vee_R$

		The \infcat\ of left module objects $\mathrm{LMod}(\QC^\vee_R)$ of $\mathscr{C}^{\otimes}\to \clm^\otimes$ is the \infcat\ of \ooop ic sections. There is a natural projection $\pi:\mathrm{LMod}(\QC^\vee_R)\to \alg(\mathcal{C})$ whose fibre over ${\mathcal{P}}$ is $\alg_{\mathcal{P}}(\QC^\vee_R)$, where $\mathcal{P}$ is an augmented PD (derived pro-coherent, derived PD) $\infty$-operad.
		
		The cocartesian fibrations of \ooop s $(\fil_I\mathscr{C})^{\otimes}\to \clm^\otimes, (\gr_I\mathscr{C})^{\otimes}\to \clm^\otimes$ are similarly defined for the actions of $\mathcal{C}$ on $\fil_I\QC^\vee_R$ and $\gr_I\QC^\vee_R$ respectively. In the same way, the actions of $\mathcal{C}^{op}$ are encoded by $(\mathscr{C}^{op})^\otimes \to \clm^\otimes$. The \infcat\ of left comodule objects are defined as $\mathrm{LComod}(\QC^\vee_R):=\mathrm{LMod}(QC^{\vee,op}_R)^{op}$, while $\mathrm{LComod}(\fil_I\QC^\vee_R)$ and $\mathrm{LComod}(\gr_I\QC^\vee_R)$ are defined using the same method. The comodule categories admit projections to $co\alg(\mathcal{C})$.
	\end{construction}
	
	\begin{theorem}\label{thm3.2n}
		There is a commutative diagram\[\begin{tikzcd}[row sep=0.6cm, column sep=0.4cm]
			&& {\mathrm{LMod}(\QC^\vee_R)} && {\mathrm{LComod}(\QC^\vee_R)} \\
			{\mathrm{LMod}(\fil_{\ge1}\QC^\vee_R)} && {\mathrm{LComod}(\fil_{\ge1}\QC^\vee_R)} && {\alg(\mathcal{C})} && {co\alg(\mathcal{C})} \\
			&& {\mathrm{LMod}(\gr_{\ge1}\QC^\vee_R)} && {\mathrm{LComod}(\gr_{\ge1}\QC^\vee_R)}
			\arrow["\br", shift left=1, from=1-3, to=1-5]
			\arrow["\cobr", shift left=1, from=1-5, to=1-3]
			\arrow["\br", shift left=1, from=3-3, to=3-5]
			\arrow["\cobr", shift left=1, from=3-5, to=3-3]
			\arrow[from=1-3, to=2-5]
			\arrow[from=3-3, to=2-5]
			\arrow[from=1-5, to=2-7]
			\arrow[from=3-5, to=2-7]
			\arrow["{F_1}", shift right=1, from=2-1, to=1-3]
			\arrow["\gr"', from=2-1, to=3-3]
			\arrow["\br", shift left=1, from=2-1, to=2-3]
			\arrow["\cobr", shift left=1, from=2-3, to=2-1]
			\arrow["{F_1}"', shift right=1, from=2-3, to=1-5, crossing over, near start]
			\arrow["\gr", from=2-3, to=3-5, crossing over, near start]
			\arrow["\br", shift left=1, from=2-5, to=2-7]
			\arrow["\cobr", shift left=1, from=2-7, to=2-5]
		\end{tikzcd}\]where the rows are adjunctions. Furthermore, the diagram satisfies the following properties:
		\begin{enumerate}[label={(\arabic*)}]
			\item The $\br$s preserve cocartesian edges relative to the projections to the \infcats\ of (co)algebras.
			\item The front square exhibits \textnormal{presentable fibrations} over $co\alg(\mathcal{C})$, while the back square exhibits presentable fibrations over $\alg(\mathcal{C})$ (in the sense of  \cite[Definition 5.5.3.2]{HTT}).
			
			\item The $F_1$s and $\gr$s are \textnormal{morphisms of presentable fibrations}, which means that they preserve both cartesian and cocartesian edges.
		\end{enumerate}
	\end{theorem}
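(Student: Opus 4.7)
The plan is to apply the categorical bar--cobar construction of \cite[\S5.2.2]{HA} as refined in \cite[\S3.4]{BCN} three times --- to the pointed monoidal \infcat\ $\mathcal{C}$ acting on $\QC^\vee_R$, on $\fil_{\ge 1}\QC^\vee_R$, and on $\gr_{\ge 1}\QC^\vee_R$ --- and to extract the commutative diagram from functoriality of this construction in morphisms of left-tensored \infcats. Each of the three targets is presentable and admits all geometric realizations and totalizations, so each row of the diagram exists. The vertical arrows $F_1$ and $\gr$ are $\mathcal{C}$-linear and colimit-preserving by Construction \ref{c2.43n}/\ref{c2.49n}, hence they promote to $\clm^\otimes$-monoidal morphisms between the three left-tensored structures, and the bar--cobar functor applied to such morphisms yields the required compatibility of the three rows.

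For assertion (1), I argue directly from the formula $\br(A,M) \simeq (1 \otimes_A 1,\, 1 \otimes_A M)$. A cocartesian edge in $\mathrm{LMod}(\mathcal{M})$ over an edge $A \to A'$ of $\alg(\mathcal{C})$ is a pair $(A\to A',\, M\to M')$ with $M'\simeq A'\otimes_A M$; its image under $\br$ has module component $1 \otimes_A M \to 1 \otimes_{A'} M'$, which is cocartesian over the induced coalgebra map $1\otimes_A 1 \to 1\otimes_{A'} 1$ since $1 \otimes_{A'} M' \simeq 1 \otimes_{A'} (A'\otimes_A M) \simeq 1 \otimes_A M$. For (2), presentability of each fibre $\alg_{\mathcal{P}}(\mathcal{M})$ or $co\alg_{\mathcal{Q}}(\mathcal{M})$ follows from presentability of $\mathcal{M}$ and $\mathcal{C}$; cocartesian transport is a relative tensor product, hence a colimit-preserving left adjoint, while cartesian transport is restriction of scalars, whose left adjoint exists by the adjoint functor theorem. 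By \cite[5.5.3.3]{HTT} this gives the presentable fibration structure on front and back squares. For (3), $F_1$ and $\gr$ preserve cocartesian edges because they commute with the relative tensor products defining base change, a consequence of $\mathcal{C}$-linearity plus colimit-preservation; they preserve cartesian edges because restriction of scalars is the identity on underlying modules, and both $F_1$ and $\gr$, being $\mathcal{C}$-linear, commute with the forgetful functors to the underlying left-tensored \infcats.

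The main obstacle, in my view, is the coherent $\infty$-operadic bookkeeping: one must upgrade the $\mathcal{C}$-linearity of $F_1$ and $\gr$ --- stated at the level of underlying left-tensored \infcats\ in Construction \ref{c2.43n}/\ref{c2.49n} --- to a morphism of cocartesian fibrations of \ooop s over $\clm^\otimes$, so that the bar--cobar machinery of \cite[\S5.2.2]{HA} applies functorially and produces the three-dimensional commutative diagram as a single diagram in $\mathrm{Cat}_\infty$. Once this operadic upgrade is secured, the remaining verifications reduce to formal consequences (colimit-preservation, base change compatibility, and the adjoint functor theorem).
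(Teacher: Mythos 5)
Your overall strategy matches the paper's: set up the three rows of bar--cobar adjunctions via the cocartesian fibrations of $\infty$-operads from Construction \ref{c3.1n}, then check compatibility of the vertical arrows. Assertion (1) is handled in the paper by citing \cite[Theorem 3.26]{BCN}; your direct computation $1\otimes_{A'}(A'\otimes_A M)\simeq 1\otimes_A M$ is a reasonable unpacking of that. But there are three genuine gaps.

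First, presentability of the comodule side is not a formal consequence of presentability of $\mathcal{M}$ and $\mathcal{C}$, as you assert. The category $\mathrm{LComod}(\mathcal{M})$ is the \emph{opposite} of a category of operadic sections, and accessibility of an opposite category is not automatic. The paper proves it via a dedicated op-accessibility lemma (Lemma \ref{lemma3.4n}) feeding into Propositions \ref{prop3.5n}--\ref{prop3.6n}, using the fact that the transition functors of the opposite fibration preserve filtered limits. Without that step you do not know that $co\alg(\mathcal{C})$, $\mathrm{LComod}(\QC^\vee_R)$, or the fibres $co\alg_{\mathcal{Q}}(\QC^\vee_R)$ are presentable, so the front square cannot yet be a presentable fibration.

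Second, your description of the transports on the front square is wrong, and this breaks your arguments for (2) and (3) there. Over $co\alg(\mathcal{C})$ the \emph{cocartesian} transport along a coalgebra map $\mathcal{P}\to\mathcal{Q}$ is the forgetful/corestriction functor (colimit-preserving, hence a left adjoint), and the \emph{cartesian} transport is its right adjoint, a cofree construction. It is not a relative (co)tensor product, and your argument that $F_1$, $\gr$ preserve cartesian edges ``because restriction of scalars is the identity on underlying modules'' has no analogue here. The paper's route is different: $F_1$ has a \emph{relative left adjoint} over $co\alg(\mathcal{C})$ commuting with the forgetful functors (via \cite[Corollary 7.3.2.7]{HA}), and passing to right adjoints gives the commutation with the cofree functors, i.e.\ preservation of cartesian edges. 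You need some such argument.

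Third, the commutativity of $F_1$ and $\gr$ with $\br$ and $\cobr$ is not a formal consequence of $\clm^\otimes$-monoidality; bar--cobar is defined via a universal object in a pairing of $\infty$-categories, and a $\mathcal{C}$-linear functor need not send a left-universal object to a left-universal object. The paper's proof isolates the extra hypothesis that does the work: $F_1$ and $\gr$ preserve sifted colimits (giving commutation with $\br$, since $\br$ is a geometric realization $|\br_\bullet(1,A,M)|$) and sifted limits (giving commutation with $\cobr$). Your proposal names ``the operadic upgrade'' as the main obstacle, but Construction \ref{c3.1n} already supplies that; the real content is the universal-object argument, which you have not given.
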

	The proof of this theorem is broken into small pieces as follows to make it more readable. We first prove (2) and (3), then return to the commutativity and (1).
	\begin{remark}[Back square]\label{rk3.3n}
		 In the context of Construction \ref{c3.1n}, it is well known that the algebra category of each (generalised) $\infty$-operad is presentable. Here we briefly recall how to prove it. Since $\circ$ is compatible with sifted colimits, Lemma 3.2.3.4 in \cite{HA} shows that each vertex in the back square is an accessible \infcat; in particular, for each $\mathcal{P}\in \alg(\mathcal{C})$, the algebra category $\alg_{\mathcal{P}}(\QC^\vee_R)$ is accessible \cite[Proposition 5.4.6.6]{HTT}. The pushouts of $B\leftarrow A\rightarrow C$ in $\alg_{\mathcal{P}}(\QC^\vee_R)$ can be constructed explicitly using bar resolutions by free algebras. The same reasoning works for filtered and graded algebras. For each morphism $\mathcal{P}\to\mathcal{Q}$ in $\alg(\mathcal{C})$, the cocartesian edges over it are given by $\mathcal{Q}\otimes_{\mathcal{P}}-$. Thus the back square consists of presentable fibrations over $\alg(\mathcal{C})$. Additionally, the functors $F_1$ and $\gr$ are morphisms of presentable fibrations, since they come from small-colimit-preserving $\mathcal{C}$-linear functors.
	\end{remark}
	
	We need some categorical preparation to treat the front square.
	\begin{lemma}\label{lemma3.4n}
		Let $\mathcal{M}^{\otimes}\to \mathcal{O}^{\otimes}$ be a cocartesian fibration of \ooop s, where $\mathcal{O}^\otimes$ is small as a simplicial set. The \infcat\ of operadic sections $\alg_{/\mathcal{O}}(\mathcal{M})$ is \textit{op-accessible} (i.e.\ its opposite is accessible) if the following conditions are satisfied:
		\begin{itemize}
			\item for each $X\in \mathcal{O}^{\otimes}$, the fibre $\mathcal{M}_X$ is op-accessible;
			\item for each $f:X\to Y$ in $\mathcal{O}^{\otimes}$, the induced functor $f_!:\mathcal{M}_X\to \mathcal{M}_Y$ is filtered-limit-preserving.
		\end{itemize}
	\end{lemma}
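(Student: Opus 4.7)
The plan is to reduce op-accessibility of $\alg_{/\mathcal{O}}(\mathcal{M})$ to a standard accessibility statement by passing to opposites. Set $q := p^{op}\colon (\mathcal{M}^{\otimes})^{op}\to (\mathcal{O}^{\otimes})^{op}$; this is a cartesian fibration of \infcats\ whose fibres $(\mathcal{M}_X)^{op}$ are accessible, and whose cartesian pullbacks $f^{*} := (f_!)^{op}$ preserve filtered colimits by the second hypothesis. Let $\mathcal{B}$ denote the \infcat\ of arbitrary sections of $p$; then $\alg_{/\mathcal{O}}(\mathcal{M})\subset \mathcal{B}$ is the full subcategory of sections sending inert edges of $\mathcal{O}^{\otimes}$ to $p$-cocartesian edges of $\mathcal{M}^{\otimes}$, and $\mathcal{B}^{op}$ is equivalent to the \infcat\ of sections of $q$.

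First I would establish that $\mathcal{B}^{op}$ is accessible. Via straightening, $q$ is classified by a diagram $G\colon (\mathcal{O}^{\otimes})^{op}\to \widehat{\mathcal{C}at}_{\infty}$ landing in accessible \infcats\ with accessible transition functors, and $\mathcal{B}^{op}$ is the lax limit of $G$. Since $\mathcal{O}^{\otimes}$ is small, one argues by induction on its skeleta, at each stage expressing the extended section \infcat\ as a pullback in $\mathcal{C}at_{\infty}$ of the previous one along an accessible evaluation map into the fibre over a newly added simplex. Stability of accessibility under such pullbacks, together with \cite[Proposition 5.4.7.11]{HTT}, propagates accessibility through the induction. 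Alternatively, one can rewrite the lax limit as an ordinary limit over the twisted arrow category of $\mathcal{O}^{\otimes}$ and reduce directly to accessibility of limits of accessible diagrams.

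Next, I would characterise the operadic sections inside $\mathcal{B}$. By definition $s\in\mathcal{B}$ lies in $\alg_{/\mathcal{O}}(\mathcal{M})$ iff, for every inert edge $f\colon X\to Y$ of $\mathcal{O}^{\otimes}$, the canonical comparison $f_!(s(X))\to s(Y)$ is an equivalence in $\mathcal{M}_Y$. Evaluation at any $Z\in\mathcal{O}^{\otimes}$ gives an accessible functor $\mathcal{B}^{op}\to (\mathcal{M}_Z)^{op}$: filtered colimits in $\mathcal{B}^{op}$ correspond to cofiltered limits in $\mathcal{B}$, which are computed pointwise in each fibre of $p$, the hypothesis on $f_!$ guaranteeing that these pointwise limits assemble into a well-defined section. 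Post-composing the source evaluation with the accessible functor $f^{*}$, both $s\mapsto f_!(s(X))$ and $s\mapsto s(Y)$ yield accessible functors $\mathcal{B}^{op}\to (\mathcal{M}_Y)^{op}$ after taking opposites.

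Finally, $\alg_{/\mathcal{O}}(\mathcal{M})^{op}\subseteq \mathcal{B}^{op}$ is carved out by imposing a small family, indexed by the inert morphisms of the small simplicial set $\mathcal{O}^{\otimes}$, of natural equivalences between accessible functors $\mathcal{B}^{op}\to (\mathcal{M}_Y)^{op}$. Such a locus is an accessible full subcategory by the standard criterion \cite[Corollary 5.4.7.17]{HTT}. Hence $\alg_{/\mathcal{O}}(\mathcal{M})^{op}$ is accessible, i.e.\ $\alg_{/\mathcal{O}}(\mathcal{M})$ is op-accessible. The main obstacle is the first step: accessibility of the total \infcat\ of sections of a cartesian fibration over a small base with accessible fibres and accessible pullbacks is folklore but not recorded verbatim in \cite{HTT}, so the bulk of the technical effort goes into its careful inductive verification, repeatedly invoking stability of accessibility under pullbacks along accessible functors.
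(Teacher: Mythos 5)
Your argument is correct, but it takes a noticeably different route from the paper. The paper gets this in one step: it notices that since $(-)^{op}$ is an autoequivalence of $\widehat{\mathcal{C}at}_\infty$, the subcategory $opAcc$ of op-accessible $\infty$-categories with filtered-limit-preserving functors satisfies the hypotheses of \cite[Proposition 5.4.7.11]{HTT} (by \cite[Remark 5.4.7.13]{HTT}), and then applies that proposition with $\mathcal{E}$ = the inert morphisms of $\mathcal{O}^{\otimes}$; the conclusion immediately places $\alg_{/\mathcal{O}}(\mathcal{M})$ in $opAcc$. You instead unwrap the argument: pass to the opposite cartesian fibration, establish accessibility of the \emph{full} section category $\mathcal{B}^{op}$ by skeletal induction, then carve out the operadic sections as the locus where the inert-comparison maps $f_!(s(X))\to s(Y)$ become equivalences, and apply \cite[Corollary 5.4.7.17]{HTT}. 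Your decomposition is more self-contained and makes visible exactly where the two hypotheses enter (fibrewise accessibility in the induction base, transition-functor accessibility in showing evaluation is accessible and that cofiltered limits of sections are computed pointwise), at the cost of re-proving what \cite[Proposition 5.4.7.11]{HTT} already packages. Two expository wrinkles, neither fatal: (i) citing 5.4.7.11 \emph{inside} the skeletal induction is circular, since the induction is exactly the content of that proposition — you want the stability of accessibility under pullbacks (e.g. \cite[Proposition 5.4.6.6]{HTT}) there, not 5.4.7.11 itself; (ii) the accessibility of the unrestricted section $\infty$-category over a small base is not merely folklore — it is the special case $\mathcal{E}=\emptyset$ of 5.4.7.11, so you could shortcut your first step by citing it directly, which would collapse your proof back to the paper's.
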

	\begin{proof}
		The thrust of proof is \cite[Proposition 5.4.7.11]{HTT}. Since the functor $(-)^{op}$ of taking opposite \infcat\ is an autoequivalence of $\widehat{\mathcal{C}at}_\infty$, we can take $\mathscr{C}$ as the (not full) subcategory $opAcc$ of $\widehat{\mathcal{C}at}_\infty$ of op-accessible $\infty$-categories with filtered-limit-preserving functors, which satisfies the conditions of \cite[Proposition 5.4.7.11]{HTT} by \cite[Remark 5.4.7.13]{HTT}. Consider the categorical fibration $\mathcal{M}^{\otimes}\to \mathcal{O}^{\otimes}$ and let $\mathcal{E}$ be the set of inert morphisms in $\mathcal{O}$. The operadic sections $\alg_{/\mathcal{O}}(\mathcal{M})$ belongs to $opAcc$ by \cite[Proposition 5.4.7.11]{HTT}.
	\end{proof}
	\begin{prop}\label{prop3.5n}
		Following the notation of Construction \ref{c3.1n}, the $\infty$-categories $\mathrm{LComod}(\QC^\vee_R)$, $\mathrm{LComod}(\fil_{\ge1}\QC^\vee_R)$,  $\mathrm{LComod}(\gr_{\ge1}\QC^\vee_R)$ and ${co\alg(\mathcal{C})}$ are all presentable.
	\end{prop}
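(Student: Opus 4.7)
The plan is to pass to opposite categories and invoke Lemma \ref{lemma3.4n}. By construction, $co\alg(\mathcal{C}) \simeq \alg(\mathcal{C}^{op})^{op}$ and $\mathrm{LComod}(\QC^\vee_R) \simeq \mathrm{LMod}(\QC^{\vee,op}_R)^{op}$, and similarly for the filtered and graded variants. Hence it suffices to prove that $\alg(\mathcal{C}^{op})$ and the corresponding module $\infty$-categories are \emph{op-presentable}, i.e.\ op-accessible and equipped with all small limits. Op-accessibility is precisely what Lemma \ref{lemma3.4n} produces, so the task splits into (a) verifying the hypotheses of Lemma \ref{lemma3.4n} for the relevant cocartesian fibrations of $\infty$-operads encoding the $\mathcal{C}^{op}$-actions, and (b) separately checking existence of small limits.

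For (a), the relevant cocartesian fibrations are $(\mathcal{C}^{op})^{\otimes} \to \mathrm{Assoc}^\otimes$ for the coalgebra case and $(\mathscr{C}^{op})^{\otimes} \to \mathcal{LM}^{\otimes}$ for the comodule case. Their fibres are finite products of $\mathcal{C}^{op}$ and of $\QC^{\vee,op}_R$ (respectively $\fil_{\ge1}\QC^{\vee,op}_R$ or $\gr_{\ge1}\QC^{\vee,op}_R$). These are all op-accessible, because $\mathcal{C}$ and the pro-coherent module categories in question are presentable by Examples \ref{exa2.23} and \ref{exa2.24} together with the fact that $\mathcal{C}$ sits as a pointed full subcategory of pro-coherent symmetric sequences closed under the relevant colimits. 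The second hypothesis requires the transition functors in $\mathcal{C}^{op}$ to preserve filtered limits; dually this is the statement that the composition products $\circ$, $\bcirc$ and their actions on $\QC^\vee_R$, $\fil_{\ge1}\QC^\vee_R$, $\gr_{\ge1}\QC^\vee_R$ preserve filtered colimits in each variable. This is exactly what Proposition \ref{prop2.41n} and Construction \ref{c2.49n} record (sifted-colimit preservation, which in particular covers filtered colimits). Applying Lemma \ref{lemma3.4n} then yields op-accessibility of $\alg(\mathcal{C}^{op})$ and of the three $\mathrm{LMod}$ categories, hence accessibility of the four categories in the statement.

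For (b), I would verify that $\alg(\mathcal{C}^{op})$ and the analogous $\mathrm{LMod}(-)$ categories admit all small limits. The forgetful functors to $\mathcal{C}^{op}$ (resp.\ $\QC^{\vee,op}_R$ and its filtered or graded variants) create small limits by the standard creation results for algebras and modules in a monoidal $\infty$-category \cite[\S3.2, \S4.2]{HA}. Since the underlying categories $\mathcal{C}^{op}$, $\QC^{\vee,op}_R$, $\fil_{\ge1}\QC^{\vee,op}_R$ and $\gr_{\ge1}\QC^{\vee,op}_R$ have all small limits (because $\mathcal{C}$, $\QC^\vee_R$ and their filtered and graded versions are presentable and thus cocomplete), these limits do exist, which upon opposing gives all small colimits in $co\alg(\mathcal{C})$, $\mathrm{LComod}(\QC^\vee_R)$, $\mathrm{LComod}(\fil_{\ge1}\QC^\vee_R)$ and $\mathrm{LComod}(\gr_{\ge1}\QC^\vee_R)$.

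The main obstacle is the careful bookkeeping with opposites: the filtered-limit hypothesis of Lemma \ref{lemma3.4n} must be correctly re-expressed as a filtered-colimit statement in the original categories, and one must check that sifted-colimit preservation in $\circ$ and $\bcirc$ (recorded at the level of symmetric sequences and the pro-coherent completions) transfers appropriately to the actions appearing in the operadic sections. Once this translation is in place, everything reduces to the already-established presentability of $\mathcal{C}$ and $\QC^\vee_R$ together with compatibility of the composition and circle products with sifted colimits.
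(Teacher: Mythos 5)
Your proposal is correct and follows essentially the same path as the paper's proof: unwind the definition of the comodule and coalgebra categories as opposites of operadic section categories, invoke Lemma \ref{lemma3.4n} for (op-)accessibility, and then use the limit-creation result from \cite[\S3.2]{HA} together with presentability of $\mathcal{C}$ and the pro-coherent module categories to furnish the missing small (co)limits. The only difference is cosmetic: you spell out the verification of the two hypotheses of Lemma \ref{lemma3.4n} (op-accessibility of the fibres, and filtered-colimit preservation of $\circ$, $\bcirc$ and their actions, which follows since filtered colimits are sifted), whereas the paper states these checks more tersely.
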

	\begin{proof}
		Unpacking the definition, $\mathrm{LComod}(\QC^\vee_R)$ is $\alg_{/\mathcal{LM}}(\mathscr{C}^{op})^{op}$, which is accessible according to Lemma \ref{lemma3.4n}. Furthermore,  $\alg_{/\mathcal{LM}}(\mathscr{C}^{op})$  admits small limits by \cite[Corollary 3.2.2.5]{HA} and the fact that $\mathcal{C}$ and $\QC^\vee_R$ are presentable. Therefore, $\mathrm{LComod}(\QC^\vee_R)$ is presentable. The proof is the same for the other three $\infty$-categories.
	\end{proof}
	\begin{prop}\label{prop3.6n}
		For each (spectral PD, derived pro-coherent or derived PD) $\infty$-cooperad $\mathcal{Q}$, the \infcats\ of $\mathcal{Q}$-coalgebras $co\alg_{\mathcal{Q}}(\QC^\vee_R)$, $co\alg_{\mathcal{Q}}(\fil_{\ge1}\QC^\vee_R)$ and $co\alg_{\mathcal{Q}}(\gr_{\ge 1}\QC^\vee_R)$ are presentable.
	\end{prop}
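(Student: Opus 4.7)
The plan is to verify that each of the three coalgebra $\infty$-categories is accessible and admits all small colimits, and hence presentable by the definition in \cite[\S5.5]{HTT}. I will handle $co\alg_{\mathcal{Q}}(\QC^\vee_R)$ in detail; the filtered and graded cases proceed mutatis mutandis, since $\fil_{\ge 1}\QC^\vee_R$ and $\gr_{\ge 1}\QC^\vee_R$ are themselves presentable and carry compatible $\mathcal{C}$-tensorings by Construction \ref{c3.1n}.

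First, unpacking definitions, $co\alg_{\mathcal{Q}}(\QC^\vee_R)$ is identified with the opposite of the fiber over $\mathcal{Q}$ of the projection $\alg_{/\clm}(\mathscr{C}^{op}) \to \alg(\mathcal{C}^{op})$. To check op-accessibility of that fiber, I would pull back the underlying cocartesian fibration $(\mathscr{C}^{op})^{\otimes} \to \clm^{\otimes}$ along the section picking $\mathcal{Q}$, obtaining a categorical fibration of \ooop s, and then rerun the argument of Lemma \ref{lemma3.4n} in this constrained setting. The fiberwise hypotheses still hold: each of $\mathcal{C}^{op}$ and $(\QC^\vee_R)^{op}$ is op-accessible since $\mathcal{C}$ and $\QC^\vee_R$ are presentable (Proposition \ref{prop3.5n}), and the relevant transition functors preserve filtered limits because the ambient $\mathcal{C}$-action on $\QC^\vee_R$ preserves filtered colimits in each variable. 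Taking opposites then yields accessibility of $co\alg_{\mathcal{Q}}(\QC^\vee_R)$.

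Next, I would establish cocompleteness by showing that the forgetful functor $U:co\alg_{\mathcal{Q}}(\QC^\vee_R)\to\QC^\vee_R$ creates small colimits. Given a diagram $F:K\to co\alg_{\mathcal{Q}}(\QC^\vee_R)$ with underlying colimit $X:=\colim(U\circ F)$ in $\QC^\vee_R$, the termwise comultiplications assemble, via the colimit-comparison transformation for the endofunctor $\mathcal{Q}\circ(-)$, into a canonical coherent comultiplication
\[
X \simeq \colim\, U F \longrightarrow \colim(\mathcal{Q}\circ U F) \longrightarrow \mathcal{Q}\circ X,
\]
making $X$ into a $\mathcal{Q}$-coalgebra whose underlying object is $\colim(U\circ F)$, with coassociativity and counitality forced by naturality. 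Combined with accessibility, this delivers presentability.

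The main obstacle, as I see it, is to make the coalgebra structure on $X$ genuinely $\infty$-coherent rather than merely homotopical. Instead of writing down the comultiplication by hand, the cleanest route is to work entirely within the operadic-section framework: compute colimits in the fiber of $\pi:\mathrm{LComod}(\QC^\vee_R)\to co\alg(\mathcal{C})$ over $\mathcal{Q}$ by exhibiting the ambient colimits, which exist by Proposition \ref{prop3.5n}, as lying in the fiber. This amounts to checking that the colimit of a fiberwise diagram is carried to a constant diagram at $\mathcal{Q}$ in $co\alg(\mathcal{C})$, which follows from the fact that the projection $\pi$ preserves colimits (being a left fibration after restriction to identities of $\mathcal{Q}$). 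This pattern is exactly the one used in Proposition \ref{prop3.5n} and transfers with minimal modification.
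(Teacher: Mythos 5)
Your first instinct for cocompleteness — show the forgetful functor $U\colon co\alg_{\mathcal{Q}}(\QC^\vee_R)\to\QC^\vee_R$ creates small colimits — is exactly what the paper does, and the coherence worry you raise is a non-issue: since $co\alg_{\mathcal{Q}}(\QC^\vee_R)\simeq \mathrm{LMod}_{\mathcal{Q}}\bigl((\QC^\vee_R)^{op}\bigr)^{op}$, the dual of \cite[Corollary 4.2.3.3]{HA} says directly that for any $K$ with $\QC^\vee_R$ admitting $K$-indexed colimits, the forgetful functor creates them, with the comultiplication on the colimit supplied automatically by the operadic-section machinery. No monoidal-compatibility hypothesis on the tensoring is needed for that statement, and there is nothing to write down by hand.

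Your replacement argument for cocompleteness, however, does not work. You propose to compute the colimit in the ambient $\mathrm{LComod}(\QC^\vee_R)$ and argue it remains in the fiber over $\mathcal{Q}$ because ``$\pi$ preserves colimits.'' But even if $\pi$ did preserve $K$-indexed colimits, the image of the ambient colimit would be $\colim_K(\mathrm{const}_{\mathcal{Q}})$ computed in $co\alg(\mathcal{C})$, which is not $\mathcal{Q}$ unless $K$ is weakly contractible; for $K=\Delta^0\amalg\Delta^0$ you would get $\mathcal{Q}\amalg\mathcal{Q}$. The parenthetical ``being a left fibration after restriction to identities of $\mathcal{Q}$'' also does not support the claim: restricting the base to the single object $\{\mathcal{Q}\}$ produces the trivial fibration of the fiber over a point, and tells you nothing about how ambient colimits sit relative to the fiber. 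In short, colimits of fiberwise diagrams in a cocartesian fibration are generally not computed by the ambient colimit; when they are (as in a presentable fibration, which is Proposition \ref{prop3.7n} and logically posterior to this statement), you first take the ambient colimit and then push forward cocartesianly — you cannot skip that second step.

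On accessibility, the paper gets it in one line from the pullback square you wrote down, via \cite[Proposition 5.4.6.6]{HTT} applied to the accessible $\infty$-categories $\mathrm{LComod}(\QC^\vee_R)$ and $co\alg(\mathcal{C})$ from Proposition \ref{prop3.5n}. Your alternative — modify the cocartesian fibration $(\mathscr{C}^{op})^{\otimes}\to\clm^{\otimes}$ so the $\mathfrak{a}$-part is pinned to $\mathcal{Q}$ and rerun Lemma \ref{lemma3.4n} — could be made to work, but you would need to verify that the constrained object is still a cocartesian fibration of \ooop s (and that $\mathcal{Q}$, being an algebra in $\mathcal{C}^{op}$ rather than a section of the fibration, can be used to perform the claimed pullback). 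This is substantially more bookkeeping than the direct route.
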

	\begin{proof}
		Consider the homotopy pullback of \infcats
		\[\begin{tikzcd}
			{co\alg_{\mathcal{Q}}(\QC^\vee_R)} & {\mathrm{LComod}(\QC^\vee_R)} \\
			{\{\mathcal{Q}\}} & {co\alg(\mathcal{C})}
			\arrow[from=1-1, to=2-1]
			\arrow[from=2-1, to=2-2]
			\arrow[from=1-2, to=2-2]
			\arrow[from=1-1, to=1-2]
		\end{tikzcd},\]where $co\alg_{\mathcal{Q}}(\QC^\vee_R)$ is accessible by \cite[Proposition 5.4.6.6]{HTT} and Proposition \ref{prop3.5n}. Moreover, $co\alg_{\mathcal{Q}}(\QC^\vee_R)$ admits all small colimits, because the forgetful functor $co\alg_{\mathcal{Q}}(\QC^\vee_R)\to \QC^\vee_R$ creates small colimits. The filtered and graded are proved mutatis mutandis.
	\end{proof}
	\begin{prop}[Front square]\label{prop3.7n}
		The left comodule categories are presentable fibrations over $co\alg(\mathcal{C})$, while the functors $F_1$ and $\gr$ are morphisms of presentable fibrations
		\[\begin{tikzcd}
			{\mathrm{LComod}(\QC^\vee_R)} & {\mathrm{LComod}(\fil_{\ge1}\QC^\vee_R)} & {\mathrm{LComod}(\gr_{\ge1}\QC^\vee_R)} \\
			& {co\alg(\mathcal{C})}
			\arrow["{F_1}"', from=1-2, to=1-1]
			\arrow["\gr", from=1-2, to=1-3]
			\arrow[from=1-2, to=2-2]
			\arrow[from=1-1, to=2-2]
			\arrow[from=1-3, to=2-2]
		\end{tikzcd}.\]
	\end{prop}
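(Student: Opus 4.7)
The plan is to exhibit $\mathrm{LComod}(\QC^\vee_R)\to co\alg(\mathcal{C})$ as a biCartesian fibration whose straightening factors through $\mathcal{P}r^L$, and then to check that $F_1$ and $\gr$ are morphisms over $co\alg(\mathcal{C})$ preserving both types of lifts. By construction $\mathrm{LComod}(\QC^\vee_R):=\mathrm{LMod}(\QC^{\vee,op}_R)^{op}$ arises from the cocartesian fibration of $\infty$-operads $(\mathscr{C}^{op})^{\otimes}\to\clm^{\otimes}$; the standard module fibration theorem (\cite[Cor.~4.2.3.2]{HA}) gives that $\mathrm{LMod}(\QC^{\vee,op}_R)\to\alg(\mathcal{C}^{op})$ is a cocartesian fibration, and passing to opposites yields that $\mathrm{LComod}(\QC^\vee_R)\to co\alg(\mathcal{C})$ is already a \emph{cartesian} fibration, where for $f:Q\to Q'$ the cartesian transition $f^{*}:\mathrm{LComod}_{Q'}(\QC^\vee_R)\to \mathrm{LComod}_{Q}(\QC^\vee_R)$ is a kind of cotensor.

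Next I would upgrade this to a presentable (hence biCartesian) fibration. Proposition \ref{prop3.6n} supplies that each fibre $\mathrm{LComod}_Q(\QC^\vee_R)$ is presentable. Each $f^{*}$ preserves small limits (limits in comodule categories are detected by the forgetful functor to $\QC^\vee_R$, and $f^{*}$ is the identity on the underlying object) and is accessible. The adjoint functor theorem thus produces a left adjoint $f_{!}$, providing the cocartesian lifts and giving the presentable fibration. Equivalently, one can define $f_{!}$ directly as the ``corestriction of coaction'': send an $Q$-comodule $(M,\rho_M)$ to the same underlying $M$ with coaction $(f\otimes\mathrm{id})\circ\rho_M$, and verify the universal property of a cocartesian edge via the explicit formula; since this transition is the identity on underlying objects it automatically preserves small colimits, so the classifying functor $co\alg(\mathcal{C})\to\mathcal{C}at_\infty$ factors through $\mathcal{P}r^L$. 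The filtered and graded analogues are proved by the same argument, using the corresponding cases of Proposition \ref{prop3.6n}.

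For the morphisms of presentable fibrations, recall from Construction \ref{c2.43n} that $F_1:\fil_{\ge1}\QC^\vee_R\to\QC^\vee_R$ and $\gr:\fil_{\ge1}\QC^\vee_R\to\gr_{\ge1}\QC^\vee_R$ are $(\sseq^{\vee,\ge1}_R,\circ)$-linear, colimit-preserving, and monoidal on the coefficient side. They therefore induce functors on left-comodule categories commuting with the projection to $co\alg(\mathcal{C})$. Compatibility with the cocartesian lifts $f_{!}$ is transparent from the explicit coaction-transport formula, since $F_1$ and $\gr$ are $\mathcal{C}$-linear and intertwine $f\otimes\mathrm{id}$ on the two sides; for the cartesian lifts, preservation follows because both are left adjoints in $\mathcal{P}r^L$ and thus commute up to canonical equivalence with the right adjoints $f^{*}$ constructed by adjoint functor theorem.

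The main obstacle is the precise $\infty$-categorical verification that the hand-built $f_{!}$ (or its abstract incarnation via adjoint functor theorem) genuinely defines coCartesian edges; this is a homotopy-coherent universal property rather than a strict one. The cleanest route is the straightening viewpoint: the cartesian fibration already classifies a functor $co\alg(\mathcal{C})^{op}\to\mathcal{C}at_\infty$ which, by the fibrewise-presentability and limit-preservation just verified, factors through $\mathcal{P}r^R$; passing to the equivalent functor $co\alg(\mathcal{C})\to\mathcal{P}r^L$ simultaneously produces the coCartesian structure, all required adjoints, and the compatibility with $F_1$ and $\gr$ at the level of $\mathcal{P}r^L$-valued diagrams.
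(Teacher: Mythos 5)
Your plan is in the same spirit as the paper's proof — fibrewise presentability plus the adjoint functor theorem to produce the missing class of (co)cartesian edges — but the orientation of the whole argument is reversed, and this is not a cosmetic slip.

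The statement of HA Cor.~4.2.3.2 is that $\mathrm{LMod}(\mathcal{M})\to\alg(\mathcal{C})$ is a \emph{Cartesian} fibration (with Cartesian edges given by restriction of scalars), not a cocartesian one. Consequently, passing to opposites shows that $\mathrm{LComod}(\QC^\vee_R)\to co\alg(\mathcal{C})$ is immediately a \emph{cocartesian} fibration, whose cocartesian transitions are the corestriction functors $f_!$ (``same underlying object, coaction composed with $f$''). What must be produced by the adjoint functor theorem are the \emph{cartesian} lifts, i.e.\ the cotensor functors $f^*$; this is exactly what the paper does. Your proposal has these swapped: you assert $f^*$ comes for free as a cotensor, and then use AFT to manufacture $f_!$. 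Along the way, the justification for applying AFT is also wrong on its face: limits in comodule categories are \emph{not} detected by the forgetful functor to $\QC^\vee_R$ (that functor is a left adjoint and detects colimits), and the cotensor $f^*$ is \emph{not} the identity on underlying objects — both of those properties belong to $f_!$, not $f^*$. In fact the ``Equivalently, one can define $f_!$ directly as corestriction\ldots'' clause shows you have correctly identified what $f_!$ is; the problem is that $f_!$ is precisely the structure you get for free from HA~4.2.3.2 plus opposites, while $f^*$ is what AFT needs to supply.

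The treatment of $F_1$ and $\gr$ has a second gap. Compatibility with the (free) cocartesian edges is indeed transparent since $F_1$ commutes with the forgetful functors. For compatibility with the cartesian edges, however, you appeal to ``both are left adjoints in $\mathcal{P}r^L$'' — but the cartesian transition $f^*$ is a \emph{right} adjoint, and two functors do not commute merely by both lying in $\mathcal{P}r^L$. The needed ingredient is a relative adjoint: as in the paper, $F_1$ admits an \emph{oplax} left adjoint on the level of $\mathcal{LM}^\otimes$-monoidal functors (HA Cor.~7.3.2.7), which gives a left adjoint to $F_1$ relative to $co\alg(\mathcal{C})$ commuting with the forgetful functors; passing to right adjoints along the fibers then shows $F_1$ preserves cartesian edges. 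Without some argument of this flavour, the cartesian-edge compatibility is merely asserted. Fixing the orientation of HA 4.2.3.2 and supplying the relative-adjoint step would bring the proposal into line with the paper's proof.
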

	\begin{proof}
		The projection $\pi:\mathrm{LComod}(\QC^\vee_R)\to co\alg(\mathcal{C})$ is a cocartesian fibration such that, for each morphism $p:\mathcal{P}\to \mathcal{Q}$ in $co\alg(\mathcal{C})$, the cocartesian edges over $p$ are given by the forgetful functor of left comodules. Note that forgetful functors here are small-colimit-preserving and each fibre is presentable, so the right adjoints of forgetful functors produce abundant locally cartesian edges by the adjoint functor theorem. Thus $\pi$ is in fact a presentable fibration. The same proof also works for the filtered and graded cases. The functor $F_1$ here commutes with forgetful functors along arrows in $co\alg(\mathcal{C})$, i.e. preserves the cocartesian edges. At the same time, the corresponding $\mathcal{LM}^{\otimes}$-monoidal functor of the underlying module categories admits an oplax left adjoint \cite[Corollary 7.3.2.7]{HA}, which means that $F_1$ has a left adjoint relative to $co\alg(\mathcal{C})$ commuting with the forgetful functors. Taking the right adjoints, $F_1$ commutes with the right adjoints of forgetful functors, i.e. preserves cartesian edges. The proof for $\gr$ is by mutatis mutandis.

	\end{proof}
	\begin{proof}[Proof of Theorem \ref{thm3.2n}]
		Apply Theorem 3.26 in \cite{BCN} to the cocartesian fibrations of \ooop s in Construction \ref{c3.1n}: $\mathscr{C}^{\otimes}\to \clm^\otimes$, $\fil_{\ge 1}\mathscr{C}^{\otimes}\to \clm^\otimes$, $\gr_{\ge 1}\mathscr{C}^{\otimes}\to \clm^\otimes$ and their opposite counterparts. This establishes everything that does not mention $F_1$ and $\gr$. The assertions (2) and (3) are verified in Remark \ref{rk3.3n} and Propostion \ref{prop3.7n}.
		
		It only remains to demonstrate that $F_1$ and $\gr$ commute with $\br$ and $\cobr$. Expanding the categorical construction in \cite[\S3.4]{BCN}, a $\mathcal{C}$-linear functor $f:\mathcal{M}\to \mathcal{N}$ gives rise to a morphism of pairings of \infcats\
		$\tilde{f}:\mathrm{LMod}(\text{Tw}(\mathcal{M}))\to \mathrm{LMod}(\text{Tw}(\mathcal{N}))$. For each $(A,M)\in\mathrm{LMod}(\mathcal{M})$, the left universal object $u$ lying over it can be understood as $(A,M)\to \br(A,M)=:(C,N)$, and its image $\tilde{f}(u)$ can be written as an arrow $(A,fM)\to (C,fN)$ receiving a unique morphism from the left universal object $(A,fM)\to \br(A,fM)=:(C',N')$. According to \cite[Theorem 5.2.2.17]{HA}, both $C$ and $C'$ are calculated by $|\br_\bullet(1,A,1)|$ in $\mathcal{C}$, while the underlying morphism of $fN\to N'$ in $\mathcal{N}$ is
		$f|\br_\bullet(1,A,M)|\to |\br_\bullet(1,A,fM)|$. If $f:\mathcal{M}\to \mathcal{N}$ is assumed to be sifted-colimit-preserving, then $\tilde{f}(u)$ is left universal as well, which means that $f$ commutes with $\br$ \cite[Proposition 5.2.1.17]{HA}. Dually, if $f:\mathcal{M}\to \mathcal{N}$ preserves sifted limits, we have that $f$ commutes with $\cobr$. Finally, recall that $F_1:\fil_{\ge1}\QC^\vee_R\to \QC^\vee_R$ and $\gr:\fil_{\ge1}\QC^\vee_R\to \gr_{\ge1}\QC^\vee_R$ preserve all small limits and colimits, and they are $\mathcal{C}$-linear.
	\end{proof}
	
	Consider $\mathcal{P}\in\alg(\mathcal{C})$ and choose $\mathcal{M}$ among $\QC^\vee_R$, $\fil\QC^\vee_R$ and $\gr\QC^\vee_R$. The functor \[\br:\alg_{\mathcal{P}}(\mathcal{M})\to co\alg_{\br(\mathcal{P})}(\mathcal{M})\] admits a right adjoint, which is given by the following composition
	\[co\alg_{\br(\mathcal{P})}(\mathcal{M})\xrightarrow{\cobr}\alg_{\cobr\circ\br(\mathcal{P})}(\mathcal{M})\xrightarrow{\phi}\alg_{\mathcal{P}}(\mathcal{M}),\]where $\phi$ is the forgetful functor along $\mathcal{P}\to \cobr\circ\br(\mathcal{P})$. Here, $\phi$ can be omitted for \textit{Koszul (generalized) $\infty$-operads}.
	
	\begin{df}\label{df3.8n}
		An augmented (spectral PD, pro-coherent derived or derived PD) $\infty$-operad $\mathcal{P}$ is said to be \textit{Koszul} if the unit morphism
		$\mathcal{P}\to \cobr\circ\br(\mathcal{P})$ is an equivalence.
	\end{df}
	The following result is proven for spectra by Ching  in \cite[Theorem 2.15]{ching2012bar}, cf. also \cite{BG} for the general case:
	\begin{prop}\label{prop3.9n}
		Suppose that $\mathcal{P}\in \alg(\mathcal{C})$ is \textnormal{reduced}, i.e.\ it satisfies the following conditions:
		
		(1) there is no nullary operation, i.e. $\mathcal{P}(0)\simeq 0$;
		
		(2) the unit map induces a natural equivalence $R\simeq \mathcal{P}(1)$ at arity 1;\\
		then $\mathcal{P}$ is Koszul.
	\end{prop}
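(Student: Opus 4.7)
The plan is to verify the unit map $\mathcal{P} \to \cobr \circ \br(\mathcal{P})$ is an equivalence arity-by-arity, exploiting the fact that reducedness forces a bounded ``tree'' filtration on the bar-cobar resolution. Since everything in sight is defined using the composition product $\circ$ (or $\bcirc$), which by formula (\ref{composite formula}) in Construction \ref{c2.35n} decomposes as a coproduct over arities, the bar and cobar constructions split with respect to arity. Thus the question reduces to checking the claim on each $\mathcal{P}(n)$.

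First, I would unpack $\br(\mathcal{P}) \simeq |\br_\bullet(1,\mathcal{P},1)|$ arity-by-arity. Using formula (\ref{composite formula}) together with $\mathcal{P}(0) \simeq 0$, the $k$-simplices $\br_k(1,\mathcal{P},1)(n) = (1 \circ \mathcal{P}^{\circ k} \circ 1)(n)$ decompose as a (co)limit indexed by $n$-labeled rooted trees of height $k+1$ with \emph{no stubs} (every internal vertex having at least one input). The condition $\mathcal{P}(1) \simeq R$ further collapses the unary vertices via the unit and counit axioms, so effectively only trees whose internal vertices have arity $\geq 2$ contribute. For fixed $n$, such trees have at most $n-1$ internal vertices, so the relevant simplicial diagram computing $\br(\mathcal{P})(n)$ is supported on finitely many trees and is built from a finite collection of $\mathcal{P}(r)$ with $2 \leq r \leq n$.

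Second, dualizing, the cobar construction $\cobr(\br(\mathcal{P}))(n)$ is a totalization whose cosimplicial terms again admit the same tree-indexed finite description at arity $n$, so the totalization reduces to a finite limit (no convergence issue arises, even over the pro-coherent or PD bases). The standard tree-combinatorial identification then shows that the canonical map from $\mathcal{P}(n)$ into this finite limit is an equivalence: intuitively, the two dual tree-models cancel out, with the internal edges of trees being summed over once via $\br$ and once via $\cobr$, leaving only the ``corolla'' contribution $\mathcal{P}(n)$. This is the content of Ching's theorem \cite{ching2012bar} for operads in spectra and of the general treatment in \cite{BG}.

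The main obstacle is to check that the tree-model argument transports uniformly across the three cases of $\mathcal{C}$ appearing in Construction \ref{c3.1n}: the spectral PD $(\sseq^{\vee,\ge 1}_{R,1//1},\circ)$, the derived pro-coherent $(\sseq^{\gen,\vee,\ge 1}_{\ul{R},1//1},\circ)$, and the derived PD $(\sseq^{\gen,\vee,\ge 1}_{\ul{R},1//1},\bcirc)$. For $\bcirc$ the composition uses $\Sigma_r$-fixed points rather than orbits, so one must verify that the tree-wise filtration still terminates; this follows because arity-finiteness is a combinatorial statement about trees and is independent of whether we (co)invariant over $\Sigma_r$. Once this uniformity is in hand, the remainder of the argument is formal, relying on presentability of the algebra and coalgebra categories established in Remark \ref{rk3.3n} and Proposition \ref{prop3.6n}.
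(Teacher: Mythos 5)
The paper does not actually prove this proposition; it defers entirely to the literature, citing \cite[Theorem 2.15]{ching2012bar} for the case of operads in spectra and \cite{BG} for the more general setting. Your sketch reproduces the standard tree-filtration argument underlying those references, so it is not so much a "different route" as it is a correct expansion of what the paper delegates to its citations. In particular, your arity-wise finiteness observation — that for a reduced $\mathcal{P}$ the tree-indexed (co)simplicial diagrams computing $\br$ and $\cobr$ at arity $n$ involve only trees with at most $n-1$ internal vertices of arity $\ge 2$ — is exactly the estimate the paper itself makes explicit later, in Lemma \ref{inlemma3} inside the proof of Theorem \ref{thm3.24n}, for the cobar convergence of reduced cooperads. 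You are also right to flag uniformity across the three choices of $\mathcal{C}$ (spectral PD, derived pro-coherent, derived PD) as the only genuine concern when transporting Ching's argument; since the bound is purely combinatorial in the trees, it does not depend on whether one takes $\Sigma_r$-orbits ($\circ$) or fixed points ($\bcirc$), which is the point \cite{BG} addresses.

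Two small points where your sketch leans on unstated facts. First, to invoke finite convergence of $\cobr(\br(\mathcal{P}))$ you need to know that $\br(\mathcal{P})$ is itself a \emph{reduced} cooperad; this is true (arity $0$ vanishes because $\mathcal{P}(0)\simeq 0$ kills every contribution, and arity $1$ is $|\br_\bullet(R,R,R)|\simeq R$), but it should be recorded. Second, the "the two dual tree-models cancel out, leaving the corolla" step is precisely the content of Ching's theorem — it is not a simple bookkeeping cancellation, but a genuine analysis of the bicomplex formed by trees appearing in both a simplicial and a cosimplicial direction. You correctly defer this to the citation, so there is no gap, but the phrase "intuitively, the two dual tree-models cancel out" somewhat understates that this is the hard part of the proof rather than a routine observation.
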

	\begin{exa}\label{exa3.10n}
		Let $R$ be a connective $\eoo$-ring spectrum. The $\infty$-operad $\neoo$ of augmented commutative $R$-algebra is Koszul as a spectral PD $\infty$-operad. When $R$ is further a simplicial commutative ring, $\com^{nu}$ is Koszul both as a derived pro-coherent  $\infty$-operad and a derived PD $\infty$-operad.
	\end{exa}
	\subsection{Refined duality for PD $\infty$-(co)operads}\label{sec3.2}
	Now we show that taking $R$-linear dual induces an adjunction between coalgebras and algebras that respects the filtrations.
	\begin{notation}\label{n3.11n}
		If $R$ is a coherent connective $\eoo$-ring spectrum, choose $\mathscr{A}$ from $\vect^\omega_R[\sig]$, $\sfil \vect^\omega_R[\sig]$ or $\sgr \vect^\omega_R[\sig]$. The \textit{$R$-linear dual functor of spectral pro-coherent symmetric sequences} is given by the internal hom-object $(-)^\vee:=\hom_{(\QC_\mathscr{A}^\vee,\levtimes)}(-, \eoo)$. 
		
		When $R$ is a coherent simplicial commutative ring, let $\mathscr{B}$ be one of $R[\mathcal{O}_{\sig}]$, $\sfil R[\mathcal{O}_{\sig}]$ or $\sgr R[\mathcal{O}_{\sig}]$. The \textit{$R$-linear dual functor of derived pro-coherent symmetric sequences} is defined as $(-)^\vee:=\hom_{(\QC_{\mathscr{B}}^\vee,\levtimes)}(-, \com)$.
	\end{notation}
	\begin{prop}[Lax monoidal structure of $(-)^\vee$]\label{prop3.12n}
		Following Notation \ref{n3.11n}, we have:
		
		(1) The $R$-linear dual functor $(-)^\vee:\QC_\mathscr{A}^{\vee,op}\to \QC_\mathscr{A}^\vee$ is lax monoidal with respect to $\circ$. Moreover, it restricts to a contravariant monoidal equivalence
		$\aperf_{\mathscr{A}}^{op}\xrightarrow{\simeq}\aperf^\vee_{\mathscr{A}}$.
		
		(2) The derived $R$-linear dual functor gives rise to lax monoidal functors
		\[(-)^\vee:(\QC^{\vee,op}_\mathscr{B},\circ)\to (\QC^\vee_\mathscr{B},\bcirc),\ \ \ \ (-)^\vee:(\QC^{\vee,op}_\mathscr{B},\bcirc)\to (\QC^\vee_\mathscr{B},\circ)\]which form a contravariant adjunction. Furthermore, this adjunction restricts to the monoidal contravariant equivalences
		$(\aperf_{\mathscr{B}}^{op},\circ)\xrightarrow{\simeq} (\aperf^\vee_{\mathscr{B}},\bcirc)$ and $(\aperf_{\mathscr{B}}^{op},\bcirc)\xrightarrow{\simeq} (\aperf^\vee_{\mathscr{B}},\circ)$.
	\end{prop}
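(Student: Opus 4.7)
The plan is to derive the lax monoidal structure by internalizing the lax/oplax compatibility between $\levtimes$ and $\circ$ (respectively $\bcirc$) established in Proposition \ref{prop2.41n} and the remark following Construction \ref{c2.49n}. Since $(-)^\vee$ is by definition the internal hom of the closed symmetric monoidal structure $\levtimes$ evaluated at the unit, any oplax structure relating $\levtimes$ to $\circ$ dualizes into a lax structure for $(-)^\vee$ as a functor between $\circ$-monoidal \infcats. Concretely, to construct the comparison map $X^\vee \circ Y^\vee \to (X \circ Y)^\vee$ in part (1), it suffices by adjunction to exhibit a pairing $(X^\vee \circ Y^\vee) \levtimes (X \circ Y) \to \mathbbb{1}$; one applies the oplax natural transformation $(A \levtimes B) \circ (C \levtimes D) \to (A \circ C) \levtimes (B \circ D)$ to rewrite the source as $(X^\vee \levtimes X) \circ (Y^\vee \levtimes Y)$, then composes with the evaluation $X^\vee \levtimes X \to \mathbbb{1}$ and the unit constraint $\mathbbb{1} \circ \mathbbb{1} \to \mathbbb{1}$. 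Part (2) proceeds identically, with the swap between $\circ$ and $\bcirc$ reflecting that $\bcirc$ is, on the generating subcategory of Construction \ref{c2.49n}, defined by $X \bcirc Y \simeq (X^\vee \circ Y^\vee)^\vee$ before being propagated to $\QC^\vee_\mathscr{B}$ by right-left extension.

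For the contravariant adjunction in part (2), on the level of underlying pro-coherent modules the two $(-)^\vee$ functors already form a contravariant adjunction via the evaluation pairing $X^\vee \levtimes X \to \mathbbb{1}$ and the closed structure. Lifting this to a contravariant adjunction of lax monoidal functors is then formal, since all structural maps of both functors descend from this single evaluation, and so the unit and counit of the adjunction are compatible with the lax structure by construction.

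To restrict to almost perfect modules, we start from Proposition \ref{prop2.30}, which already gives $(-)^\vee \colon \aperf^{op} \xrightarrow{\simeq} \aperf^\vee$ as a plain equivalence. The task is to verify that the lax comparison maps built above become equivalences on almost perfect inputs. Using the explicit formula $M \circ N \simeq \bigoplus_r (M(r) \otimes N^{\otimes r})_{\Sigma_r}$ and the analogous formula with $\Sigma_r$-fixed points for $\bcirc$, together with closure of $\aperf$ under finite tensor products, sums, and orbits via Proposition \ref{prop2.15}, the verification reduces arity by arity to two facts: duality exchanges finite tensor products of dualizable modules, and duality exchanges $\Sigma_r$-orbits with $\Sigma_r$-fixed points for almost perfect modules carrying a $\Sigma_r$-action. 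The first is immediate from Proposition \ref{prop2.30} applied in each arity; the second follows from the self-duality of the group algebra $R[\Sigma_r]$ combined with dualizability of almost perfect $\Sigma_r$-modules established therein.

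The main obstacle will be assembling these arity-wise checks into a coherent $\infty$-categorical equivalence of monoidal (rather than just underlying) functors. The cleanest route is to invoke Theorem \ref{theorem2.29}: both $\circ$ and $\bcirc$ on $\QC^\vee$ (and $\aperf^\vee$) are right-left derived from the corresponding operations on the generating additive subcategory, which consists of compact, self-dual objects on which $(-)^\vee$ manifests as a contravariant monoidal antiequivalence. Naturality of right-left extension with respect to this equivalence propagates the monoidal compatibility to all of $\aperf$ and $\aperf^\vee$. The filtered and graded cases follow by the same argument applied to the coherent additive \infcats\ $\sfil \mathscr{A}$ and $\sgr \mathscr{A}$ by the functoriality of the constructions in Proposition \ref{prop2.41n}.
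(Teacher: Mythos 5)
Your proposal constructs the lax comparison map $X^\vee \circ Y^\vee \to (X \circ Y)^\vee$ by hand from the evaluation pairing and the (op)lax compatibility of $\levtimes$ with $\circ$, whereas the paper packages this abstractly as Lemma \ref{lemma3.13n} via Yoneda embedding into $\mathrm{Fun}(\mathcal{E}^{op},\mathcal{S}_{\mathrm{large}})$ with Day convolution. Two corrections. First, a slip of names: the displayed formula $(A \levtimes B) \circ (C \levtimes D) \to (A \circ C) \levtimes (B \circ D)$ is the \emph{lax} direction, but to rewrite $(X^\vee \circ Y^\vee) \levtimes (X \circ Y)$ as $(X^\vee \levtimes X) \circ (Y^\vee \levtimes Y)$ you need the \emph{oplax} comparison $(A \circ C) \levtimes (B \circ D) \to (A \levtimes B) \circ (C \levtimes D)$; Proposition \ref{prop2.41n} supplies both, so this does not sink the argument. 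Second, and this is the genuine gap: exhibiting a binary comparison map is not the same as producing a lax monoidal $\infty$-functor --- the full tower of higher coherences is unaccounted for. The paper's Lemma \ref{lemma3.13n} delivers these automatically by identifying $j \circ (-)^\vee$ with $\map(-\boxtimes-, 1_\boxtimes)$, a composite of already-coherent lax functors in the Day convolution framework, and your hands-on route has no substitute for this step when it comes to the lax structure on all of $\QC^\vee_\mathscr{A}$; you acknowledge a coherence concern only for the restricted equivalences, not here.

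For part (2), asserting that the contravariant adjunction ``lifts formally'' to lax monoidal functors glosses over the real content: the paper constructs the right adjoint and its lax structure simultaneously as a right Kan extension, citing \cite[Lemma 2.58]{BCN}, which is precisely the coherence-carrying step. By contrast, your closing Kan-extension argument for the restricted monoidal antiequivalences on $\aperf$ closely matches the paper's (extend the monoidal antiequivalence from the generating self-dual subcategory $\mathscr{A}$ or $\mathscr{B}$, then use that $\circ$ and $\bcirc$ commute with sifted colimits, together with Proposition \ref{prop2.30}); once that is in hand, the arity-by-arity verification you sketch is redundant. The missing move in your proposal is to run that same Kan-extension machinery for the lax monoidal structures on $\QC^\vee$ as well, which is exactly what the paper does in part (2), rather than relying on the by-hand pairing argument.
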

	\begin{proof}
		The lax monoidal structure in (1) is established by Lemma \ref{lemma3.13n} below. In the meanwhile, each $R[\Sigma_r]$ is dualizable and has itself as a dual object as a finite free $R$-module. Thus Proposition \ref{prop2.30} implies an equivalence
		$\aperf_{\mathscr{A}}^{\vee,op}\xrightarrow{\simeq}\aperf_{\mathscr{A}}$. This equivalence is monoidal with respect to $\circ$, because it is given by left Kan extension and $\circ$ commutes with sifted colimits.
		
		For (2), we start with the monoidal equivalence $F^{op}:=(-)^\vee:(\mathscr{B}^{op},\circ)\xrightarrow{\simeq} (\mathscr{B},\bcirc)$. Left Kan extension produces a monoidal equivalence $F^{R,op}:(\aperf_{\mathscr{B}}^{\vee,op},\circ)\xrightarrow{\simeq}(\aperf_{\mathscr{B}},\bcirc)$, since $\bcirc$ commutes with sifted colimits. Then right Kan extension produces a lax monoidal right adjoint $F^{RL,op}:(-)^\vee:(\QC^{\vee,op}_\mathscr{B},\circ)\to (\QC^\vee_\mathscr{B},\bcirc)$ \cite[Lemma 2.58]{BCN}. The other direction is done in the same way.
		
	\end{proof}

	\begin{lemma}cf. \cite[Proposition 3.47]{BCN}\label{lemma3.13n}
		Let $\mathcal{E}$ be a presentable \infcat\ endowed with two monoidal structures $\otimes$ and $\boxtimes$, while $\boxtimes$ is closed and symmetric. Suppose that $\mathcal{E}\times\mathcal{E}\xrightarrow{\boxtimes}\mathcal{E}$ is oplax with respect to $\otimes$, and that the unit $1_\boxtimes$ of $\boxtimes$ is an associative algebra in $(\mathcal{E},\otimes)$. Then
		$(-)^\vee:=(\hom_{(\mathcal{E},\boxtimes)}(-,1_\boxtimes):\mathcal{E}^{op}\to \mathcal{E})$ is lax monoidal with respect to $\otimes$.
	\end{lemma}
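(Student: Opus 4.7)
The plan is to construct the lax monoidal structure on $(-)^\vee$ by combining the $\boxtimes$--$\hom_\boxtimes$ adjunction with the oplax compatibility of $\boxtimes$ relative to $\otimes$ and the associative algebra structure on $1_\boxtimes$. This runs parallel to the strategy used in \cite[Proposition 3.47]{BCN}, adapted to the present hypotheses.

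First I would produce the structure maps objectwise. By the internal hom adjunction defining $(-)^\vee$, giving a natural map $X^\vee \otimes Y^\vee \to (X \otimes Y)^\vee$ is equivalent to giving a natural map $(X \otimes Y) \boxtimes (X^\vee \otimes Y^\vee) \to 1_\boxtimes$. I build this as the composite
\[
(X \otimes Y) \boxtimes (X^\vee \otimes Y^\vee) \longrightarrow (X \boxtimes X^\vee) \otimes (Y \boxtimes Y^\vee) \longrightarrow 1_\boxtimes \otimes 1_\boxtimes \longrightarrow 1_\boxtimes,
\]
where the first arrow is the oplax structure of $\boxtimes$ with respect to $\otimes$, the second is the $\otimes$-tensor of the two evaluation counits $X \boxtimes X^\vee \to 1_\boxtimes$ and $Y \boxtimes Y^\vee \to 1_\boxtimes$, and the third is the multiplication of $1_\boxtimes$ viewed as an associative algebra in $(\mathcal{E}, \otimes)$. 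The unit constraint $1_\otimes \to (1_\otimes)^\vee$ corresponds under adjunction to a map $1_\otimes \boxtimes 1_\otimes \to 1_\boxtimes$, which I obtain by composing the oplax unit $1_\otimes \boxtimes 1_\otimes \to 1_\otimes$ with the algebra unit $1_\otimes \to 1_\boxtimes$.

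To upgrade these pointwise data into a coherent lax monoidal structure in the $\infty$-operadic sense, I would invoke the duality between oplax and lax monoidal structures under adjunction: since $\boxtimes$ is oplax with respect to $\otimes$ and admits (by the closedness hypothesis on $\boxtimes$) a two-variable right adjoint $\hom_\boxtimes : \mathcal{E}^{op} \times \mathcal{E} \to \mathcal{E}$, the adjoint $\hom_\boxtimes$ inherits a canonical lax monoidal structure with respect to $\otimes^{op} \times \otimes$ on the source and $\otimes$ on the target. Precomposing with the functor $\ast \to \mathcal{E}$ selecting $1_\boxtimes$, which is lax monoidal precisely because $1_\boxtimes$ is an associative $\otimes$-algebra, yields the desired lax monoidal structure on $(-)^\vee = \hom_\boxtimes(-, 1_\boxtimes)$.

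The main obstacle will be executing the second step cleanly at the $\infty$-operadic level. For $1$-categories the passage from oplax $\boxtimes$ to lax $\hom_\boxtimes$ is immediate, but in the $\infty$-categorical setting one must appeal to the formalism packaging lax and oplax monoidal functors and their behaviour under adjunction (e.g.\ in the spirit of \cite[\S2.4.2, \S7.3]{HA}). Alternatively, one can avoid this abstract input entirely and verify the relevant associativity/unitality coherence diagrams directly from the pointwise construction above, checking that the two composites arising from the pentagon/triangle identities agree by expanding the oplax structure of $\boxtimes$ together with the algebra axioms for $1_\boxtimes$, which mirrors the line of argument in the proof of \cite[Proposition 3.47]{BCN}.
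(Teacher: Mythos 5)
Your pointwise construction of the structure maps is correct and agrees with what the paper's argument produces: under the $\boxtimes$-adjunction the lax map $X^\vee\otimes Y^\vee\to (X\otimes Y)^\vee$ corresponds exactly to your composite
\[
(X\otimes Y)\boxtimes(X^\vee\otimes Y^\vee)\longrightarrow (X\boxtimes X^\vee)\otimes(Y\boxtimes Y^\vee)\longrightarrow 1_\boxtimes\otimes 1_\boxtimes\longrightarrow 1_\boxtimes,
\]
and the unit constraint matches likewise. However, the coherence step is where your proposal has a genuine gap, and you have correctly flagged but not resolved it. Your route (a) asserts that since $\boxtimes$ is oplax with respect to $\otimes$ and admits the internal hom $\hom_\boxtimes:\mathcal{E}^{op}\times\mathcal{E}\to\mathcal{E}$, this internal hom ``inherits a canonical lax monoidal structure.'' But $\hom_\boxtimes$ is \emph{not} a right adjoint of $\boxtimes:\mathcal{E}\times\mathcal{E}\to\mathcal{E}$ in the sense to which the standard oplax-to-lax transfer (\cite[Cor.\ 7.3.2.7]{HA} and its dual) applies; a right adjoint of that two-variable functor would have shape $\mathcal{E}\to\mathcal{E}\times\mathcal{E}$. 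What you are invoking is an adjunction ``in one variable,'' and the passage from an oplax monoidal structure on a two-variable tensor to a lax monoidal structure on its one-variable adjoint is not a result you can cite off the shelf; it requires essentially the same bookkeeping you are trying to avoid. Your fallback route (b) — verifying the coherence diagrams directly — is not viable in the $\infty$-categorical setting, since lax monoidality is an infinite tower of coherence data, not a finite list of pentagon/triangle identities.

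The paper resolves precisely this coherence problem by embedding $\mathcal{E}$ monoidally (for $\otimes$) into a presheaf category equipped with Day convolution via the Yoneda functor $j$. In the presheaf category, the composite $j\circ(-)^\vee$ is identified (via currying) with the two-variable functor $\mathcal{E}^{op}\times\mathcal{E}^{op}\xrightarrow{\boxtimes}\mathcal{E}^{op}\xrightarrow{\mathrm{Map}(-,1_\boxtimes)}\mathcal{S}$, and the lax structure on the latter is manifest because $1_\boxtimes$ is an $\otimes$-algebra and $\boxtimes^{op}$ on $\mathcal{E}^{op}$ is lax. The universal property of Day convolution then packages this into a lax monoidal structure on $j\circ(-)^\vee$, and full faithfulness of the monoidal functor $j$ transfers it back to $(-)^\vee$. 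This is the machine that supplies exactly the higher coherences your pointwise construction leaves implicit; you should either adopt that argument or locate (and cite) a precise statement of the one-variable-adjoint transfer you want to use, because as written the coherence step is unjustified.
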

	\begin{proof}
		Since $\mathcal{E}$ is essentially small, Yoneda embedding induces a monoidal functor $j:\mathcal{E}\hookrightarrow \mathrm{Fun}(\mathcal{E}^{op},\mathcal{S}_{\mathrm{large}})$ with respect to $\otimes$ and Day convolution tensor product. The composite $j\circ (-)^\vee$ corresponding to 
		\[\mathcal{E}^{op}\times \mathcal{E}^{op}\xrightarrow{\boxtimes}\mathcal{E}^{op}\xrightarrow{\map(-,1_\boxtimes)}\mathcal{S}_{\mathrm{large}}\] by the canonical equivalence $\mathrm{Fun}(\mathcal{E}^{op},\mathrm{Fun}(\mathcal{E}^{op},\mathcal{S}_{\mathrm{large}}))\simeq \text{Fun}(\mathcal{E}^{op}\times\mathcal{E}^{op},\mathcal{S}_{\mathrm{large}})$. Here $\map(-,1_\boxtimes)$ is lax monoidal, since $j$ is monoidal and $1_\boxtimes$ is an $\otimes$-algebra. Thus $\map(-\boxtimes-, 1_\boxtimes)$ is a lax monoidal functor. The construction of Day convolution \cite[\S2.2.6]{HA} provides a commutative diagram
		\[\begin{tikzcd}
			{\alg_{(\mathcal{E}^{op}\times_{\text{Assoc}}\mathcal{E}^{op})/\text{Assoc}}(\mathcal{S}_{\mathrm{large}})} & {\text{Fun}(\mathcal{E}^{op}\times\mathcal{E}^{op},\mathcal{S}_{\mathrm{large}})} \\
			{\alg_{\mathcal{E}^{op}/\text{Assoc}}(\text{Fun}(\mathcal{E}^{op},\mathcal{S}_{\mathrm{large}}))} & {\text{Fun}(\mathcal{E}^{op},\text{Fun}(\mathcal{E}^{op},\mathcal{S}_{\mathrm{large}}))}
			\arrow["\simeq", from=1-1, to=2-1]
			\arrow[from=2-1, to=2-2]
			\arrow[from=1-1, to=1-2]
			\arrow["\simeq"', from=1-2, to=2-2]
		\end{tikzcd},\]which implies that $j\circ(-)^\vee$ admits a lax monoidal structure with respect to $\otimes$. Finally, we conclude by noting that $j$ is a fully faithful monoidal functor.
	\end{proof}
	We can regard $R$ as a filtered (graded) module by the embedding $(-)_0$ ($[-]_0$) into weight $0$. The $R$-linear dual of a filtered module $M$ is determined by \begin{equation}\label{dualfil}
		F_{n}(M^\vee)\simeq (\mathop{\colim}_{k\to -\infty}F_kM/F_{-n+1}M)^\vee,
	\end{equation}which is \textit{always} complete. Taking $R$-linear dual of graded modules sends $N_\star$ to $N^\vee_{-\star}$. These formulae show that there is a commuting diagram in $\mathcal{A}dd^{\text{coh,poly}}$
	\begin{equation}\label{e3}
		\begin{tikzcd}
			{(\vect^\omega_{R})^{op}} & {\vect^\omega_{R}} \\
			{(\sfil_{\ge1}\vect^\omega_{R})^{op}} & {\sfil_{\le-1}\vect^\omega_{R}} \\
			{(\sgr_{\ge1}\vect^\omega_{R})^{op}} & {\sgr_{\le-1}\vect^\omega_{R}}
			\arrow["{(-)_1}", from=1-1, to=2-1]
			\arrow["{(-)^\vee}", from=1-1, to=1-2]
			\arrow["\const", from=1-2, to=2-2]
			\arrow["\gr", from=2-2, to=3-2]
			\arrow["{(-)^\vee}", from=2-1, to=2-2]
			\arrow["{(-)^\vee}", from=3-1, to=3-2]
			\arrow["\gr", from=2-1, to=3-1]
		\end{tikzcd}.
	\end{equation}
	\begin{notation}\label{n3.14n}
		Adopt the same notation as Construction \ref{c3.1n}. When $\mathcal{C}=(\sseq^{\vee,\ge1}_{R,1//1},\circ)$, $(\sseq^{\gen,\vee,\ge1}_{R,1//1},\circ)$ or $(\sseq^{\gen,\vee,\ge1}_{R,1//1},\bcirc)$, set $\mathcal{C}_d$ as $(\sseq^{\vee,\ge1}_{R,1//1},\circ)$, $(\sseq^{\gen,\vee,\ge1}_{R,1//1},\bcirc)$ or $(\sseq^{\gen,\vee,\ge1}_{R,1//1},\circ)$ respectively.
		The $\mathcal{LM}^{\otimes}$-monoidal \infcats\ that encode the action of $\mathcal{C}_d$ on $\QC^\vee_R$, $\fil_{\le-1}\QC^\vee_R$ or $\gr_{\le-1}\QC^\vee_R$ are denoted as $\mathscr{C}^\otimes_d$, $(\fil_{\le-1}\mathscr{C}_d)^\otimes$ or $(\gr_{\le-1}\mathscr{C}_d)^\otimes$ respectively.
	\end{notation}
	\begin{prop}\label{prop3.15}
		There exists a commutative diagram of \ooop s over $\clm^{\otimes}$
		\begin{equation}\label{e4}
			\begin{tikzcd}
				{(\mathscr{C}^{op})^\otimes} & {\mathscr{C}_d^\otimes} \\
				{(\fil_{\ge1}\mathscr{C}^{op})^\otimes} & {(\fil_{\le-1}\mathscr{C}_d)^\otimes} \\
				{(\gr_{\ge1}\mathscr{C}^{op})^\otimes} & {(\gr_{\le-1}\mathscr{C}_d)^\otimes}
				\arrow["{(-)_1}", from=1-1, to=2-1]
				\arrow["\gr", from=2-1, to=3-1]
				\arrow["{(-)^\vee}", from=1-1, to=1-2]
				\arrow["\const", from=1-2, to=2-2]
				\arrow["\gr", from=2-2, to=3-2]
				\arrow["{(-)^\vee}", from=2-1, to=2-2]
				\arrow["{(-)^\vee}", from=3-1, to=3-2]
			\end{tikzcd}
		\end{equation}such that:
		
		(1) on the fibres over $\mathfrak{a}$, the vertical arrows are identities of $\mathcal{C}$ or $\mathcal{C}_d$, the horizontal arrows are equivalent to the R-linear dual functor $\mathcal{C}^{op}\xrightarrow{(-)^\vee}\mathcal{C}_d$ defined in Notation \ref{n3.11n};
		
		(2) on the fibres over $\mathfrak{m}$, the diagram is given by right-left derived functors of (\ref{e3}).
		
	\end{prop}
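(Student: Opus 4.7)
The plan is to lift the additive-level diagram (\ref{e3}) to a diagram of $\clm^\otimes$-monoidal additive $\infty$-categories, then apply the functoriality of right-left extension to obtain (\ref{e4}) directly. First, I would view each of the six entries of (\ref{e3}) as the $\mathfrak{m}$-slot of a single $\clm^\otimes$-algebra in $\mathcal{A}dd^{\text{coh,poly}}$, whose $\mathfrak{a}$-slot is $\mathcal{C}^{op}$ on the left column and $\mathcal{C}_d$ on the right column (i.e.\ $\vect^\omega_R[\Sigma]$, $\sfil\vect^\omega_R[\Sigma]$ or $\sgr\vect^\omega_R[\Sigma]$ equipped with $\circ$ or $\bcirc$, as appropriate). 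The vertical arrows within each column are identities on the $\mathfrak{a}$-slot, and are given on the $\mathfrak{m}$-slot by $(-)_1,\gr$ on the left and $\const,\gr$ on the right; these arise from Day-convolution-monoidal additive functors and are $\mathcal{C}^{op}$-linear (resp.\ $\mathcal{C}_d$-linear) by the functoriality established in Constructions \ref{c2.43n} and \ref{c2.49n}.

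The horizontal arrows in each row are the $R$-linear duals $(-)^\vee$ on both slots. Proposition \ref{prop3.12n} supplies them as contravariant monoidal equivalences on the $\mathfrak{a}$-slot (swapping $\circ$ and $\bcirc$ in the derived case, and preserving $\circ$ in the spectral case). The lax structure of $(-)^\vee$ coming from Lemma \ref{lemma3.13n}, applied to the oplax pairing of $\levtimes$ against $\circ$ and $\bcirc$ from Proposition \ref{prop2.41n}, then upgrades these $\mathfrak{a}$-slot monoidal equivalences to morphisms of $\clm^\otimes$-algebras in each row by providing the compatibility $(\mathcal{P}\circ M)^\vee \to \mathcal{P}^\vee \bcirc M^\vee$ (and its analogue in the spectral setting). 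The commutativity of the resulting square in the $\infty$-category of $\clm^\otimes$-algebras in $\mathcal{A}dd^{\text{coh,poly}}$ reduces to the commutativity of (\ref{e3}) on each matching slot, which is immediate.

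With (\ref{e3}) thus enhanced, the final step is to apply the symmetric monoidal right-left extension of Theorem \ref{theorem2.29}: it sends $\clm^\otimes$-algebras to $\clm^\otimes$-algebras in $\mathcal{P}r^{\st,\Sigma}$ and locally polynomial morphisms to sifted-colimit-preserving ones, producing the desired diagram (\ref{e4}) of cocartesian fibrations of $\infty$-operads over $\clm^\otimes$. Properties (1) and (2) hold by construction: the $\mathfrak{a}$-fibres are unchanged by the extension process, so the vertical maps are identities of $\mathcal{C}$ or $\mathcal{C}_d$ and the horizontal maps are $(-)^\vee$; while the $\mathfrak{m}$-fibres are exactly the right-left derived functors of (\ref{e3}).

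The main obstacle I anticipate is arranging (\ref{e3}) coherently as a single diagram of $\clm^\otimes$-algebras, rather than extracting the $\clm^\otimes$-structure pointwise and gluing it afterwards. Concretely, one must exhibit the six placement/dualization/grading operations as morphisms compatible with both the $\levtimes$-monoidal and $\circ$/$\bcirc$-composition structures simultaneously; the key inputs are the functoriality of $\sseq^\vee_-$ in Proposition \ref{prop2.41n} and the naturality of the lax structure in Lemma \ref{lemma3.13n}, which together ensure that all the relevant squares of lax monoidal functors commute before passing to right-left extensions.
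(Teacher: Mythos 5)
There is a genuine gap. You claim that $(-)_1$ and $\const$ "are $\mathcal{C}^{op}$-linear (resp.\ $\mathcal{C}_d$-linear) by the functoriality established in Constructions \ref{c2.43n} and \ref{c2.49n}," but those constructions establish $\mathcal{C}$-linearity for the functors $(-)_0$, $[-]_0$, $\gr$, $F_1$ and $\colim$ --- not for $(-)_1$ or $\const$. The distinction is not cosmetic: $(-)_1$ places an object at filtration weight $1$, and for a symmetric sequence $\mathcal{P}$ concentrated in arities $\geq 1$ with constant (weight $0$) filtration, the component $\mathcal{P}(r)\otimes (V_1)^{\otimes r}$ sits in weight $r$, so $\mathcal{P}\circ (V)_1$ spreads across all weights $\geq 1$ while $(\mathcal{P}\circ V)_1$ is concentrated in weight $1$. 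Strong $\mathcal{C}$-linearity of $(-)_1$ therefore genuinely fails (and likewise for $\const$), so your plan to lift $(-)_1,\const$ directly to morphisms of $\clm^\otimes$-algebras in $\mathcal{A}dd^{\text{coh,poly}}$ and then apply the symmetric monoidal functoriality of Theorem \ref{theorem2.29} cannot get off the ground.

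What the paper actually does is exploit the adjunctions $(-)_1\dashv F_1$ and $\colim\dashv\const$ \emph{at the pro-coherent level}: $F_1$ and $\colim$ are the $\mathcal{C}$-linear functors, i.e.\ $\clm^\otimes$-monoidal, and Corollary 7.3.2.7 of \cite{HA} then promotes their adjoints $(-)_1$ and $\const$ to operadic maps (oplax/lax $\clm^\otimes$-monoidal functors, which is all that is needed for a map of $\infty$-operads over $\clm^\otimes$). The $\gr$'s pose no problem since they are themselves $\mathcal{C}$- and $\mathcal{C}_d$-linear, and the horizontal arrows come from restricting the lax monoidal structure of $(-)^\vee$ supplied by Proposition \ref{prop3.12n}, exactly as you suggest. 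If you want to salvage the additive-level strategy, you would still have to produce a lax/oplax $\clm^\otimes$-structure on $(-)_1$ and $\const$ at the level of $\mathcal{A}dd^{\text{coh,poly}}$ and verify it is preserved by right-left extension; that is extra work not covered by the constructions you cite, and the paper sidesteps it entirely by working after the extension where the relative adjoint machinery of \cite{HA} is available.
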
	
	\begin{proof}
		The existence of the bottom vertical arrows is clear, since $\gr:\fil_I\QC^\vee_{R}\to \gr_I\QC^\vee_{R}$ is both $\mathcal{C}$-linear and $\mathcal{C}_d$-linear. Now observe that $(-)_1$ admits a right adjoint
		$F_1:\fil_{\ge 1}\QC^\vee_{R}\to \QC^\vee_R$, which is $\mathcal{C}$-linear by Construction \ref{c2.43n} and \ref{c2.49n}. Then $(-)_1$ is promoted to a right ajdoint $(-)_1: {(\mathscr{C}^{op})^\otimes}\to {(\fil_{\ge1}\mathscr{C}^{op})^\otimes}$ relative to $\mathcal{LM}^{\otimes}$ \cite[Corollary 7.3.2.7]{HA}, which is an operadic map satisfying (1) and (2). Applying the same procedure to $\colim\dashv \const$ gives the other lax $\clm^\otimes$-monoidal functor $\const:\mathscr{C}_d^\otimes\to (\fil_{\le-1}\mathscr{C}_d)^\otimes$.
		
		Restricting the lax monoidal functors in Proposition \ref{prop3.12n} leads to the expected horizontal arrows. The commutativity of (\ref{e4}) can be checked by considering the underlying functors.

	\end{proof}

	\begin{corollary}\label{cor3.16n}
		Taking $R$-linear dual induces a commutative diagram of right adjoints
		\begin{equation}\label{e5}
			\begin{tikzcd}
				{\mathrm{LComod}(\QC^\vee_R)^{op}} & {{\mathrm{LMod}(\QC^\vee_R)}} \\
				{\mathrm{LComod}(\fil_{\ge1}\QC^\vee_R)^{op}} & {{\mathrm{LMod}(\fil_{\le-1}\QC^\vee_R)}} \\
				{\mathrm{LComod}(\gr_{\ge1}\QC^\vee_R)^{op}} & {{\mathrm{LMod}(\gr_{\le-1}\QC^\vee_R)}} \\
				{(co\alg(\mathcal{C}))^{op}} & {\alg(\mathcal{C}_d)}
				\arrow["{(-)_1}", from=1-1, to=2-1]
				\arrow["\gr", from=2-1, to=3-1]
				\arrow["\const", from=1-2, to=2-2]
				\arrow["\gr", from=2-2, to=3-2]
				\arrow["{(-)^\vee}", from=1-1, to=1-2]
				\arrow["{(-)^\vee}", from=2-1, to=2-2]
				\arrow["{(-)^\vee}", from=3-1, to=3-2]
				\arrow[from=3-1, to=4-1]
				\arrow[from=3-2, to=4-2]
				\arrow["{(-)^\vee}", from=4-1, to=4-2]
			\end{tikzcd}.
		\end{equation}
	\end{corollary}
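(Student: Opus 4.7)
The plan is to derive diagram~(\ref{e5}) by applying the $\mathrm{LMod}$ construction to the commutative diagram~(\ref{e4}) of lax $\clm^\otimes$-monoidal functors produced in Proposition~\ref{prop3.15}, and then to recognize each resulting arrow as a right adjoint by tracing the relevant monoidal adjunctions.

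First I would apply $\mathrm{LMod}$ to each node of~(\ref{e4}) to obtain the six vertices of~(\ref{e5}); the identification $\mathrm{LMod}((\mathscr{C}^{op})^\otimes)\simeq \mathrm{LComod}(\QC^\vee_R)^{op}$, together with its filtered and graded analogues, turns the left column of~(\ref{e4}) into the left column of~(\ref{e5}). Functoriality of $\mathrm{LMod}$ with respect to (lax) $\clm^\otimes$-monoidal maps then delivers all the induced arrows of~(\ref{e5}) and its commutativity for free. The bottom row $(co\alg(\mathcal{C}))^{op}\to\alg(\mathcal{C}_d)$ arises as the induced map on the $\alg$/$co\alg$ bases, which is the $R$-linear dual functor of Proposition~\ref{prop3.12n} restricted to (co)associative (co)algebras.

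Next I would verify that every arrow in~(\ref{e5}) is a right adjoint. For the horizontal $(-)^\vee$'s, Proposition~\ref{prop3.12n} provides a contravariant monoidal adjunction, so the opposite $(-)^\vee$ in the other direction furnishes an explicit left adjoint at the $\mathrm{LMod}$ level. For the vertical arrows, the proof of Proposition~\ref{prop3.15} constructs them as $\clm^\otimes$-relative right adjoints: $(-)_1\dashv F_1$ and $\colim\dashv\const$ produce the right adjoints on the two columns, while $\gr$, being $\clm^\otimes$-monoidal and preserving both small limits and small colimits in the stable setting (a fibre is a cofibre up to shift), is automatically a right adjoint as well. Applying $\mathrm{LMod}$ to a $\clm^\otimes$-monoidal right adjoint yields a right adjoint between the corresponding $\mathrm{LMod}$ categories relative to the induced map of $\alg$, which is precisely what~(\ref{e5}) asserts.

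The main obstacle is this last step: verifying that $\mathrm{LMod}$ actually sends a $\clm^\otimes$-monoidal right adjoint to a right adjoint at the module level, with the expected left adjoint globally defined. This amounts to a relative adjoint functor theorem, and it can be handled exactly as in the proofs of Propositions~\ref{prop3.5n}--\ref{prop3.7n}: one checks fibrewise that each functor is a (presentable) right adjoint with an accessible, filtered-limit-preserving left adjoint, and then invokes the presentability of the total $\mathrm{LMod}$ categories already established there to assemble the left adjoints globally. Once this is in place, diagram~(\ref{e5}) is the image of~(\ref{e4}) under a right-adjoint-preserving functor, hence a commutative diagram of right adjoints as required.
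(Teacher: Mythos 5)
Your plan of applying operadic sections to diagram~(\ref{e4}) and reading off commutativity is exactly what the paper does. The divergence, and the gap, is in how you justify that the arrows are right adjoints.

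The problematic step is the claim that Proposition~\ref{prop3.12n} ``furnishes an explicit left adjoint at the $\mathrm{LMod}$ level'' for the horizontal $(-)^\vee$'s. Proposition~\ref{prop3.12n} produces a contravariant adjunction at the level of (pro-coherent) symmetric sequences, where \emph{both} $(-)^\vee$ functors are only \emph{lax} monoidal with respect to $\circ$ and $\bcirc$. Passing an adjunction to $\mathrm{LMod}$ categories (e.g.\ via \cite[Corollary 7.3.2.7]{HA}) requires the left adjoint to be strong $\clm^\otimes$-monoidal, i.e.\ to preserve cocartesian edges, which is not the case here. So the adjunction of Proposition~\ref{prop3.12n} does not lift to the module/comodule categories in the way you suggest, and no explicit left adjoint is produced. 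Your fallback, a fibrewise relative adjoint functor argument ``as in Propositions~\ref{prop3.5n}--\ref{prop3.7n}'', also does not directly apply to the horizontal $(-)^\vee$'s, since they do not lie over an identity of the base: the induced map on bases is the nontrivial functor $(co\alg(\mathcal{C}))^{op}\to\alg(\mathcal{C}_d)$. By contrast, your treatment of the vertical arrows is fine: $(-)_1\dashv F_1$ and $\colim\dashv\const$ are genuine $\clm^\otimes$-relative adjunctions, as shown in the proof of Proposition~\ref{prop3.15}, and these do transport to the $\mathrm{LMod}$ level.

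The paper sidesteps the lax-vs-strong-monoidal issue entirely by an abstract adjoint functor argument. It notes that taking operadic sections of~(\ref{e4}) produces small-limit-preserving functors (\cite[Proposition 3.2.2.1]{HA}), that the vertices $\mathrm{LComod}(\ldots)^{op}$ on the left are op-presentable (Proposition~\ref{prop3.5n}), and that the right-hand vertical arrows additionally preserve sifted colimits (\cite[Proposition 3.2.3.1]{HA}). The adjoint functor theorem \cite[Corollary 5.5.2.9]{HTT} (part (1) applied to the opposite functors starting from the op-presentable left column, part (2) to the right vertical arrows) then yields the left adjoints abstractly, without ever needing to identify them. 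To repair your argument you should abandon the attempt to build explicit left adjoints through Proposition~\ref{prop3.12n} and instead observe limit-preservation and op-presentability as the paper does; the explicit adjoint only exists (and is only needed) when restricted to dually almost perfect objects, which is exactly what the subsequent Corollary~\ref{cor3.17n} records.
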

	\begin{proof}
		This diagram is constructed by considering the operadic sections of (\ref{e4}), which consists of small-limit-preserving functors using \cite[Proposition3.2.2.1]{HA}. Since the vertices on the left-hand side are all op-presentable \infcats\ (Proposition \ref{prop3.5n}), the arrows in (\ref{e5}) starting from them are all right adjoints. Meanwhile, it is known that the vertices on the right-hand side are presentable $\infty$-categories. Observing that the three right vertical arrows preserve sifted colimits \cite[Proposition 3.2.3.1]{HA}, we conclude the proof using (2) in \cite[Corollary 5.5.2.9]{HTT}.
	\end{proof}
	\begin{notation}\label{n3.17n}
		Denote the adjoints of $(-)^\vee$ in (\ref{e5}) as $(-)^{\ed}$.
	\end{notation}
	The functors $(-)^{\ed}$ are formally deduced from the adjoint functor theorem. However, they can be made explicit for dually almost perfect algebras.
	\begin{corollary}\label{cor3.17n}
		The above adjunctions restrict to the following commutative diagram,
		\[
		\begin{tikzcd}[column sep=2cm]
			{\mathrm{LComod}_{\ap}(\QC^\vee_R)^{op}} & {{\mathrm{LMod}_{\dap}(\QC^\vee_R)}} \\
			{\mathrm{LComod}_{\ap}(\fil_{\ge1}\QC^\vee_R)^{op}} & {{\mathrm{LMod}_{\dap}(\fil_{\le-1}\QC^\vee_R)}} \\
			{\mathrm{LComod}_{\ap}(\gr_{\ge1}\QC^\vee_R)^{op}} & {{\mathrm{LMod}_{\dap}(\gr_{\le-1}\QC^\vee_R)}} \\
			{(co\alg_{\ap}(\mathcal{C}))^{op}} & {\alg_{\dap}(\mathcal{C})}
			\arrow["{(-)_1}", from=1-1, to=2-1]
			\arrow["\const", from=1-2, to=2-2]
			\arrow["\gr", from=2-2, to=3-2]
			\arrow["{(-)^\vee,\simeq}", from=1-1, to=1-2]
			\arrow["{(-)^\vee,\simeq}", from=2-1, to=2-2]
			\arrow["{(-)^\vee,\simeq}", from=3-1, to=3-2]
			\arrow[from=3-1, to=4-1]
			\arrow[from=3-2, to=4-2]
			\arrow["{(-)^\vee,\simeq}", from=4-1, to=4-2]
			\arrow["\gr", from=2-1, to=3-1]
		\end{tikzcd}\]where $\ap$ ($\dap$) means that the underlying module is (dually) almost perfect, and the inverses of $(-)^\vee$ are given by $(-)^\ed$. In particular, the underlying module of $(-)^{\ed}$ is given by $(-)^\vee$ for dually almost perfect objects.
	\end{corollary}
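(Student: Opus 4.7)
The plan is to derive this restricted equivalence by lifting the underlying contravariant monoidal equivalence from Proposition \ref{prop3.12n} through the categorical bar-cobar and operadic formalism. The starting observation is that Proposition \ref{prop3.12n} already gives a monoidal equivalence $(\aperf_{\mathscr{A}}^{op},\circ) \simeq (\aperf^\vee_{\mathscr{A}},\circ)$ on the level of pro-coherent symmetric sequences (with analogous statements interchanging $\circ$ and $\bcirc$ in the derived case), and similar equivalences on the filtered/graded versions via Example \ref{exa2.23} and Example \ref{exa2.24}.

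First I would check that the operadic map of Proposition \ref{prop3.15}, and hence the diagram of right adjoints (\ref{e5}), restricts to the full subcategories of (dually) almost perfect objects. The forgetful functors $\mathrm{LComod}(\mathcal{M})\to\mathcal{M}$ and $\mathrm{LMod}(\mathcal{M})\to\mathcal{M}$ detect the (dually) almost perfect conditions on underlying objects, and since at the underlying level the dual $(-)^\vee$ sends $\aperf$ to $\aperf^\vee$ and vice versa, we obtain well-defined restricted horizontal arrows. The vertical arrows $(-)_1$, $\const$, $\gr$ preserve (dually) almost perfectness: $\gr$ by Proposition \ref{prop2.15}, while $(-)_1$ and $\const$ obviously do (they concentrate an object at a single filtered or graded weight).

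Next, to promote each restricted horizontal arrow to an equivalence, I would use that a monoidal equivalence of monoidal $\infty$-categories automatically induces an equivalence on associated (co)algebra $\infty$-categories, and that an equivalence of monoidal categories together with compatible equivalences of (co)tensored $\infty$-categories yields equivalences on (co)module categories. Applied to the monoidal equivalences of Proposition \ref{prop3.12n} (and their filtered and graded variants, which exist by the same right--left extension formalism), this produces the desired equivalences $\mathrm{LComod}_{\ap}^{op}\simeq \mathrm{LMod}_{\dap}$ in each row. Commutativity of the restricted square is inherited from the commutativity of (\ref{e5}).

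For the final clause, since $(-)^{\ed}$ is, by definition, the right adjoint of $(-)^\vee$ in the unrestricted diagram, restricting this adjunction to the (dually) almost perfect subcategories yields an adjunction whose left adjoint is an equivalence; the right adjoint is then a quasi-inverse. By uniqueness of inverses and the fact that Proposition \ref{prop3.12n} makes $(-)^\vee$ its own inverse on the dualizable subcategories, the underlying (symmetric sequence or module) of $(-)^{\ed}(X)$ on a dually almost perfect $X$ must coincide with $X^\vee$. The main subtlety will be keeping the three variants of $\mathcal{C}$ (spectral PD, derived pro-coherent, derived PD) together with their $\circ$/$\bcirc$ monoidal structures and the filtered/graded refinements all in sync; however, this bookkeeping is precisely what the operadic packaging of Proposition \ref{prop3.15} achieves, so no genuinely new construction is required.
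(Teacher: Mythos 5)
Your proposal follows essentially the same route as the paper: restrict the $\mathcal{LM}^{\otimes}$-monoidal diagram of Proposition \ref{prop3.15} to (dually) almost perfect objects, observe (via Propositions \ref{prop2.30} and \ref{prop3.12n}) that the restricted horizontal arrows become monoidal equivalences, and pass to operadic sections / (co)module categories to obtain the equivalences and inherit commutativity. The closing argument identifying the underlying module of $(-)^{\ed}$ with $(-)^\vee$ via uniqueness of inverses matches the paper's intent as well.
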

	\begin{proof}
		Restricting the diagram in Proposition \ref{prop3.15} to almost perfect (on the left) or dually almost perfect objects (on the right), this induces $\mathcal{LM}^{\otimes}$-monoidal equivalences. Then conclude by taking operadic sections.
	\end{proof}

	\begin{df}\label{df3.18n}
		For an augmented (spectral PD, derived pro-coherent or derived PD) $\infty$-operad $\mathcal{P}$ over $R$, its \textit{PD Koszul dual $\infty$-operad} is defined as $\kd(\mathcal{P}):=\br(\mathcal{P})^\vee$.
		
		If $A$ is a $\mathcal{P}$-algebra object lies in $\alg_{\mathcal{P}}(\QC^\vee_R)$ (or $\alg_{\mathcal{P}}(\fil_{\ge 1}\QC^\vee_R)$, $\alg_{\mathcal{P}}(\gr_{\ge 1}\QC^\vee_R)$), its \textit{PD Koszul dual} is the $\kd(\mathcal{P})$-algebra object $\kd(A):=\br(A)^\vee$ lies in $\alg_{\kd(\mathcal{P})}(\QC^\vee_R)$ (resp. $\alg_{\kd(\mathcal{P})}(\fil_{\le-1}\QC^\vee_R)$, $\alg_{\kd(\mathcal{P})}(\gr_{\le-1}\QC^\vee_R)$).
	\end{df}
	\subsection{Filtered Koszul duality for algebras}\label{sec3.4}
	
	We assemble the results from the former two subsections together to relate the $\mathcal{P}$-algebras and $\kd(\mathcal{P})$-algebras via an adjunction that respects filtrations. The target is to establish that, under condition ($\dagger$) stated below, the \infcat\ of dually almost perfect $\kd(\mathcal{P})$-algebras can be fully faithfully embedded into that of filtered $\mathcal{P}$-algebras,
	$\aqf:\alg^{\dap}_{\kd(\mathcal{P})}\hookrightarrow\alg_{\mathcal{P}}(\fil_{\ge1}\QC^\vee_R)^{op}$ (Theorem \ref{thm3.24n}). In particular, this theorem establishes a filtered Koszul duality for dually almost perfect partition Lie algebras.
	\begin{construction}\label{c3.19n}
		Following Construction \ref{c3.1n} and fix a $\mathcal{P}\in \alg(\mathcal{C})$ satisfying the condition
		\begin{center}
			(\textdagger) $\mathcal{P}$ is reduced and connective, while $\br(\mathcal{P})$ is almost perfect,
		\end{center}
		so that the unit morphisms $\mathcal{\mathcal{P}}\to \cobr\circ\br(\mathcal{P})$ and $\br(\mathcal{P})\to ((\br(\mathcal{P}))^\vee)^\ed$ (Notation \ref{n3.17n}) are equivalences. Then, Theorem \ref{thm3.2n} and Corollary \ref{cor3.16n} induce a commuting diagram of adjoints
		\begin{equation}\label{ecd}
			\begin{tikzcd}
				{\alg_{\mathcal{P}}(\QC^\vee_R)} & {co\alg_{\br(\mathcal{P})}(\QC^\vee_R)} & {\alg_{\kd(\mathcal{P})}(\QC^\vee_R)^{op}} \\
				{\alg_{\mathcal{P}}(\fil_{\ge1}\QC^\vee_R)} & {co\alg_{\br(\mathcal{P})}(\fil_{\ge1}\QC^\vee_R)} & {\alg_{\kd(\mathcal{P})}(\fil_{\le-1}\QC^\vee_R)^{op}} \\
				{\alg_{\mathcal{P}}(\gr_{\ge1}\QC^\vee_R)} & {co\alg_{\br(\mathcal{P})}(\gr_{\ge1}\QC^\vee_R)} & {\alg_{\kd(\mathcal{P})}(\gr_{\le-1}\QC^\vee_R)^{op}}
				\arrow["\adic"', shift right, from=1-1, to=2-1]
				\arrow["{(-)_1}"', shift right, from=1-2, to=2-2]
				\arrow["\const"', shift right, from=1-3, to=2-3]
				\arrow["\gr"', shift right, from=2-2, to=3-2]
				\arrow["\gr"', shift right, from=2-1, to=3-1]
				\arrow["\br", shift left, from=1-1, to=1-2]
				\arrow["\cobr", shift left, from=1-2, to=1-1]
				\arrow["\br", shift left, from=2-1, to=2-2]
				\arrow["\cobr", shift left, from=2-2, to=2-1]
				\arrow["\br", shift left, from=3-1, to=3-2]
				\arrow["\cobr", shift left, from=3-2, to=3-1]
				\arrow["{(-)^\vee}", shift left, from=1-2, to=1-3]
				\arrow["{(-)^{\ed}}", shift left, from=1-3, to=1-2]
				\arrow["{(-)^\vee}", shift left, from=2-2, to=2-3]
				\arrow["{(-)^{\ed}}", shift left, from=2-3, to=2-2]
				\arrow["{(-)^\vee}", shift left, from=3-2, to=3-3]
				\arrow["{(-)^{\ed}}", shift left, from=3-3, to=3-2]
				\arrow["\gr"', shift right, from=2-3, to=3-3]
				\arrow["{F^1}"', shift right, from=2-2, to=1-2]
				\arrow["{F^1}"', shift right, from=2-1, to=1-1]
				\arrow[shift right, from=3-1, to=2-1]
				\arrow[shift right, from=3-2, to=2-2]
				\arrow["\colim"', shift right, from=2-3, to=1-3]
				\arrow[shift right, from=3-3, to=2-3]
			\end{tikzcd}.	
		\end{equation}
		
		Here, the functors $(-)^\ed$ and $\adic$ are formally determined by the adjoint functor theorem, and the right adjoints of $\gr$ are $\triv$, the functors of associating trivial transfer maps.\ The \textit{Andr\'e-Quillen functor (of $\kd(\mathcal{P})$-algebras)} is defined as
		\[\aq:\alg_{\kd(\mathcal{P})}(\QC^\vee_R)\xrightarrow{\cobr\circ(-)^\ed}\alg_\mathcal{P}(\QC^\vee_{R})^{op},\] and the \textit{Andr\'e-Quillen functor with (completed) Hodge filtration}\footnote{This name is justified later in Proposition \ref{prop4.20old}.} refers to the composite
		\[\aqf:\alg_{\kd(\mathcal{P})}(\QC^\vee_R)\xrightarrow{\const}\alg_{\kd(\mathcal{P})}(\fil_{\le -1}\QC^\vee_R)\xrightarrow{\cobr\circ(-)^\ed}\alg_\mathcal{P}(\fil_{\ge1}\QC^\vee_{R})^{op}.\]
	\end{construction}
	\begin{prop}\label{prop3.20n}
		The functors $\adic$, $(-)_1$ and $\const$ in diagram (\ref{ecd}) are fully faithful.
	\end{prop}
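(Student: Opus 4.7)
Each functor is a left adjoint (for $\const$ after passing to opposite categories, where the non-op adjunction is $\colim\dashv\const$). Full faithfulness is equivalent to showing the corresponding unit (resp.\ counit, for $\const$ in non-op) is an equivalence, and in each case we reduce to a computation on the underlying pro-coherent modules.

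For $\const$, the counit $\colim\const A \to A$ on a $\kd(\mathcal{P})$-algebra $A$ is computed on the underlying filtered module, since the forgetful to $\fil_{\le-1}\QC^\vee_R$ creates small colimits and the module-level $\colim$ is a pointwise colimit along the transition maps; because $F_n\const A \simeq A$ with identity transitions for every $n\le-1$, we get $\colim\const A \simeq A$ with counit the identity. For $(-)_1$ on coalgebras, the forgetful $U: co\alg_{\br(\mathcal{P})}(\fil_{\ge 1}\QC^\vee_R) \to \fil_{\ge 1}\QC^\vee_R$ is conservative; by the $\mathcal{LM}^\otimes$-monoidal construction of Proposition \ref{prop3.15}, both $(-)_1$ and $F^1$ on coalgebras lift the corresponding module-level functors, so $U$ commutes with $F^1\circ(-)_1$. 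The module-level unit $C \to F_1((C)_1) = C$ is the identity, and conservativity of $U$ upgrades this to an equivalence on coalgebras.

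The $\adic$ case is more delicate, since $\adic$ is defined only abstractly through the adjoint functor theorem, and this is where the main obstacle lies. We handle it in two steps. First, for a free algebra $A = \mathrm{Free}_{\mathcal{P}}(V)$, composition of left adjoints (free--forget with $\adic\dashv F^1$) gives $\adic(\mathrm{Free}_{\mathcal{P}}(V)) \simeq \mathrm{Free}_{\mathcal{P}}^{\fil}((V)_1)$, the free filtered $\mathcal{P}$-algebra on $V$ placed in weight $1$; its underlying filtered module at level $n$ is $\bigoplus_{r\ge n}(\mathcal{P}(r)\otimes V^{\otimes r})_{h\Sigma_r}$, and applying $F^1$ returns $\mathrm{Free}_{\mathcal{P}}(V)$, so the unit is an equivalence on free algebras. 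Second, every $\mathcal{P}$-algebra is a sifted colimit of free ones; as a left adjoint, $\adic$ preserves this colimit, and $F^1$ on algebras preserves sifted colimits because the forgetful to $\fil_{\ge 1}\QC^\vee_R$ creates sifted colimits and module-level $F^1$ preserves all small colimits. Therefore $F^1\adic(A) \simeq A$ naturally in $A$, so the unit is an equivalence.
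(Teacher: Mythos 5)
Your proof is correct and takes essentially the same approach as the paper's: for $(-)_1$ and $\const$ you observe that the unit/counit is computed on the underlying modules (the paper refers to Proposition \ref{prop3.15} for the same fact), and for $\adic$ you use the commuting square of left adjoints with the free functors and then extend by sifted colimits, exactly as the paper does via the bar resolution $|\br_\bullet(id,\fr_\mathcal{P}\circ\text{forget}_{\mathcal{P}},A)|$. One minor inaccuracy not affecting the argument: your explicit formula $\bigoplus_{r\ge n}(\mathcal{P}(r)\otimes V^{\otimes r})_{h\Sigma_r}$ is the free-algebra formula for ordinary $\infty$-operads and does not apply verbatim when $\mathcal{P}$ is a derived PD $\infty$-operad (there $\bcirc$ uses genuine fixed points in the pro-coherent sense), but your proof only uses the commuting square, not this formula.
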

	\begin{proof}
		The proof of Proposition \ref{prop3.15} also shows that the adjunctions $(-)_1\dashv F_1$ and $\colim\dashv\const$ are lying over the corresponding adjunctions of underlying modules (which justifies the symbols), thus there are equivalences $id\simeq F_1\circ(-)_1$ and $id\simeq \colim\circ\const$. To show that $\adic$ is fully faithful, observe that there is a commutative square of left adjoints
		\[\begin{tikzcd}
			{\QC^\vee_R} & {\alg_\mathcal{P}(\QC^\vee_R)} \\
			{\fil_{\ge1}\QC^\vee_R} & {\alg_\mathcal{P}(\fil_{\ge1}\QC^\vee_R)}
			\arrow["{\fr_\mathcal{P}}", from=1-1, to=1-2]
			\arrow["{\fr_\mathcal{P}}", from=2-1, to=2-2]
			\arrow["{(-)_1}"', from=1-1, to=2-1]
			\arrow["\adic", from=1-2, to=2-2]
		\end{tikzcd},\]so $F_1(\adic\circ\fr_\mathcal{P}(V))$ is conanically equivalent with $\fr_\mathcal{P}(V)$ for any $V\in\QC^\vee_R$. For general $A\in \alg_{\mathcal{P}}(\QC^\vee_R)$, conclude by noting that $A$ is equivalent to $|\br_\bullet(id,\fr_\mathcal{P}\circ\text{forget}_{\mathcal{P}},A)|$.
	\end{proof}
	
	The next proposition shows that $\aqf$ is truly a refinement of $\aq$.
	\begin{prop}\label{prop3.21n}
		The functor $\aqf$ is a sifted-colimit-preserving lift of $\aq$ along $F_1$.
	\end{prop}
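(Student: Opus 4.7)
The plan is to prove both assertions --- the identification $F_1\circ \aqf\simeq\aq$ and the sifted-colimit preservation --- by formal manipulations on the commutative diagram (\ref{ecd}) and the adjunctions set up in Corollary \ref{cor3.16n}, combined with the fully faithfulness of $\const$ in Proposition \ref{prop3.20n}.

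For the lift statement, I unpack $\aqf = \cobr\circ(-)^\ed\circ \const$ and decompose the claim into three commutations. The first, $F_1\circ \cobr \simeq \cobr\circ F_1$ at the (co)algebra level, is part of the content of Theorem \ref{thm3.2n}: $F_1$ is a $\mathcal{C}$-linear morphism of presentable fibrations and preserves all small limits and colimits, hence commutes with both $\br$ and $\cobr$ as shown in its proof. The second, $F_1\circ(-)^\ed\simeq (-)^\ed\circ\colim$, is obtained by taking mates in the commutative square of right adjoints in Corollary \ref{cor3.16n}: the resulting commutative square of left adjoints gives exactly this identity at the $\mathrm{LMod}/\mathrm{LComod}$ level, and it descends to the fibres over $\alg(\mathcal{C})$ and $co\alg(\mathcal{C})$ since all functors involved respect the projections. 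The third, $\colim\circ\const\simeq\mathrm{id}$, is the counit of the adjunction $\colim\dashv\const$, which is an equivalence because $\const$ is fully faithful (Proposition \ref{prop3.20n}). Chaining these equivalences yields
\[F_1\circ\aqf \;\simeq\; \cobr\circ F_1\circ (-)^\ed\circ \const \;\simeq\; \cobr\circ (-)^\ed\circ\colim\circ\const \;\simeq\; \cobr\circ (-)^\ed \;\simeq\; \aq.\]

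For sifted-colimit preservation, I analyse each factor of $\aqf = \cobr\circ (-)^\ed\circ \const$ separately, keeping in mind that the target is the opposite category $\alg_\mathcal{P}(\fil_{\ge 1}\QC^\vee_R)^{op}$. The functor $\const$ at the algebra level preserves sifted colimits, because such colimits in $\alg_{\kd(\mathcal{P})}(-)$ are computed on the underlying modules, and at the module level $\const$ preserves all colimits (they are computed termwise in filtered objects). The functor $(-)^\ed$ is the left adjoint of $(-)^\vee$ by Corollary \ref{cor3.16n}, hence preserves all colimits. Finally, $\cobr:co\alg\to\alg$ is the right adjoint of $\br$, hence preserves limits; equivalently, viewed as a functor into $\alg^{op}$, it preserves colimits. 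Composing the three gives the desired sifted-colimit preservation.

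The proof is essentially formal; the only subtle point is bookkeeping with the directions of adjoints across the several opposite categories, particularly when translating the right-adjoint square of Corollary \ref{cor3.16n} into its mate at the $\mathrm{LMod}$ level and then restricting to algebra fibres.
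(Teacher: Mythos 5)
Your proof is correct and follows essentially the same route as the paper's: identify $F_1\circ\aqf\simeq\aq$ from the commutativity of diagram~(\ref{ecd}) together with $\colim\circ\const\simeq\mathrm{id}$ (fully faithfulness of $\const$, Proposition~\ref{prop3.20n}), and deduce sifted-colimit preservation from $\const$ preserving sifted colimits plus $(-)^\ed$ and $\cobr$ being (appropriately directed) adjoints. The paper compresses all of this into two sentences; you expand the commutativity of~(\ref{ecd}) into its three constituent squares (from Theorem~\ref{thm3.2n}, the mate of Corollary~\ref{cor3.16n}, and the counit of $\colim\dashv\const$), which is a helpful unpacking but not a different argument.
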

	\begin{proof}
		Recall that the sifted colimits of algebra objects are calculated by that of the underlying modules, which results in that $\const$ is sifted-colimit-preserving. Since $(-)^\ed$ and $\cobr$ are right adjoints, the functor $\aqf\simeq \cobr\circ(-)^{\ed}\circ \const$ sends sifted colimits to sifted limits. Because (\ref{ecd}) commutes, we have $F_1\aqf\simeq \aq\circ\colim\circ\const\simeq \aq$.
	\end{proof}
	Our main interest of (\ref{ecd}) is that it applies to partition Lie algebras:
	\begin{df}cf. \cite[Definition 3.54, 3.88]{BCN}\label{df3.22n}
		If $R$ is a coherent connective $\eoo$-ring spectrum, the \textit{spectral PD $\infty$-operad $\lie$ of spectral partition Lie algebras over $R$} is defined as $\kd(\neoo)$. When $R$ is a coherent simplicial commutative ring, the \textit{derived PD $\infty$-operad $\dlie$ of derived partition Lie algebras over $R$} is defined as $\kd(\com^{nu})$.
		
		In these cases, the functor $\aq$ ($\aqf$) is preferably written as $\ce$ ($\cet$), which means \textit{Chevalley-Eilenberg functor (with completed Hodge filtration)} for partition Lie algebras.
	\end{df}
	
	Then we deduce a filtered PD Koszul duality through $\aqf$ under some finiteness conditions:
	\begin{notation}
		(1) Let $\alg^{\fil}_{\mathcal{P},\mathrm{qfafp}}\subset\alg_{\mathcal{P}}(\fil_{\ge1}\QC^\vee_R)$ be the full subcategory of complete algebras $A$ such that the natural map $\fr_{\mathcal{P}}([\gr_1A]_1)\simeq\gr(A)$ is an equivalence of graded $\mathcal{P}$-algebras, and $\gr_1(A)$ is an almost perfect $R$-module. Here $\qf$ stands for \textit{quasi-free} and $\afp$ means \textit{almost finitely presented}.
		
		(2) A $\kd(\mathcal{P})$-algebra is said to be \textit{dually almost perfect} if the underlying module is a dually almost perfect $R$-module. Such algebras form a full subcategory $\alg^{\dap}_{\kd(\mathcal{P})}\subset\alg_{\kd(\mathcal{P})}(\QC^\vee_R)$.
	\end{notation}

	\begin{theorem}[Filtered PD Koszul duality]\label{thm3.24n}
		Assume that $P\in \alg(\mathcal{C})$ satisfies the condition (\textdagger). The Andr\'e-Quillen functor with Hodge filtration $\aqf$ induces an equivalence
		\[\aqf:\alg^{\dap}_{\kd(\mathcal{P})}\simeq \alg^{\fil,op}_{\mathcal{P},\mathrm{qfafp}},\]whose homotopy inverse is given by restricting the following composite
		\[\dD:\alg_{\mathcal{P}}(\fil_{\ge1}\QC^\vee_R)\xrightarrow{\kd}\alg_{\kd(\mathcal{P})}(\fil_{\le -1}\QC^\vee_R)^{op}\xrightarrow{\colim}\alg_{\kd(\mathcal{P})}(\QC^\vee_R)^{op}.\]
	\end{theorem}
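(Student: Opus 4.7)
My plan is to verify that $\aqf$ restricts to a functor $\alg^{\dap}_{\kd(\mathcal{P})}\to \alg^{\fil,op}_{\mathcal{P},\mathrm{qfafp}}$ and that $\dD$ restricts to a functor in the opposite direction, then to check that these are mutually inverse by computing on associated gradeds and on underlying modules. Since equivalences of complete filtered pro-coherent modules are detected by $\gr$, and equivalences of algebras are detected on underlying pro-coherent modules, these computations will suffice.

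First, for $L \in \alg^{\dap}_{\kd(\mathcal{P})}$, the commutativity of diagram (\ref{ecd}) combined with Theorem \ref{thm3.2n}(3) and the dual statement from Proposition \ref{prop3.7n} gives $\gr\aqf(L)\simeq \cobr\bigl(\gr((-)^{\ed}\const(L))\bigr)$. Using Corollary \ref{cor3.17n}, the underlying filtered module of $(-)^{\ed}\const(L)$ is $(\const L)^{\vee}$, whose associated graded is $L^{\vee}$ placed in weight $+1$. Because $\br(\mathcal{P})(0)\simeq 0$ and $\br(\mathcal{P})(1)\simeq R$ under $(\dagger)$, the only $\br(\mathcal{P})$-coalgebra structure on a graded object concentrated in weight one is the one with identity arity-one coaction, and the Koszul equivalence $\mathcal{P}\simeq \cobr(\br(\mathcal{P}))$ (Proposition \ref{prop3.9n}) identifies $\cobr$ on such a weight-$1$ object with the free $\mathcal{P}$-algebra functor. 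Therefore $\gr\aqf(L)\simeq \fr_{\mathcal{P}}([L^{\vee}]_1)$, so $\gr_1\aqf(L)\simeq L^\vee$ is almost perfect by Proposition \ref{prop3.12n}(2); completeness of $\aqf(L)$ is inherited from that of $\const(L)$ through the right adjoints $(-)^{\ed}$ and $\cobr$.

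Conversely, for $A \in \alg^{\fil}_{\mathcal{P},\mathrm{qfafp}}$ the hypothesis $\gr A\simeq \fr_{\mathcal{P}}([\gr_1 A]_1)$ means $\gr \br(A)$ is the corresponding trivial $\br(\mathcal{P})$-coalgebra on $\gr_1 A$ in weight $1$, so after $(-)^{\vee}$ and $\colim$ the underlying module of $\dD(A)$ becomes $(\gr_1 A)^{\vee}$, which is dually almost perfect by Proposition \ref{prop3.12n}(2). For the unit $\eta_L:L\to\dD\aqf(L)$ of the adjunction assembled from (\ref{ecd}), its underlying map of pro-coherent modules is the double-duality map $L\to L^{\vee\vee}$, which is an equivalence by Corollary \ref{cor3.17n}; together with conservativity of the forgetful functor $\alg_{\kd(\mathcal{P})}(\QC^\vee_R)\to \QC^\vee_R$, this forces $\eta_L$ itself to be an equivalence. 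For the counit $\varepsilon_A:\aqf\dD(A)\to A$, the first computation applied to $L=\dD(A)$ yields $\gr\aqf\dD(A)\simeq \fr_{\mathcal{P}}([(\gr_1 A)^{\vee\vee}]_1)\simeq \fr_{\mathcal{P}}([\gr_1 A]_1)\simeq \gr A$; both sides are complete, so $\varepsilon_A$ is an equivalence.

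The main obstacle is the graded identification $\cobr([M]_1)\simeq \fr_{\mathcal{P}}([M]_1)$ of the weight-one cobar with the free $\mathcal{P}$-algebra. This is where condition $(\dagger)$ is genuinely used, through the operadic Koszul equivalence $\mathcal{P}\simeq \cobr(\br(\mathcal{P}))$ of Proposition \ref{prop3.9n} together with the vanishing $\br(\mathcal{P})(0)\simeq 0$ that forces the weight argument to kill all nontrivial higher coactions; it simultaneously pins down the essential image of $\aqf$ and drives the comparison with $\dD$.
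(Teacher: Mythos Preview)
Your overall strategy matches the paper's, but there is a genuine gap at the step you yourself flag as the ``main obstacle'': the graded identification $\cobr(e_![M]_1)\simeq \fr_{\mathcal{P}}([M]_1)$. Citing the operadic equivalence $\mathcal{P}\simeq\cobr(\br(\mathcal{P}))$ is not enough. That equivalence lives in symmetric sequences and gives $\fr_{\mathcal{P}}([M]_1)\simeq\bigl(\tot\cobr^\bullet(1,\br(\mathcal{P}),1)\bigr)\circ[M]_1$, whereas $\cobr(e_![M]_1)\simeq\tot\bigl(\cobr^\bullet(1,\br(\mathcal{P}),e_![M]_1)\bigr)$. Passing from one to the other means commuting an \emph{infinite} totalization past $-\circ[M]_1$, and the composition product does not preserve arbitrary limits. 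The paper handles this by proving (Lemmas~\ref{inlemma3} and~\ref{inlemma4}) that for a reduced cooperad $\mathcal{Q}$ the cobar diagram $\cobr^\bullet(1,\mathcal{Q},1)$ is \emph{locally skeletal}: at each arity $p$ the totalization is finite, so in the graded setting, where weight $p$ only sees arities $\le p$, the exchange is legitimate. Lemma~\ref{inlemma2} then packages this into the monad identification $\cobr(\mathcal{Q})\circ(-)\simeq e^*e_!$. Your phrase ``the weight argument kills higher coactions'' gestures at the right idea but does not supply the skeletal bound, and without it the comparison map $\fr_{\mathcal{P}}([M]_1)\to\cobr(e_![M]_1)$ is not shown to be an equivalence.

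Two smaller issues. First, your completeness argument is incorrect: $\const(L)$ has $F_i=L$ for all $i$ and is \emph{not} complete, so nothing is ``inherited'' from it. The paper instead observes that $C=(\const L)^{\ed}$ has underlying module $(L^\vee)_1\in\sfil_{\ge1}\aperf_R$, that each term $\br(\mathcal{P})^{\circ n}\circ C$ is almost perfect (using $(\dagger)$) and hence complete by Proposition~\ref{prop2.15}, and that totalizations of complete objects are complete. Second, for the counit you only show $\gr\aqf\dD(A)$ and $\gr A$ are \emph{abstractly} equivalent; you must show the map $\gr(\varepsilon_A)$ itself is an equivalence. The paper does this by identifying $\gr(\varepsilon_A)$ with the unit $\gr A\to\cobr\br(\gr A)$, which the graded computation (Lemma~\ref{inlemma1}) shows is an equivalence because $\gr A$ is free.
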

	\begin{proof}
		The core of this proof is calculating $\aqf(L)$ for $L\in\alg^{\dap}_{\kd(\mathcal{P})}$ (see Lemma \ref{inlemma1}), which is divided into several lemmas.
		
		Recall that $\Delta$ is the 1-category of simplices and let $\Delta_{sur}\subset \Delta$ be the subcategory of surjections. Let $\Delta^{\le n}$ (or $(\Delta_{sur})^{\le n}$) be the full subcategory of $\Delta$ (resp. $\Delta_{sur}$) spanned by the objects of size no more than $n+1$. For an \infcat\ $\mathcal{X}$, the \infcat\ of\textit{ (semi-)cosimpilicial objects in $\mathcal{X}$} is $\text{Fun}(N(\Delta),\mathcal{X})$ (resp. $\text{Fun}(N(\Delta_{sur}),\mathcal{X})$). A (semi-)cosimpilicial object is said to be \textit{$n$-skeletal} if it is a right Kan extension of its restriction to $N((\Delta_{sur})^{\le n})$.
		
		\begin{alert}
			Our notion of semi-cosimplicial objects is \textbf{not} standard. It is only used to facilitate determining skeletal cosimplicial objects. Despite of the following lemma, $\Delta_{sur}\subset\Delta$ is \textbf{not} coinitial.
		\end{alert}

		\begin{lemma}\label{inlemma4}
			Let $\mathcal{X}$ be an \infcat. A cosimplicial object in $\mathcal{X}$ is $n$-skeletal precisely when its restriction to $N(\Delta_{sur})$ is $n$-skeletal as a semi-cosimplicial object.
		\end{lemma}
		\begin{proof}
			This lemma can be regarded as a dual version of \cite[Lemma 6.5.3.8]{HTT}.
			It is sufficient to prove that, for each $[m]\in \Delta$, the simplicial nerve of
			\[(\Delta_{sur})_{[m]/}\times_{\Delta_{sur}}(\Delta_{sur})^{\le n}\subset \Delta_{[m]/}\times_{\Delta}\Delta^{\le n}\]
			is coinitial as a morphism of \infcats. Applying \cite[Theorem 4.1.3.1]{HTT}, it suffices to show that, for each object $\theta:[m]\to [k]$ of $\Delta_{[m]/}\times_{\Delta}\Delta^{\le n}$, considering the category $C$ of all factorizations
			\[[m]\xrightarrow{\theta'} [k']\xrightarrow{\theta''} [k]\] such that $\theta'$ is a surjection and $k'\le n$, its simplicial nerve $N(C)$ is weakly contractible. However, $C$ has a final object (where $\theta''$ is injective), so the lemma is proved.
			
		\end{proof}
		Set $D^{n}:=N((\Delta_{sur})_{[n]/}\times_{\Delta_{sur}}(\Delta_{sur})^{\le n-1})$. The next lemma controls the convergence of the cobar construction for reduced $\infty$-cooperads.
		\begin{lemma}\label{inlemma3}
			Assume that $\mathcal{Q}\in co\alg(\mathcal{C})$ is \textit{reduced}, i.e.\ $\mathcal{Q}(0)\simeq 0$ and $\mathcal{Q}(1)\simeq R$. The cosimplicial diagram $\cobr^\bullet(1,\mathcal{Q},1)$ is locally skeletal in the sense that the following difference\[\Omega_n:= \text{fib}\big(\mathcal{Q}^{\circ n}\to \lim_{D^n} \cobr^{\bullet}(1,\mathcal{Q},1)|_{D^n}  \big)\]is trivial at each arity $p\le n$. In particular, the cobar construction $\cobr(\mathcal{Q})\simeq \tot\big(\cobr^\bullet(1,\mathcal{Q},1)\big)$, at each chosen arity, is a finite totalization.
		\end{lemma}
		\begin{proof}
			This claim holds for $n=1$. We do induction on $n$. Take a cube $N(\Delta^1)^{\times n}$ and decorate each vertex $(a_0,\ldots,a_n)$ with $\mathcal{A}_1\circ \ldots\circ \mathcal{A}_n$, where $\mathcal{A}_i\simeq \mathcal{Q}$ if $a_i=0$ and $\mathcal{A}_i\simeq 1$ if $a_i=1$, and the edges of $N(\Delta)^{\times n}$ are morphisms given by applying the counit map of $\mathcal{Q}$ at the right position. The diagram $\cobr^{\bullet}(1,\mathcal{Q},1)|_{D^n}$ can be reformulated as the diagram of the decorated edges $(0,\ldots,0)\to (a_1,\ldots,a_n)$ such that $\sum|a_i|\ne 0$. This rephrasing leads to a commuting diagram
			\[\begin{tikzcd}
				{\mathcal{Q}^{\circ n}} & {\lim_{D^n}\cobr^{\bullet}(1,\mathcal{Q},1)|_{D^n}} & {\mathcal{Q}^{\circ (n-1)}} \\
				& ({\lim_{D^{n-1}}\cobr^{\bullet}(1,\mathcal{Q},1)|_{D^{n-1}})\circ\mathcal{Q}} & {\lim_{D^{n-1}}\cobr^{\bullet}(1,\mathcal{Q},1)|_{D^{n-1}}}
				\arrow[from=1-3, to=2-3]
				\arrow[from=2-2, to=2-3]
				\arrow["\beta"', from=1-2, to=2-2]
				\arrow[from=1-2, to=1-3]
				\arrow[from=1-1, to=1-2]
				\arrow["\alpha"', from=1-1, to=2-2]
			\end{tikzcd}\]
			where the square is cartesian in $\mathcal{C}$. The $n$th difference $\Omega_{n}$ fits into a fibre sequence,
			\[\Omega_n\to \text{fib}(\alpha)\xrightarrow{\gamma} \text{fib}(\beta),\]where $\gamma$ is equivalent to $\Omega_{n-1}\circ\mathcal{Q}\xrightarrow{id_{\Omega_{n-1}}\circ \epsilon}\Omega_{n-1}$ with $\epsilon$ being the counit of $\mathcal{Q}$. By induction hypothesis, $\Omega_{n}$ is trivial at arity $p\le n$. Then the second assertion follows from Lemma \ref{inlemma4}.
			
		\end{proof}
		\begin{lemma}\label{inlemma2}
			Supposing that $\mathcal{Q}\in co\alg(\mathcal{C})$ is reduced, the commutative diagram of adjunctions 
			\[\begin{tikzcd}
				& {\gr_{\ge1}\QC^\vee_R} \\
				{\alg_{\cobr(\mathcal{Q})}(\gr_{\ge1}\QC^\vee_R)} && {co\alg_{\mathcal{Q}}(\gr_{\ge1}\QC^\vee_R)}
				\arrow["\cobr", shift left=1, from=2-3, to=2-1]
				\arrow["{\mathrm{forget}_{\cobr(\mathcal{Q})}}"'{pos=0.8}, shift right=1, from=2-1, to=1-2]
				\arrow["{e^*}"', shift right=1, from=2-3, to=1-2]
				\arrow["{\fr_{\cobr(\mathcal{Q})}}"', shift right=1, from=1-2, to=2-1]
				\arrow[shift left=1, from=2-1, to=2-3]
				\arrow["{e_!}"', shift right=1, from=1-2, to=2-3]
			\end{tikzcd}\]arising from Theorem \ref{thm3.2n} induces a natural equivalence $\cobr(\mathcal{Q})\circ (-)\simeq e^*e_!$ of monads. 
		\end{lemma}
		\begin{proof}
			By Corollary 5.8 and Corollary 8.9 in \cite{Hau}, there is a morphism of monads $\cobr(Q)\circ (-)\simeq \mathrm{forget}_{\cobr(\mathcal{Q})}\circ\fr_{\cobr(\mathcal{Q})}\to e^*e_!$, whose underlying morphism of endofunctors is given by
			\[\tot\big(\cobr^\bullet(1,\mathcal{Q},1)\big)\circ V\to \tot\big(\cobr^\bullet(1,\mathcal{Q},e_!V)\big),\] for each $V\in\gr_{\ge 1}\QC^\vee_R$. We claim that this arrow is an equivalence. Indeed, at each weight $p\ge 1$,
			\begin{equation}\label{ineq1}
				\tot\big(\cobr^\bullet(1,\mathcal{Q},1)\big)\circ V (p)\to \big(\cobr^\bullet(1,\mathcal{Q},e_!V)\big)(p)
			\end{equation}
			only the components of arity $\le p$ are concerned since $\mathcal{Q}$ has no nullary operation, which means that both sides in (\ref{ineq1}) are finite totalizations as shown in Lemma \ref{inlemma3}. The product $\circ$ (or $\bcirc$) preserves finite totalizations \cite[Proposition 3.37]{BM}. Thus (\ref{ineq1}) is an equivalence for all $p$ and $V$.

		\end{proof}
		\begin{lemma}\label{inlemma1}
			For each $L\in\alg^{\dap}_{\kd(\mathcal{P})}$, the filtered Andr\'e-Quillen algebra $\aqf(L)$ lies in $\alg^{\fil}_{\mathcal{P},\mathrm{qfafp}}$, and $\gr(\aqf(L))$ is freely generated by $[L^\vee]_1$.
		\end{lemma}
		\begin{proof}
			The underlying module of $C:=(\const(L))^\ed$ is $(L^\vee)_1$ by Corollary \ref{cor3.17n}, which belongs to $\sfil_{\ge1}\aperf_R$. The cosimplicial diagram $\cobr^\bullet(1,\br(\mathcal{P}),C)$ consists of almost perfect (then complete by Proposition \ref{prop2.15}) terms. Thus $\aqf(L)\simeq \tot\cobr^\bullet(1,\br(\mathcal{P}),C)$ is complete.
			
			Now we determine the $\br(\mathcal{P})$-coalgebra structure of $\gr(C)$ using a dual version of \cite[Theorem 4.7.1.34]{HA}. Let $\mathcal{C}_{\red}$ be the full subcategory of $\mathcal{C}$ in Construction \ref{c3.1n} spanned by reduced objects, i.e.\ $\mathcal{Q}\in \mathcal{C}$ such that $\mathcal{Q}(0)\simeq 0$ and $\mathcal{Q}(1)\simeq R$ naturally. The \infcat\ $\mathcal{C}_{\red}$ inherits from $\mathcal{C}$ a monoidal structure and a left action on $\gr_{\ge1}\QC^\vee_R$. The \textit{(co)endomorphism category} $\mathcal{C}_{\red}[[L^\vee]_1]$ in the sense of \cite[Definition 4.7.1.1]{HA} consists of the pairs $(\mathcal{Q},\Delta)$ such that $\mathcal{Q}\in \mathcal{C}_{\red}$ and $\Delta:[L^\vee]_1\to \mathcal{Q}\circ[L^\vee]_1$. The natural monoidal projection $p:\mathcal{C}_{\red}[[L^\vee]_1]\to \mathcal{C}_{\red}$ is a left fibration \cite[Lemma 4.7.1.36]{HA} having constant fibres $\mathrm{End}_R(L^\vee)$. By the natural augmentation $\mathcal{Q}\to \mathbbb{1}$ in $\mathcal{C}_{\red}$, we have a second projection $q:\mathcal{C}_{\red}[[L^\vee]_1]\to p^{-1}(\mathbbb{1})\simeq \mathrm{End}_R(L^\vee)$. The projections $p$ and $q$ together induce a product $\mathcal{C}_{\red}[[L^\vee]_1]\simeq \mathcal{C}_{\red}\times \mathrm{End}_{R}(L^\vee)$.

			This product and Theorem 4.7.1.34 in \cite{HA} show that the space of $\br(\mathcal{P})$-coalgebra structures on $[L^\vee]_1$ is equivalent to that of $\mathbbb{1}$-coalgebras, which is contractible \cite[Proposition 4.2.4.9]{HA}. Therefore, there is a cocartesian edge $[L^\vee]_1\xrightarrow{e_!}\gr(C)$ in $\mathrm{LComod}(\gr_{\ge 1}\QC^\vee_R)$ lying over the coaugmentation $\mathbbb{1}\to \br(\mathcal{P})$ in $co\alg(\mathcal{C})$ (given by forgetful functor of coalgebras). Here we have a morphism $\eta:\fr_\mathcal{P}([L^\vee]_1)\to \cobr(e_![L^\vee]_1)\simeq \gr(\aqf(C))$ adjoint to $id_{\gr(C)}$ by Theorem \ref{thm3.2n}. Lemma \ref{inlemma2} shows that the arrow $\eta$ is  an equivalence.

		\end{proof}
		
		The remaining task is to show that $\dD|_{\alg^{\fil}_{\mathcal{P},\mathrm{qfafp}}}$ is a homotopy inverse of $\aqf|_{\alg^{\dap}_{\mathcal{P}}}$. Take any $A\in \alg^{\fil}_{\mathcal{P},\mathrm{qfafp}}$, the underlying module of $\gr(\br(A))$ is $[\gr_1A]_1$ due to Theorem \ref{thm3.2n}, so $\kd(A)$ has $((\gr_1A)^\vee)_{-1}$ as the underlying module, which is dually almost perfect. Thus the functor $\dD$ induces a functor $\alg^{\fil}_{\mathcal{P},\mathrm{qfafp}}\to \alg^{\dap,op}_{\kd(\mathcal{P})}$. Since $\const$ in (\ref{ecd}) is fully faithful, there is a contravariant adjunction
		$\aqf:\alg^{\dap}_{\kd(\mathcal{P})}\rightleftarrows\alg^{\fil,op}_{\mathcal{P},\mathrm{qfafp}}:\dD$.
		For each $L\in \alg^{\dap}_{\kd(\mathcal{P})}$, the unit map $\eta:L\to \dD\circ \aqf(L)$ is given by applying $\const\circ(-)^\vee$ to
		$a:\br\circ \cobr (C)\to C$ with $C:=(\const(L))^\ed$. Meanwhile, the calculation in Lemma \ref{inlemma1} shows that $\gr(a)$ is an equivalence. The completeness of $R$-linear dual (\ref{dualfil}) shows that $\eta$ is an equivalence as well. In the meanwhile, for each $A\in \alg^{\fil}_{\mathcal{P},\mathrm{qfafp}}$, the unit map $\xi:A\to \aqf\circ \dD(A)$ is a morphism of complete algebras and $\gr(\xi)$ is homotopic with $\gr(A)\to \cobr\circ\br( \gr(A))$ by Proposition \ref{cor3.17n}, which is an equivalence by Lemma \ref{inlemma1}, so $\xi$ is also an equivalence.

	\end{proof}
	\begin{remark}\label{rk.3.29n}
		Suppose that $R$ is eventually coconnective so that $\iota:\m_R\hookrightarrow \QC^\vee_R$ is a fully faithful embedding. If $\mathcal{P}$ is a (spectral, derived or restricted derived) $\infty$-operad, then $\alg^{\fil}_{\mathcal{P},\mathrm{qfafp}}$ can be identified with a full subcategory of $\alg_\mathcal{P}(\fil_{\ge 1}\m_R)$ by Proposition \ref{prop2.38}. Under this condition, the filtered Andr\'e-Quillen functor $\aqf$ embeds $\alg^{\dap}_{\kd(\mathcal{P})}$ (contravariantly) into the \infcat\ of ordinary $\mathcal{P}$-algebras with a decreasing filtration.
	\end{remark}
	In particular, Theorem \ref{thm3.24n} applies to partition Lie algebras:
	\begin{corollary}\label{cor3.30n}
		Let $R$ be a coherent connective $\eoo$-ring spectrum. The Chevalley-Eilenberg functor $\cet$ with Hodge filtration induces a fully faithful embedding
		\[\cet:\aplie\hookrightarrow \calg^{nu}(\fil_{\ge 1}\QC^\vee_R)^{op}\]from the \infcat\ of dually almost perfect spectral partition Lie algebras, into the \infcat\ of $R$-augmented filtered pro-coherent $\eoo$-algebras. Furthermore, the essential image of $\cet$ consists of completely filtered pro-coherent ring spectra $A$, such that $\sym_R[\gr_1A]_1\simeq \gr A$ naturally.
	\end{corollary}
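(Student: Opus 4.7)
The plan is to recognize this as an immediate application of Theorem \ref{thm3.24n} to the distinguished case $\mathcal{P}=\neoo$ with $\kd(\mathcal{P})=\lie$, so the whole job is to verify the hypothesis ($\dagger$) in Construction \ref{c3.19n} and then translate the abstract conclusion into the statement advertised here.

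First I would check ($\dagger$) for $\mathcal{P}=\neoo$ inside the augmented spectral PD $\infty$-operad category of Construction \ref{c3.1n}. Connectivity and reducedness are immediate from the definition in Example \ref{exa2.38n}: by construction $\neoo(0)\simeq 0$ and $\neoo(1)\simeq R$, and each arity is a copy of $R$, hence connective. For the almost-perfectness of $\br(\neoo)$ one invokes the familiar fact (established, in the pro-coherent incarnation we use, in \cite{BCN}) that the bar construction of the non-unital commutative operad is, at each arity $r$, the $R$-linear chain complex of the partition poset $\Pi_r$ shifted appropriately; in particular each $\br(\neoo)(r)$ is a perfect $R[\Sigma_r]$-module and $\br(\neoo)$ is almost perfect as a pro-coherent symmetric sequence. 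This is precisely why the PD Koszul dual $\kd(\neoo)$ is taken as the definition of $\lie$ in Definition \ref{df3.22n}.

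Once ($\dagger$) is verified, Theorem \ref{thm3.24n} applies verbatim and yields an equivalence
\[\aqf\colon \alg^{\dap}_{\kd(\neoo)}\xrightarrow{\simeq}\alg^{\fil,op}_{\neoo,\mathrm{qfafp}}.\]
The left-hand side is, by definition of $\lie$ and of the notation $\aplie$, precisely the \infcat\ of dually almost perfect spectral partition Lie algebras, and under this specialization the functor $\aqf$ is by definition $\cet$ (Definition \ref{df3.22n}). The right-hand side lives inside $\calg^{nu}(\fil_{\ge1}\QC^\vee_R)^{op}$, giving the fully faithful embedding claim after composing with the inclusion of the essential image.

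It remains to identify the essential image in the form stated. By Theorem \ref{thm3.24n} the image consists of complete filtered non-unital $\eoo$-algebras $A$ with $\gr_1A\in\aperf_R$ and the canonical map $\fr_{\neoo}([\gr_1A]_1)\xrightarrow{\simeq}\gr A$ an equivalence of graded $\neoo$-algebras. The free non-unital commutative algebra functor $\fr_{\neoo}$ on a graded pro-coherent module is, by the formula \eqref{composite formula} applied to $\neoo$ together with the fact that $\neoo(r)\simeq R$ with trivial $\Sigma_r$-action, given termwise by $V\mapsto \bigoplus_{r\ge 1}(V^{\otimes r})_{h\Sigma_r}$; but this is exactly the definition of $\sym_R$ in the non-unital convention used throughout the paper. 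Hence the condition $\fr_{\neoo}([\gr_1A]_1)\simeq \gr A$ is the same as $\sym_R[\gr_1A]_1\simeq \gr A$, and completeness is the remaining hypothesis, which matches the statement. The only subtle point I anticipate is bookkeeping the grading shift implicit in the notation $[-]_1$ and making sure the $\Sigma_r$-homotopy orbits on the right side of \eqref{composite formula} land at the correct weight $r$; this follows from tracing how $[-]_1\colon \QC^\vee_R\hookrightarrow \gr_{\ge 1}\QC^\vee_R$ interacts with the Day-convolution-like grading on $\circ$ inherited via Construction \ref{c2.43n}.
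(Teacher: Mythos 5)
Your proposal is correct and matches the paper's (implicit) argument: Corollary \ref{cor3.30n} is stated immediately after Theorem \ref{thm3.24n} with no separate proof, and the intended reading is exactly the specialization $\mathcal{P}=\neoo$, $\kd(\neoo)=\lie$ that you carry out. Your unpacking of condition ($\dagger$) — reducedness and connectivity of $\neoo$ from $\neoo(r)\simeq R$ for $r\ge1$ and $\neoo(0)\simeq 0$, almost-perfectness of $\br(\neoo)$ via the partition complex — and the identification of $\fr_{\neoo}$ with non-unital $\sym_R$ via formula \eqref{composite formula} are the right ingredients, and they supply the details the paper leaves tacit.
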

	
	\begin{corollary}\label{cor3.31n}
		Let $R$ be a coherent simplicial commutative ring. The Chevalley-Eilenberg functor $\cet$ with Hodge filtration induces a fully faithful embedding
		\[\cet:\apdlie\hookrightarrow \dalg^{nu}(\fil_{\ge 1}\QC^\vee_R)^{op}\]from the \infcat\ of dually almost perfect derived partition Lie algebras into that of $R$-augmented filtered pro-coherent derived algebras. The essential image of $\cet$ consists of such $A$ that is complete and has $\lsym_R[\gr_1A]_1\simeq \gr A$ naturally.
	\end{corollary}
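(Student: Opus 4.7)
The plan is to obtain this as an immediate specialization of Theorem \ref{thm3.24n} to the derived PD $\infty$-operad $\mathcal{P} = \com^{nu}$, whose Koszul dual is by definition $\kd(\com^{nu}) = \dlie$ and whose associated $\aqf$ is by definition $\cet$ (Definition \ref{df3.22n}). So the main task is simply to check that $\com^{nu}$ satisfies the hypothesis ($\dagger$) of Construction \ref{c3.19n} and to translate the abstract description of the essential image into the concrete one stated here.

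First I would verify ($\dagger$). By construction, $\com^{nu}$ is reduced (no nullary operation, and the unit induces $R \simeq \com^{nu}(1)$) and connective as a derived PD $\infty$-operad. Reducedness gives Koszulness via Proposition \ref{prop3.9n} (recalled in Example \ref{exa3.10n}), so $\com^{nu} \simeq \cobr\circ\br(\com^{nu})$. For the almost perfectness of $\br(\com^{nu})$, I would argue arity by arity: at arity $r$ the bar construction is a finite colimit in $\sseq^{\gen,\vee}_{\ul{R}}$ of composites of copies of $\com^{nu}$ (using the reducedness argument from Lemma \ref{inlemma3}, which shows that only terms of arity $\le r$ contribute), and each such composite is almost perfect since it is built from $\ul{R}[\sig_r]$-modules of the form $\ul{R}\otimes\infsus X$ for finite $\sig_r$-sets $X$.

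With ($\dagger$) in hand, Theorem \ref{thm3.24n} gives an equivalence
\[
\cet = \aqf: \apdlie \xrightarrow{\simeq} \alg^{\fil,op}_{\com^{nu},\mathrm{qfafp}},
\]
whose inverse is $\dD = \colim \circ\, \kd$. To finish, I would unpack the definition of $\alg^{\fil}_{\com^{nu},\mathrm{qfafp}}$ in this case. By definition (Notation preceding Theorem \ref{thm3.24n}), an object $A$ lies in this subcategory iff $A$ is complete, $\gr_1 A$ is almost perfect, and the natural map $\fr_{\com^{nu}}([\gr_1 A]_1) \xrightarrow{\simeq} \gr A$ is an equivalence of graded $\com^{nu}$-algebras. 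Since the free non-unital derived algebra on a pro-coherent module $V$ is $\lsym_R(V)$ (viewed in the pro-coherent context via right-left extension, per Example \ref{exa2.52}), this condition is exactly $\lsym_R[\gr_1 A]_1 \simeq \gr A$ naturally. The almost perfectness of $\gr_1 A$ is implied by this equivalence combined with completeness, since $\cet$ lands automatically in quasi-free filtered algebras with almost perfect generators (by Lemma \ref{inlemma1} in the proof of Theorem \ref{thm3.24n}), so it need not be imposed separately in the statement.

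The only genuine thing to check is almost perfectness of $\br(\com^{nu})$; everything else is formal transportation of the general filtered PD Koszul duality. The potentially subtle point is ensuring that the derived bar construction of $\com^{nu}$ has the correct finiteness in the derived (as opposed to spectral) context, where $\sseq^{\gen,\vee}$ involves transfers and genuine $\sig_r$-equivariance; but the bar resolution is already arity-wise finite for reduced operads by Lemma \ref{inlemma3}, and each $\com^{nu}(r) \simeq \ul{R}$ is coherent in $\m^{\sig_r}_{\ul{R}}$ by coherence of $R$ (Remark \ref{rk2.21}).
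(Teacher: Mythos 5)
Your proposal is correct and follows the same route the paper (implicitly) takes: Corollary~\ref{cor3.31n} is the specialization of Theorem~\ref{thm3.24n} to $\mathcal{P} = \com^{nu}$, with $\aqf$ identified with $\cet$ via Definition~\ref{df3.22n} and the free graded $\com^{nu}$-algebra on $[\gr_1 A]_1$ unwound as $\lsym_R[\gr_1 A]_1$. Two of your steps deserve comment. First, for the almost perfectness of $\br(\com^{nu})$ you sketch a direct arity-wise argument, whereas the paper outsources this to \cite[Definition 3.52]{BCN} (``almost finitely presented''); but your citation of Lemma~\ref{inlemma3} is backwards---that lemma controls the \emph{cobar} complex of a reduced cooperad, and for the bar complex you need the formally dual statement (which does hold: for reduced $\mathcal{P}$ the terms $\bar{\mathcal{P}}^{\circ n}$ contribute nothing in arity $r$ once $n \ge r$, so $\br(\com^{nu})(r)$ is a finite geometric realization of coherent objects, noting that $\circ$ is locally polynomial and $R[\mathcal{O}_\sig]$ is coherent). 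Second, your closing remark that almost perfectness of $\gr_1 A$ ``need not be imposed separately'' is circular: Lemma~\ref{inlemma1} shows that objects in the image of $\cet$ have almost perfect $\gr_1$, but it does not show that every complete $A$ with $\gr A \simeq \lsym_R[\gr_1 A]_1$ has almost perfect $\gr_1$ (it does not---take $\gr_1 A$ to be any non-almost-perfect pro-coherent module). That condition is a genuine constraint on the essential image, correctly imported from the definition of $\alg^{\fil}_{\mathcal{P},\mathrm{qfafp}}$ in Theorem~\ref{thm3.24n}, and must be read into the statement even though the wording of Corollaries~\ref{cor3.30n} and~\ref{cor3.31n} elides it. Your proof establishes the correct (stronger) characterization; do not try to argue the hypothesis away.
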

	
	This section is closed by a technique corollary for later usage:
	\begin{corollary}[Totalization of free algebras]\label{totalg}
		Let $R$ be a coherent simplicial commutative ring, and $M\to N^\bullet$ be a totalization diagram existing in $\aperf_R$. Then there is an induced totalization $\lsym_R[M]_1\simeq \tot\lsym_R[N^\bullet]_1$ of graded derived algebras.
	\end{corollary}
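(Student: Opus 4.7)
The strategy is to realize both sides of the claimed equivalence as associated gradeds of $\cet$ (Corollary~\ref{cor3.31n}) applied to suitable dually almost perfect partition Lie algebras, and exploit the fact that $\cet$ sends geometric realizations to totalizations.

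First, I translate the hypothesis to the Lie side. By Proposition~\ref{prop2.30}, the $R$-linear duality $(-)^\vee:\aperf_R^{op}\xrightarrow{\simeq} \aperf^\vee_R$ converts the totalization $M\simeq \tot N^\bullet$ into a geometric realization $M^\vee\simeq |(N^\bullet)^\vee|$ in $\aperf^\vee_R$. Since the forgetful $\alg_{\dlie}(\QC^\vee_R)\to \QC^\vee_R$ preserves sifted colimits, taking $L:=|\triv((N^\bullet)^\vee)|$ in $\alg_{\dlie}(\QC^\vee_R)$ yields an object with underlying module $M^\vee$; since $M\in \aperf_R$ we have $L\in \apdlie$ and $L^\vee\simeq M$.

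Next, applying $\cet$ and invoking Proposition~\ref{prop3.21n}, which says that $\cet$ sends sifted colimits to sifted limits, I obtain $\cet(L)\simeq \tot \cet(\triv((N^\bullet)^\vee))$ in $\dalg^{nu}(\fil_{\ge1}\QC^\vee_R)$. By Lemma~\ref{inlemma1} in the proof of Theorem~\ref{thm3.24n}, the associated graded of $\cet$ applied to any dually almost perfect partition Lie algebra $L'$ is freely generated in weight one by $(L')^\vee$; in particular $\gr\cet(L)\simeq \lsym_R[M]_1$ and $\gr\cet(\triv((N^n)^\vee))\simeq \lsym_R[N^n]_1$ as graded derived algebras.

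The remaining step, and main technical obstacle, is showing that $\gr$ commutes with the totalization $\tot \cet(\triv((N^\bullet)^\vee))$. This follows from Theorem~\ref{thm dg_-}: the equivalence $\fil^{\cpl}\QC^\vee_R\simeq \dg_-\QC^\vee_R$ identifies $\gr$, up to the shearing autoequivalence $[-*]$, with the restriction functor along $\mathbb{Z}^{dis,op}\hookrightarrow \ch^{op}$. Being a restriction of pointed functor categories, this preserves pointwise limits and colimits; combined with the fact that limits of complete filtered derived algebras are computed on the underlying filtered pro-coherent modules (the forgetful from algebras creates limits), we conclude that $\gr$ commutes with this totalization. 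Putting it all together,
\[\lsym_R[M]_1 \simeq \gr\cet(L) \simeq \gr\tot \cet(\triv((N^\bullet)^\vee)) \simeq \tot \gr \cet(\triv((N^\bullet)^\vee)) \simeq \tot \lsym_R[N^\bullet]_1,\]
and naturality of each step identifies the composed equivalence with the canonical map induced by the universal property of $\tot$.
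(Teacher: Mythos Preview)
Your proof is correct and follows essentially the same approach as the paper: dualize the totalization to a geometric realization of abelian/trivial partition Lie algebras in $\aperf^\vee_R$, apply $\cet$ (which converts sifted colimits into totalizations), and then pass to associated gradeds. The paper is terser, invoking Theorem~\ref{thm3.24n} directly and leaving the final step as ``Conclude by considering the associated graded algebras''; your justification via the identification $\fil^{\cpl}\simeq \dg_-$ that $\gr|_{\fil^{\cpl}}$ preserves limits is a correct and welcome elaboration of that step.
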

	\begin{proof}
		There is a geometric realization $M^\vee\simeq |(N^\bullet)^\vee|$ existing in $\aperf^\vee_{R}$, which can also be regarded as a geometric realization of abelian partition Lie algebras. Thus, there is $\cet(M^\vee)\simeq \tot \cet((N^\bullet)^\vee)$ by Theorem \ref{thm3.24n}. Conclude by considering the associated graded algebras.
	\end{proof}
	\subsection{Application: Lie algebra of Frobenius kernel over $\mathbb{F}_2$}
	Set $k=\mathbb{F}_2$ the field with two elements. The group of second roots of unity $\mu_2=\spec(k[x]/(x^2-1))$ is an infinitesimal flat group of interest. Now, we explicitly determine all the homotopy operations of its  derived partition Lie algebra $\mathrm{Lie}(\mu_2)^{\{2\}}$, where $\{2\}$ suggests the existence of $2$-restrictions.
	
	We begin by explaining the geometric meaning behind each term. As the fibre of the Frobenius $\varphi:\mathbb{G}_m\to \mathbb{G}_m$, the group scheme $\mu_2$ has $\dT_{\mu_2}\simeq k\oplus \sig^{-1}k$ as its tangent complex at the identity. This complex carries a natural $\kdlie$-structure such that $\ce(\dT_{\mu_2})\simeq k[x]/(x^2-1)$, \cite[Theorem 4.20]{BM}. Further, Theorem 4.23 in \cite{BM} shows that $\dT_{\mu_2}$ is a group object in derived partition Lie algebras. Let $\mathrm{Lie}(\mu_2)^{\{2\}}$ denote the $\kdlie$-algebra $B\dT_{\mu_2}$, which is the tangent of $B\mu_2$. Conversely, $\dT_{\mu_2}\simeq \Omega\mathrm{Lie}(\mu_2)^{\{2\}}$. In particular, the underlying module of $\mathrm{Lie}(\mu_2)^{\{2\}}$ is $\sig k\oplus k$.
	
	\begin{prop}\label{EXAprop1}
		(1) The homotopy operations on a $\kdlie$-algebra $L$ concentrating in degree $0$ and $1$ consist of $([-,-],(-)^{\{2\}})$ a restricted Lie structure on $\fg_1:=\pi_1(L)$, a $\fg_{1}$-representation structure on $\fg_0:=\pi_0(L)$ and a new additive operation $R^1:\fg_1\to \fg_0$.
		\[\begin{tikzcd}
			& {\fg_1} &&& {\fg_0} \\
			{} & {\fg_0} &&& {\fg_{-1}}
			\arrow["{[-,-]}", from=1-2, to=1-2, loop, in=150, out=210, distance=5mm]
			\arrow["{(-)^{\{2\}}}", from=1-2, to=1-2, loop, in=330, out=30, distance=5mm]
			\arrow["{R^1}", from=1-2, to=2-2]
			\arrow["{[-,-]}"', shift right, from=1-5, to=2-5]
			\arrow["{(-)^{\{2\}}}", shift left, from=1-5, to=2-5]
			\arrow["{[\fg_1,-]}", from=2-2, to=2-2, loop, in=150, out=210, distance=5mm]
		\end{tikzcd}\]
		
		(2) The homotopy  operations on a $\kdlie$-algebra $L$ concentrating in degree $0$ and $-1$ consist of a symmetric bilinear map $[-,-]$ and a weight $2$ operation $(-)^{\{2\}}$ satisfying
		\[(x+y)^{\{2\}}=(x)^{\{2\}}+[x,y]+(y)^{\{2\}}.\]
	\end{prop}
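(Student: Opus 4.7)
The plan is to deduce all operations directly from Corollary \ref{cor3.31n}: since both $\kdlie$-algebras in the statement have finite-dimensional homotopy concentrated in two consecutive degrees, they are dually almost perfect, so the completed Chevalley--Eilenberg functor $\cet$ identifies $L$ with an augmented complete filtered derived $\mathbb{F}_2$-algebra $A:=\cet(L)$ whose associated graded is free, $\gr A\simeq \lsym_{\mathbb{F}_2}([L^\vee]_1)$. Natural homotopy operations on $L$ of arity $n$ then correspond contravariantly to $\Sigma_n$-invariant classes of the appropriate bidegree in $\pi_*\lsym^n_{\mathbb{F}_2}([L^\vee]_1)$. The key external input is Priddy's explicit polynomial description $\pi_*\lsym_{\mathbb{F}_2}(\mathbb{F}_2[-1].\tau)\cong \mathbb{F}_2[\tau,\st^0\tau,(\st^0)^{\circ 2}\tau,\ldots]$ recalled in the example preceding the statement, together with its shifted analogues.

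For part (1), I would set $V:=[L^\vee]_1$, with generators $a^\vee$ in degree $0$ (dual to $\fg_0$) and $b^\vee$ in degree $-1$ (dual to $\fg_1$), both in weight $1$. Scanning $\pi_*\lsym^n_{\mathbb{F}_2}(V)$ in low weights for classes whose dualisation lands in $\pi_0$ or $\pi_1$ of $L$, one finds: the identities (weight $1$); the three $\Sigma_2$-invariant products $(a^\vee)^2$, $a^\vee b^\vee$, $(b^\vee)^2$ in $\lsym^2(V)$, which dualise to the restricted bracket $[-,-]$ on $\fg_1$, the $\fg_1$-module structure on $\fg_0$, and the $2$-restriction $(-)^{\{2\}}$ on $\fg_1$; and finally the Dyer--Lashof class $\st^0(b^\vee)$, which by Priddy sits in total degree $-1$ and weight $2$ and dualises to the new additive unary operation $R^1:\fg_1\to\fg_0$. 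No further classes contribute: for $n\ge 3$ a weight-versus-degree estimate using the explicit generators $(\st^0)^{\circ i}$ pushes every non-trivial class outside the bidegree window.

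For part (2), the same method applies with $V=[L^\vee]_1$ now carrying generators in degrees $0$ and $1$ (dual to $\fg_0$ and $\fg_{-1}$). The only weight-$\ge 2$ classes with total degree in $\{-1,0\}$ are the symmetric pairing on $\fg_0^\vee$ and the derived square $\lsym^2(\fg_0^\vee)$, both in weight $2$; they dualise to a symmetric bilinear bracket $[-,-]:\fg_0\otimes\fg_0\to\fg_{-1}$ (the degree drop being the expected shift of the Lie bracket in the partition Lie formalism) and to a $2$-restriction $(-)^{\{2\}}:\fg_0\to\fg_{-1}$. The identity $\lsym^2(x^\vee+y^\vee)=\lsym^2(x^\vee)+x^\vee y^\vee+\lsym^2(y^\vee)$ in $\gr_2 A$ then transports contravariantly to the stated polarisation formula $(x+y)^{\{2\}}=x^{\{2\}}+[x,y]+y^{\{2\}}$.

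The hard part will be the exhaustiveness check at the end of each case: I must rule out hidden contributions coming from higher $\st^0$-iterates or from higher-arity symmetric powers landing in the prescribed bidegrees. This reduces to a finite but careful bookkeeping in Priddy's polynomial algebra, crucially exploiting the characteristic-$2$ failure of $\lsym$ to commute with degree shifts (the phenomenon emphasised in the example just before the proposition): it is precisely this failure that produces $R^1$ in part (1) while keeping the weight-$\ge 3$ classes outside the relevant degree range in both cases.
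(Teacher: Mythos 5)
Your approach is genuinely different from the paper's. The paper reads the homotopy operations directly off the \emph{free} $\kdlie$-algebra $\kdlie(\Sigma k^{\oplus l_1}\oplus k^{\oplus l_2})$, using the explicit Lyndon-word basis of $\pi_*\kdlie(V)$ from \cite[Theorem 7.3]{BM}, singles out the ``atomic'' sequences $((xy))$, $(1,(x))$, $(0,(x))$, and then transfers the relations from \cite{zhang2022operations} (which treats the spectral operad $\klie$) along the map of monads $\kd(f):\kdlie\to\klie$ after checking, by an explicit $\lsym^2$ vs.\ $\sym^2$ computation, that $\kd(f)$ is injective on homotopy in the relevant range. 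You instead propose to read everything off the \emph{dual} side, from $\gr\cet(L)\simeq\lsym_{\mathbb{F}_2}([L^\vee]_1)$ and Priddy's description of $\pi_*\lsym$. This is the philosophy behind Lemma \ref{EXAlemma2} and is a legitimate approach, but as written it has gaps.

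First, the dictionary ``operations of arity $n$ on $L$ correspond contravariantly to $\Sigma_n$-invariant classes of $\pi_*\lsym^n([L^\vee]_1)$'' is not correct as stated: $\lsym^n$ is already the derived symmetric power, so there is no residual $\Sigma_n$-action left to take invariants of, and more importantly the weight grading of $\lsym$ does \emph{not} match the arity grading of $\kdlie$. Your own observation illustrates this: the class $\st^0(b^\vee)$ lives in weight $2$ of $\lsym(L^\vee)$ but is supposed to encode the \emph{unary} operation $R^1$. That mismatch is precisely the PD phenomenon (duality exchanges $\circ$-orbits for $\bcirc$-fixed points), and it must be built into the dictionary before the bookkeeping can be trusted. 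The precise form of the correspondence is what Lemma \ref{EXAlemma2} works out for arity $\le 2$; you need a version of it for all arities (or, as the paper does, avoid the dual side entirely by using the basis theorem for free $\kdlie$-algebras).

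Second, the exhaustiveness step, which you acknowledge as ``the hard part,'' is not carried out, and it is not clear that a weight-versus-degree estimate in $\pi_*\lsym$ alone settles it; the paper instead gets exhaustiveness for free from the explicit basis of $\pi_*\kdlie(\Sigma k^{\oplus l_1}\oplus k^{\oplus l_2})$ indexed by admissible sequences and Lyndon words. Third, you do not establish the \emph{relations}: that $([-,-],(-)^{\{2\}})$ is a restricted Lie structure, that $\fg_0$ is a $\fg_1$-representation, and that $R^1$ is additive. Your polarisation argument in part (2) is correct and pleasantly direct, but the analogous relations in part (1) are not addressed; the paper derives them by transporting the known relations for $\klie$ from \cite{zhang2022operations} along $\kd(f)$ after establishing injectivity of $\pi_*\kd(f)$ on the relevant free algebras. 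Without either that comparison or an explicit verification via connection maps, the proof is incomplete.
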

	\begin{proof}
		We only carry out the proof of (1). The same reasoning applies to (2) immediately.
		
		Recall that the homotopy operations of $\kdlie$-algebras are encoded by the homology of free algebras. Since $L$ concentrates at degree $0,1$, it suffices to consider $\pi_i(\kdlie(\sig k^{\otimes {l_1}}\oplus k^{l_2}))$ for $i=0,1$. Following \cite[Theorem 7.3]{BM}, $\pi_*(\kdlie(\sig k^{\otimes {l_1}}\oplus k^{l_2}))$ has a basis indexed by sequences $(i_1,\ldots,i_k,w)$. Here $w\in B(n_1,\ldots,n_{l_1},m_{1},\ldots,m_{l_2})$ is a \textit{Lyndon word} with $n_j$ ($m_j$) denoting how many times each letter at degree $1$ (or $0$) appears, and $|w|:=1-m_1-\ldots-m_{l_2}$. Besides, the integers $i_k$ satisfy: (1) $0\le i_k\le |w|$; (2) for all $1\le j<k$, $0\le i_j<2i_{j+1}$. Such a sequence has degree $|w|-l_2$. In particular, $\pi_1(\kdlie(\sig k^{\otimes {l_1}}\oplus k^{l_2}))$ is spanned by the sequences in the form of $(1,\ldots,1,(x))$ that all integers (if $k\ge1$) are $1$ and $x$ is a letter of degree $1$. As for $\pi_0(\kdlie(\sig k^{\otimes {l_1}}\oplus k^{l_2}))$, the basis has two types of vectors: (1) $(0,1,\ldots,1,(x))$ with $|x|=1$; (2) $(w)$, where $w$ is a Lyndon word containing exactly one (counting the multiplicity) letter at degree $0$.
		
		Now, we consider some ``atomic'' operations: let $[-,-]$ denote the operations $\fg_1\times\fg_i\to \fg_i$ ($i=0,1$) represented by $((xy))$ with $|x|=1$ and $|y|=0,1$, $(-)^{\{2\}}$ denote the operation $\fg_1\to \fg_1$ represented by the sequence $(1,(x))$ with $|x|=1$, and $R^1$ denote the operation $\fg_1\to\fg_0$ represented by the sequence $(0,(x))$ with $|x|=1$.
		
		Then, we find the relations between $[-,-]$, $(-)^{\{2\}}$ and $R^1$. In fact, Theorem 1.1 in \cite{zhang2022operations} describes thoroughly the relations between the homotopy operations of $\klie$-algebras. To pass to $\kdlie$, recall that the embedding $k[\sig]\hookrightarrow k[\mathcal{O}_\sig]$ induces a monoidal embedding $\sseq^\vee_k\to \sseq^{\gen,\vee}_k$, whose right adjoint maps $\com^{nu}$ to $\neoo$. Therefore, the unit map provides a morphism $f:\neoo\to \com^{nu}$ with $\neoo$ identified with its Borel derived operad. The PD Koszul dual of $f$ gives rise to a morphism of monads $\kd(f):\kdlie\to\klie$. At the same time, a direct calculation shows that, for each finitely dimensional graded $k$-space $V$, $\kdlie(2)\circ V\to \klie(2)\circ V$ is dual to the natural map 
		\[	c_V:(\sym^2_kV^\vee)[1]\to (\lsym^2_k V^\vee)[1].\]
		For $V=k$, $\pi_*(c_V)$ is obviously surjective. For $V=\sig k$, $\pi_*(\lsym^2_k \sig^{-1}k)$ is spanned by a generator $\tau^2$ at degree $-2$ and a generator $\st\tau$ at degree $-1$ given by Steenrod square. Thus, $\pi_*(c_{\sig k})$ is also surjective. Thus, $\kdlie(2)\circ (\sig k^{\oplus l_1}\oplus k^{\oplus l_2})\to \klie(2)\circ (\sig k^{\oplus l_1}\oplus k^{\oplus l_2})$ is injective on the level of homotopy groups. Further, the operations $[-,-]$, $(-)^{2}$ and $R^1$ are sent to the corresponding operations via $\kd(f)$ in \cite[Theorem 1.1]{zhang2022operations}, which satisfy the statement in this proposition.
		
		Moreover, notice that the homotopy operations for $\klie$-algebras represented by $(1,\ldots,1,w)$ with $|w|=1$, $(0,1,\ldots,1,w)$ with $|w|=1$ and $(w)$ with $|w|=0$ are non-null composites of $[-,-]$, $(-)^{\{2\}}$ and $R^1$. Therefore, they actually come from the homotopy operation of $\kdlie$-algebras and spanned by the ``atomic'' operations.
		
		To prove (2), it suffices to note that $c_{\sig^{-1}k}$ is surjective on the level of homotopy groups as $\lsym^2_k (k[1])\simeq 0$ and use the same reasoning.
		
	\end{proof}
	A $\kdlie$-algebra $L$ with perfect underlying module is determined by $\cet(L)$ (Theorem \ref{thm3.24n}). The binary operations on $\pi_*(L)$ are evident from this perspective.
	\begin{lemma}\label{EXAlemma2}
		The composition map $m:(\kdlie(2)\otimes L^{\otimes2})^{\sig_2}\to L$ of a perfect $\kdlie$-algebra $L$ is dual to the natural connection $d_L:L^\vee\to \lsym^2_k(L^\vee)[1]$ via $\cet(L)$.
	\end{lemma}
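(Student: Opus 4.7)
The plan is to trace through the construction $\cet = \cobr\circ(-)^{\ed}\circ\const$ (Construction~\ref{c3.19n}) and identify the first homotopy-coherent differential of $\cet(L)$ as the $R$-linear dual of the binary composition $m$.

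First, I would use Theorem \ref{thm3.24n} and Corollary~\ref{cor3.17n} to get an explicit description of $\cet(L)$ for perfect $L$: under the equivalence $(-)^\vee$ between dually almost perfect algebras and almost perfect coalgebras, $(\const L)^{\ed}$ is the filtered $\br(\com^{nu})$-coalgebra whose underlying module is (essentially) $L^\vee$ and whose coalgebra structure is dual to the $\kdlie$-algebra structure on $L$ via the monoidal contravariant equivalence of Proposition~\ref{prop3.12n}(2). In particular, the binary cooperation $L^\vee\to(\br(\com^{nu})(2)\otimes (L^\vee)^{\otimes 2})^{\sig_2}$ is tautologically the dual of $m$.

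Next, I would compute the bottom portion of the homotopy-coherent cochain complex associated to $\cet(L)$ via Theorem \ref{thm dg_-}. By Lemma~\ref{inlemma1}, $\gr_n \cet(L)[n]$ sits in weight $n$ as $\lsym^n_k(L^\vee)[n-\text{(shift)}]$, and $\gr\cet(L)$ is freely generated by $[L^\vee]_1$. Using Lemma~\ref{inlemma3} to guarantee that the cobar totalization at weight $\leq 2$ truncates to a finite (in fact two-term) cosimplicial diagram, the first connecting map $d : L^\vee \to \lsym^2_k(L^\vee)[1]$ in the cofibre sequence $F_1\cet(L)/F_3\cet(L)\to\gr_1\cet(L)\to\gr_2\cet(L)[1]$ is computed directly from the arity-$2$ cobar differential applied to $(\const L)^{\ed}$. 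That differential is, by the formula for $\cobr$ on a coaugmented coalgebra, exactly the binary coalgebra structure map (followed by the identification $(\br(\com^{nu})(2)\otimes(L^\vee)^{\otimes 2})^{\sig_2}\simeq \lsym^2_k(L^\vee)[1]$ coming from $\br(\com^{nu})(2)\simeq \Sigma\com^{nu}(2)$ as in Proposition~\ref{prop3.9n} applied to the reduced derived PD $\infty$-operad $\com^{nu}$).

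Combining the two steps: $d_L$ equals the binary coalgebra structure of $(\const L)^{\ed}$, which in turn is the $R$-linear dual of $m$. The main obstacle is the second step: one must carefully verify the formula for the first differential of $\cobr$ in the genuine $\sig_2$-equivariant (i.e.\ derived divided power) setting, identifying $\br(\com^{nu})(2)$ with a shift of $\com^{nu}(2)$ so that the expected target $\lsym^2_k(L^\vee)[1]$ appears. Once this identification is in hand, naturality of the whole construction in $L$ ensures that the two maps $d_L$ and $m^\vee$ agree as natural transformations on perfect $\kdlie$-algebras.
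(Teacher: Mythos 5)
Your proposal takes a genuinely different route from the paper's. You attempt to read off the first differential $d_L$ directly from the cobar totalization $\cobr(C)=\tot\,\cobr^\bullet(1,\br(\com^{nu}),C)$ with $C:=(\const L)^{\ed}$, identifying it with the binary cooperation. The paper instead uses a bi-filtration trick: it takes the counit map $m^{\sfil}:H:=\kdlie((L)_{-1})\to\const(L)$ out of a free filtered algebra, applies $\cet$ in a bi-filtered context to get a morphism of bi-complete algebras $\Delta$, then takes the connecting map in the $\bullet$ (Hodge) direction and the truncation $F_{0\le *\le 2}$ in the $*$ (algebra-weight) direction. The freeness of $H$ gives a splitting $F_*\ce(H)\simeq\widehat\oplus\ce(H^{\sgr})\simeq\sqz([L^\vee]_1)$, which forces $(\alpha_1,\alpha_2)\simeq(0,\text{equiv})$, and then $d_L\simeq\alpha_2\circ m^\vee$ falls out of the commutative square without ever computing the cobar differential directly.

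The gap in your sketch is exactly the step you flag at the end. Identifying the first connecting map of the Hodge filtration on $\tot\,\cobr^\bullet$ with the binary coalgebra structure map is not automatic. By Lemma~\ref{inlemma1}, the $\br(\com^{nu})$-coalgebra structure on $\gr(C)\simeq[L^\vee]_1$ is trivial (it is the coaugmentation), so the binary cooperation of $C$ lives only as extension data between filtration steps. Recovering that extension from the totalization requires relating the Hodge filtration on $\cobr(C)$ to the arity/skeletal filtration on the cosimplicial cobar object; this is a nontrivial diagram chase that your sketch asserts but does not carry out, and it is precisely the bookkeeping that the paper's $*$-filtration makes explicit and that the freeness of $H$ trivializes. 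A further small slip: the dual of $(\kdlie(2)\otimes L^{\otimes 2})^{\sig_2}$ (genuine fixed points, from $\bcirc$) is $(\br(\com^{nu})(2)\otimes(L^\vee)^{\otimes 2})_{\sig_2}$ (orbits, from $\circ$) by Proposition~\ref{prop3.12n}(2), not fixed points as you have written; the target $\lsym^2_k(L^\vee)[1]$ is built from orbits. That said, if one supplies the missing analysis — using Lemma~\ref{inlemma3} to reduce to a finite totalization at weights $\le 2$ and then tracking the coface maps explicitly — your approach should work; it is just heavier than the paper's and the burden sits exactly where your sketch stops.
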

	\begin{proof}
		Set $H:=\kdlie((L)_{-1})$ the free filtered $\kdlie$-algebra, and consider the composition map $m^{\sfil}:H\to \const(L)$ of filtered $\kdlie$-algebras. Endow both sides with a second constant filtration, and then apply the Chevalley-Eilenberg functor in the bi-filtered context, we obtain a morphism of bi-complete algebras
		\[\Delta:F_\bullet F_* \ce(L)\to F_\bullet F_* \ce(\kdlie(L)),\]where $\gr_{\bullet}\Delta$ is $\lsym(m^{\fil,\vee}):\lsym^{\bullet}_k (L^\vee)_1\to \lsym^{\bullet}_kH^\vee$. The connection map of $\bullet$-filtrations give rise to a commutative square of filtered modules
		\[\begin{tikzcd}[column sep=2.5cm]
			{(L^\vee)_1} & {H^\vee} \\
			{(\lsym^2_k L^\vee)_2[1]} & {\lsym^2_k H^\vee[1]}
			\arrow["{m^{\fil,\vee}}", from=1-1, to=1-2]
			\arrow["{d_L}"', from=1-1, to=2-1]
			\arrow["{d_H}", from=1-2, to=2-2]
			\arrow["{\lsym^2_k(m^{\fil,\vee})}"', from=2-1, to=2-2]
		\end{tikzcd}.\]Applying the $*$-filtration truncation $F_{0\le *\le 2}$, we have
		\[\begin{tikzcd}
			{(L^\vee)_1} & {(L^\vee)_1\oplus ((\kdlie(2)\circ L)^\vee)_2} \\
			{(\lsym^2_k L^\vee)_2[1]} & {(\lsym^2_k L^\vee)_2[1]}
			\arrow["{(id,m^\vee)}", from=1-1, to=1-2]
			\arrow["{d_L}"', from=1-1, to=2-1]
			\arrow["{(\alpha_1,\alpha_2)}", from=1-2, to=2-2]
			\arrow["id"', from=2-1, to=2-2]
		\end{tikzcd}.\]Then, notice that there is a symmetric monoidal left adjoint $\oplus: \gr\QC^\vee_k\to \fil\QC^\vee_k$ sending the free graded algebra $H^{\sgr}:=\kdlie([L]_{-1})\simeq \gr H$ to $H$. Therefore, there is a splitting structure on $F_*\ce(H)\simeq \widehat{\oplus} \ce(H^{\sgr})$. Besides, there is an equivalence $\widehat{\oplus} \ce(H^{\sgr})\simeq \sqz([L^\vee]_{1})$ by the freeness of $H^{\sgr}$ and Proposition \ref{prop3.9n}. It means that $\alpha_1\simeq 0$ and $\alpha_2$ is the natural equivalence, i.e. $d_L\simeq \alpha_2\circ m^\vee$.

	\end{proof}
	\begin{exa}[Lie structure of $\mathrm{Lie}(\mathbb{G}_m)^{\{2\}}$]\label{EXAexa6.3}
	Let $\mathrm{Lie}(\mathbb{G}_m)^{\{2\}}$ denote the $\kdlie$-algebra	corresponding to $B\widehat{\mathbb{G}}_m$ by \cite[Theorem 4.23]{BM}, which has $\sig k$ as the underlying module. Since $B\widehat{\mathbb{G}}_m\simeq |\widehat{\mathbb{G}}^{\times \bullet}_m|$, the filtered Chevalley-Eilenberg algebra $A:=\cet(\mathrm{Lie}(\mathbb{G}_m))$ is the totalization of a cosimplicial resolution $k[[x]]^{\widehat{\otimes} \bullet}$ with all variables of weight $1$, which exhibits the formal group law of $\widehat{\mathbb{G}}_m$. In particular, the coface maps between the terms of degree $\le 3$ are as follows:
	\begin{equation}\label{cosimplicial res}
		k\mathrel{\substack{\textstyle\rightarrow\\[-0.4ex]
				\textstyle\rightarrow }}k[[t]]\mathrel{\substack{\textstyle\rightarrow\\[-0.4ex]
				\textstyle\rightarrow \\[-0.4ex]
				\textstyle\rightarrow}}k[[x_1,x_2]]\mathrel{\substack{\textstyle\rightarrow\\[-0.4ex]
				\textstyle\rightarrow \\[-0.4ex]
				\textstyle\rightarrow\\[-0.4ex]
				\textstyle\rightarrow}}
		k[[y_1,y_2,y_3]]	
	\end{equation}
	$t$ is sent to $x_2$, $x_1+x_2+x_1x_2$, $x_1$ by the three coface maps, and $(x_1,x_2)$ is mapped to $(y_2,y_3), (y_1+y_2+y_1y_2,y_3), (y_1,y_2+y_3+y_2y_3)$ or $(y_1,y_2)$ respectively by the four coface maps.

	The homology of graded pieces $\pi_*\gr(A)$ is known by \cite[Theorem 4.0.1]{priddy1973mod}. In fact, it is the polynomial algebra $\pi_*\gr(A)\cong k[\tau,\sq \tau, \sq^{\circ2} \tau, \ldots]$, where $\sq^{\circ n}\tau$ is of degree $-1$ and weight $2^n$. In particular, there is $\gr_2(A)\simeq k\tau^2\oplus k\sq \tau$. Considering the $\gr_1$ piece of (\ref{cosimplicial res}), there is one non-null class $[t]$ representing $\tau$. The $\gr_2$ piece of (\ref{cosimplicial res}) has two classes $[t^2]$ of degree $-1$ and $[x_1x_2 ]$ of degree $-2$, which represent $\sq\tau$ and $\tau^2$ respectively. Moreover, there is $\partial t=x_1x_2$ in (\ref{cosimplicial res}), which means that the connection map $d:\gr_1A\to \gr_2 A[1]$ sends $\tau=[t]$ to $\tau^2=[x_1x_2]$. The dual of $d$ shows that $\mathrm{Lie}(\mathbb{G}_m)^{\{2\}}$ has null Lie bracket and an identical restriction map, which recovers the ordinary restricted Lie algebra of $\mathbb{G}_m$ (up to a shifting).
	\end{exa}
	\begin{prop}\label{EXAprop4}
		The homotopy operations of $\mathrm{Lie}(\mu_2)^{\{2\}}$, having the underlying module $\sig\dT_{\mu_2}\simeq kD_1\oplus kD_0$ ($|D_i|=i$), can be described as
		
		\[\begin{tikzcd}
			& {kD_1} \\
			{} & {kD_0}
			\arrow["{[-,-]=0}", from=1-2, to=1-2, loop, in=150, out=210, distance=5mm]
			\arrow["{(D_1)^{\{2\}}=D_1}", from=1-2, to=1-2, loop, in=330, out=30, distance=5mm]
			\arrow["{D_1\mapsto D_0}", from=1-2, to=2-2]
			\arrow["{[D_1,-]=0}", from=2-2, to=2-2, loop, in=150, out=210, distance=5mm]
		\end{tikzcd},\]which consists of a trivial Lie bracket $[-,-]$, an identical restriction $(D_1)^{\{2\}}$ and an additive higher restriction $R^1(D_1)=D_0$.
	\end{prop}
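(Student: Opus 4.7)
The plan is to compute $\cet(\mathrm{Lie}(\mu_2)^{\{2\}})$ explicitly enough to read off the connection map $d:\gr_1 A\to \gr_2 A[1]$ on generators, and then apply Lemma \ref{EXAlemma2} to translate this into the binary operations. Set $L=\mathrm{Lie}(\mu_2)^{\{2\}}$ and $A=\cet(L)$. Since $L$ is dually almost perfect, Corollary \ref{cor3.31n} gives $\gr A\simeq \lsym_k(L^\vee)$ with $L^\vee\simeq kv\oplus k[-1]\tau$, where $v$ and $\tau$ are the generators of homological degree $0$ and $-1$ dual to $D_0$ and $D_1$. By the Priddy formula quoted just before Example \ref{EXAexa6.3}, the weight-$2$ piece $\gr_2 A\simeq \lsym^2_k(L^\vee)$ admits the basis $\{v^2,\,v\tau,\,\tau^2,\,\st^0\tau\}$ in homological degrees $0,\,-1,\,-2,\,-1$ respectively.

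To compute $d$, I would follow Example \ref{EXAexa6.3} but refine the cosimplicial resolution $(k[t]/t^2)^{\widehat{\otimes}\bullet}$ of $B\mu_2\simeq |\mu_2^{\times\bullet}|$ by resolving each factor by a free simplicial commutative $k$-algebra (for instance, adjoining a degree-$1$ variable $u$ with $\partial u=t^2$ to $k[t]$), so that the bi-cosimplicial derived model carries the Hodge filtration with both $t$ and $u$ at weight $1$. The weight-$1$ Moore complex then realises $\gr_1 A\simeq L^\vee$, with $\tau$ represented by $[t]$ at cosimplicial degree $1$ and $v$ by the Koszul generator $[u]$ at cosimplicial degree $0$; the latter class is precisely the contribution of $\mu_2$ being non-smooth/derived.

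Applying the differential next: exactly as in Example \ref{EXAexa6.3}, the coface $\mu^*(t)=t_1+t_2+t_1 t_2$ sends $t$ to $x_1 x_2$ in the weight-$2$ cochain, yielding $d(\tau)=\tau^2$ (which is the unique weight-$2$ class of homological degree $-2$, so pinned down without ambiguity). For $v$, the relation $\partial u=t^2$ produces a weight-$2$ boundary $[t^2]$ at cosimplicial degree $0$, which under the Priddy identification lands on the Dyer-Lashof generator $\st^0\tau\in \lsym^2_k(k[-1]\tau)$ rather than $\tau^2$ (for degree reasons); the $v\tau$-component of $d(v)$ vanishes, since the multiplication on $\mu_2$ does not mix $t$ and $u$ in the refined resolution. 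Thus $d(v)=\st^0\tau$.

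By Lemma \ref{EXAlemma2}, the composition map $m:\kdlie(2)\otimes_{\sig_2} L^{\otimes 2}\to L$ is $R$-linearly dual to $d$. Under this duality, the basis elements $\tau^2$, $v\tau$, $\st^0\tau$, $v^2$ of $\lsym^2_k(L^\vee)$ correspond to the atomic operations $(D_1)^{\{2\}}$, the bracket $[D_0,D_1]$, the higher restriction $R^1(D_1)$, and the $(D_0)$-self-pairing component of the $D_0$-action on $\fg_0$, as classified by Proposition \ref{EXAprop1}(1). The computations $d(\tau)=\tau^2$, $d(v)=\st^0\tau$, together with the vanishing of the remaining components, therefore translate into $(D_1)^{\{2\}}=D_1$, $R^1(D_1)=D_0$, $[-,-]=0$, and $[D_1,-]=0$. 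The main obstacle is the rigorous identification of the boundary of the Koszul generator $u$ with $\st^0\tau$ (as opposed to $\tau^2$), which requires carefully tracking the Priddy basis of $\lsym^2_k(k[-1]\tau)$ through the refined bicosimplicial model; a clean alternative is to exploit the fibre sequence $B\mu_2\to B\mathbb{G}_m\xrightarrow{B\varphi}B\mathbb{G}_m$, with $\varphi$ the Frobenius squaring (null on Lie algebras in characteristic $2$), which reduces the non-trivial component of $d(v)$ to the extension class of the induced fibre sequence of partition Lie algebras, computed from the $\mathbb{G}_m$-case of Example \ref{EXAexa6.3}.
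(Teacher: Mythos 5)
Your primary approach --- compute $\cet(L)$ via a bicosimplicial Koszul resolution of $B\mu_2$, adjoining a degree-$1$ variable $u$ with $\partial u=t^2$ to each $\mathcal{O}(\mu_2)=k[t]/t^2$ factor --- is a genuinely different route from the paper's, and it contains a gap you yourself flag. Your degree count correctly rules out $\tau^2$ as a candidate for $d(v)$ (it sits in homological degree $-2$, while $d(v)$ must land in degree $-1$), but $\pi_{-1}(\gr_2 A)\cong k\cdot v\tau\oplus k\cdot\st^0\tau$ is two-dimensional, and killing the $v\tau$-component is precisely the assertion $[D_1,D_0]=0$; it cannot be dismissed by saying "the multiplication on $\mu_2$ does not mix $t$ and $u$". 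The comultiplication lifted to the resolution necessarily acquires correction terms in $u$ (one needs $\partial(\Delta u)=t_1^2+t_2^2+t_1^2t_2^2$), and showing no weight-$2$ contribution survives the totalization through the Priddy basis is exactly the hard bookkeeping you flag as the obstacle. So as written the primary route does not close the $[D_1,D_0]$-gap.

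The paper avoids this by taking the fibre-sequence alternative you only sketch at the end: it exhibits $\cet(L)$ as the completed pushout $k\widehat{\otimes}_A A$ with $A:=\cet(\mathrm{Lie}(\mathbb{G}_m))$ and horizontal arrow induced by Frobenius $\varphi$, and passes to graded pieces to get a commutative cube. The decisive input is that $\varphi$ is null-homotopic on $\gr_1A$ (Frobenius kills the linear term), so after contracting the trivialized faces, $\pi_0(d)(\upsilon)$ is pinned to the lift of $\varphi$ through $\gr_2A\to F_1/F_3A\to\gr_1A$, which Example~\ref{EXAexa6.3} identifies as $[t]\mapsto[t^2]=\sq\tau$ on the nose --- with no possible $v\tau$-ambiguity. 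This also dispenses with building an explicit Koszul model of $\mu_2$. Fleshed out, your alternative would reproduce the paper's argument; your primary route would need a separate argument for the $v\tau$-vanishing.
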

	\begin{proof}
		Write $L:=\mathrm{Lie}(\mu_2)^{\{2\}}$ for short.\ Lemma \ref{EXAlemma2} shows that it suffices to understand the connection $d:L^\vee\to \lsym^2_k(L^\vee)[1]$ via $\cet(L)$.\ There is a completed pushout diagram ($A$ as in \ref{EXAexa6.3})
		\[\begin{tikzcd}
			A & A \\
			k & {\cet(L)}
			\arrow["\varphi", from=1-1, to=1-2]
			\arrow[from=1-1, to=2-1]
			\arrow[from=1-2, to=2-2]
			\arrow[from=2-1, to=2-2]
		\end{tikzcd},\]
		whose graded pieces induce a commutative cube
		\[\begin{tikzcd}
			&& {\gr_2A[1]} & {\gr_2A[1]} \\
			{k\tau'} & {k\tau} & 0 & {\gr_2\cet(L)[1]} \\
			0 & {k\upsilon\oplus k\tau}
			\arrow["{\varphi\simeq0}", from=1-3, to=1-4]
			\arrow[from=1-3, to=2-3]
			\arrow[from=1-4, to=2-4]
			\arrow[from=2-1, to=1-3]
			\arrow["{\varphi\simeq0}"', from=2-1, to=2-2]
			\arrow[from=2-1, to=3-1]
			\arrow[from=2-3, to=2-4]
			\arrow[from=3-1, to=2-3]
			\arrow[from=3-1, to=3-2]
			\arrow["d"', from=3-2, to=2-4]
			\arrow[crossing over, from=2-2, to=1-4]
			\arrow[crossing over, from=2-2, to=3-2]
		\end{tikzcd}.\]
		The homotopy group of $\gr\cet(L)$ is $k[\upsilon,\tau,\sq\tau,\ldots]$ by \cite[Theorem 4.0.1]{priddy1973mod}, where $\upsilon=[D_0^\vee]$ is of degree $0$ and $\tau=[D_1^\vee]$ is still of degree $-1$. The right vertical face of the above cube implies that $\pi_{-1}(d):\tau\mapsto \tau^2$ following Example \ref{EXAexa6.3}. The map $\pi_0(d):k\upsilon\to \pi_{-1}(\gr_2\cet(L))$ is encoded by the front vertical face post composed with $d$, regarded as a loop in $\map(k\tau',\gr_2\cet(L)[1])$. The above cube is an equivalence between this $2$-morphism with another one given by the upper horizontal face post composed by the natural inclusion $\gr_2 A[1]\to \gr_2\cet(L)[1]$, since all the other faces are null-homotopic through the triviality of $F_{*\ge1}k$. Then, recall that $\varphi:k\tau'\to k\tau$ is null-homotopic by the explicit model in Example \ref{EXAexa6.3} and the lifting of $\varphi$ along $\gr_2A\to F_1/F_3 A\to \gr_1A\simeq k\tau$. Therefore, the upper horizontal face represents a map $k\tau'\to \gr_2 A$ agreeing with the lifting of $\varphi$, which is $\tau'=[t]\mapsto[t^2]=\st \tau$ following Example \ref{EXAexa6.3}. It means that $\pi_0(d):\upsilon\mapsto \sq\tau$, or dually $R^1:D_1\mapsto D_0$ and $[D_1,D_0]=0$.
	\end{proof}
	A similar but easier process shows that $\cet(\dT_{\mu_2})=(k[\upsilon,\varepsilon]/(\varepsilon^2),d)$ with $|\varepsilon|=1$, where the graded mixed structure $d$ sends $\varepsilon$ to $\upsilon$. It means that $\dT_{\mu_2}\simeq kD_0\oplus kD_{-1}$ has vanishing Lie bracket and $(D_0)^{\{2\}}= D_{-1}$.

	\section{Partition Lie algebroids and Koszul duality}\label{sec4}

	We define \textit{(derived) partition Lie algebroids over $B$} (relative to $R$), in the spirit of \cite{BW}, as the algebras of a sifted-colimit-preserving monad $T$ acting on $\QC^{\vee}_B$ (Definition \ref{df4.4}).\ Here $T$ receives a monad morphism from $\bdlie$. Their interplay with filtrations will be detailed in \S\ref{sec4.2}.
	
	Our purpose is to establish a ``many-object'' generalization of Corollary \ref{cor3.31n}.\ We present in Theorem \ref{thm4.25} an equivalence between \textit{dually almost perfect} partition Lie algebroids with a special types of complete filtered $R$-derived algebras,\ named as \textit{foliation-like algebras} (Notation \ref{n4.14}), which generalize the derived foliations in the sense of \cite[\S1.2]{TV} into general characteristics.

	\subsection{Partition Lie algebroids}\label{sec4.1}
	Before introducing partition Lie algebroids, we review the basics of commutative algebras in pro-coherent modules via derived symmetric sequences (Definition \ref{df2.46}).
	\begin{recoll}\label{recoll4.1}		
		Consider the \infcat\ of coherent simplicial commutative rings $\mathrm{SCR}^{\scoh}\subset\mathrm{SCR}$. From Theorem \ref{theorem2.29}, Lemma \ref{lemma2.47} and the adjoint functor theorem, the (pro-coherent) derived symmetric sequences with different coefficient algebras are encoded in the following diagram
		\[\begin{tikzcd}
			{\mathrm{SCR}^{\scoh}} && {\pr^{L}}
			\arrow["\sseq^{\gen}"=0, from=1-1, to=1-3, bend left]
			\arrow["\sseq^{\gen,\vee}"', name=1, from=1-1, to=1-3, bend right]
			\arrow["\iota",shorten <=6pt, shorten >=6pt, Rightarrow, from=0, to=1]
		\end{tikzcd}.\]	Here, the transfers along arrows $R\to B$ in $\mathrm{SCR}^{\scoh}$ give rise to the base change $B\otimes_R$ for (pro-coherent) derived symmetric sequences, whose right adjoints are restrictions of scalars.
		
		Both the transfers and the natural transformation $\iota$ are symmetric monoidal with respect to $\otimes$ and $\levtimes$, monoidal with respect to $\circ$ and $\bcirc$. In particular, they always preserve $\com$, as the unit of $\levtimes$ and an associative algebra of $\circ$. Then, for each $R\to B$ in $\mathrm{SCR}^{\scoh}$, there is an adjunction
		\[B\otimes_R(-):\dalg(\QC^\vee_R)\rightleftarrows\dalg(\QC^\vee_B):\mathrm{rest}.\]Moreover, this adjunction is monadic by Barr-Beck-Lurie's theorem, which identifies $\dalg(\QC^\vee_B)$ with the slice category $\dalg(\QC^\vee_R)_{B/}$.
	\end{recoll}
	Now, we construct a monad $T$ governing the partition Lie algebroids.
	\begin{construction}\label{c4.2}
		 
		The square zero extension of $B$-modules gives rise to functors into different algebra categories
		\[\sqzr:\QC^\vee_B\xrightarrow{\sqz_B}\dalg^{nu}(\QC^\vee_B)\xrightarrow{\mathrm{rest}}\dalg(\QC^\vee_R)_{/B},\]where $\sqz_{B}$ has a left adjoint $\cot_{B}$ given by $\br$ in Construction \ref{c3.19n}, and $\sqzr$ has a left adjoint $\cotr:=\cot_{B}(B\otimes_{R}-)$ called the \textit{relative cotangent fibre}. Composing with the $B$-linear dual defines a natural but unsatisfying monad on $\QC^\vee_B$
		\[\tbar:=(-)^\vee\circ \cotr\circ \sqzr\circ (-)^\vee.\]
	\end{construction}
	We are going to rectify $\tbar$ into a sifted-colimit-preserving monad using right-left extension.
	\begin{lemma}\label{lemma4.3}
		For each $V\in \aperf^\vee_B$, there is a functorially \textit{splitting} fibre sequence $\bdlie(V)\to \tbar(V)\to \dT_{B/R}(:= \dL^\vee_{B/R})$ in $\QC^\vee_B$.
	\end{lemma}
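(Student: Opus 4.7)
The plan is to compute $\cotr\sqzr(V^\vee)$ explicitly as a direct sum in $\QC^\vee_B$ and then apply the $B$-linear dual to obtain the claim. Three ingredients will be needed: (i) the cotangent-complex formula for $\cotr$; (ii) the transitivity cofibre sequence for $R \to B \to B \oplus V^\vee$; (iii) the natural retraction $B \oplus V^\vee \to B$ that produces the splitting.

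First, I identify the left adjoint. Since $\sqzr$ factors as $\mathrm{rest}\circ \sqz_B$, spelling out the universal property of $\sqzr$ yields, for every $A \in \dalg(\QC^\vee_R)_{/B}$, a natural equivalence $\cotr(A) \simeq \dL_{A/R}\otimes_A B$ in $\QC^\vee_B$. In particular, for the trivial square-zero extension $A = \sqzr(V^\vee) = B \oplus V^\vee$, the cotangent transitivity cofibre sequence for $R \to B \to A$, base-changed along $A \to B$, gives
\[
\dL_{B/R} \longrightarrow \cotr\sqzr(V^\vee) \longrightarrow \dL_{A/B}\otimes_A B.
\]
The rightmost term is precisely $\cot_B\sqz_B(V^\vee)$ by the general formula $\cot_B(I) \simeq \dL_{(B\oplus I)/B}\otimes_{B\oplus I} B$, and by the categorical bar-cobar setup of Construction \ref{c3.19n} the composite comonad $\cot_B\circ\sqz_B$ on $\QC^\vee_B$ is represented by $\br(\com^{nu})(-)$. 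Thus one obtains a functorial cofibre sequence $\dL_{B/R} \to \cotr\sqzr(V^\vee) \to \br(\com^{nu})(V^\vee)$.

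Next, I argue that this sequence splits naturally. The augmentation $\varepsilon:B\oplus V^\vee \to B$ is a retraction of the unit $u:B\to B\oplus V^\vee$ inside the slice $\dalg(\QC^\vee_R)_{/B}$. Applying the functor $\cotr$, the composite $\varepsilon\circ u = \mathrm{id}_B$ descends to a retraction $\cotr\sqzr(V^\vee) \to \cotr(B) \simeq \dL_{B/R}$ of the left-hand map in the cofibre sequence above; this retraction is natural in $V$. Stability of $\QC^\vee_B$ then produces a natural splitting
\[
\cotr\sqzr(V^\vee) \simeq \dL_{B/R} \oplus \br(\com^{nu})(V^\vee).
\]
Finally, I dualize. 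The hypothesis that $V \in \aperf^\vee_B$ means $V^\vee \in \aperf_B$, and then $\br(\com^{nu})(V^\vee)$ is almost perfect (this is exactly the input to the filtered Koszul duality, Theorem \ref{thm3.24n}); $\dL_{B/R}$ is almost perfect by the standing hypothesis. Under the contravariant monoidal equivalence $\aperf_B^{op} \simeq \aperf^\vee_B$ of Proposition \ref{prop2.30}, the $B$-linear dual carries $\dL_{B/R}$ to $\dT_{B/R}$ by definition, and $\br(\com^{nu})(V^\vee)$ to $\kd(\com^{nu})(V) = \bdlie(V)$ by the definition of the PD Koszul dual operad. Dualizing the split decomposition then yields the claimed functorially splitting fibre sequence $\bdlie(V) \to \tbar(V) \to \dT_{B/R}$.

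The main technical obstacle lies in Step 1: rigorously justifying, inside the derived pro-coherent context, that $\cot_B\circ\sqz_B$ is indeed presented by $\br(\com^{nu})(-)$ and that the cotangent-complex identification $\cotr(A) \simeq \dL_{A/R}\otimes_A B$ holds. Both are expected, but depend on assembling the monadic bar-cobar machinery of \S\ref{sec3.1} with the pro-coherent cotangent theory; once these identifications are in hand, the transitivity sequence and the splitting are formal.
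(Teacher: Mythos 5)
Your proposal is correct and follows broadly the same route as the paper: produce the cofibre sequence $\dL_{B/R}\to\cotr\sqzr(V^\vee)\to\cot_B\sqz_B(V^\vee)$, split it via the retraction $0\to V^\vee\to 0$ (equivalently, $B\to B\oplus V^\vee\to B$), dualize, and use that $\bdlie(V)\simeq(\cot_B\sqz_B(V^\vee))^\vee$. The one place you depart — and which you yourself flag as the ``main technical obstacle'' — is how the cofibre sequence is established. You derive it from the identification $\cotr(A)\simeq\dL_{A/R}\otimes_A B$ together with the cotangent transitivity cofibre sequence for $R\to B\to B\oplus V^\vee$, base-changed along $B\oplus V^\vee\to B$. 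This is conceptually clean, but it imports a transitivity theorem for cotangent complexes of \emph{pro-coherent} derived algebras that the paper never invokes and whose pro-coherent version is not documented in its references. The paper instead proves the cofibre sequence directly: it exhibits the natural transformation $\cotr\circ\sqzr\to\cot_B\circ\sqz_B$ coming from the counit of $(B\otimes_R -)\dashv\mathrm{rest}$, applies $\map_{\QC^\vee_B}(-,M)$, and identifies the result with a visible homotopy pullback of mapping spaces built purely from the adjunctions $\sqz_B\dashv\cot_B$ and $(B\otimes_R -)\dashv\mathrm{rest}$. That argument is shorter, self-contained, and avoids any appeal to a general transitivity statement in the pro-coherent setting. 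If you replace your transitivity step by this direct mapping-space verification, your proof becomes essentially identical to the paper's; otherwise you should supply (or cite) the pro-coherent transitivity cofibre sequence you are using. Your splitting argument and the dualization via \ref{prop2.30} and the definition of $\kd$ are fine and match the paper's.
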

	\begin{proof}
		Observe that there is a natural transformation $\cotr\circ\sqzr\to \cot_B\circ\sqz_B$ given by the counit map of $(B\otimes_R-)\dashv \mathrm{rest}$. It induces a natural cofibre sequence
		\[\dL_{B/R}\simeq\cotr\circ\sqzr(0)\to\cotr\circ\sqzr(V^\vee)\to \cot_B\circ\sqz_B(V^\vee).\]Indeed, applying $\map_{\QC^\vee_B}(-,M)$ on it gives a homotopy pullback
		\[\begin{tikzcd}
			{\map_{\dalg^{nu}(\QC^\vee_B)}(\sqz_B(V),\sqz_B(M))} & {\{f\}} \\
			{\map_{\dalg(\QC^\vee_R)_{/B}}(\sqzr(V),\sqzr(M))} & {\map_{\dalg(\QC^\vee_R)_{/B}}(B,\sqzr(M))}
			\arrow[from=1-1, to=1-2]
			\arrow[from=1-1, to=2-1]
			\arrow[from=1-2, to=2-2]
			\arrow[from=2-1, to=2-2]
		\end{tikzcd},\]where $f:B\to \sqzr(M)$ is given by $0\to M$. Taking $B$-linear dual gives the wanted fibre sequence as $\bdlie(V)\simeq (\cot_B\circ\sqz_B(V^\vee))^\vee$ canonically \cite[Proposition 3.53]{BCN}. The splitting is given by applying $\tbar$ on the retraction $0\to V\to 0$.
	\end{proof}
	Assuming that the algebraic cotangent complex $\dL_{B/R}$ is almost perfect, the above lemma shows that $\tbar|_{\aperf^\vee_B}\in\mathrm{End}_{\sigma,reg}(\aperf_B^\vee)$.\ Additionally, $\bdlie|_{\aperf^\vee_B}\to\tbar|_{\aperf^\vee_B}$ is a morphism of monads, i.e. a morphism of algebras in $\mathrm{End}_{\sigma,reg}(\aperf_B^\vee)$ with respect to the composition product\cite[Corollary 5.8, 8.9]{Hau}. Then, left Kan extension induces a morphism of sifted-colimit-preserving monads $\bdlie\to T$ acting on $\QC^\vee_B$, Remark \ref{rk2.29}.
	
	\begin{df}\label{df4.4}
		The \infcat\ ${\lagd}$ of \textit{(derived) partition Lie algebroids over $B$ (relative to $R$)} is defined as the \infcat\ $\alg_T(\QC^\vee_R)$ of $T$-algebras.
	\end{df}
	\begin{construction}\label{c4.5}
		The forgetful functor along $\bdlie\to T$ maps $\fr_T(0)$ (an initial object of $\lagd$) to a $\bdlie$-algebra given by $\kd(B\otimes_R B)$  (\ref{df3.18n}). Therefore, there is a functor
		\[\rmfib:\lagd\to (\bdlie)_{\kd(B\otimes_R B)/}\]of taking the ``\textit{fibres}'', where $\alg_{\bdlie}(\QC^\vee_B)$ is written as $\bdlie$ for short.
		
		Further forgetting to pro-coherent $B$-modules, we obtain a commuting diagram of right adjoints
		\begin{equation}\label{e: fibre and forgetful}
			\begin{tikzcd}
				\lagd & {(\QC^\vee_B)_{/\dT_{B/R}[1]}} \\
				{(\QC^\vee_B)_{\dT_{B/R}/}} & {\QC^\vee_B}
				\arrow["\rmfor", from=1-1, to=1-2]
				\arrow["\rmfib"', from=1-1, to=2-1]
				\arrow["\rmfib", from=1-2, to=2-2]
				\arrow["{\mathrm{cofib},\simeq}"', from=2-1, to=1-2]
				\arrow[from=2-1, to=2-2]
			\end{tikzcd},
		\end{equation}
		where $\mathrm{cofib}$ is a categorical equivalence sending each $f:\dT_{B/R}\to M$ to $\rho:\mathrm{cofib}(f)\to \dT_{B/R}[1]$. We call the top horizontal arrow the \textit{forgetful functor} of partition Lie algebroids, whose left adjoint $\fr_{\lagd}$ is called the free partition Lie algebroid functor.
		
		A partition Lie algebroid can be then informally written as $\rho:\fg\to \dT_{B/R}[1]$. The forgetful functor of $T$-algebras is nothing but taking the fibre of $\rho$.
	\end{construction}
	
	Next, we construct a Chevalley-Eilenberg functor $C^*$ for partition Lie algebroids.
	\begin{notation}\label{n4.6}
		A partition Lie algebroid $L$ is said to be \textit{dually almost perfect} if $\rmfor(L)$ belongs to $(\aperf^\vee_B)_{/\dT_{B/R}[1]}$. Let $\aplagd\subset \lagd$ be the full subcategory of dually almost perfect partition Lie algebroids, which is naturally equivalent to $\alg_{T}(\aperf_B^\vee)$ by Lemma \ref{lemma4.3}.
		
		Denote by $\cwafp$ the full subcategory of $\dalg(\QC^\vee_R)_{/B}$ consisting of such $A(\to B)$ that the relative cotangent fibre $\cotr(A)$ is an almost perfect $B$-module, where $\mathrm{wafp}$ means \textit{weakly almost finitely presented}. 
	\end{notation}
	\begin{prop}[Chevalley-Eilenberg functor]\label{prop4.7}
		There is an adjunction of \infcats\[C^*:\lagd\rightleftarrows\big(\dalg(\QC^\vee_R)_{/B}\big)^{op}:\fD\]such that $\rmfor\circ \fD$ maps $A\to B$ to $\dT_{B/A}[1]\to\dT_{B/R}[1]$, where the left adjoint functor $C^*$ is called the  \textnormal{Chevalley-Eilenberg functor} of partition Lie algebroids.
	\end{prop}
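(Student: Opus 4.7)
The plan is to construct the right adjoint $\fD$ explicitly and then produce $C^*$ via a representability argument on the presentable $\infty$-category $\dalg(\QC^\vee_R)_{/B}$. For each $A \in \dalg(\QC^\vee_R)_{/B}$, I would define $\fD(A) \in \lagd$ as the $T$-algebra whose underlying $B$-module is $\cotr(A)^\vee$, equipped with the $T$-action produced from the natural $\cotr\sqzr$-coalgebra structure on $\cotr(A)$: the unit $A \to \sqzr\cotr(A)$ of the adjunction $\cotr \dashv \sqzr$, applied by $\cotr$, yields this coalgebra, which Lemma \ref{lemma4.3}'s identification $\bar{T} \simeq (-)^\vee \cotr\sqzr (-)^\vee$ dualizes to a $\bar{T}$-algebra---hence $T$-algebra---structure on $\cotr(A)^\vee$ whenever $\cotr(A)$ is almost perfect. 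For general $A$, I would resolve $A$ by a sifted diagram of free algebras $\sqzr(V_\bullet)$ with each $V_\bullet$ perfect (so each $\sqzr(V_\bullet)$ lies in $\cwafp$) and set $\fD(A) := \lim_\bullet \fD(\sqzr(V_\bullet))$ in $\lagd$, invoking the right-left extension formalism of \S\ref{sec2.2} to guarantee functoriality.

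With this definition, the data $\rmfib \fD(A) \simeq (\dT_{B/R} \to \cotr(A)^\vee)$ coincides with the anchor-unit map extracted from the splitting in Lemma \ref{lemma4.3}, and the cofib equivalence of Construction \ref{c4.5} translates it to $\rmfor \fD(A) \simeq (\dT_{B/A}[1] \to \dT_{B/R}[1])$ via the cotangent fibre sequence $\cotr(A) \to \dL_{B/R} \to \dL_{B/A}$ upon dualization, verifying the formula of the statement. That $\fD$ sends colimits in $\dalg(\QC^\vee_R)_{/B}$ to limits in $\lagd$ follows from the analogous property for each ingredient: $\cotr$ is a left adjoint, $(-)^\vee$ is a contravariant limit-former, and the $T$-action passes through coherently by naturality of the right-left extension. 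Accessibility is inherited similarly.

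For each $L \in \lagd$, the functor $A \mapsto \map_{\lagd}(L, \fD(A))$ from $\dalg(\QC^\vee_R)_{/B}$ to spaces is accessible and sends colimits to limits, hence is representable by an object $C^*(L) \in \dalg(\QC^\vee_R)_{/B}$; naturality in $L$ upgrades this into the functor $C^*$, producing the adjunction $\map_{\dalg(\QC^\vee_R)_{/B}}(A, C^*(L)) \simeq \map_{\lagd}(L, \fD(A))$. The main obstacle lies in the construction of $\fD$ for non-finite $A$: since the monad $T$ only coincides with $\bar{T}$ on dually almost perfect objects, lifting the $T$-algebra structure on $\cotr(A)^\vee$ to a functor valued in $\lagd$---rather than a mere pointwise family of $T$-algebras---demands threading the construction through the monoidal endomorphism category of Remark \ref{rk2.29}, as permitted by the right-left extension machinery (Proposition \ref{prop2.29}, Theorem \ref{theorem2.29}).
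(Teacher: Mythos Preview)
Your overall strategy---build $\fD$ first on a small class of algebras, extend, then invoke the adjoint functor theorem for $C^*$---matches the paper's approach, and your identification of $\rmfor\circ\fD(A)$ via the cofibre sequence $\cotr(A)\to\dL_{B/R}\to\dL_{B/A}$ is correct. However, the extension step contains a genuine gap.

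You propose to resolve a general $A\in\dalg(\QC^\vee_R)_{/B}$ as a sifted colimit of square-zero extensions $\sqzr(V_\bullet)$ with $V_\bullet$ perfect. This is not possible: the objects $\sqzr(V)$ all admit $B$ as a retract (as $R$-algebras over $B$), so any sifted colimit of them does too, whereas a typical $A$---e.g.\ a polynomial ring $R[x]$ mapping to $B$---does not. Square-zero extensions simply do not generate $\dalg(\QC^\vee_R)_{/B}$ under sifted colimits. Relatedly, the right--left extension machinery of \S\ref{sec2.2} that you invoke is designed to extend functors from $\aperf^\vee_{\mathscr{A},\eqslantless 0}$ to $\QC^\vee_{\mathscr{A}}$ on \emph{module} categories; it does not apply to extending a partially defined functor on an \emph{algebra} category such as $\dalg(\QC^\vee_R)_{/B}$.

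The paper's repair is to work instead with the full subcategory $\cwafp$ of algebras with almost perfect relative cotangent fibre, on which the restricted adjunction $\bar{C}^*\dashv\bar{\fD}$ already exists by the monadicity of $\tbar$ on $\aperf^\vee_B$. The crucial observation is that $\sqzr$ preserves filtered colimits, so $\cotr$ preserves compactness, and hence $\cwafp$ contains \emph{all compact objects} of $\dalg(\QC^\vee_R)_{/B}$. One then defines $\fD$ as the left Kan extension of $\bar{\fD}$ (valued in $(\lagd)^{op}$) along $\cwafp\hookrightarrow\dalg(\QC^\vee_R)_{/B}$; since $\bar{\fD}$ restricted to compacts preserves finite colimits, the Ind-extension argument of \cite[Proposition~5.5.1.9]{HTT} shows $\fD$ preserves all small colimits, and the adjoint functor theorem (using Corollary~\ref{cor4.9} for cocompleteness of $(\lagd)^{op}$) produces $C^*$. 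The formula for $\rmfor\circ\fD$ then propagates from $\cwafp$ to all $A$ by colimit-preservation of $\dL_{B/-}$.
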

	\begin{proof}
		We start with the adjunction $\sqzr\circ(-)^\vee:\aperf^\vee_{R}\rightleftarrows (\cwafp)^{op}:(-)^\vee\circ\cotr$ of full subcategories guaranteed by Lemma \ref{lemma4.3}. It gives rise to the following adjoint by abstract nonsense
		\[\bar{C}^*:\aplagd\simeq \alg_{\tbar}(\aperf^\vee_B)\rightleftarrows(\cwafp)^{op}:\bar{\fD}.\]Define $\fD:\dalg(\QC^\vee_R)_{/B}\to (\lagd)^{op}$ as the left Kan extension of \[\cwafp\xrightarrow{\bar{\fD}}(\aplagd)^{op}\hookrightarrow(\lagd)^{op}.\]
		
		We claim that $\fD$ is small-colimit-preserving. Since $\sqzr$ respects filtered colimits, its left adjoint $\cotr$ preserves compactness, thus $\cwafp$ includes the full subcategory of compact objects $\mathcal{C}^{\omega}$. By the uniqueness of Kan extension, $\fD$ is equivalent to the left Kan extension of $\bar{\fD}|_{\mathcal{C}^{\omega}}$, and further $\bar{\fD}|_{\mathcal{C}^{\omega}}$ preserves finite colimits. Observe that $\dalg(\QC^\vee_R)_{/B}$ is presentable and $(\lagd)^{op}$ is cocomplete (Corollary \ref{cor4.9} as below), thus one may use the reasoning of \cite[Proposition 5.5.1.9]{HTT} to say that $\fD$ is small-colimit-preserving, and it admits a right adjoint $C^*$.
		
		Finally, note that $\rmfor\circ\bar{\fD}(A\to B)\simeq \big(\dT_{B/A}[1]\to\dT_{B/R}[1]\big)$ for $A\to B$ in $\cwafp$. It also holds in general, since the algebraic cotangent complex $\dL_{B/-}$ is small-colimit-preserving.
	\end{proof}
	We then compare the above Chevalley-Eilenberg functor $C^*$ with $\ce$ for $B$-partition Lie algebras, defined in \S\ref{sec3.4}. To do so, we require some categorical preparation.

	\begin{lemma}\label{lemma4.8}
		Let $\mathcal{C}$ be a presentable \infcat\ and $T$ a sifted-colimit-preserving monad on it. Then the \infcat\ $\alg_T(\mathcal{C})$ is presentable.
	\end{lemma}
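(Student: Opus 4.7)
The plan is to establish the two defining properties of presentability — accessibility and cocompleteness — and then combine them. Throughout, let $U:\alg_T(\mathcal{C})\to \mathcal{C}$ denote the forgetful functor and $F=\fr_T$ its left adjoint.

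First, since $T$ preserves sifted colimits (and in particular $\kappa$-filtered colimits for some regular $\kappa$) on the presentable \infcat\ $\mathcal{C}$, the monad $T$ is accessible. A standard argument with accessible monads on presentable \infcats\ (the analogous statement for $\mathrm{LMod}_T(\mathcal{C})$ in \cite{HA}) then shows that $\alg_T(\mathcal{C})$ is accessible.

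Second, I would use that $U$ is conservative and creates all small limits (a standard feature of monadic adjunctions); this immediately gives that $\alg_T(\mathcal{C})$ admits all small limits. Moreover, the sifted-colimit-preservation of $T$ together with Barr--Beck--Lurie implies that $U$ also creates sifted colimits, so sifted colimits in $\alg_T(\mathcal{C})$ exist and are computed by $U$.

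Third, to upgrade to arbitrary small colimits, note that $F$ preserves coproducts as a left adjoint, so $F(X)\sqcup F(Y)\simeq F(X\sqcup_{\mathcal{C}} Y)$, and every $T$-algebra $A$ is canonically the geometric realization of its free bar resolution $|\mathrm{Bar}_\bullet(F,T,A)|$, i.e.\ a sifted colimit of free algebras. Combining these two constructions produces arbitrary small coproducts, and coequalizers are built analogously; together with the sifted colimits of the previous step this gives cocompleteness. Alternatively, once $\alg_T(\mathcal{C})$ is known to be accessible and to admit small limits, one may invoke \cite[Corollary 5.5.2.9]{HTT} to deduce cocompleteness from the existence of coproducts. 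In either guise, accessibility plus cocompleteness yields presentability.

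The argument is routine; there is no genuine obstacle beyond unpacking Barr--Beck--Lurie in the sifted-colimit-preserving setting and bookkeeping the accessibility estimates.
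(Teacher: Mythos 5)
Your proof follows essentially the same route as the paper's: accessibility of $\alg_T(\mathcal{C})$, sifted colimits created by the forgetful functor via Barr--Beck--Lurie, and general coproducts built by realizing the diagonal of bar resolutions of free algebras. Two cautionary remarks. First, the accessibility step is left as ``a standard argument''; the paper makes it precise by writing $\mathcal{C}\simeq\mathrm{Ind}_\kappa(\mathcal{C}_0)$, identifying the \infcat\ $\mathrm{End}_{\mathrm{cont},\kappa}(\mathcal{C})\simeq\mathrm{Fun}(\mathcal{C}_0,\mathcal{C})$ of $\kappa$-continuous endofunctors as accessible via \cite[Proposition 5.4.4.3]{HTT}, viewing $\mathcal{C}$ as left-tensored over it, and then invoking \cite[Corollary 3.2.3.5]{HA} for the algebra object $T$ --- so it is not a bare statement about ``accessible monads'' but passes through a left-tensored module category. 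Second, your alternative route via \cite[Corollary 5.5.2.9]{HTT} is misapplied: that corollary is the adjoint functor theorem between $\infty$-categories \emph{already} assumed presentable, and does not serve as a criterion for detecting presentability from accessibility plus small limits (or plus coproducts). Your primary, bar-resolution argument is the one to keep, and it coincides with the paper's.
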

	\begin{proof}
		Since $\mathcal{C}\simeq \text{Ind}_{\kappa}(\mathcal{C}_0)$ for some regular cardinal $\kappa$ and small \infcat\ $\mathcal{C}_0$, the \infcat\ $\text{End}_{cont,\kappa}(\mathcal{C})\simeq \text{Fun}(\mathcal{C}_0,\mathcal{C})$ of $\kappa$-continuous endofunctors is accessible \cite[Proposition 5.4.4.3]{HTT}. Since $\mathcal{C}$ is left tensored by $\text{End}_{cont,\kappa}(\mathcal{C})$, $\alg_{T}({\mathcal{C}})$ is accessible \cite[Corollary 3.2.3.5]{HA}. Now it remains to check that $\alg_{T}$ admits all small colimits. But $T$ is sifted-colimit-preserving, then $\alg_{T}(\mathcal{C})$ admits sifted colimits \cite[Corollary 4.2.3.5]{HA}, and the forgetful functor to $\mathcal{C}$ creates sifted colimits. The coproduct of two free algebras and the initial object are clear. The coproduct of two general algebras can be found as follows: for each $A_i\in \alg_{T}(\mathcal{C})$, $i=1$ or $2$, we have $A_i\simeq|\br_\bullet(T,T,A_i)|$ by Barr-Beck-Lurie theorem, then $A_1\coprod_{\alg_{T}}A_2$ can be obtained by the geometric realization of the diagonal of $T\big(\br_\bullet(id,T,A_1)\coprod_{\mathcal{C}}\br_\bullet(id,T,A_2)\big)$.
	\end{proof}
	\begin{corollary}\label{cor4.9}
		The \infcat\ $\lagd$ of partition Lie algebroids is presentable.
	\end{corollary}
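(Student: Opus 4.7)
The plan is a direct application of Lemma \ref{lemma4.8}. I would verify its two hypotheses for the ambient $\infty$-category $\QC^\vee_B$ and the monad $T$ constructed in \S\ref{sec4.1}, and then the conclusion is immediate from the definition $\lagd := \alg_T(\QC^\vee_B)$.

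First I would confirm presentability of the ambient category. By Definition \ref{df2.21}, $\QC^\vee_B = \text{Ind}(\coh^{op}_{B^{op}})$, and the coherence hypothesis on $B$ ensures $\coh^{op}_{B^{op}}$ is essentially small, so this Ind-category is presentable.

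Next I would confirm that $T$ is sifted-colimit-preserving as a monad. Construction \ref{c4.2} produces an endofunctor $\tbar = (-)^\vee\circ\cotr\circ\sqzr\circ(-)^\vee$, and Lemma \ref{lemma4.3}, using crucially that $\dL_{B/R}$ is almost perfect, shows that $\tbar|_{\aperf^\vee_{B,\eqslantless 0}}$ lands in $\aperf^\vee_B$ and is regular. The strictly monoidal equivalence $\text{End}^{\dap,\eqslantless 0}_{\Sigma}(\QC^\vee_B) \simeq \text{End}_{\sigma,reg}(\aperf^\vee_{B,\eqslantless 0})$ recorded in Remark \ref{rk2.29} then extends $\tbar|_{\aperf^\vee_{B,\eqslantless 0}}$ via left Kan extension to the sifted-colimit-preserving endofunctor $T$ on $\QC^\vee_B$, and, because this equivalence is one of monoidal $\infty$-categories with respect to composition, the monad structure on $\tbar$ transports to a genuine monad structure on $T$. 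With both hypotheses in place, Lemma \ref{lemma4.8} delivers the presentability of $\alg_T(\QC^\vee_B) = \lagd$.

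The step requiring the most care, though not really an obstacle since it is precisely what the preceding infrastructure was designed to accomplish, is promoting the monad structure on $\tbar|_{\aperf^\vee_{B,\eqslantless 0}}$ along the left Kan extension. This is why the strictly (rather than merely laxly) monoidal equivalence in Remark \ref{rk2.29} is invoked: without it one would recover only an endofunctor $T$ and have to separately verify the associativity and unit coherences.
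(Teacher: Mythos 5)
Your proposal is correct and takes essentially the same route as the paper: Corollary \ref{cor4.9} is intended as an immediate application of Lemma \ref{lemma4.8} to the monad $T$ on the presentable stable $\infty$-category $\QC^\vee_B$, where the sifted-colimit-preservation of $T$ is exactly what the discussion after Lemma \ref{lemma4.3} establishes via the right-left extension machinery of Remark \ref{rk2.29}. Your explicit verification of the two hypotheses, including the observation that the strictly monoidal equivalence $\text{End}^{\dap,\eqslantless 0}_{\Sigma}(\QC^\vee_B) \simeq \text{End}_{\sigma,reg}(\aperf^\vee_{B,\eqslantless 0})$ is needed to transport the monad (not merely endofunctor) structure, is a faithful unpacking of the paper's construction.
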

	\begin{lemma}\label{lemma4.10}
		Consider a cocomplete \infcat\ $\mathcal{D}$. Let $\mathscr{A}$ be a coherent additive \infcat, and $T$ be a sifted-colimit-preserving monad on $\QC^\vee_{\mathscr{A}}$ which preserves the full subcategory $\aperf_{\mathscr{A}}^\vee$. Write $T':=T|_{\aperf_{\mathscr{A}}^\vee}$. If the following $F^0$ satisfies that $f^0:=F^0\circ \fr_{T'}$ belongs to $\text{Fun}_{\sigma,reg}(\aperf_{\mathscr{A}}^\vee,\mathcal{D})$\[
		\begin{tikzcd}
			{\alg_{T'}(\aperf_{\mathscr{A},\eqslantless 0}^\vee)} & {\mathcal{D}} \\
			{\alg_T{\QC^\vee_\mathscr{A}}}
			\arrow[hook, from=1-1, to=2-1]
			\arrow["{F^0}", from=1-1, to=1-2]
			\arrow["F"', dashed, from=2-1, to=1-2]
		\end{tikzcd}\]then there exists uniquely a sifted-colimit-preserving filler $F$ up to a contractible space of choices.
	\end{lemma}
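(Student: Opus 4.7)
The plan is two stages: first extend $f^0$ to a sifted-colimit-preserving functor on the ambient module category, then promote it to algebras via the monadic bar resolution.

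First, I would apply Proposition \ref{prop2.29} to the hypothesis that $f^0 = F^0 \circ \fr_{T'}$ lies in $\mathrm{Fun}_{\sigma,reg}(\aperf^\vee_{\mathscr{A},\eqslantless 0}, \mathcal{D})$: this yields a unique sifted-colimit-preserving extension $\tilde{f} : \QC^\vee_\mathscr{A} \to \mathcal{D}$ with $\tilde{f}|_{\aperf^\vee_{\mathscr{A},\eqslantless 0}} \simeq f^0$.

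Next, I would define $F$ using the Barr--Beck--Lurie monadic bar resolution. For each $A \in \alg_T(\QC^\vee_\mathscr{A})$, the bar resolution $\br_\bullet(\fr_T, T, A)$ has geometric realization $A$, and its $n$-th term is the free $T$-algebra $\fr_T(T^n U(A))$, where $U$ denotes the forgetful functor. Set
\[
F(A) \;:=\; \bigl| \tilde{f}(T^\bullet U(A)) \bigr| ,
\]
with the simplicial structure inherited from the bar resolution; functoriality in $A$ follows from that of the bar construction, and the colimit exists since $\mathcal{D}$ is cocomplete.

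I would then verify three properties: (i) $F \circ \fr_T \simeq \tilde{f}$, since the bar resolution of $\fr_T(V)$ admits an extra degeneracy so that $|T^\bullet V| \simeq V$; (ii) $F$ preserves sifted colimits, combining that $U$ does (as $T$ is sifted-colimit-preserving), that $\tilde{f}$ does, and that geometric realizations commute with sifted colimits in any cocomplete $\infty$-category; (iii) the desired agreement $F|_{\alg_{T'}(\aperf^\vee_{\mathscr{A},\eqslantless 0})} \simeq F^0$, since for such $A$ the bar resolution is termwise in $\alg_{T'}(\aperf^\vee_{\mathscr{A},\eqslantless 0})$, and $F$ evaluated on it coincides with $|F^0(\br_\bullet(\fr_{T'}, T', A))|$, which reduces to $F^0(A)$ via the finite-geometric-realization clause of $\mathrm{Fun}_{\sigma,reg}$. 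Uniqueness then follows from combining the uniqueness clauses of Proposition \ref{prop2.29} (pinning $\tilde{f}$) and of Barr--Beck--Lurie (pinning $F$ on all $T$-algebras from its values on free ones).

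The main obstacle is ensuring that this construction genuinely produces a functor of $\infty$-categories, rather than merely a map on objects; this will be handled by realizing the monadic bar resolution as a functor $\alg_T(\QC^\vee_\mathscr{A}) \to \mathrm{Fun}(N(\Delta^{op}), \alg_T(\QC^\vee_\mathscr{A}))$ and post-composing with $|-| \circ \tilde{f} \circ U$. A secondary delicacy lies in the agreement step (iii): one has to check that the bar resolution for a $T'$-algebra inside $\aperf^\vee_{\mathscr{A},\eqslantless 0}$ behaves well despite that category admitting only finite geometric realizations, which is precisely where the ``preserves finite geometric realizations'' part of $\mathrm{Fun}_{\sigma,reg}$ is used.
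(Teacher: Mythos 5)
Your overall strategy coincides with the paper's: extend $f^0$ to a sifted-colimit-preserving $\tilde f$ on $\QC^\vee_{\mathscr{A}}$ via Proposition~\ref{prop2.29}, then recover $F$ from a monadic bar construction. The paper's proof packages this as the equivalence
$-\circ \fr_T\colon\text{Fun}_{\Sigma}\big(\alg_T(\QC^\vee_\mathscr{A}),\mathcal{D}\big)\xrightarrow{\simeq}\text{RMod}_T\big(\text{Fun}_\Sigma(\QC^\vee_{\mathscr{A}},\mathcal{D})\big)$
with inverse $B(\phi)\simeq|\br_\bullet(\phi,T,\text{forget})|$, and sets $F:=B(f)$; your formula $F(A)=|\tilde f(T^\bullet U(A))|$ is $B(f)(A)$ written out at the level of objects.

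There is, however, a gap you take for granted. To form the simplicial object $\tilde f(T^\bullet U(A))$ you need the outer face map $d_n\colon \tilde f(T^nU(A))\to \tilde f(T^{n-1}U(A))$, which requires a right $T$-module structure on $\tilde f$; Proposition~\ref{prop2.29} only produces $\tilde f$ as a sifted-colimit-preserving \emph{functor}. That module structure has to be transported from the canonical right $T'$-module structure on $f^0 = F^0\circ\fr_{T'}$ inherited from the functoriality of $F^0$, and this transport is precisely what the paper supplies via its fully faithful embedding $\text{RMod}_{T'}\big(\text{Fun}_{\sigma,reg}(\aperf^\vee_{\mathscr{A}},\mathcal{D})\big)\hookrightarrow\text{RMod}_T\big(\text{Fun}_\Sigma(\QC^\vee_{\mathscr{A}},\mathcal{D})\big)$ built from the $\mathcal{RM}^\otimes$-operadic structure; it is also where the regularity hypothesis on $f^0$ actually does its work. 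Relatedly, your justification of step~(iii) does not hold up as written: the ``finite geometric realizations'' clause of $\text{Fun}_{\sigma,reg}$ constrains $f^0$ as a functor on \emph{modules}, not $F^0$ on \emph{algebras}, and the bar resolution $\br_\bullet(\fr_{T'},T',A)$ is an infinite simplicial diagram, so that clause cannot by itself deliver $|F^0(\br_\bullet(\fr_{T'},T',A))|\simeq F^0(A)$. The paper obtains agreement with $F^0$ structurally --- from the module-structure transport together with the identification $F\circ\fr_T\simeq f$ furnished by the equivalence $-\circ\fr_T$ --- rather than by verifying it after the fact.
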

		\begin{proof}
		Remind us that $f^0$ can be uniquely (up to homotopy) extended into $f\in\text{Fun}_\sig(\QC^\vee_{\mathscr{A}},\mathcal{D})$ \cite[Proposition 2.40]{BCN}. The idea is to transfer this extension to $\alg_T{(\QC^\vee_\mathscr{A}})$. Here we generalize the reasoning of \cite[Construction 2.2.17]{Holeman}.
		
		Regard $\text{Fun}_\sig(\QC^\vee_{\mathscr{A}},\mathcal{D})$ $\big($resp. $\text{Fun}_{\sigma,reg}(\aperf_{\mathscr{A}}^\vee,\mathcal{D})$$\big)$ as a right tensored \infcat\ of $\text{End}_{\sig}(\QC^\vee_{\mathscr{A}})$ $\big($resp. $\text{End}_{\sigma,reg}(\aperf_{\mathscr{A}}^\vee)$$\big)$ by composition of functors, and write the corresponding \ooop\ as $\mathcal{F}^{\otimes}\to \mathcal{RM}^{\otimes}$ (resp. $(\mathcal{F^{\dap}})^{\otimes}\to \mathcal{RM}^{\otimes}$). Then, left Kan extension induces a fully faithful embedding $(\mathcal{F^{\dap}})^{\otimes}\hookrightarrow\mathcal{F}^{\otimes}$ of \ooop s by the uniquess of extension. Here follows \[\text{RMod}_{T'}\big(\text{Fun}_{\sigma,reg}(\aperf_{\mathscr{A}}^\vee,\mathcal{D})\big)\hookrightarrow \text{RMod}_{T}\big(\text{Fun}_\sig(\QC^\vee_{\mathscr{A}},\mathcal{D})\big)\]a fully faithful embedding which takes $f^0=F^0\circ\fr_{T'}$ to its left Kan extension $f$ with a canonical right $T$-module structure. At the same time, a direct calculation shows that the assignment $\Phi\mapsto \Phi\circ \fr_T$ induces a categorical equivalence
		\begin{equation}\label{e12}
			-\circ \fr_T:\text{Fun}_{\sig}(\alg_{T}(\QC^\vee_\mathscr{A}),\mathcal{D})\xrightarrow{\simeq}\text{RMod}_{T}\big(\text{Fun}_\sig(\QC^\vee_{\mathscr{A}},\mathcal{D})\big)
		\end{equation}
		whose inverse $B$ can be described as $B(\phi)\simeq |\br_\bullet(\phi,T,\text{forget})|$. The promised filler in the statement can be taken as $B(f)$.

	\end{proof}
	\begin{corollary}[Comparison of $\ce$ and $C^*$]\label{cor4.11}
		There is a commuting square of adjunctions
		\[\begin{tikzcd}
			\bdlie & {(\dalg^{nu}(\QC^\vee_B))^{op}} \\
			\lagd & {(\dalg(\QC^\vee_R)_{/B})^{op}}
			\arrow["\ce", shift left, from=1-1, to=1-2]
			\arrow["{\rho_0}"', shift right, from=1-1, to=2-1]
			\arrow["\kd", shift left, from=1-2, to=1-1]
			\arrow["{\mathrm{restrict}}"', shift right, from=1-2, to=2-2]
			\arrow["\rmfib"', shift right, from=2-1, to=1-1]
			\arrow["{C^*}", shift left, from=2-1, to=2-2]
			\arrow["{B\otimes_R(-)}"', shift right, from=2-2, to=1-2]
			\arrow["\fD", shift left, from=2-2, to=2-1]
		\end{tikzcd},\]where $\ce\dashv\kd$ is given in Construction \ref{c3.19n}, and $\bdlie$ refers to $\alg_{\bdlie}(\QC^\vee_B)$. Additionally, the underlying module of $\rho_0(\fh)$ is  $(\fh\xrightarrow{0}\dT_{B/R}[1])$.
	\end{corollary}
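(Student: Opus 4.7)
The plan is to treat the two new vertical adjunctions and the commutativity of the square in turn. The left column $\rho_0\dashv\rmfib$ is obtained as follows: $\rmfib$ is the forgetful functor along the monad morphism $\bdlie\to T$ from Construction \ref{c4.2}, so it creates all limits and sifted colimits. Since $\lagd$ (Corollary \ref{cor4.9}) and $\bdlie$ (Lemma \ref{lemma4.8}) are both presentable, the adjoint functor theorem produces a left adjoint $\rho_0$, computable monadically as $T\otimes_{\bdlie}(-)$. For the right column, Recollection \ref{recoll4.1} supplies the monadic adjunction $B\otimes_R(-)\dashv \mathrm{rest}$; passing to the overcategory $(-)_{/B}$ on both sides and using the standard identification $\dalg^{nu}(\QC^\vee_B)\simeq \dalg(\QC^\vee_B)_{/B}$ gives an adjunction $B\otimes_R(-):\dalg(\QC^\vee_R)_{/B}\rightleftarrows \dalg^{nu}(\QC^\vee_B):\mathrm{restrict}$, whose opposite exhibits $\mathrm{restrict}$ as a left adjoint as stated.

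To identify $\rho_0(\fh)$, I would first treat the free case: matching universal properties gives $\rho_0(\bdlie(V))\simeq \fr_T(V)$ for $V\in\QC^\vee_B$. The natural splitting $T(V)\simeq \bdlie(V)\oplus \dT_{B/R}$ extended from Lemma \ref{lemma4.3} realises the canonical inclusion $\dT_{B/R}=T(0)\to T(V)$ as a split monomorphism whose cofibre is $\bdlie(V)$, so in the presentation $\rmfor(\fr_T(V))=(\fg\to\dT_{B/R}[1])$ one obtains $\fg\simeq\bdlie(V)$ with anchor given by a boundary of a split sequence, hence zero. For arbitrary $\fh$, resolving by the canonical bar $\fh\simeq |\bdlie^{\bullet+1}(\fh)|$ and using that $\rho_0$ and $\rmfor$ preserve sifted colimits yields $\rmfor(\rho_0(\fh))=(\fh\xrightarrow{0}\dT_{B/R}[1])$ as claimed.

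For commutativity of the square, I would verify the left-adjoint version $C^*\circ \rho_0\simeq \mathrm{restrict}\circ \ce$; the right-adjoint square then commutes by the mate correspondence. Both composites preserve colimits, and $\bdlie$ is generated under sifted colimits by free algebras, so it suffices to check on $\fh=\bdlie(V)$. On one side, the free-algebra universal property together with the closed symmetric monoidal structure on $\QC^\vee_B$ and the adjunction $\cot_B\dashv \sqz_B$ gives a natural chain $\map_\bdlie(\bdlie(V),\kd(A))\simeq \map(V,\cot_B(A)^\vee)\simeq \map(\cot_B(A),V^\vee)\simeq \map_{\dalg^{nu}(\QC^\vee_B)}(A,\sqz_B(V^\vee))$, whence $\ce(\bdlie(V))\simeq \sqz_B(V^\vee)$ and $\mathrm{restrict}(\ce(\bdlie(V)))\simeq \sqzr(V^\vee)$. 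On the other side, Proposition \ref{prop4.7} and the dual of the cofibre sequence $\cotr(A)\to \dL_{B/R}\to \dL_{B/A}$ identify the underlying $T$-module of $\fD(A)$ with $\cotr(A)^\vee$, so the parallel chain $\map_\lagd(\fr_T(V),\fD(A))\simeq \map(V,\cotr(A)^\vee)\simeq \map(\cotr(A),V^\vee)\simeq \map_{\dalg(\QC^\vee_R)_{/B}}(A,\sqzr(V^\vee))$ gives $C^*(\fr_T(V))\simeq \sqzr(V^\vee)$. Naturality in $V$ and $A$ lets Yoneda identify the two composites on generators; extension to all of $\bdlie$ is automatic by sifted colimits.

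The main conceptual content sits in Lemma \ref{lemma4.3}: the splitting $T\simeq \bdlie\oplus\dT_{B/R}$ is exactly what upgrades a $\bdlie$-algebra into a trivially anchored Lie algebroid, and what allows the universal-property computation to identify the underlying $T$-module of $\fD(A)$ with $\cotr(A)^\vee$. All remaining steps are formal manipulations with adjunctions, sifted-colimit extension, and the closed symmetric monoidal structure of $\QC^\vee_B$.
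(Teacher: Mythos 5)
Your construction of the two vertical adjunctions and your identification of $\rho_0$ on free algebras (via the splitting from Lemma \ref{lemma4.3}, then a bar-resolution argument) match the paper's argument essentially step for step, and those parts are fine.

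The one place where your write-up has a genuine gap is the commutativity of the square. You compute that both composites send a free algebra $\bdlie(V)$ to $\sqzr(V^\vee)$, naturally in $V$ and $A$, and then assert that ``extension to all of $\bdlie$ is automatic by sifted colimits.'' This is not quite enough. The functor family $\map_{\bdlie}(\bdlie(V),-)\simeq\map_{\QC^\vee_B}(V,\rmfor(-))$ only detects the \emph{underlying $B$-module}, so naturality in $V$ on the free side produces an identification of $\rmfib\circ\fD$ and $\kd\circ(B\otimes_R(-))$ after applying the forgetful functor to $\QC^\vee_B$, not an identification of the $\bdlie$-valued functors themselves. Equivalently, two sifted-colimit-preserving functors $F,G:\bdlie\to\mathcal{D}$ are determined by their restrictions $F\circ\fr_{\bdlie},G\circ\fr_{\bdlie}$ together with their canonical \emph{right $\bdlie$-module structures} (this is the content of the equivalence (\ref{e12}) in the proof of Lemma \ref{lemma4.10}); a natural-in-$V$ agreement of the underlying functors $\QC^\vee_B\to\mathcal{D}$ does not automatically respect the right-module structure, because morphisms $\bdlie(V)\to\bdlie(W)$ in $\bdlie$ are more than just $\bdlie(f)$ for $f:V\to W$. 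This is exactly why the paper does not argue on arbitrary free objects but instead restricts to the almost-perfect subcategories, exhibits the commutative square of left adjoints between $\apbdlie$, $\aplagd$, $(\mathcal{C}^{\text{wafp}}_{\text{aug}})^{op}$ and $(\cwafp)^{op}$ coming from the factorization $(-)^\vee\circ\cotr\simeq ((-)^\vee\circ\cot_B)\circ(B\otimes_R(-))$ ``by the generality of adjoints and their monads,'' and then invokes Lemma \ref{lemma4.10} to extend. Lemma \ref{lemma4.10} is precisely the uniqueness statement that closes the gap you leave open; without it (or an explicit check that your identification is one of right $\bdlie$-module functors), the conclusion $C^*\circ\rho_0\simeq\mathrm{restrict}\circ\ce$ does not follow.
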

	\begin{proof}
		Recall that forgetting along $\bdlie\to T$ induces the left vertical right adjoint $\rmfib$, which creates sifted colimits by considering the underlying modules. Therefore it admits a left adjoint $\rho_0$ by the adjoint functor theorem. For each free $\fh=\bdlie(V)$, one can see that $\rho_0(\fh)$ is $\fr_T(V)$, which has the underlying module as $\bdlie(V)\xrightarrow{0}\dT_{B/R}[1]$, Lemma \ref{lemma4.3}. For general $\fh$, it suffices to consider the bar resolution $\br_\bullet(id,\bdlie,\fh)$.
		
		Now we verify that $C^*\circ \rho_0\simeq \mathrm{restrict}\circ \ce$. Recall that $(-)^\vee\circ\cotr$ factorizes into $(-)^\vee\circ \cot_B$ and $B\otimes_R(-)$, then there is a commutative diagram of left adjoints
		\[\begin{tikzcd}
			\apbdlie & {(\mathcal{C}^{\text{wafp}}_{\text{aug}})^{op}} \\
			\aplagd & {(\cwafp)^{op}}
			\arrow[from=1-1, to=1-2]
			\arrow[from=1-1, to=2-1]
			\arrow[from=1-2, to=2-2]
			\arrow[from=2-1, to=2-2]
		\end{tikzcd},\]by the generality of adjoints and their monads, where $\mathcal{C}^{\text{wafp}}_{\text{aug}}$ is defined in the same way like $\cwafp$ but for augmented $B$-algebras. Observe that the above diagram can be recovered by restrict the diagram stated in the corollary. Therefore, Lemma \ref{lemma4.10} implies that $C^*\circ \rho_0\simeq \mathrm{restrict}\circ \ce$, since both ends are left adjoints and their restrictions to $\apbdlie$ are homotopic.
	\end{proof}
	
	\begin{remark}\label{rk4.12}
	There is a global version of Definition \ref{df4.4} in \cite{BMN}. Let $X$ be a locally coherent qcqs derived scheme, and $\QC^\vee(X):=\mathrm{Ind}(\coh^{op}(X))$ be the \infcat\ of pro-coherent sheaves. There exists a sifted-colimit-preserving monad $\mathrm{LieAlgd}^{\pi}_{\Delta}$ acting on $\QC^\vee(X)_{/\dT_{X/R}[1]}$ obtained by gluing up the monad of partition Lie algebroids on each affine chart\cite[Theorem 4.2]{BMN}.
		
	We recall their construction as follows: Let $\scr^{\scoh,\laft}_R\subset\scr_R$ be the (not full) subcategory of all coherent $R$-algebras and almost finitely presented morphisms, and $\QC^\vee_{/\dT[1]}$ be the \infcat\ of the pairs $(B,M\to \dT_{B/R}[1])$, where $B\in\scr^{\scoh,\laft,op}_R$ and $M\in \QC^\vee_{B}$. In the light of \cite[Definition 4.16]{BMN}, there is a unique relative monad $\mathrm{LieAlgd}^{\pi}_{\Delta}$ (which they denote as $\mathrm{Lie}^{\pi}_{\Delta}$)
		\[\begin{tikzcd}
			{\QC^\vee_{/\dT[1]}} && {\QC^\vee_{/\dT[1]}} \\
			& {\scr^{\scoh,\laft,op}_R}
			\arrow["{\mathrm{LieAlgd}^{\pi}_{\Delta}}", from=1-1, to=1-3]
			\arrow[from=1-1, to=2-2]
			\arrow[from=1-3, to=2-2]
		\end{tikzcd}\]
		that preserves sifted colimits fibrewise and specializes to the monad of the free-forgetful adjunction in Notation \ref{n4.6} on each fibre. They also prove that the resulting projection $\mathrm{LieAlgd}^{\pi}_{/R,\Delta}\to\scr^{\scoh,\laft,op}_R$ is a cartesian fibration, and $B\mapsto \lagd$ has \'etale descent \cite[Proposition 4.18]{BMN}. Then, $\mathrm{LieAlge}^{\pi}_{X/R,\Delta}$ is defined as \[\lim_{\spec(B)\subset X} \lagd.\]
	\end{remark}

	\subsection{Filtered Chevalley-Eilenberg functor}\label{sec4.2}
	We first discuss some properties of (completed) Hodge-filtered derived infinitesimal cohomology, which is useful for constructing the filtered Chevalley-Eilenberg functor.
	\begin{construction}\label{c4.13}
	There is a natural transformation of small-limit-preserving functors
	\[(F_0\to\gr_0):\sseq^{\gen,\vee}_{\fil,R}\to \sseq^{\gen,\vee}_R,\]where both $F_0$ and $\gr_0$ are lax monoidal with respect to $\circ$, and this arrow respects the lax monoidal structure. Moreover, this natural transformation sends $\com$ (regarded as a filtered derived $\infty$-operad concentrating at degree 0) to the identity of $\com$ as an $\infty$-operad.
	
	The above abstract nonsense induces a right adjoint $(F_0\to \gr_0)$ as follows
	\[\infcohnewfunctor:\dalg(\QC^\vee_R)^{\Delta^1}\rightleftarrows\dalg(\fil_{\ge0}\QC^\vee_R):(F_0\to\gr_0),\]	where the left adjoint is called \textit{Hodge-filtered derived infinitesimal cohomology}. We also write $\infcohnewfunctor(S\to B)$ as $\mathrm{F^H_*}\mathbbb{\Pi}_{B/S}$ or $\infcohnewfunctor(B/S)$ for convenience. It will be shown in Remark \ref{rk4.15} that $\infcohnewfunctor$ is independent from the base ring $R$.
\end{construction}
\begin{remark}
	The notion of \textit{Hodge-filtered derived infinitesimal cohomology} follows Antieau's ongoing work \cite{Antieau}. We will discuss its relation to Grothendieck's infinitesimal cohomology at the end of this section.
\end{remark}
\begin{notation}\label{n4.14}
	(1) Write $\dalg(\fil_{\ge0}\QC^\vee_R)$ as $\dafilr$ for short. For each $B\in\mathrm{SCR}_R$, the \infcat\ $\dafilbr$ is defined by a cartesian diagram of \infcats
	\[\begin{tikzcd}
		\dafilbr & \dafilr \\
		{\{B\}} & {\dalg(\QC^\vee_R)}
		\arrow[from=1-1, to=1-2]
		\arrow[from=1-1, to=2-1]
		\arrow["{\gr_0}"', from=1-2, to=2-2]
		\arrow[from=2-1, to=2-2]
	\end{tikzcd}.\]Intuitively, $\dafilbr$ is the \infcat\ of the filtered $R$-algebras $A$ with a chosen $\gr_0A\simeq B$.
	
	(2) A filtered algebra $A\in \dafilr$ is said to be \textit{foliation-like} if $A$ is complete, $B:=\gr_0A$ is connective and $\lsym_B[\gr_1A]_1\to\gr A$ is a natural equivalence of graded algebras. Here, the $B$-module $\dL_A:=\gr_1A[1]$ is called the cotangent complex of $A$. If $\dL_A$ is an almost perfect $B$-module, we say that $A$ is \textit{almost perfect}.
	
	For each $B\in\mathrm{SCR}_R$, the \infcat\ $\dalg^{\fol}_{B/R}$ of \textit{foliation-like algebras over $B$ relative to $R$} is by definition the full subcategory of foliation-like objects in $\dafilbr$.
	
\end{notation}
\begin{remark}\label{rk4.15}
	Construction \ref{c4.13} is natural in $R$ using a filtered version of Construction \ref{recoll4.1}. There is a commutative diagram of left adjoints
	\[\begin{tikzcd}[column sep=huge]
		{\dalg(\QC^{\vee}_R)^{\Delta^1}} & {\mathcal{D}^{\fil}_{/R}} \\
		{\dalg(\QC^{\vee}_{R'})^{\Delta^1}} & {\mathcal{D}^{\fil}_{/R'}}
		\arrow["{\mathrm{F^H_*}\mathbbb{\Pi}^{(R)}}", from=1-1, to=1-2]
		\arrow["{R'\otimes_R}"', from=1-1, to=2-1]
		\arrow[from=1-1, to=2-1]
		\arrow[from=1-1, to=2-1]
		\arrow["{R'\otimes_R}", from=1-2, to=2-2]
		\arrow["{\mathrm{F^H_*}\mathbbb{\Pi}^{(R')}}"', from=2-1, to=2-2]
	\end{tikzcd},\]where we temporarily emphasize the base ring of $\mathrm{F^H_*}\mathbbb{\Pi}$. The unit map of  $\mathrm{F^H_*}\mathbbb{\Pi}^{(R')}\dashv (F_0\to \gr_0)$ gives rise to a natural transformation \[\mathrm{F^H_*}\mathbbb{\Pi}^{(R)}\circ \mathrm{rest}\to\mathrm{rest}\circ\mathrm{F^H_*}\mathbbb{\Pi}^{(R')},\]which we claim to be an equivalence: for each $(S\to B)\in\dalg(\QC^{\vee}_{R'})^{\Delta^1}$, the filtered algebra $\mathrm{F^H_*}\mathbbb{\Pi}^{(R)}(B/S)$ naturally admits a $S$-algebra structure and then naturally a $R'$-algebra as well. Therefore, we have a natural arrow $\mathrm{F^H_*}\mathbbb{\Pi}^{(R')}(B/S)\to \mathrm{F^H_*}\mathbbb{\Pi}^{(R)}(B/S)$ that is an inverse of the above natural transformation.
\end{remark}
The next lemma shows that $(\infcohnewfunctor(B/-))^\wedge$ are naturally foliation-like over $B$ relative to $R$.
\begin{lemma}\label{lemma4.16}
	The graded algebra $\gr\infcohnewfunctor(B/S)$ is naturally equivalent to $\lsym_B[\dL_{B/S}[-1]]_1$ the freely generated graded $B$-algebra.
\end{lemma}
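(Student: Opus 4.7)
\emph{Plan.} The approach is to build a canonical comparison map, reduce to the case where $B$ is free over $S$, and compute the free case explicitly.

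\emph{Step 1 (Natural map).} The adjunction unit gives a canonical map $B \to \gr_0\infcohnewfunctor(B/S)$ of $R$-algebras. Using the equivalence $\fil^{\cpl}\QC^{\vee}_R \simeq \dg_-\QC^{\vee}_R$ of Theorem~\ref{thm dg_-}, the complete filtered algebra $\infcohnewfunctor(B/S)$ corresponds to a homotopy-coherent CDGA whose weight-$n$ term is $\gr_n[n]$, and whose weight-zero differential is a derivation $\gr_0 \to \gr_1[1]$. Composing with the unit produces a derivation $B \to \gr_1\infcohnewfunctor(B/S)[1]$, and the fact that $S$ maps into $F_0\infcohnewfunctor(B/S)$ (hence into the kernel of the coboundary in the cochain-complex interpretation) forces this derivation to be $S$-linear. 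Equivalently, it is a map $\dL_{B/S}[-1] \to \gr_1\infcohnewfunctor(B/S)$ in $\m_B$. By the universal property of the free graded $B$-algebra generated in weight $1$, this assembles into a natural morphism
\[
\eta_{B/S}\colon \lsym_B[\dL_{B/S}[-1]]_1 \longrightarrow \gr\infcohnewfunctor(B/S)
\]
of graded derived $B$-algebras, and it remains to show $\eta_{B/S}$ is an equivalence.

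\emph{Step 2 (Reduction to the free case).} Both sides preserve sifted colimits in the variable $(S \to B) \in \dalg(\QC^{\vee}_R)^{\Delta^1}$: the target because $\infcohnewfunctor$ is a left adjoint and $\gr$ is cocontinuous; the source because the assignment $(S\to B)\mapsto \dL_{B/S}$ is sifted-colimit-preserving and $\lsym_B$ is a free functor. Resolving $B \simeq |\lsym_S(M_{\bullet})|$ by the bar resolution by free $S$-algebras, it therefore suffices to verify $\eta_{B/S}$ when $B = \lsym_S(M)$ for some $M \in \m_S$.

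\emph{Step 3 (Free case).} For $B = \lsym_S(M)$, I would produce an explicit model $\widetilde{A}$ for $\infcohnewfunctor(S \to B)$: namely the complete filtered derived $S$-algebra $\lsym_S^{\fil}(M \oplus M[-1]_1)$, freely generated by $M$ in weight $0$ and $M[-1]$ in weight $1$, equipped with the unique homotopy-coherent CDGA differential whose weight-zero component is the universal derivation $M \to M[-1]\cdot[1]$ extended by the Leibniz rule. One unpacks the universal property against any $A^{\fil}$: a map $\widetilde{A} \to A^{\fil}$ amounts to an $S$-algebra map $M \to \gr_0 A^{\fil}$ (equivalently $\lsym_S(M) \to \gr_0 A^{\fil}$) together with the compatibility between the weight-$0$ and weight-$1$ generators under the cochain differentials, which is precisely the data of a morphism $(S \to \lsym_S(M)) \to (F_0 A^{\fil} \to \gr_0 A^{\fil})$ in the arrow category. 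Applying $\gr$ to $\widetilde{A}$ yields $\lsym_{\lsym_S(M)}(\lsym_S(M)\otimes_S M[-1])$ in weight $1$, which matches $\lsym_B[\dL_{B/S}[-1]]_1$ via $\dL_{\lsym_S(M)/S} \simeq \lsym_S(M)\otimes_S M$, and by construction $\eta_{B/S}$ is the identity on generators.

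\emph{Main obstacle.} The principal difficulty is Step~3: constructing and verifying the universal property of the explicit free-case model $\widetilde{A}$, especially the homotopy-coherent CDGA differential, is delicate in general characteristic because $\lsym^n$ differs from the naive symmetric power and the cochain-complex structure must be specified via Theorem~\ref{thm dg_-}. An alternative route, avoiding an explicit model, is to reduce further using the universal property of $\lsym_B$: it is enough to verify $\eta_{B/S}$ in weights $0$ and $1$ for $B = \lsym_S(M)$ (freeness in higher weights then follows from the algebra-structure being freely generated by the weight-$1$ part, which is the content of Step~1's construction), and these two checks can be performed by testing against filtered algebras of the form $(C)_0$ (isolating $\gr_0$) and suitable weight-$1$ square-zero extensions (isolating $\gr_1$).
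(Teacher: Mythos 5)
Your Steps 1 and 2 track the paper's argument closely: the comparison map $c(B/S)$ is constructed exactly as in the paper (unit of the adjunction $\infcohnewfunctor\dashv(F_0\to\gr_0)$, universal property of the cotangent complex, then freeness of $\lsym_B$), and the reduction via sifted colimits to the free case is also the paper's move, though the paper resolves the entire arrow $(S\to B)$ to generators $\lsym_R N_0\to\lsym_R N_1$ rather than fixing $S$ and resolving only $B$ — either reduction is sound.

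The genuine gap is your Step 3, and you have already put your finger on it. You propose to build an explicit filtered model $\widetilde{A}=\lsym_S^{\fil}(M\oplus M[-1]_1)$ together with a homotopy-coherent differential, verify its universal property against arbitrary $A^{\fil}$, and then take $\gr$. You never actually construct the homotopy-coherent differential (via Theorem~\ref{thm dg_-}, this means exhibiting a full coherent cochain-complex structure, not a single map), and you never verify that $\widetilde{A}$ has the stated universal property — you only assert it. That is precisely the "delicate" part you flag, and neither the main route nor the alternative route is carried out. The paper circumvents the issue entirely: instead of building a model, it writes down a commuting square of right adjoints
\[
\begin{tikzcd}[column sep=2cm]
	{\dalg(\gr_{\ge0}\QC^\vee_R)} & \dafilr & {\dalg(\QC^\vee_R)^{\Delta^1}} \\
	{\gr_{\ge0}\QC^\vee_R} && {(\QC^\vee_R)^{\Delta^1}}
	\arrow["\triv", from=1-1, to=1-2]
	\arrow["\rmfor"', from=1-1, to=2-1]
	\arrow["{(F_0\to\gr_0)}", from=1-2, to=1-3]
	\arrow["\rmfor", from=1-3, to=2-3]
	\arrow[from=2-1, to=2-3]
\end{tikzcd}
\]
(where the bottom arrow sends $M_\star$ to $M_0\to M_0\oplus M_1[1]$) and passes to left adjoints. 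The left adjoint of the bottom arrow sends $f\colon N_0\to N_1$ to $[N_1]_0\oplus[\rmfib(f)]_1$, so commutativity of left adjoints immediately yields $\gr\infcohnewfunctor(\lsym_RN_1/\lsym_RN_0)\simeq\lsym_R[N_1]_0\otimes_R\lsym_R[\rmfib(f)]_1$ with no model-building, no explicit differential, and no universal-property check needed. If you want to complete your proposal, you should either replace Step 3 by a diagram-of-adjunctions argument of this sort, or genuinely carry out the construction of $\widetilde{A}$ (e.g.\ as $(\infcohnewfunctor$ applied to $\lsym_S(M)$ presented via $S\to\lsym_S(M)$ and computed term by term through the bar construction), which is substantially more work.
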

\begin{proof}
	The unit morphism of $\infcohnewfunctor\dashv (F_0\to \gr_0)$ induces a natural $B\to \gr_0\infcohnewfunctor(B/S)$. The universal property of algebraic cotangent complex\footnote{a pro-coherent version of \cite[Construction 4.4.10]{Raksit}.} determines $\dL_{B/S}\to \gr_1\infcohnewfunctor(B/S)[1]$. So we obtain a comparison map $c(B/S):\lsym_B[\dL_{B/S}[-1]]_1\to \gr\infcohnewfunctor(B/S)$ natural in the arrow $S\to B$. Then, consider the following commuting diagram of right adjoints
	\[\begin{tikzcd}[column sep=2cm]
		{\dalg(\gr_{\ge0}\QC^\vee_R)} & \dafilr & {\dalg(\QC^\vee_R)^{\Delta^1}} \\
		{\gr_{\ge0}\QC^\vee_R} && {(\QC^\vee_R)^{\Delta^1}}
		\arrow["\triv", from=1-1, to=1-2]
		\arrow["\rmfor"', from=1-1, to=2-1]
		\arrow["{(F_0\to\gr_0)}", from=1-2, to=1-3]
		\arrow["\rmfor", from=1-3, to=2-3]
		\arrow["{M_\star\mapsto (M_0\to M_0\oplus M_1[1])}"', from=2-1, to=2-3]
	\end{tikzcd},\]where $\gr\dashv \triv$, and the bottom arrow has a left adjoint sending $f:N_0\to N_1$ to $[N_1]_0\oplus[\rmfib(f)]_1$. The commutativity of the left adjoints then gives rise to an equivalence natural in $f:N_0\to N_1$
	\[\lsym_R[N_1]_0\otimes_R\lsym_R[\rmfib(f)]_1 \xrightarrow{\simeq} \gr\infcohnewfunctor(\lsym_RN_1/\lsym_RN_0).\]Notice that $\dL_{\lsym_RN_1/\lsym_RN_0}$ is naturally equivalent to $\lsym_RN_1\otimes_R \mathrm{cofib}(f)$, so the above line is equivalent to $c(\lsym_R(N_1)/\lsym_R(N_0))$. For the general case, it sufficient to note that both ends of $c(B/S)$ are sifted-colimit-preserving.
\end{proof}

\begin{corollary}\label{cor4.17}
	The adjunction $\infcohnewfunctor:\dalg(\QC^\vee_R)^{\Delta^1}\rightleftarrows\dafilr:(F_0\to\gr_0)$ is relative to $\dalg(\QC^\vee_R)$ in the sense of \cite[\S7.3.2]{HA}, where the left-hand side projects by $\mathrm{ev}_1:(A_0\to A_1)\mapsto A_1$ and the right-hand side projects by $\gr_0$.
\end{corollary}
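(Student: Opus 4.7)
The plan is to verify directly the criteria of \cite[Definition 7.3.2.2]{HA} for a relative adjunction over $\dalg(\QC^\vee_R)$: namely, (i) both $\infcohnewfunctor$ and $(F_0\to\gr_0)$ should commute with the respective projections $\mathrm{ev}_1$ and $\gr_0$, and (ii) the unit and counit should lie over identities in $\dalg(\QC^\vee_R)$.

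For the right adjoint, (i) is tautological: $\mathrm{ev}_1\circ(F_0\to\gr_0)(A)=\gr_0(A)$ by the very definition of $(F_0\to\gr_0)$ in Construction \ref{c4.13}. For the left adjoint, (i) is precisely where Lemma \ref{lemma4.16} does the work: the weight-zero piece of the natural equivalence $\gr\,\infcohnewfunctor(B/S)\simeq\lsym_B[\dL_{B/S}[-1]]_1$ identifies $\gr_0\,\infcohnewfunctor(B/S)$ with $B=\mathrm{ev}_1(S\to B)$.

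For (ii), I would unpack the unit and counit in terms of these identifications. The unit at $(S\to B)$ is a morphism in the arrow category whose $\mathrm{ev}_1$-component is $B\to\gr_0\,\infcohnewfunctor(B/S)$; but this is nothing other than the weight-zero component of the comparison map $c(B/S)$ appearing in the proof of Lemma \ref{lemma4.16}, which was shown there to be an equivalence. Dually, applying $\gr_0$ to the counit $\infcohnewfunctor(F_0 A\to\gr_0 A)\to A$ recovers the equivalence $\gr_0\,\infcohnewfunctor(F_0 A\to\gr_0 A)\simeq\gr_0 A$ from the same lemma.

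The main obstacle is ensuring that these pointwise identifications promote to a coherent equivalence of $\infty$-functors, rather than merely an equivalence on objects. The key observation is that the comparison map $c(B/S)$ is built from the unit of the adjunction $\infcohnewfunctor\dashv(F_0\to\gr_0)$ together with the universal property of $\dL_{B/S}$ in pro-coherent modules; both ingredients are manifestly natural in the arrow $(S\to B)\in\dalg(\QC^\vee_R)^{\Delta^1}$. This functoriality automatically yields the natural transformations of $\infty$-functors required by \cite[Definition 7.3.2.2]{HA}, so no further homotopy-coherence argument is needed.
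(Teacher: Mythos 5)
Your proposal is correct and fills in what the paper leaves implicit — the paper states Corollary \ref{cor4.17} without proof as an immediate consequence of Lemma \ref{lemma4.16}, and your argument is precisely the one being relied on. You correctly identify that the entire content is: $\mathrm{ev}_1\circ(F_0\to\gr_0)=\gr_0$ is definitional, while the equivalence $\gr_0\circ\infcohnewfunctor(B/S)\simeq B$ (natural in $S\to B$) is supplied by the weight-zero piece of the comparison map $c(B/S)$ from Lemma \ref{lemma4.16}, and the $\mathrm{ev}_1$-component of the unit of the absolute adjunction is exactly this map.

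One small simplification worth noting: the coherence worry in your final paragraph can be sidestepped entirely by invoking Lurie's criterion (\cite[Proposition 7.3.2.6]{HA} or the formulation in \cite[Proposition 7.3.2.11]{HA}). Given that $(F_0\to\gr_0)$ is a functor strictly over $\dalg(\QC^\vee_R)$ and admits an absolute left adjoint $\infcohnewfunctor$, that proposition reduces the relative adjointness to a pointwise condition — namely that $\mathrm{ev}_1$ applied to the unit at each object $(S\to B)$ is an equivalence in $\dalg(\QC^\vee_R)$ — with no further naturality check required. That pointwise condition is exactly what Lemma \ref{lemma4.16} delivers. So your naturality-of-$c(B/S)$ argument is correct but not needed; the abstract machinery already promotes the objectwise equivalence to the required coherent data.
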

\begin{corollary}\label{cor4.18}
	For each $B\in \mathrm{SCR}_R$, there is an adjunction
	\[\infcohnewfunctor(B/-):\dalg(\QC^\vee_R)_{/B}\rightleftarrows \dafilbr:F^0.\]In particular, the derived infinitesimal derived cohomology $\infcohR$ is an initial object in $\dafilbr$.
\end{corollary}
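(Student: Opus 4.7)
The plan is to derive the stated adjunction by passing to fibres in the relative adjunction of Corollary \ref{cor4.17}. Unpacking definitions, the slice \infcat\ $\dalg(\QC^\vee_R)_{/B}$ is the pullback $\dalg(\QC^\vee_R)^{\Delta^1}\times_{\dalg(\QC^\vee_R)}\{B\}$ along $\mathrm{ev}_1$, and by Notation \ref{n4.14}(1) the \infcat\ $\dafilbr$ is the pullback of $\gr_0:\dafilr\to\dalg(\QC^\vee_R)$ along $\{B\}\hookrightarrow\dalg(\QC^\vee_R)$. Thus both \infcats\ appearing in the statement arise as fibres of the two projections featured in Corollary \ref{cor4.17}.

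I would then invoke the standard fact that a relative adjunction restricts to an adjunction between the fibres over any common base object (cf.\ \cite[Proposition 7.3.2.5]{HA}). Applied to the fibre over $B$, this directly produces the desired adjunction $\infcohnewfunctor(B/-)\dashv F^0$. The notation is consistent with Construction \ref{c4.13}, since an object of $\dalg(\QC^\vee_R)_{/B}$ is a map $S\to B$, and the restricted left adjoint sends it to $\infcohnewfunctor(B/S)$.

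For the second assertion, observe that the initial object of the slice $\dalg(\QC^\vee_R)_{/B}$ is the structure map $R\to B$, since $R$ is initial in $\dalg(\QC^\vee_R)$ by Recollection \ref{recoll4.1}. Left adjoints preserve initial objects, so $\infcohnewfunctor(B/R)$ is initial in $\dafilbr$, and this is exactly $\infcohR$ by definition.

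No serious obstacle is anticipated: the corollary is purely formal once one has the relative adjunction of \ref{cor4.17} and the explicit identification of both fibres. The only point worth double-checking is that $\mathrm{ev}_1:\dalg(\QC^\vee_R)^{\Delta^1}\to\dalg(\QC^\vee_R)$ indeed models the slice construction (which it does via the standard $\infty$-categorical definition), so that taking fibres on the left-hand side of the relative adjunction genuinely produces $\dalg(\QC^\vee_R)_{/B}$ rather than some homotopy variant.
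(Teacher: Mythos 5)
Your proof is correct and matches the argument the paper leaves implicit: the paper states Corollary \ref{cor4.18} immediately after Corollary \ref{cor4.17} precisely because it follows by restricting the relative adjunction to fibres over $B$ (via \cite[Proposition 7.3.2.5]{HA}), identifying the fibre of $\mathrm{ev}_1$ with $\dalg(\QC^\vee_R)_{/B}$ and the fibre of $\gr_0$ with $\dafilbr$ as in Notation \ref{n4.14}(1). The initiality claim is exactly as you say: $R\to B$ is initial in the slice and left adjoints preserve initial objects, giving $\infcohnewfunctor(B/R)\simeq\infcohR$ initial in $\dafilbr$.
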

\begin{remark}\label{rk4.19}
	One can also see that the adjunction
	\[\infcohnewfunctor(-/R):\dalg(\QC^\vee_R)\rightleftarrows\dafilr:\gr^0\]exhibits a colocalization. Then, $\dafilbr$ can be regarded as a full subcategory of $(\dafilr)_{\infcohR/}$. More generally, consider a colocalization $L$ of an \infcat\ $\mathcal{C}$ and the following diagram
	\[
	\begin{tikzcd}
		{\mathcal{C}} & {\text{Fun}(\Delta^1,\mathcal{C})} \\
		{L\mathcal{C}} & {\mathcal{C}}
		\arrow[hook, from=2-1, to=2-2]
		\arrow["{L\to id_\mathcal{C}}", hook, from=1-1, to=1-2]
		\arrow["L"', from=1-1, to=2-1]
		\arrow["{pr_0}", from=1-2, to=2-2]
	\end{tikzcd}.
	\]The categorical fibre over $Lc\in\mathcal{C}$ induces a fully faithful embedding $\mathcal{C}\times_{L\mathcal{C}}\{Lc\}\hookrightarrow \mathcal{C}_{Lc/}$.
	
	The same trick also demonstrates that $\dafilbr$ can be embedded into $\dalg(\fil_{\ge0}\QC^\vee_R)_{/B}$.
\end{remark}
\newcommand{\sbar}{\overline{S}}
Next, we construct the monad of filtered partition Lie algebroids and compare it with the unfiltered one $T$ (\ref{df4.4}).
\begin{construction}\label{c4.20}
	Let $R\to B$ be a morphism in $\mathrm{SCR}^{\scoh}$ such that $\dL_{B/R}$ is almost perfect over $B$. The above discussion permits us to have a commuting diagram as follows 
	\[\begin{tikzcd}[column sep=huge]
		{\dalg(\QC^\vee_R)_{/B}} & {\dalg^{aug}(\QC^\vee_B)} & {\QC^\vee_B} \\
		\dafilbr & {\dalg^{nu}(\fil_{\ge1}\QC^\vee_B)} & {\fil_{\ge1}\QC^\vee_B}
		\arrow["{B\otimes_R(-)}", shift left, from=1-1, to=1-2]
		\arrow["{\infcohnewfunctor(B/-)}"', shift right, from=1-1, to=2-1]
		\arrow["{\mathrm{rest}}", shift left, from=1-2, to=1-1]
		\arrow["{\cot_{B}}", shift left, from=1-2, to=1-3]
		\arrow["\adic"', shift right, from=1-2, to=2-2]
		\arrow["{\sqz_{B}}", shift left, from=1-3, to=1-2]
		\arrow["{(-)_1}"', shift right, from=1-3, to=2-3]
		\arrow["{F^0}"', shift right, from=2-1, to=1-1]
		\arrow["{B\otimes_{\infcohR}(-)}", shift left, from=2-1, to=2-2]
		\arrow["{F^0}"', shift right, from=2-2, to=1-2]
		\arrow["{\mathrm{rest}}", shift left, from=2-2, to=2-1]
		\arrow["{\cot_B}", shift left, from=2-2, to=2-3]
		\arrow["{F^1}"', shift right, from=2-3, to=1-3]
		\arrow["{\sqz_{B}}", shift left, from=2-3, to=2-2]
	\end{tikzcd},\]where $\dalg^{nu}(\fil_{\ge1}\QC^\vee_B)$ is identified with a full subcategory of $\dalg^{aug}(\fil_{\ge0}\QC^\vee_B)$, $\adic$ is formally given by the adjoint functor theorem. Write the composite of the bottom adjunctions as $\cotrfil\dashv \sqzrfil$. Composing with $B$-linear dual, there is a monad $\sbar$ acting on $\fil_{\le -1}\QC^\vee_B$
	\[\sbar:=(-)^\vee\circ\cotrfil\circ \sqzrfil\circ(-)^\vee.\]At the same time, the counit map $\infcohnewfunctor(B/-)\circ F_0\to id_{\dafilbr}$ induces a morphism of monads \begin{equation}\label{e13}
		\sbar\to\const\circ \tbar\circ\colim=:\tbar^{\fil}.
	\end{equation}
\end{construction}
Next, we rectify $\sbar$ into a sifted-colimit preserving monad through a procedure parallel to the construction of $T$ in \S\ref{sec4.1}
\begin{lemma}\label{lemma4.21}
	Consider the initial object $\infcohR$ and the final object $B$ in $\dafilbr$, their relative cotangent fibres are given by $\cotrfil(\infcohR)\simeq 0$ and $\cotrfil(R)\simeq (\dL_{B/R})_1$.
\end{lemma}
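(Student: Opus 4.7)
The plan is to compute both cotangent fibres via the universal property of $\cotrfil$ as a left adjoint to $\sqzrfil$ (Construction \ref{c4.20}). For the initial object $\infcohR$ (identified in Corollary \ref{cor4.18}): since $\cotrfil = \cot_B \circ (B \otimes_{\infcohR} -)$ is a composite of left adjoints, it preserves initial objects, so $\cotrfil(\infcohR) \simeq 0$ is immediate.

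For the final object, I would first observe that $B$ equipped with the filtration $F_0 B = B$, $F_n B = 0$ for $n \ge 1$ is final in $\dafilbr$: any $A \in \dafilbr$ admits a unique filtered algebra map to it, obtained from the augmentation $F_0 A \twoheadrightarrow \gr_0 A = B$ (extended by zero in positive filtration degrees). Then for $V \in \fil_{\ge 1}\QC^\vee_B$, the adjunction gives
\[
\map_{\fil_{\ge 1}\QC^\vee_B}(\cotrfil(B), V) \simeq \map_{\dafilbr}(B, \sqzrfil(V)),
\]
which by finality of $B$ is the space of sections of $\sqzrfil(V) \to B$, i.e.\ $R$-linear filtered derivations $B \to V$. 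The key reduction is that $V \in \fil_{\ge 1}$ has filtration constant at $F_1 V$ in all degrees $\le 1$ while $B$ lives purely in filtration degree $0$, so a filtered derivation $B \to V$ is uniquely determined by an ordinary $R$-derivation $B \to F_1 V$. This identifies the mapping space with $\map_B(\dL_{B/R}, F_1 V) \simeq \map_{\fil\QC^\vee_B}((\dL_{B/R})_1, V)$ via the adjunction $(-)_1 \dashv F_1$, and Yoneda yields $\cotrfil(B) \simeq (\dL_{B/R})_1$.

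The main subtle step is the identification of filtered $R$-derivations $B \to V$ with ordinary $R$-derivations $B \to F_1 V$: one must verify that the square-zero algebra structure on $\sqzrfil(V)$ interacts correctly with the filtration, so that sections genuinely factor through $B \oplus F_1 V$ at the $F_0$-level and the higher filtration conditions are automatic. As an alternative route avoiding this bookkeeping, one could compute $\gr(B \otimes_{\infcohR} B) \simeq \lsym_B([\dL_{B/R}]_1)$ using Lemma \ref{lemma4.16} together with a pushout calculation, then apply $\cot_B$ (which commutes with $\gr$) to obtain $\gr\cotrfil(B) \simeq [\dL_{B/R}]_1$, and finally use completeness plus the structural observation that any complete object of $\fil_{\ge 1}$ whose associated graded is concentrated in weight $1$ must be of the form $(M)_1$.
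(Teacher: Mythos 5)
Your main argument is correct and genuinely different from the paper's. The paper computes $\cotrfil(B)$ by invoking the base-change identity $\infcohnewfunctor(B/B)\simeq B$ from Remark \ref{rk4.15} together with the fact that the left adjoints in Construction \ref{c4.20} commute, giving $\cotrfil(B)\simeq \cotrfil\bigl(\infcohnewfunctor(B/B)\bigr)\simeq \bigl(\cotr(B)\bigr)_1\simeq (\dL_{B/R})_1$. You instead verify the adjunction directly: because $B$ carries the constant filtration concentrated in degree $0$ and $V\in\fil_{\ge 1}\QC^\vee_B$ has $F_n V\simeq F_1 V$ for $n\le 1$, a filtered section of $\sqzrfil(V)\to B$ is uniquely determined by its $F_0$-component, which is an $R$-derivation $B\to F_1 V$; this is a legitimate, more elementary route that avoids the base-change lemma entirely. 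Both approaches are valid; the paper's is slicker given the infrastructure already built, while yours is self-contained.

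One small flag on your \emph{alternative} route: the intermediate assertion $\gr(B\otimes_{\infcohR}B)\simeq\lsym_B([\dL_{B/R}]_1)$ is not quite what a pushout calculation yields. Since $\gr$ is symmetric monoidal, $\gr(B\otimes_{\infcohR}B)\simeq B\otimes_{\lsym_B[\dL_{B/R}[-1]]_1}B$ is a bar construction of the \emph{free} graded derived algebra, and away from characteristic $0$ this is a Tor/divided-power-type algebra rather than $\lsym_B([\dL_{B/R}]_1)$ (already for rank-one generators, $\br(\mathbb{F}_p[x])\simeq\Lambda(\epsilon)$ is not $\lsym_{\mathbb{F}_p}(\mathbb{F}_p\epsilon)$, whose higher $\lsym^n$ are nonzero). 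The endpoint $\gr\cotrfil(B)\simeq[\dL_{B/R}]_1$ is nonetheless correct, but the cleaner way to extract it is the transitivity cofibre sequence for $B\to \lsym_B[\dL_{B/R}[-1]]_1\to B$, which gives $\cot_B\bigl(\br(\lsym_B(W))\bigr)\simeq W[1]$ without any claim that the bar construction is free. The rest of the alternative route (completeness plus weight-$1$ concentration forces the form $(M)_1$) is fine.

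Also note the typo in the lemma statement: $\cotrfil(R)$ should read $\cotrfil(B)$, as the final object of $\dafilbr$ is $B$ with the constant filtration; your proposal correctly addresses $B$.
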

\begin{proof}
	By definition, $\cotrfil(\infcohR)$ is initial and then null-homotopic. As for $B\in \dafilbr$, notice that $\infcohnewfunctor(B/B)\simeq B$ (Remark \ref{rk4.15}), which implies that $\cotrfil(B)\simeq \big(\cot_B(B\otimes_RB)\big)_1\simeq (\dL_{B/R})_1$.
\end{proof}
	
	Let $\sfil_{\le -1}\aperf^\vee_B$ be the full subcategory of dually almost perfect objects in $ \fil_{\le-1}\QC^\vee_B$. Using the above lemma and the same reasoning as Lemma \ref{lemma4.3}, for each $V\in \sfil_{\le -1}\aperf^\vee_B$, there is a natural morphism of \textit{splitting} fibre sequences in $\sfil_{\le -1}\aperf^\vee_B$

	\begin{equation}\label{e:two fibre sequeces}
		\begin{tikzcd}[column sep=2cm]
			{\bdlie(V)} & {\sbar(V)} & {\const(\dL_{B/R})} \\
			{\const\circ\bdlie\circ\colim(V)} & {\tbar(V)} & {\const(\dL_{B/R})}
			\arrow[from=1-1, to=1-2]
			\arrow[from=1-1, to=2-1]
			\arrow[from=1-2, to=1-3]
			\arrow[from=1-2, to=2-2]
			\arrow[Rightarrow, no head, from=1-3, to=2-3]
			\arrow[from=2-1, to=2-2]
			\arrow[from=2-2, to=2-3]
		\end{tikzcd},
	\end{equation}
	where $\bdlie$ means the derived PD $\infty$-operads of $B$-partition Lie algebras in filtered (the upper one) or ordinary (the lower one) pro-coherent modules respectively. Additionally, the middle vertical arrow is a map of monads
	
	The previous discussion has multiple consequences. Firstly, the monad $\sbar|_{\sfil_{\le -1}\aperf^\vee_B}$ is extended uniquely to a sifted-colimit-preserving monad $S$ acting on $\fil_{\le -1}\QC^\vee_B$ (Remark \ref{rk2.29}), and the latter has a presentable \infcat\ of algebra objects (Lemma \ref{lemma4.8})\[\lagdf:=\alg_{S}(\fil_{\le -1}\QC^\vee_B),\]whose objects are called \textit{(increasingly) filtered derived partition Lie algebroids}.\ There is also a commuting square of right adjoints analogous to (\ref{e: fibre and forgetful})
	\[\begin{tikzcd}
		\lagdf & {(\fil_{\le-1}\QC^\vee_B)_{/\const(\dT_{B/R}[1])}} \\
		{(\fil_{\le-1}\QC^\vee_B)_{\const(\dT_{B/R})/}} & {\fil_{\le-1}\QC^\vee_B}
		\arrow["\rmfor", from=1-1, to=1-2]
		\arrow["\rmfib"', from=1-1, to=2-1]
		\arrow[from=1-2, to=2-2]
		\arrow["{\mathrm{cofib},\simeq}"', from=2-1, to=1-2]
		\arrow[from=2-1, to=2-2]
	\end{tikzcd}.\]

	Secondly, $T^{\fil}:=\const\circ T\circ \colim$ is a monad \cite[Lemma 3.10]{BCN}, and the middle vertical arrow in (\ref{e:two fibre sequeces}) extends to a map of sifted-colimit-preserving monads $S\to T^{\fil}$. Furthermore, it gives rise to a sequence of right adjoints
	\[\alg_T(\QC^\vee_B)\xrightarrow{\const} \alg_{T^{\fil}}(\fil_{\le -1}\QC^\vee_B)\xrightarrow{\mathrm{forget}} \alg_S(\fil_{\le -1}\QC^\vee_B).\]
	\begin{lemma}\label{lemma4.2}
		The left adjoint $\phi:\lagdf\to\lagd$ of the above composite is given by $\colim:\fil^\vee_{\le-1}\QC^\vee_B\to\QC^\vee_B$ on the level of underlying modules, which is a localization functor.
	\end{lemma}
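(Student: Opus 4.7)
The plan is to deduce both assertions---that $\phi$ restricts to $\colim$ on underlying modules and that it is a localization---from the single monadic comparison $\colim\circ S\simeq T\circ\colim$ of sifted-colimit-preserving endofunctors of $\fil_{\le-1}\QC^\vee_B$.

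First I would compute $\phi$ on free algebras. Chasing $\phi\dashv(\const\circ\mathrm{forget})$ through the adjunctions $\fr_S\dashv\mathrm{forget}_S$ and $\colim\dashv\const$ yields $\phi(\fr_S V)\simeq\fr_T(\colim V)$ for every $V\in\fil_{\le-1}\QC^\vee_B$. Post-composing with the forgetful functors this reads
\[\mathrm{forget}_T\circ\phi\circ\fr_S(V)\simeq T(\colim V),\qquad \colim\circ\mathrm{forget}_S\circ\fr_S(V)\simeq\colim(S(V)),\]
so the underlying-module equality I seek reduces, on free $S$-algebras, to $T(\colim V)\simeq\colim(S(V))$.

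Next I would establish $\colim\circ S\simeq T\circ\colim$ on all of $\fil_{\le-1}\QC^\vee_B$. The monad map $S\to T^{\fil}=\const\circ T\circ\colim$ from~(\ref{e13}), post-composed with $\colim$ and using $\colim\circ\const\simeq\mathrm{id}$, supplies a natural transformation $\colim\circ S\to T\circ\colim$. Both sides preserve sifted colimits, so by left Kan extension it suffices to verify this is an equivalence on $V\in\sfil_{\le-1}\aperf^\vee_B$. Applying $\colim$ to the diagram of splitting fibre sequences~(\ref{e:two fibre sequeces}), the rightmost terms both become $\dT_{B/R}$, while the two leftmost terms $\colim\bdlie(V)$ and $\bdlie(\colim V)$ agree because the filtered PD operad acting in the upper row is $\const(\bdlie)$ and $\colim$ is strong monoidal for the composition product. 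Hence the middle terms agree as well. Propagating from free $S$-algebras to arbitrary ones via the canonical bar resolution by free $S$-algebras---using that both $\mathrm{forget}_T\circ\phi$ and $\colim\circ\mathrm{forget}_S$ preserve sifted colimits---then yields the first assertion.

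For the localization claim, the right adjoint of $\phi$ is by construction $\const\circ\mathrm{forget}\colon\lagd\to\lagdf$, sending a partition Lie algebroid $A$ to its constant filtration regarded as an $S$-algebra via $S\to T^{\fil}$. The counit $\phi\circ(\const\circ\mathrm{forget})\to\mathrm{id}_{\lagd}$ reduces on underlying modules to $\colim\circ\const\simeq\mathrm{id}$, and conservativity of $\mathrm{forget}_T$ then forces this counit to be an equivalence on $\lagd$, so $\phi$ exhibits $\lagd$ as a reflective subcategory of $\lagdf$. The principal technical hurdle I foresee is the identification $\colim\bdlie(V)\simeq\bdlie(\colim V)$ used in the monad comparison: it requires pinning down that the filtered PD operad acting in the upper row of~(\ref{e:two fibre sequeces}) is indeed $\const(\bdlie)$ and invoking the strong monoidality of $\colim$ for the composition product inherited from Day convolution; once this compatibility is secured, the rest of the argument is formal.
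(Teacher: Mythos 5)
Your proposal is correct and follows essentially the same route as the paper: reduce to free $S$-algebras via the adjunction, establish $\colim\circ S\simeq T\circ\colim$ from the split fibre sequences in (\ref{e:two fibre sequeces}), propagate to general algebras by the bar resolution, and deduce the localization claim from $\colim\circ\const\simeq\mathrm{id}$ and conservativity of $\mathrm{forget}_T$. The one small imprecision is your phrase ``the composition product inherited from Day convolution'' — the relevant fact is that $\colim$ is $(\sseq^{\gen,\vee,\ge 1}_R,\bcirc)$-linear (Construction \ref{c2.49n}), which is not the same as being monoidal for a Day-convolution product, but the conclusion $\colim(\bdlie\bcirc V)\simeq\bdlie\bcirc\colim(V)$ you extract from it is the right one.
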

	\begin{proof}
		Note that the left adjoint of $\const$ is $\colim$ on the level of underlying modules by Proposition 4.1.9 in \cite{Raksit}. In the free case, $\phi(\fr_S(V))\simeq \colim(\fr_{T^{\fil}}(V))$ for each $V\in\fil_{\le -1}\QC^\vee_B$, while it follows from (\ref{e:two fibre sequeces}) that $\colim\circ S\simeq T\circ\colim$, so the first statement is true for free $S$-algebras. For general $L\in\lagdf$, it suffices to consider its bar resolution $\br_\bullet(id,\fr_S\circ\text{forget},L)$ and observe that every functor here respects sifted colimits. At the end, the full faithfulness of $\lagd\to\lagdf$ is implied by $\colim\circ \const\simeq id$.
	\end{proof}

	Thirdly, there is a Chevalley-Eilenberg functor in the filtered context compatible with that in Proposition \ref{prop4.7}:
	\begin{prop}\label{prop4.23}
		There is a commutative square of adjunctions of \infcats
		\[\begin{tikzcd}
			\lagdf & {(\dafilbr)^{op}} \\
			\lagd & {(\dalg(\QC^{\vee}_R)_{/B})^{op}}
			\arrow["{C^*_{\fil}}", shift left, from=1-1, to=1-2]
			\arrow["\colim"', shift right, from=1-1, to=2-1]
			\arrow["{\mathfrak{D}_{\fil}}", shift left, from=1-2, to=1-1]
			\arrow["{F_0}"', shift right, from=1-2, to=2-2]
			\arrow["\const"', shift right, hook', from=2-1, to=1-1]
			\arrow["{C^*}", shift left, from=2-1, to=2-2]
			\arrow["{\infcohnewfunctor(B/-)}"', shift right, from=2-2, to=1-2]
			\arrow["{\mathfrak{D}}", shift left, from=2-2, to=2-1]
		\end{tikzcd},\]
		where $\rmfib\circ\mathfrak{D}_{\fil}(A)\simeq \big(\const(\dT_{B/R})\to\cotrfil(A)^\vee \big)$.
	\end{prop}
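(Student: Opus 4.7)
My plan is to mirror Proposition \ref{prop4.7} in the filtered setting. The morphism of splitting fibre sequences (\ref{e:two fibre sequeces}) together with Lemma \ref{lemma4.21} give the filtered analogue of Lemma \ref{lemma4.3}, showing in particular that $\bar{S}=(-)^\vee\circ\cotrfil\circ\sqzrfil\circ(-)^\vee$ preserves $\sfil_{\le-1}\aperf^\vee_B$. Let $\cwafpf\subset\dafilbr$ consist of those $A$ with $\cotrfil(A)$ almost perfect as a $B$-module. The dualized adjunction $\sqzrfil\circ(-)^\vee\dashv(-)^\vee\circ\cotrfil$ between $(\sfil_{\le-1}\aperf^\vee_B)^{op}$ and $\cwafpf$ promotes by abstract nonsense to
\[
\bar{C}^*_{\fil}:\aplagdf\simeq\alg_{\bar{S}}(\sfil_{\le-1}\aperf^\vee_B)\rightleftarrows(\cwafpf)^{op}:\bar{\mathfrak{D}}_{\fil},
\]
with $\rmfor\circ\bar{\mathfrak{D}}_{\fil}(A)$ computed as the arrow $\const(\dT_{B/R})\to\cotrfil(A)^\vee$.

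Next, I would define $\mathfrak{D}_{\fil}$ as the left Kan extension of $\bar{\mathfrak{D}}_{\fil}$ along $\cwafpf\hookrightarrow\dafilbr$, valued in $(\lagdf)^{op}$. Since $\cotrfil$ is a left adjoint it preserves compactness, hence $\cwafpf$ contains the compact objects of $\dafilbr$; this, combined with presentability of $\dafilbr$ and cocompleteness of $(\lagdf)^{op}$ (via Lemma \ref{lemma4.8} applied to the sifted-colimit-preserving monad $S$), lets the argument of Proposition \ref{prop4.7} go through to show $\mathfrak{D}_{\fil}$ preserves small colimits. Its right adjoint $C^*_{\fil}$ is then guaranteed by the adjoint functor theorem, and the description of $\rmfib\circ\mathfrak{D}_{\fil}$ extends from $\cwafpf$ to all of $\dafilbr$ by sifted-colimit-preservation of the functors involved.

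For commutativity of the square, I would verify $\mathfrak{D}_{\fil}\circ\infcohnewfunctor(B/-)\simeq\const\circ\mathfrak{D}$. Since $\const$ is fully faithful by Lemma \ref{lemma4.2}, it suffices to check on underlying modules via the equivalence $\mathrm{cofib}$ of (\ref{e: fibre and forgetful}) and its filtered analogue. Applying the formula for $\rmfib$, the required identity becomes $\cotrfil(\infcohnewfunctor(B/A))^\vee\simeq\const(\cotr(A)^\vee)$ in $\fil_{\le-1}\QC^\vee_B$. Using the formula (\ref{dualfil}) to compute that $\const(V)^\vee\simeq(V^\vee)_1$, and invoking dual-on-dual $=\mathrm{id}$ for dually almost perfect $\cotr(A)$ (Proposition \ref{prop2.30}), this reduces to the natural equivalence $\cotrfil\circ\infcohnewfunctor(B/-)\simeq(-)_1\circ\cotr$ of left adjoints. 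On right adjoints this becomes $F^0\circ\sqzrfil\simeq\sqzr\circ F^1$, which follows from the commutativity of the right-adjoint diagram of Construction \ref{c4.20} by combining $\sqz_B\circ F^1\simeq F^0\circ\sqz_B$ with $F^0\circ\mathrm{rest}\simeq\mathrm{rest}\circ F^0$. The main obstacle is promoting this module-level identification to the Lie algebroid level, i.e.\ ensuring that the $\bar{S}$-algebra structure on $\cotrfil(\infcohnewfunctor(B/A))^\vee$ carried by $\bar{\mathfrak{D}}_{\fil}$ coincides, under the identification, with the one on $\const(\cotr(A)^\vee)$ obtained from $\const$ and the monad morphism (\ref{e13}) $\bar{S}\to\tbar^{\fil}$. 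This should follow from the universal property of the adjunction structures first on $\cwafpf$ and then propagate to all of $\dafilbr$ by sifted-colimit-preservation.
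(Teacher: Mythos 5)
Your construction of the top adjunction $C^*_{\fil}\dashv\mathfrak{D}_{\fil}$ (restrict to $\alg_{\sbar}(\sfil_{\le-1}\aperf^\vee_B)\rightleftarrows(\cwafpf)^{op}$, then left Kan extend $\bar{\mathfrak{D}}_{\fil}$) is exactly the paper's route, and the module-level formula for $\rmfib\circ\mathfrak{D}_{\fil}$ is handled the same way.

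For commutativity, however, you have correctly located but not closed a genuine gap. Your reduction ``Since $\const$ is fully faithful, it suffices to check on underlying modules'' is not valid as stated: full faithfulness of $\const$ lets you \emph{detect} equivalences between objects in its essential image, but it does not produce the needed natural comparison transformation $\mathfrak{D}_{\fil}\circ\infcohnewfunctor(B/-)\to\const\circ\mathfrak{D}$, nor does it reduce an equivalence of $S$-algebras to one of underlying modules unless such a comparison already exists (conservativity of the forgetful functor only helps once you have a map). You acknowledge this at the end --- ``the main obstacle is promoting this module-level identification to the Lie algebroid level'' --- but the appeal to ``the universal property of the adjunction structures'' is an assertion, not an argument; this is precisely the place where the paper warns ``there is no a priori comparing transformation between the two paths.''

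The paper resolves this differently. Rather than trying to compare the two composites directly, it observes that a commuting square of \emph{right} adjoints
\[
\begin{tikzcd}
	{\alg_{\sbar}(\sfil_{\le-1}\aperf^\vee_B)} & {(\cwafpf)^{op}} \\
	{\alg_{\tbar^{\fil}}(\sfil_{\le-1}\aperf^\vee_B)} & {(\cwafp)^{op}}
	\arrow["{\mathfrak{D}_{\fil}}"', from=1-2, to=1-1]
	\arrow["\rmfor", from=2-1, to=1-1]
	\arrow["{\infcohnewfunctor(B/-)}"', from=2-2, to=1-2]
	\arrow["\psi", from=2-2, to=2-1]
\end{tikzcd}
\]
exists a priori, because both $\sbar$ and $\tbar^{\fil}$ are induced by the commuting adjunctions of Construction~\ref{c4.20}; this is where the natural transformation comes from. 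Then the degree-concentration observation $\cotrfil\circ\infcohnewfunctor(B/A)\simeq(\cotr(A))_1$ --- which you do identify --- is used to factorize $\psi$ through $\const\circ\mathfrak{D}$, yielding $C^*\circ\colim\simeq F_0\circ C^*_{\fil}$ on $\aplagdf$. Finally, the extension to all of $\lagdf$ is not a generic ``propagate by sifted colimits'' step but an invocation of the uniqueness statement in Lemma~\ref{lemma4.10}: a sifted-colimit-preserving extension from $\alg_{T'}(\aperf^\vee_{\mathscr{A},\eqslantless0})$ to $\alg_T(\QC^\vee_{\mathscr{A}})$ is determined up to contractible ambiguity, so once the two sides agree on dually almost perfect objects they agree everywhere. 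Your outline needs both of these ingredients --- the explicit source of the comparison map, and the precise extension lemma --- to become a proof.
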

	\begin{proof}
		Set $\aplagdf\subset\lagdf$ the full subcategory of filtered partition Lie algebroids with dually almost perfect underlying modules, and $\cwafpf\subset\dafilbr$ the full subcategory of such $A$ that $\cotrfil(A)$ is almost perfect. The top adjunction is constructed from
		\[\aplagdf\simeq\alg_{\sbar}(\sfil_{\le-1}\aperf^\vee_{B})\rightleftarrows(\cwafpf)^{op}\]in the same way as in Proposition \ref{prop4.7}, but everything is replaced by the filtered counterpart.
		
		Now it remains to check the commutativity. The difficulty is that there is no \textit{a priori} comparing transformation between the two paths. However, there is a commuting square of right adjoints
		\[\begin{tikzcd}
			{\alg_{\sbar}(\sfil_{\le-1}\aperf^\vee_B)} & {(\cwafpf)^{op}} \\
			{\alg_{{\tbar}^{\fil}}(\sfil_{\le-1}\aperf^\vee_B)} & {(\cwafp)^{op}}
			\arrow["{\fD_{\fil}}"', from=1-2, to=1-1]
			\arrow["\rmfor", from=2-1, to=1-1]
			\arrow["{\infcohnewfunctor(B/-)}"', from=2-2, to=1-2]
			\arrow["\psi", from=2-2, to=2-1]
		\end{tikzcd},\]since $\tbar^{\fil}$ and $\sbar$ are defined by some adjunctions in Construction \ref{c4.20}. Noting that $\cotrfil\circ\infcohnewfunctor(B/A)\simeq (\cotr(A))_1$ concentrates at degree $1$, the functor $\psi$ naturally factorizes into
		\[(\cwafp)^{op}\xrightarrow{\mathfrak{D}}\alg_{T}(\aperf^\vee_{B})\xhookrightarrow{\const}\alg_{T^{\fil}}(\sfil_{\le-1}\aperf^\vee_B).\]Then, there is dually $C^*\circ\colim|_{\aplagdf}\simeq F_0\circ C^*_{\fil}|_{\aplagdf}$. By Lemma \ref{lemma4.10}, the square of left adjoints in the statement commutes.
	\end{proof}

		\begin{df}\label{df4.24}
		The composite $\tc:\lagd\hookrightarrow\lagdf\xrightarrow{C^*_{\fil}}(\dafilbr)^{op}$ is called \textit{the (completely) Hodge-filtered Chevalley-Eilenberg functor of partition Lie algebroids}.
	\end{df}
	Then, we go to the main theorem of this section.
	\begin{theorem}\label{thm4.25}
		Let $R\to B$ be a map of coherent simplicial commutative rings such that $\dL_{B/R}$ is an almost perfect $B$-module. The Hodge-filtered Chevalley-Eilenberg functor $\tc$ induces a categorical equivalence 
		\[\tc:\aplagd\xrightarrow{\simeq} (\dalg^{\fol}_{B/R,\ap})^{op},\]
		sending \textnormal{dually almost perfect} partition Lie algebroids to \textnormal{almost perfect} foliation-like algebras (Notation \ref{n4.14} (2)). Moreover, there is $\gr(\tc(L))\simeq \lsym_B[\rmfor(L)^\vee]_1$ for each $L\in\aplagd$. 
	\end{theorem}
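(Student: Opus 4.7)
The proof proceeds in three steps: identify the graded pieces $\gr(\tc(L))\simeq \lsym_B[\rmfor(L)^\vee]_1$, deduce full faithfulness of $\tc$ on $\aplagd$, then establish essential surjectivity onto almost perfect foliation-like algebras. The overall strategy is the algebroid analogue of Theorem~\ref{thm3.24n}, executed through the adjunction $C^*_{\fil}\dashv\fD_{\fil}$ of Proposition~\ref{prop4.23}, with the splitting fibre sequence (\ref{e:two fibre sequeces}) decomposing the filtered monad $S$ into a ``$\bdlie$-part'' and a constant copy of $\dT_{B/R}$.

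For the graded computation, the plan is to promote the comparison between algebroid and Lie-algebra Chevalley--Eilenberg functors (Corollary~\ref{cor4.11}) to the filtered setting via Lemma~\ref{lemma4.10}, obtaining a commutative square that relates $\tc$ to the Hodge-filtered Chevalley--Eilenberg functor $\cet$ for $B$-partition Lie algebras studied in Corollary~\ref{cor3.31n}. The splitting (\ref{e:two fibre sequeces}) identifies the ``Lie'' portion of $S$ with the filtered $\bdlie$-monad, so Corollary~\ref{cor3.31n} applied to the fibre $\bdlie$-algebra $\rmfib(L)$ computes $\gr_{\ge1}(\tc(L))$ as $\lsym_B[\rmfor(L)^\vee]_1$ in weights $\ge1$, while the complementary $\dT_{B/R}$ summand contributes only to $\gr_0(\tc(L))\simeq B$. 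The case of a general $L\in\aplagd$ is then reached by writing $L$ as the geometric realization of free algebroids on objects of $(\aperf_B^\vee)_{/\dT_{B/R}[1]}$ and invoking that $\tc$ sends sifted colimits to sifted limits in $\dafilbr^{op}$ following the pattern of Proposition~\ref{prop3.21n}. In particular, $\tc(L)$ is complete with almost perfect cotangent complex $\dL_{\tc(L)}\simeq\rmfor(L)^\vee$, so the functor lands in $\dalg^{\fol}_{B/R,\ap}$.

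For full faithfulness, the adjunction $C^*_{\fil}\dashv\fD_{\fil}$ together with the formula $\rmfib\circ\fD_{\fil}(A)\simeq\bigl(\const\dT_{B/R}\to\cotrfil(A)^\vee\bigr)$ in Proposition~\ref{prop4.23} reduces the claim to showing that the unit $L\to\fD_{\fil}(\tc(L))$ is an equivalence in $\lagdf$. Using the graded identification of the previous step together with Lemma~\ref{lemma4.21}, one computes $\cotrfil(\tc(L))\simeq(\rmfor(L)^\vee)_1$, which yields the equivalence after passing to fibres. For essential surjectivity, given $A\in\dalg^{\fol}_{B/R,\ap}$, set $L':=\fD_{\fil}(A)$ in $\lagdf$. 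The foliation-like hypothesis on $A$ forces $\cotrfil(A)$ to be concentrated in filtration weight~$1$ with dually almost perfect underlying $B$-module, so by Proposition~\ref{prop4.23} the object $L'$ descends via Lemma~\ref{lemma4.2} to some $L\in\aplagd$ with $\const L\simeq L'$. The counit $\tc(L)\to A$ is then an equivalence because both sides are complete and it induces the natural equivalence $\lsym_B[\gr_1A]_1\xrightarrow{\simeq}\gr A$ on associated graded algebras, by the foliation-like hypothesis combined with the graded formula from step one.

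The main obstacle is the graded computation in the first step: disentangling the ``anchor'' $\dT_{B/R}$-direction from the ``Lie bracket'' direction inside the filtered monad $S$, and verifying that the former contributes only to the base point $\gr_0\simeq B$ while the latter controls the higher graded pieces. Once this splitting is carefully executed using (\ref{e:two fibre sequeces}), the remainder of the argument follows the template of Theorem~\ref{thm3.24n} and Corollary~\ref{cor3.31n} essentially verbatim.
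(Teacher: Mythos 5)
Your proposal has the right high-level shape (graded computation, then full faithfulness via $C^*_{\fil}\dashv\fD_{\fil}$, then essential surjectivity), matching the paper's strategy, but the crucial graded step has a genuine gap. You assert that Corollary \ref{cor3.31n} ``applied to the fibre $\bdlie$-algebra $\rmfib(L)$'' yields $\gr_{\ge1}(\tc(L))\simeq\lsym_B[\rmfor(L)^\vee]_1$, with the ``complementary $\dT_{B/R}$ summand contributing only to $\gr_0$.'' This is not correct: applying Corollary \ref{cor3.31n} to $\rmfib(L)$ would give $\lsym_B[\rmfib(L)^\vee]_1$, and these differ precisely by the anchor direction, since $\rmfor(L)\simeq\mathrm{cofib}(\dT_{B/R}\to\rmfib(L))$ is a genuine cofibre, not a direct sum unless $\rho$ is null-homotopic. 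Moreover, the anchor does \emph{not} vanish into $\gr_0$; it is exactly what changes the weight-$1$ generating module from $\rmfib(L)^\vee$ to $\rmfor(L)^\vee$. Your alternative resolution by free algebroids on $(\aperf^\vee_B)_{/\dT_{B/R}[1]}$ compounds the issue: the terms $\fr_{\lagd}(\cdots)$ have underlying modules involving the whole monad $T$, so the graded pieces of $\tc$ on them are not visibly free on anything close to $\rmfor(L)^\vee$.

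The paper resolves this via a different bar resolution: $H_\bullet:=\br_\bullet(\rho_0\circ\rmfib,\rho_0\circ\rmfib,L)$ attached to the \emph{monadic} adjunction $\rho_0\dashv\rmfib$ of Corollary \ref{cor4.11}. Each $H_n$ is pointed (trivial anchor), so $\tc(H_n)\simeq\mathrm{rest}\circ\cet(\rmfor(H_n))$ and Corollary \ref{cor3.31n} applies cleanly. The computation $\rmfor(H_n)\simeq\rmfib(L)\oplus(\dL_{B/R}^\vee)^{\oplus n}$ then identifies the weight-$1$ cosimplicial module with the \v{C}ech conerve of $\rmfor(L)^\vee\to\rmfib(L)^\vee$, and Corollary \ref{totalg} on totalizations of free graded algebras delivers $\gr(\tc(L))\simeq\lsym_B[\rmfor(L)^\vee]_1$. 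This \v{C}ech-conerve step is precisely where the anchor is absorbed into the weight-$1$ piece, and is the idea your proposal is missing. Once the graded formula is in hand, your remaining steps (unit/counit equivalences via completeness, and essential surjectivity via $L_A:=\colim\circ\fD_{\fil}(A)$) are correct and match the paper.
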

	\begin{proof}
		We begin with the fact that $C^*_{\fil}$ takes value in complete algebras.
		\begin{lemma}
			For each $L\in \lagdf$, $C^*_{\fil}(L)$ is complete as a filtered algebra.
		\end{lemma}
		\begin{proof}
			For $L\simeq \fr_{S}(V)$ with $V\in\sfil_{\le-1}\aperf^\vee_{B}$, there is $C^*_{\fil}(L)
			\simeq \sqzrfil(V^\vee)$, which is surely complete. The general case follows from the fact that $\lagdf$ is generated by such $L$ with sifted colimits, and $C^*_{\fil}$ takes colimits to limits.
		\end{proof}
		Now we calculate $\tc$ and check the full faithfulness in two steps.\\
		\noindent\textbf{Pointed case:} $L:=\rho_0(\fh)$\footnote{$\rho_0$ is introduced in Corollary \ref{cor4.11}} for some $\fh\in\alg_{\bdlie}(\aperf^\vee_B)$.
		
		Observe that $\tc(L)\simeq \mathrm{rest}\circ\cet(\fh)$ is foliation-like and $\gr\cet(\fh)\simeq \lsym_B[\fh^\vee]_1$ (Corollary \ref{cor3.31n}).\ Next we verify that $\eta_L:\const(L)\to \fD_{\fil}\circ \tc(L)$ is an equivalence.\ Consider the following commutative diagram:
		
	\[\begin{tikzcd}
		{\const(\fh)} & {\const\circ\rmfib(L)} & {\const\circ\rmfib(\rho_0(0))} \\
		{\kd\circ\cet(\fh)} & {\rmfib\circ\fD_{\fil}\circ\tc(L)} & {\rmfib\circ\fD^{\fil}(B)}
		\arrow[from=1-1, to=1-2]
		\arrow["\simeq"', from=1-1, to=2-1]
		\arrow[from=1-2, to=1-3]
		\arrow["{\rmfib(\eta_L)}", from=1-2, to=2-2]
		\arrow["{\rmfib(\eta_{\rho_0(0)})}", from=1-3, to=2-3]
		\arrow[from=2-1, to=2-2]
		\arrow[from=2-2, to=2-3]
	\end{tikzcd}.\]
		Since $\rho_0(0)\simeq \fr_{T}(0)$, we know that $\rmfib(\rho_0(0))\simeq \dL_{B/R}^\vee$ and the upper line is a fibre sequence. Besides, the right vertical arrow is an equivalence because of $\const(\fr_{T}(0))\simeq\fr_{T^{\fil}}(0)\simeq \fD^{\fil}(B)$. The left vertical equivalence is given by Corollary \ref{cor3.31n}. In the meanwhile, $\rmfib\circ\fD_{\fil}\circ\tc(L)$ is equivalent with $\kd(B\otimes_{\infcohR}B\otimes_B \cet(\fh))$ by Proposition \ref{prop4.23}, which implies that the lower line is also a fibre sequence. Hence $\rmfib(\eta_{L})$ (and $\eta_L$ itself) is an equivalence.
		
		\noindent\textbf{General case:} $L\in\aplagd$.
		
		Recall that Corollary \ref{cor4.11} provides a monadic adjunction between $\bdlie$ and $\lagd$. Here $L$ can be written as the geometric realization of 
		$H_\bullet:=\br_\bullet(\rho_0\circ\mathrm{fib},\rho_0\circ\mathrm{fib},L)$. Since $H_\bullet$ is termwise pointed, and $\rmfor(H_n)\simeq \rmfib(L)\oplus (\dL^\vee_{B/R})^{\oplus n}$, the graded pieces of $\tc(L)$ can be expressed as a totalization of freely generated graded algebras 
		$\gr(\tc(H_\bullet))\simeq \lsym_B \big[\rmfib(L)^\vee\oplus(\dL_{B/R})^{\bullet}\big]_1$ following the pointed case. Observing that the module of generators concentrates at weight $1$, $\gr(\tc(H_{\bullet}))$ is determined by its weight $1$ component. Thus this cosimplicial diagram could be obtained by applying $\lsym_B\circ[-]_1$ to the \v{C}ech conerve of $\rmfor(L)^\vee\to \rmfib(L)^\vee$. Therefore, we have the expected equivalence $\gr(\tc(L))\simeq \lsym_B[\rmfor(L)^\vee]_1$ using Corollary \ref{totalg}.
		
		Next, we show that $\eta:\const(L)\to \fD_{\fil}\circ \tc(L)$ is an equivalence. Here $\eta$ factorizes into
		\[\const(L)\simeq |\const(H_\bullet)|\xrightarrow{\alpha} |\fD_{\fil}\circ \tc(H_\bullet)|\xrightarrow{\beta} \fD_{\fil}\circ \tc(|H|_\bullet)\simeq \fD_{\fil}\circ \tc(L).\] The map $\alpha$ is an equivalence from the pointed case. The map $\beta$ is the $B$-linear dual of $\cotrfil(\tc(L))\to \tot \cotrfil(\tc(H_\bullet))$, whose associated graded map is an equivalence by Corollary \ref{totalg}. Since $B$-linear dual is invariant under completion, $\beta$ itself is also an equivalence.
		
		\noindent\textbf{Essential image of $\tc|_{\aplagd}$:}
		
		Take an arbitrary $A\in \dalg^{\fol}_{B/R,\ap}\subset\dafilbr$. It is clear that $\rmfib(\fD_{\fil}(A))\simeq \big(\text{cofib}(\dL_{R/k}[-1]\to \gr_1A)\big)^\vee$ is dually almost perfect, i.e.\ $L_A:=\colim\circ\fD_{\fil}(A)$ belongs to $\aplagd$. It is remained to show that $\delta_A: A\to \tc(L_A)$ is an equivalence. Since both ends are complete and quasi-free with almost perfect $\gr_1$ components, it suffices to consider $\fD_{\fil}(\delta_A)$. However, we already have the equivalence $\const(L_A)\simeq \fD_{\fil}\circ \tc(L_A)$ as above.

	\end{proof}
		\begin{remark}
		Let $X/R$ be a locally coherent qcqs derived scheme. This theorem can be improved into an equivalence between the \textit{sheaves of almost perfect foliation-like algebras} and dually almost perfect partition Lie algebroids on $X$. In the next section, we globalize this theorem with a more geometric method.
	\end{remark}

	We end this section with a discussion on the derived infinitesimal cohomology. Here, Notation \ref{n4.14} slightly differs from that in \cite{Antieau}, since Antieau works with ordinary modules rather than pro-coherent modules.\ But, if \textit{$R$ is eventually coconnective}, the two notions of Hodge-completed derived infinitesimal cohomology would coincide following Proposition \ref{prop2.38}, and better, all almost perfect foliation-like algebras live in $\dalg(\fil_{\ge0}\m_R)$. Thus, let us keep this assumption here.
	
	Now, we go to a proposition that justifies the name of $\tc$:
	\begin{prop}\label{prop4.20old}
		Consider a coherent $S\in \mathrm{SCR}_{R//B}$ such that $\dL_{B/S}$ is an almost perfect $B$-module.\
		The foliation-like algebra $\tc\circ\fD(S)\in\dalg^{\fol}_{B/R,\ap}$ admits a natural $S$-algebra structure, by which it can be identified with $(\infcohnewfunctor(B/S))^\wedge\in \dalg^{\fol}_{B/S,\ap}$, the Hodge-completed derived infinitesimal cohomology of $B$ relative to $S$.
		
	\end{prop}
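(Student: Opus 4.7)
The strategy is to build a natural comparison morphism from $\infcohnewfunctor(B/S)$ to $\tc(\fD(S))$, show it is an equivalence after Hodge completion by reducing to associated gradeds, and then transport the natural $S$-algebra structure.

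By the commutativity of right adjoints in Proposition \ref{prop4.23}, there is a natural equivalence $\const \circ \fD \simeq \fD_{\fil} \circ \infcohnewfunctor(B/-)$; applied to $S \in \dalg(\QC^\vee_R)_{/B}$, this yields $\const(\fD(S)) \simeq \fD_{\fil}(\infcohnewfunctor(B/S))$. The unit of the contravariant adjunction $C^*_{\fil} \dashv \fD_{\fil}$ then produces a canonical morphism
\[
\alpha \colon \infcohnewfunctor(B/S) \longrightarrow C^*_{\fil}\bigl(\fD_{\fil}(\infcohnewfunctor(B/S))\bigr) \simeq C^*_{\fil}(\const(\fD(S))) = \tc(\fD(S))
\]
in $\dafilbr$. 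Since $\tc(\fD(S))$ is complete (by the first lemma in the proof of Theorem \ref{thm4.25}), $\alpha$ factors uniquely as $\alpha^\wedge \colon (\infcohnewfunctor(B/S))^\wedge \to \tc(\fD(S))$.

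I would then check that $\alpha^\wedge$ is an equivalence. Both sides are complete filtered $R$-algebras, so this reduces to showing $\gr(\alpha)$ is an equivalence. By Lemma \ref{lemma4.16}, $\gr \infcohnewfunctor(B/S) \simeq \lsym_B[\dL_{B/S}[-1]]_1$ as a freely generated graded $B$-algebra. For the target, since $\rmfor(\fD(S)) \simeq \dT_{B/S}[1]$ (Proposition \ref{prop4.7}) and $\dL_{B/S}$ is almost perfect by hypothesis, we have $\rmfor(\fD(S))^\vee \simeq \dL_{B/S}[-1]$, so Theorem \ref{thm4.25} gives $\gr \tc(\fD(S)) \simeq \lsym_B[\dL_{B/S}[-1]]_1$. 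Both graded algebras are thus freely generated at weight $1$, so it suffices to verify $\gr_1(\alpha)$ is an equivalence. On each side, the weight-$1$ generator is the universal derivation $B \to \dL_{B/S}$ shifted appropriately; unwinding the construction of $\alpha$ (using that the unit of $\infcohnewfunctor \dashv (F_0 \to \gr_0)$ applied to $(S \to B)$ produces the tautological map $\dL_{B/S} \to \gr_1 \infcohnewfunctor(B/S)[1]$ used in the proof of Lemma \ref{lemma4.16}), one sees that $\gr_1(\alpha)$ is the identity up to canonical equivalence.

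Finally, for the $S$-algebra structure, I would invoke Remark \ref{rk4.15}: the functor $\infcohnewfunctor$ is intrinsic to the arrow $S \to B$ and independent of the base ring $R$, so $(\infcohnewfunctor(B/S))^\wedge$ already lies in $\dalg^{\fol}_{B/S, \ap}$. The equivalence $\alpha^\wedge$ transports this $S$-algebra structure to $\tc(\fD(S))$; equivalently, applying $F^0$ to $\alpha^\wedge$ recovers the canonical morphism $S \to C^*(\fD(S))$ supplied by the unit of $C^* \dashv \fD$. The principal technical obstacle is pinning down $\gr_1(\alpha)$, since this requires reconciling two \emph{a priori} independent descriptions of the same universal derivation; once this identification is in place, every other assertion follows formally from Theorem \ref{thm4.25} and the adjunction calculus of Proposition \ref{prop4.23}.
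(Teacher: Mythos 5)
Your approach is valid but genuinely different from the paper's, and it's worth comparing. You construct the comparison map $\alpha^\wedge\colon(\infcohnewfunctor(B/S))^\wedge\to\tc\circ\fD(S)$ via the adjunction calculus of Proposition~\ref{prop4.23} (which is equivalent to the paper's $\beta_S$, constructed there from the universal property of $\infcohnewfunctor(B/S)$ as the initial object of $\dafilbr$ receiving a map from $S$), and then propose to show it's an equivalence by passing to associated gradeds and checking the weight-$1$ generator. The paper instead closes the argument \emph{without any} computation of $\gr_1(\alpha)$: using Theorem~\ref{thm4.25} (which supplies $A\simeq\tc(L_A)$ with $L_A:=\colim\circ\fD(A)$ for all $A\in\dalg^{\fol}_{B/R,\ap}$) one gets a chain of natural equivalences
\[
\map_{\dalg_{R//B}}(S,F^0A)\simeq\map_{\lagd}(L_A,\fD(S))\simeq\map_{\dalg^{\fol}_{B/R,\ap}}(\tc\circ\fD(S),A),
\]
which says $\tc\circ\fD(S)$ and $(\infcohnewfunctor(B/S))^\wedge$ corepresent the same functor on $\dalg^{\fol}_{B/R,\ap}$; Yoneda then forces $\beta_S$ to be an equivalence. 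This neatly eliminates the issue you flag yourself as the main obstacle --- reconciling the two a priori independent identifications of $\gr_1$ with $\dL_{B/S}[-1]$ coming from Lemma~\ref{lemma4.16} on the source side and from $\rmfor(\fD(S))\simeq\dT_{B/S}[1]$ plus Theorem~\ref{thm4.25} on the target side. That identification is plausible but not formal, and your ``unwinding the construction of $\alpha$'' step is exactly where a careful write-up would have to do real work; the paper's mapping-space argument is the cleaner route because it invokes Theorem~\ref{thm4.25} globally rather than extracting its weight-$1$ consequence. Your treatment of the $S$-algebra structure (via base-independence of $\infcohnewfunctor$, Remark~\ref{rk4.15}) is compatible with, and essentially dual to, the paper's (which proceeds via $\alpha_S\colon S\to\tc\circ\fD(S)$ and Remark~\ref{rk4.19}); both are fine.
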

	\begin{proof}
		By the adjunction $(-)_0:\dalg(\QC^\vee_R)_{/B}\rightleftarrows\dalg(\fil_{\ge0}\QC^\vee_R)_{/B}:F^0$, the unit map $S\to C^*\circ\fD(S)$ gives rise to a natural map $\alpha_S:S\to \tc\circ \fD(S)$. According to Remark \ref{rk4.19}, the map $\alpha_S$ exhibits $\tc\circ \fD(S)$ as an object in $\mathcal{D}^{\fil}_{B/S}$. Since $\infcohnewfunctor(B/S)$ is initial in $\mathcal{D}^{\fil}_{B/S}$, there is a canonical comparison map $\beta_S:(\infcohnewfunctor(B/S))^\wedge\to \tc\circ\fD(S)$.
		
		Now, let's verify that $\beta_S$ is an equivalence. For each $A\in \dalg^{\fol}_{B/R,\ap}$, set $L_A:=\colim\circ \fD(A)\in\aplagd$. Theorem \ref{thm4.25} gives a natural equivalence $A\simeq\tc(L_A)$, which implies that $F^0(A)\simeq C^*(L_A)$. Furthermore, we have the functorial equivalences of mapping spaces for $A$
		\[\map_{\dalg_{R//B}}(S,F^0A)\simeq \map_{\lagd}(L_A,\fD(S))\simeq \map_{\dalg^{\fol}_{B/R,\ap}}(\tc\circ\fD(S),A).\]
		The morphisms $S\to F^0 (\infcohnewfunctor(B/S))^\wedge\xrightarrow{F^0(\beta_S)} C^*\circ\fD(S)$ correspond to 
		\[\tc\circ\fD(S)\xrightarrow{\gamma_S}(\infcohnewfunctor(B/S))^\wedge\xrightarrow{\beta_S} \tc\circ\fD(S),\] where $\beta_S\circ \gamma_S$ is an identity. At the same time, there is $\gamma_S\circ \beta_S\simeq id_{(\infcohnewfunctor(B/S))^\wedge}$ by the universal property of $\infcohnewfunctor(B/S)$.
	\end{proof}
	When $S$ is discrete and $B$ is a smooth $S$-algebra, $(\infcohnewfunctor(B/S))^\wedge$ is equivalent with Grothendieck's infinitesimal cohomology as proved in \cite[Theorem 3.2]{Toen2023} (using Theorem \ref{thmn5.11} as a dictionary). More generally, in the view of \cite[\S3]{Toen}, $\tc(L)$ induces a Hodge filtration on the (complete) foliated infinitesimal cohomology by $L$. It might also be worth mentioning that, when $R$ is over $\mathbb{Q}$, derived infinitesimal cohomology agrees with derived de Rham cohomology.
	\section{Towards infinitesimal derived foliations}\label{sec5}
	We have established an equivalence between partition Lie algebroids and foliation-like algebras under some finiteness conditions (Theorem \ref{thm4.25}).\ In \S\ref{sec5.1}, we spell out their geometrization, namely \textit{infinitesimal derived foliations}. Finally, \S\ref{sec5.2} is devoted to the proof of the main theorem (\ref{thmn5.13}).\ Familiarity with the geometry of filtrations is assumed, referring to \cite{Mou} for details.
	\subsection{Infinitesimal derived foliations}\label{sec5.1}
	To motivate the construction of infinitesimal derived foliations, we transform the foliation-like algebras into terms of homotopy-coherent cochain complexes.
	
	Recall that, for each $\mathcal{E}\in\calg(\pr^{\st})$, there is a symmetric monoidal equivalence $\fil^{\cpl}\mathcal{E}\simeq \dg_-\mathcal{E}$ (Theorem \ref{thm dg_-}), which identifies the completely filtered $\eoo$-algebras and homotopy-coherent $\eoo$-dg-algebras. The next proposition elaborates on a derived analogue.

	\begin{prop}\label{prop5.1n}
		Let $R$ be a (coherent) simplicial commutative ring, $\dD^\vee_-$ be the derived Hopf algebra from Remark \ref{rk2.8: k[t]} and $\mathcal{E}$ be $\m_R$ (or $\QC^\vee_R$). There is a commuting square of symmetric monoidal functors
		\[\begin{tikzcd}
			{\dalg(\fil^{\cpl}\mathcal{E})} & {\mathrm{LComod}_{\dD^\vee_-}(\dalg(\gr\mathcal{E}))} \\
			{\fil^{\cpl}\mathcal{E}} & {\dg_{-}\mathcal{E}}
			\arrow["{f,\simeq}", from=1-1, to=1-2]
			\arrow["\simeq", from=2-1, to=2-2]
			\arrow["{\mathrm{forget}}"', from=1-1, to=2-1]
			\arrow["{\mathrm{forget}}", from=1-2, to=2-2]
		\end{tikzcd}.\]

	\end{prop}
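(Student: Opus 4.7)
The plan is to lift the symmetric monoidal equivalence $\fil^{\cpl}\mathcal{E}\simeq \mathrm{LComod}_{\dD^\vee_-}(\gr\mathcal{E})$ from Remark \ref{rk2.8: k[t]} across the derived algebra functor, and then commute $\dalg$ past $\mathrm{LComod}_{\dD^\vee_-}$. The bottom equivalence $\fil^{\cpl}\mathcal{E}\simeq \dg_{-}\mathcal{E}$ is already Theorem \ref{thm dg_-}, so everything reduces to producing and analyzing the top horizontal arrow $f$.

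First, recall that by Remark \ref{rk2.8: k[t]} the equivalence $\Phi:\fil^{\cpl}\mathcal{E}\xrightarrow{\simeq}\mathrm{LComod}_{\dD^\vee_-}(\gr\mathcal{E})$ lives in $\calg(\pr^{\st})$, and it is compatible (via the forgetful functor $\mathrm{LComod}_{\dD^\vee_-}\to\gr\mathcal{E}$) with the symmetric monoidal associated graded $\gr:\fil^{\cpl}\mathcal{E}\to\gr\mathcal{E}$. Since $\dalg$ is built from the sifted-colimit-preserving monad $\lsym$ (Example \ref{exa2.52}), it extends to a functor $\calg(\pr^{\st})\to\pr^{\st}$ carrying symmetric monoidal equivalences to equivalences and compatible with the underlying symmetric monoidal forgetful functors. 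Applying $\dalg$ to $\Phi$ therefore gives an equivalence
\[\dalg(\Phi):\dalg(\fil^{\cpl}\mathcal{E})\xrightarrow{\simeq}\dalg(\mathrm{LComod}_{\dD^\vee_-}(\gr\mathcal{E})),\]
sitting over the equivalence $\dalg(\gr\Phi):\dalg(\fil^{\cpl}\mathcal{E})\to \dalg(\gr\mathcal{E})$ of underlying derived algebras.

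Second, I would establish the commutation
\[\dalg(\mathrm{LComod}_{\dD^\vee_-}(\gr\mathcal{E}))\simeq \mathrm{LComod}_{\dD^\vee_-}(\dalg(\gr\mathcal{E})),\]
which, composed with $\dalg(\Phi)$, yields $f$. The point is that $\dD^\vee_-$ is bicommutative, so $\mathrm{LComod}_{\dD^\vee_-}(-)$ upgrades to a symmetric monoidal endofunctor of $\calg(\pr^{\st})$ (with symmetric monoidal forgetful functor). Concretely, both sides classify a derived algebra $A$ in $\gr\mathcal{E}$ equipped with a coaction $\rho:A\to \dD^\vee_-\otimes A$ that is a map of derived algebras, where the right hand side receives its derived algebra structure from $\dD^\vee_-$ being commutative. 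The cleanest way to formalise this is to exhibit $\mathrm{LComod}_{\dD^\vee_-}(\mathcal{D})$ as a coBar totalisation $\tot\big(\mathcal{D}\rightrightarrows\mathcal{D}\otimes\dD^\vee_-\mathop{\rightrightarrows}^{\rightarrow}\cdots\big)$ in $\calg(\pr^{\st})$, note that $\dalg:\calg(\pr^{\st})\to \pr^{\st}$ preserves such (sifted-opposite) limits because it is the composite of a right adjoint to the free algebra functor with a limit-preserving forgetful functor, and compare.

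Third, I would check the commutativity of the square. The right vertical forgetful functor factors as $\mathrm{LComod}_{\dD^\vee_-}(\dalg(\gr\mathcal{E}))\to \mathrm{LComod}_{\dD^\vee_-}(\gr\mathcal{E})\simeq \fil^{\cpl}\mathcal{E}\simeq\dg_{-}\mathcal{E}$, where the first equivalence is Remark \ref{rk2.8: k[t]} and the second is Theorem \ref{thm dg_-}. Naturality of the forgetful in symmetric monoidal categories makes this agree with first forgetting the derived algebra structure on the left and then transporting by the bottom equivalence.

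The main obstacle I expect is Step 2, i.e.\ setting up the commutation $\dalg(\mathrm{LComod}_H(-))\simeq \mathrm{LComod}_H(\dalg(-))$ for the bicommutative bialgebra $H=\dD^\vee_-$ in sufficient generality and with enough coherence to be functorial. Once one has a clean statement that $\dalg:\calg(\pr^{\st})\to \pr^{\st}$ preserves the relevant totalisation presenting $\mathrm{LComod}_H$, the rest is formal; the bookkeeping, rather than any real geometric content, is the bottleneck.
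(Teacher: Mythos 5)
Your overall target is correct and the identification of the key difficulty is accurate, but the proposed route has a genuine gap at exactly the step you flag as the bottleneck, and the gap is more serious than a matter of bookkeeping. The functor $\dalg(-)$ in this paper is \emph{not} given by algebras over a Lurie-style $\infty$-operad, nor is it a functor defined on all of $\calg(\pr^{\st})$; it is built by right--left extension of $\lsym$ and only makes sense on the specific (filtered/graded, ordinary/pro-coherent) module categories over $R$ appearing in Constructions~\ref{c2.43n}--\ref{c2.49n}. So ``applying $\dalg$ to $\Phi$'' and, a fortiori, forming $\dalg(\mathrm{LComod}_{\dD^\vee_-}(\gr\mathcal{E}))$ are not a priori meaningful in the paper's framework: you would first have to equip the abstract symmetric monoidal category $\mathrm{LComod}_{\dD^\vee_-}(\gr\mathcal{E})$ with the structure that lets one do derived algebra, and at that point you would essentially be redoing the proposition. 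Relatedly, your one-line argument that ``$\dalg:\calg(\pr^{\st})\to \pr^{\st}$ preserves such (sifted-opposite) limits because it is the composite of a right adjoint to the free algebra functor with a limit-preserving forgetful functor'' confuses the internal free--forget adjunction over a fixed $\mathcal{C}$ with the external functoriality $\mathcal{C}\mapsto\dalg(\mathcal{C})$; the former does not give preservation of limits in $\calg(\pr^{\st})$.

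The paper's proof sidesteps all of this. Instead of proving an abstract commutation $\dalg(\mathrm{LComod}_H(-))\simeq\mathrm{LComod}_H(\dalg(-))$, it constructs the functor $f$ directly, using the identification $\fil\mathcal{E}\simeq\mathrm{LMod}_{1_\mathcal{E}[t]}(\gr\mathcal{E})$ from Remark~\ref{rk2.8: k[t]}: a filtered derived algebra is a $1_\mathcal{E}[t]$-module in graded derived algebras (this is fine because $1_\mathcal{E}[t]=\fr_{\com}([R]_{-1})$ is itself a derived algebra in $\gr\mathcal{E}$), and then applying the bar construction $1\otimes_{1_\mathcal{E}[t]}(-)$ lands in $\dD^\vee_-$-comodules in $\dalg(\gr\mathcal{E})$. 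It then observes that $f$, post-composed with the conservative colimit-preserving forgetful functor to $\dalg(\gr\mathcal{E})$, equals $\gr$, so $f$ is itself colimit-preserving and a left adjoint. This produces a morphism of comonads $\alpha:\gr\circ\triv\to\dD^\vee_-\otimes-$ on $\dalg(\gr\mathcal{E})$, which is an equivalence because it can be checked after forgetting to modules, where it is Raksit's Theorem 3.2.14 (exactly the equivalence $\Phi$ you want to input). Finally Barr--Beck--Lurie upgrades this to the equivalence of categories. In short: the paper trades the abstract commutation lemma (which requires defining $\dalg$ in a generality the framework does not obviously support) for a concrete comonadicity argument that reduces the check to underlying modules, where it is already known. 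If you want to pursue your route, the missing piece is a precise, functorial definition of $\dalg$ on $\mathrm{LComod}_{\dD^\vee_-}(\gr\mathcal{E})$ together with a real argument that the cosimplicial presentation is preserved; the comonadicity route avoids needing either.
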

	 \begin{proof}
	 	Recall that $\fil\mathcal{E}\simeq \mathrm{LMod}_{1_\mathcal{E}[t]}(\gr\mathcal{E})$ (Remark \ref{rk2.8: k[t]}), where $1_\mathcal{E}[t]\simeq \fr_{\com}([R]_{-1})$. We construct $f$ as the following horizontal composite
	 	\[\begin{tikzcd}
	 		{f:\dalg(\fil^{\cpl}\mathcal{E})} & {\mathrm{LMod}_{1_{\mathcal{E}}[t]}(\dalg(\gr\mathcal{E}))} & {\mathrm{LComod}_{\dD^\vee_-}(\dalg(\gr\mathcal{E}))} \\
	 		&& {\dalg(\gr\mathcal{E})}
	 		\arrow["{\mathrm{forget}}", from=1-1, to=1-2]
	 		\arrow["{1\otimes_{1_{\mathcal{E}}[t]}-}", from=1-2, to=1-3]
	 		\arrow[from=1-3, to=2-3]
	 		\arrow["\gr"', from=1-1, to=2-3]
	 	\end{tikzcd},\]where one can see that $f$ preserves small colimits, since $\gr$ and the forgetful functor of comodules are conservative left adjoints. Since every \infcat\ here is presentable, $f$ is a left adjoint and the above diagram gives rise to a morphism of comonads on $\dalg(\gr\mathcal{E})$, $\alpha:\gr\circ \triv\to \dD^\vee_-\otimes-$. Forgetting the derived algebra structure commutes with both $\gr$ and $\triv$ by Theorem \ref{thm3.2n}. Besides, the forgetful functor, as a symmetric monoidal functor, also commutes with $\dD^\vee_-\otimes-$ . Therefore, by considering the underlying modules, $\alpha$ is an equivalence \cite[Theorem 3.2.14]{Raksit}. Finally, using Barr-Beck-Lurie theorem, $\gr:\dalg(\fil^{\cpl}\mathcal{E})\to \dalg(\gr\mathcal{E})$ is comonadic, which implies that $f$ is an equivalence.
	 \end{proof}
	 \begin{remark}\label{rk5.4}
	 	(1) The \infcat\ $\dg_-\m_R$ has a twin $\dg_+\m_R$\cite[Definition 5.1.4]{Raksit}.\ Although these twin \infcats\ are equivalent in $\pr^{\st}$ by the shearing functor $[+2*]$, their derived algebra objects differ subtly away from characteristic $0$.\ For an in-depth study of $\dg_+\m_R$ and its algebras, see \cite{MRT} (in the term of graded mixed complexes) and \cite{Raksit}.  
	 	
	 	(2) In characteristic $0$, the derived algebras are equivalent to $\eoo$-algebras (both modeled as cdgas). Let $R$ be a connective cdga over $\mathbb{Q}$. The shearing functor $[+2*]$ induces an auto-equivalence of $\dalg_R(\gr\m_R)$, where $\dD^\vee_-\simeq [R]_0\oplus [R[1]]_{-1}$ is sent to a graded Hopf cdga $\dD^\vee_+\simeq [R]_0\oplus [R[-1]]_{-1}$. Then, there is an equivalence
	 	\begin{equation}\label{shearing}
	 		\dalg(\fil^{\cpl}\m_R)\simeq\mathrm{LComod}_{\dD^\vee_-}(\dalg(\gr\m_R))\xrightarrow{[+2*],\simeq}\mathrm{LComod}_{\dD^\vee_+}(\dalg(\gr\m_R)),
	 	\end{equation}
	 	where the right-hand side is denoted as $\epsilon-\mathrm{cdga}^{\sgr}_R$ and modeled by graded mixed cdgas in \cite{TV}.
	 	
	 	Let $B$ be a connective cdga finitely presented over $\mathbb{C}$. Following Proposition \ref{prop2.38} and \ref{prop5.1n}, (\ref{shearing}) identifies $\dalg^{\fol}_{B/\mathbb{C}}$ with the \infcat\ of graded mixed algebras $(\mathcal{A}^\bullet,d)$ over $\mathbb{C}$ with a chosen $\mathcal{A}^0\simeq B$ such that the natural map $\sym_B \mathcal{A}^1\xrightarrow{\simeq} \mathcal{A}^\bullet$ of graded cdgas is an equivalence. Therefore, the derived foliations over $B$ in the sense of \cite[Definition 1.2.1]{TV} form a full subcategory of $\dalg^{\fol}_{B/\mathbb{C}}$ via (\ref{shearing}), which is spanned by such $(\mathcal{A}^\bullet,d)$ that $\mathcal{A}^1[-1]$ is a connective perfect $B$-module.

	 \end{remark}

	\begin{remark}
	 	In the $R$-linear context, $\dD^\vee$ is free as a derived algebra. Indeed, the map of modules $[R[1]]_{-1}\to [R]_0\oplus[R[1]]_{-1}\simeq \dD^\vee$ induces a morphism of derived algebras $\fr_{\com}([R[1]]_{-1})\to \dD^\vee_-$, which turns out to be an equivalence. Further, $\spec(\dD^\vee)$ is precisely the loop group $G_0:=0\times_{\mathbb{G}_a}0$ of additive group with the natural $\mathbb{G}_m$-action.
	 \end{remark}
	 
	 Now we geometrize the comultiplication of $\dD^\vee_-$. Applying \cite[\S4]{Mou}, we have a symmetric monoidal equivalence \[\dg_-\m_R\simeq \mathrm{LComod}_{\mathcal{O}(G_0)}(\QC(B\mathbb{G}_m)).\]Set $\cH:=\mathbb{G}_m\ltimes G_0$ the semidirect product derived group scheme. Its classifying stack $B\cH$ can be regarded as a $B\mathbb{G}_m$-pointed derived stack.
	 \begin{lemma}\label{lemma5.1}
	 	The $B\mathbb{G}_m$-pointed stack $B\cH$ serves as the relative classifying stack of $[G_0/\mathbb{G}_m]\to B\mathbb{G}_m$, where $[G_0/\mathbb{G}_m]$ is regarded as an abelian graded group stack.
	 \end{lemma}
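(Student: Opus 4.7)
The plan is to identify both stacks with $[BG_0/\mathbb{G}_m]$, the quotient of the classifying stack of $G_0$ by the induced $\mathbb{G}_m$-action. First, I would observe that since $G_0 = 0 \times_{\mathbb{G}_a} 0$ is an abelian derived group scheme and the scaling $\mathbb{G}_m$-action on $\mathbb{G}_a$ restricts to an action on $G_0$ by group automorphisms (respecting the loop group structure), the quotient $[G_0/\mathbb{G}_m] \to B\mathbb{G}_m$ is a commutative group object in derived stacks over $B\mathbb{G}_m$, thus an abelian graded group stack.

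Next, I would compute the relative classifying stack $B_{B\mathbb{G}_m}([G_0/\mathbb{G}_m])$ as the geometric realization (in $\dst$) of the internal bar simplicial object $\mathrm{Bar}_\bullet^{B\mathbb{G}_m}([G_0/\mathbb{G}_m])$, whose $n$-th level is the $n$-fold fiber product of $[G_0/\mathbb{G}_m]$ over $B\mathbb{G}_m$, namely $[G_0^n/\mathbb{G}_m]$ with $\mathbb{G}_m$ acting diagonally by scaling. Since the formation of quotient stacks $[-/\mathbb{G}_m]$ commutes with geometric realizations (being a left adjoint), I can interchange the two colimits to obtain
\[
B_{B\mathbb{G}_m}\bigl([G_0/\mathbb{G}_m]\bigr) \;\simeq\; \bigl[\,|G_0^\bullet|/\mathbb{G}_m\,\bigr] \;\simeq\; [BG_0/\mathbb{G}_m],
\]
where the outer bar construction computes $BG_0$.

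On the other hand, I would apply the standard equivalence for semidirect products of derived group schemes: for $H = G \ltimes N$ with $G$ acting on $N$ by group automorphisms, the canonical projection $H \to G$ induces an equivalence $BH \simeq [BN/G]$, where $G$ acts on $BN$ functorially via its action on $N$. Specializing to $H = \mathcal{H} = \mathbb{G}_m \ltimes G_0$ yields $B\mathcal{H} \simeq [BG_0/\mathbb{G}_m]$, and the natural map $B\mathcal{H} \to B\mathbb{G}_m$ is identified with the projection to $B\mathbb{G}_m$. Combining with the previous paragraph gives the desired equivalence of $B\mathbb{G}_m$-pointed stacks.

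The main obstacle is justifying the two formal manipulations carefully: the commutation of $[-/\mathbb{G}_m]$ with geometric realizations, and in particular the semidirect-product equivalence $B(G \ltimes N) \simeq [BN/G]$ in derived algebraic geometry, which is most easily proven by comparing bar constructions levelwise (the map of simplicial objects $\mathcal{H}^\bullet \to [G_0^\bullet/\mathbb{G}_m]$ is an equivalence after geometric realization because, levelwise, the face maps of $\mathcal{H}^\bullet$ encode exactly the twisted diagonal action). All other steps reduce to naturality and the universal property of relative classifying stacks.
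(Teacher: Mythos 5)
Your proposal is correct and follows essentially the same route as the paper: both reduce to the levelwise comparison of the bar construction of $\cH$ with the (bi)simplicial resolution of $[G_0/\mathbb{G}_m] \to B\mathbb{G}_m$ via the untwisting isomorphism $\cH^{\times n} \cong G_0^{\times n} \times \mathbb{G}_m^{\times n}$, which is precisely the explicit formula the paper writes down. Your intermediate detour through $[BG_0/\mathbb{G}_m]$ (commuting $[-/\mathbb{G}_m]$ with geometric realizations, then invoking $B(\mathbb{G}_m\ltimes G_0)\simeq[BG_0/\mathbb{G}_m]$) is a mild repackaging of the same core computation, since, as you note, the semidirect-product formula itself is proved by the identical bar-level comparison.
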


	 \begin{proof}
	 	The classifying stack of $[G_0/\mathbb{G}_m]\to B\mathbb{G}_m$ is calculated by the geometric realization of the diagonal $\delta^\bullet$ of the bisimplicial stack $G_0^{\times \bullet}\times \mathbb{G}_m^{\times \star}$. The transition $\cH^{\times n}\to G_0^{\times n}\times \mathbb{G}^{\times n}_m$ sending $(x_1,g_1,\ldots,x_n,g_n)$ to $(x_1,g_1^{-1}x_2,\ldots,(g_1\ldots g_{n-1})^{-1}x_n,g_1,\ldots,g_n)$ identifies $\br_\bullet(pt,\cH,pt)$ with $\delta^\bullet$. Taking geometric realization finishes this proof.
	 \end{proof}
	 
	 \begin{prop}\label{prop5.2}\footnote{There is a similar result for $\dg_{+}\m_R$ in \cite[\S4.2]{MRT}, which is harder since $B\mathrm{Ker}$ (unlike $G_0$) is not affine.}
	 	There is an equivalence in $\calg(\pr^{st})$ identifying the quasi-coherent sheaves on $B\cH$ with the graded representations of $[G_0/\mathbb{G}_m]$, i.e.
	 	$\QC(B\cH)\simeq\mathrm{LComod}_{\mathcal{O}(G_0)}(\QC(B\mathbb{G}_m))$.
	 \end{prop}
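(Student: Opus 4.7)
The plan is to exhibit $B\cH$ as a $\mathbb{G}_m$-equivariant form of $BG_0$ and then reduce the statement to standard descent over an affine group scheme. The semidirect decomposition $\cH = \mathbb{G}_m \ltimes G_0$ gives a split short exact sequence $1 \to G_0 \to \cH \to \mathbb{G}_m \to 1$, hence a fibration $B\cH \to B\mathbb{G}_m$ with $BG_0$-fibers. The splitting identifies
\[
B\cH \simeq [BG_0/\mathbb{G}_m],
\]
where $\mathbb{G}_m$ acts on $BG_0$ through its natural (conjugation) action on $G_0$. I would establish this either as a direct reformulation of Lemma \ref{lemma5.1} (since the relative classifying stack of $[G_0/\mathbb{G}_m] \to B\mathbb{G}_m$ is computed by taking $\mathbb{G}_m$-quotients of a $G_0$-classifying space) or by comparing \v{C}ech nerves along the section $B\mathbb{G}_m \to B\cH$.

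First I would apply $\QC(-)$ to the quotient presentation: descent for the $\mathbb{G}_m$-action gives $\QC(B\cH) \simeq \QC(BG_0)^{\mathbb{G}_m}$, where the superscript denotes $\mathbb{G}_m$-equivariant objects, i.e.\ objects of $\QC(B\mathbb{G}_m)$. Next, since $G_0 = \spec(\dD^\vee_-)$ is affine and $\spec R \to BG_0$ is a faithfully flat cover, derived flat descent identifies $\QC(BG_0)$ symmetric monoidally with $\mathrm{LComod}_{\mathcal{O}(G_0)}(\m_R)$, where $\mathcal{O}(G_0) \simeq \dD^\vee_-$ carries its standard commutative Hopf structure. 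The $\mathbb{G}_m$-action upgrades $\mathcal{O}(G_0)$ to a graded Hopf algebra in $\QC(B\mathbb{G}_m)$, so ``taking $\mathbb{G}_m$-equivariants'' commutes with ``taking $\mathcal{O}(G_0)$-comodules'' and we obtain
\[
\QC(B\cH) \simeq \mathrm{LComod}_{\mathcal{O}(G_0)}\!\bigl(\QC(B\mathbb{G}_m)\bigr).
\]
Symmetric monoidality is preserved throughout, because each intermediate equivalence is induced by a symmetric monoidal functor.

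The main obstacle will be the commutation of $\mathbb{G}_m$-equivariants with $\mathcal{O}(G_0)$-comodules in the last step. This is a base change statement: the comonad $\mathcal{O}(G_0) \otimes_R -$ is $\m_R$-linear, and promoting $\mathcal{O}(G_0)$ to a graded Hopf algebra makes all of its structure maps morphisms in $\QC(B\mathbb{G}_m)$, so the comonad lifts to $\gr\,\m_R$ and produces the desired decomposition. If this step proves delicate, an alternative route avoiding it is to go directly through the bar construction from Lemma \ref{lemma5.1}: flat descent gives $\QC(B\cH) \simeq \tot \QC([G_0^{\times\bullet}/\mathbb{G}_m])$, each term equals $\mathrm{LMod}_{\mathcal{O}(G_0)^{\otimes \bullet}}(\QC(B\mathbb{G}_m))$ by affineness of $G_0 \to \spec R$, and the totalization of this cobar diagram equals $\mathrm{LComod}_{\mathcal{O}(G_0)}(\QC(B\mathbb{G}_m))$ by the standard comodule–cobar equivalence, in the spirit of Remark \ref{rk2.8: k[t]}.
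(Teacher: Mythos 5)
Your strategy is essentially the one the paper follows: descend along $p:B\mathbb{G}_m\to B\cH$, use the $\check{\text{C}}$ech/bar presentation from Lemma~\ref{lemma5.1}, and identify the resulting cosimplicial limit with $\mathcal{O}(G_0)$-comodules. The decomposition $B\cH\simeq [BG_0/\mathbb{G}_m]$ is a harmless repackaging of Lemma~\ref{lemma5.1}.

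The gap is in the phrase ``the totalization of this cobar diagram equals $\mathrm{LComod}_{\mathcal{O}(G_0)}(\QC(B\mathbb{G}_m))$ by the standard comodule--cobar equivalence.'' This is not a standard fact, and for a general affine group it is \emph{false} without a completion; Remark~\ref{rk2.8: k[t]} itself is phrased for $\fil^{\cpl}\mathcal{C}$ precisely because $\mathrm{LComod}_{\dD^\vee_-}$ picks out \emph{complete} objects. What makes the step work here is that $\mathcal{O}(G_0)\simeq R\oplus R[1]$ is a perfect $R$-module. The paper uses this at two points you elide: (i) it guarantees that the diagram of pushforwards $p^\bullet_*$ is right adjointable and consists of small-colimit-preserving functors, which is what lets one pass to the totalization and obtain a \emph{comonadic} adjunction $p^*\dashv p_*$ by Barr--Beck--Lurie; and (ii) it allows the comparison map of comonads $\alpha: p^*p_*\to\mathcal{O}(G_0)\otimes^{\sgr}(-)$ to be checked by smooth base change through the cartesian square furnished by Lemma~\ref{lemma5.1}. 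The same caveat applies to your intermediate claim $\QC(BG_0)\simeq\mathrm{LComod}_{\mathcal{O}(G_0)}(\m_R)$ in route~1: it is true for this particular $G_0$, but only because $\mathcal{O}(G_0)$ is perfect. To close the argument you should make both uses of perfectness explicit rather than appeal to a general descent principle.
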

	 \begin{proof}
	 	The morphism $p:B\mathbb{G}_m\to B\cH$ is the geometric realization of $p^\bullet:G_0\times \cH^{\times \bullet}\to \cH^{\times \bullet}$, where the pullback of quasi-coherent modules induces a diagram of left adjoints $p^{\bullet,*}:\QC(\cH^{\times \bullet})\to \QC(G_0\times\cH^{\times \bullet})$. Due to $\mathcal{O}(G_0)\simeq R\oplus R[1]$, the diagram $p^{\bullet,*}$ is also right adjointable\cite[Definition 7.3.1.1]{HTT} and the right adjoint diagram $p^{\bullet}_*$ consisits of small-colimit-preserving functors. Taking totalization, there is a comonadic adjunction of presentable \infcats
	 	\[p^*:\QC(B\cH)\rightleftarrows\QC(B\mathbb{G}_m):p_*\]\cite[Proposition 3.3.3.1, 5.5.3.13]{HTT}. Note that $p^*$ factorizes into the following left adjoints
	 	\[\QC(B\cH)\xrightarrow{\simeq}\tot\QC([G_0^{\times\bullet}/\mathbb{G}_m])\to \mathrm{LComod}_{\mathcal{O}(G_0)}(\QC(B\mathbb{G}_m))\to \QC(B\mathbb{G}_m),\]where the left equivalence is given by Lemma \ref{lemma5.1} and pullbacks, the middle arrow is taking the underlying cosimplicial diagrams, and the right arrow is the forgetful functor of $\mathcal{O}(G_0)$-comodules. Thus we have a natural morphism $\alpha:p^*p_*\to \mathcal{O}(G_0)\otimes^{\sgr}(-)$ of comonads. The cartesian diagram
	 	\[\begin{tikzcd}
	 		{G_0} & {[G_0/\mathbb{G}_m]} & {B\mathbb{G}_m} \\
	 		pt & {B\mathbb{G}_m} & B\cH
	 		\arrow["{p^0}"', from=1-1, to=2-1]
	 		\arrow["p", from=1-3, to=2-3]
	 		\arrow["q", from=1-2, to=2-2]
	 		\arrow["p", from=2-2, to=2-3]
	 		\arrow["q", from=1-2, to=1-3]
	 		\arrow["\pi", from=2-1, to=2-2]
	 		\arrow["{\pi'}", from=1-1, to=1-2]
	 	\end{tikzcd}\]induced by Lemma \ref{lemma5.1} helps to rephrase the transformation $\alpha$ as $p^*p_*\to q_*q^*$. Remind us that $p_*$ is the totalization of the diagram $p^{\bullet}_*$, thus $\pi^*p^*p_*\simeq p^{0}_*\pi'^{*}q^*$. However, $\pi$ and $\pi'$ are smooth covering, which means that $\alpha$ is an equivalence using \cite[A.1.5, A.1.10]{HLP}.
	 \end{proof}
	 
	 We are then tempted to geometrize foliation-like algebras as derived $\cH$-equivariant stacks. 
	 \begin{df}
	 	The assignment $S\mapsto \map_{\dalg(\m_R)}(S,-)$ induces a functor $\dalg(\m_R)^{op}\to\dst\subset \mathrm{Fun}(\mathrm{SCR}_R,\mathcal{S})$ taking values in the \infcat\ of derived $R$-stacks \cite[Lemma 2.2.2.13]{HAG2}. We call it the \textit{functor of non-connective spectra} $\spnc$.
	 \end{df}

	 For $B\in \mathrm{SCR}_R$, there is a functor of \textit{linear stacks} $\mathbb{V}:\m^{op}_B\to \mathbf{dSt}_{B}$ defined by
	 \[E\mapsto \spnc(\lsym_B E)(-)\simeq \map_{B}(E,-).\]The $B$-derived stack $\mathbb{V}(E)$ can be equipped with a natural graded structure by posing $E$ at weight $1$, which enhances $\mathbb{V}$ into
	 \[\mathbb{V}:\m^{op}_B\to\mathbb{G}_m-\mathbf{dSt}_B.\]Moreover, the restriction $\mathbb{V}|_{\m^{op}_{B,+}}$ of the enhanced $\mathbb{V}$ is fully faithful\cite[Theorem 2.5]{Monier}, where $\m_{B,+}$ is the full subcategory of eventually connective $B$-modules.
	 
	 On a derived scheme $X$, there is a global version of $\mathbb{V}$ sending a quasi-coherent sheaf $E$ to the relative non-connective affine stack corepresented by the sheaf of algebras $\lsym_{\mathcal{O}_X}E$. Moreover, since essential connectivity is preserved by Zariski descent, there is a fully faithful embedding
	 \[\QC_{+}(X)\hookrightarrow \mathbb{G}_m-\mathbf{dSt}_X,\]sending \textit{essentially connective} coherent sheaves to their ``stacky vector bundles''. The essential image is denoted as $\mathbf{dSt}^{lin}_X$, which means \textit{linear stacks} over $X$.

	 Now we consider the extra differential. Recall that the forgetful functor of equivariant stacks along $\mathbb{G}_m\to \cH$ admits a right adjoint $\mathcal{L}^{\sgr}_{\pi}(-/R)$ such that the underlying derived stack of $\mathcal{L}^{\sgr}_{\pi}(X/R)$ is \[\map_{\mathbb{G}_m-\dst}(\cH,X)\simeq \map_{\mathbb{G}_m-\dst}(G_0\times\mathbb{G}_m,X)\simeq \map_{\dst}(G_0,X).\]

	 \begin{df}[Definition 2.1 \cite{Toen2023}]\label{df5.5}
	 	On some derived scheme $X$ over $R$, an \textit{infinitesimal derived foliation (relative to $R$)} is an $\cH$-equivariant stack $\mathscr{F}$ together with a $\mathbb{G}_m$-equivariant map $p:\mathscr{F}\to X$, where $\mathbb{G}_m$ acts trivially on $X$, and $p$ is equivalent to $\mathbb{V}(E)\to X$ the projection of some linear stack over $X$. More categorically, the \textit{\infcat\ of infinitesimal derived foliation on $X$ (relative to $R$)} is given by a pullback of \infcats
	 	\[\begin{tikzcd}
	 		{\fol^{\pi}(X/R)} & {\mathbf{dSt}^{lin}_X} \\
	 		{(\cH-\dst)_{/\mathcal{L}^{\sgr}_{\pi}(X/R)}} & {(\mathbb{G}_m-\dst)_{/X}}
	 		\arrow[from=1-1, to=1-2]
	 		\arrow[hook, from=1-1, to=2-1]
	 		\arrow[hook, from=1-2, to=2-2]
	 		\arrow["{\text{forget}}"', from=2-1, to=2-2]
	 	\end{tikzcd}.\]
	 	Here $E[1]$ is called \textit{cotangent complex of the infinitesimal derived foliation} $\mathscr{F}$ and denoted as $\dL_{\mathscr{F}}$.
	 \end{df}
	 
	 \begin{remark}[Restriction on $\dL_{\mathscr{F}}$]
	 	We do not require perfectness like in \cite[\S2]{Toen2023}, because almost perfectness is preferable for PD Koszul duality. Other than this, we want to include $\mathcal{L}^{\sgr}_{\pi}(X/R)$ so that $\fol^{\pi}(X/R)$ always has a final object (see Corollary \ref{corn5.10}). However, the eventually connectiveness is \textbf{neccessary} for defining cotangent complexes. For a counterexample, set $B:=\lsym_R(R[2])$ and $E$ as the $B$-module
	 	\[\colim\big( B\xrightarrow{\times t}\Sigma^{-2}B\xrightarrow{\times t}\Sigma^{-4}B\xrightarrow{\times t}\ldots\big).\] 
	 	Note that each $B$-derived algebra $(A,a)$ consists of a $R$-derived algebra $A$ and a point of $\pi_2(A)$. The $B$-linear stack $\mathbb{V}(E)$ is defined by the assignment $\mathbb{V}(E)\big((A,a)\big)\simeq \map_{\m_R}\big(E,(A,a)\big)$. This mapping space is the limit of an inverse system
	 	\[\Omega^{\infty+2}A\xleftarrow{\times a}\Omega^{\infty}A\xleftarrow{\times a}\Omega^{\infty-2}A\xleftarrow{\times a}\ldots,\]which is obviously contractible. However, the $B$-module $E$ itself is non-trivial.
	 \end{remark}
	 \begin{df}\label{df5.7n}
	 	Let $X$ be a derived scheme over $R$.\ An infinitesimal derived foliation $\mathscr{F}\in \fol^\pi(X/R)$ is said to be \textit{almost perfect} if its cotangent complex $\dL_{\mathscr{F}}$ is an almost perfect quasi-coherent sheaf \cite[Definition 2.8.4.4]{SAG}.\ The full subcategory of almost perfect infinitesimal derived foliations is written as $\fol^{\pi}_{\ap}(X/k)$.

	 \end{df}
	\subsection{Realization of foliations}\label{sec5.2}
	So far, we have translated foliation-like algebras into the language of homotopy-coherent chain complexes (Proposition \ref{prop5.1n}). Additionally, we have observed that \textit{infinitesimal derived foliations} (Definition \ref{df5.5}) could potentially encode the foliation-like algebras in a geometric way.
	
	In this subsection, we will confirm this expectation, at least when the cotangent complex is almost perfect. It then leads to the main theorem, Theorem \ref{thmn5.13}, that compares partition Lie algebroids and infinitesimal derived foliations on \textit{locally coherent qcqs} derived schemes. We write $\dalg(\m_R)$ as $\dalg_R$ and $\mathrm{LComod}_{\dD^\vee_-}(\dalg(\gr\m_R))$ as $\dg_{-}\dalg_R$ for short.
	
	We begin with constructing a functor that realizes foliation-like algebras into infinitesimal derived foliations. Note that the functor $\spnc:\dalg_R^{op}\to \dst$ is a right adjoint.\ In particular, it is symmetric monoidal with respect to the cartesian monoidal structures. In the light of \cite[\S3]{Mou}, there is a symmetric monoidal equivalence $\dalg(\gr\m_R)\simeq \mathrm{LComod}_{R[t,t^{-1}]}(\dalg_R)$ and then a symmetric monoidal functor $\dalg(\gr\m_R)\to (\mathbb{G}_m-\dst)^{op}$ induced by $\spnc$.
	
	Similarly, noting that $\dD^\vee$ is a cocommutative graded Hopf derived algebra and using Proposition \ref{prop5.1n}, \ref{prop5.2}, $\spnc$ induces a symmetric monoidal functor
	\[\dg_-\dalg_R\to  \big(\Omega_0\mathbb{G}_a-(\mathbb{G}_m-\dst)\big)^{op}\simeq (\cH-\dst)^{op},\]where the latter equivalence comes from $\cH=\mathbb{G}_m\ltimes \Omega_0\mathbb{G}_a$.
	
	As to $\mathcal{O}:\dst\to (\dalg_R)^{op}$, the left adjoint of $\spnc$, it is oplax symmetric monoidal and sends $X$ to a limit $\mathcal{O}(X):=\lim\limits_{\text{Spec}(A)\to X}A$ in $\dalg_R$, where $A$ ranges among connective algebras. The oplax symmetric monoidal structure refines $\mathcal{O}$ into an $\cH$-equivariant version:
	 	\begin{prop}\label{propn5.8}
	 	From the above discussion, there is a commuting diagram of adjunctions
	 	
	 	\[\begin{tikzcd}[column sep=1.5cm]
	 		{\dg_-\dalg_R} & {\dalg(\gr\m_R)} & {\dalg_R} \\
	 		{(\cH-\dst)^{op}} & {(\mathbb{G}_m-\dst)^{op}} & {(\dst)^{op}}
	 		\arrow["{\mathrm{forget}}", shift left, from=1-1, to=1-2]
	 		\arrow["\spnc"', shift right, from=1-1, to=2-1]
	 		\arrow["{\dD^\vee_-\otimes^{\sgr}_{R}(-)}", shift left, from=1-2, to=1-1]
	 		\arrow["{\mathrm{forget}}", shift left, from=1-2, to=1-3]
	 		\arrow["\spnc"', shift right, from=1-2, to=2-2]
	 		\arrow["{R[t,t^{-1}]\otimes_R(-)}", shift left, from=1-3, to=1-2]
	 		\arrow["\spnc"', shift right, from=1-3, to=2-3]
	 		\arrow["{\mathcal{O}^{\epsilon-\sgr}}"', shift right, from=2-1, to=1-1]
	 		\arrow["{\mathrm{forget}}", shift left, from=2-1, to=2-2]
	 		\arrow["{\mathcal{O}^{\sgr}}"', shift right, from=2-2, to=1-2]
	 		\arrow["{[\cH\times -/\mathbb{G}_m]}", shift left, from=2-2, to=2-1]
	 		\arrow["{\mathrm{forget}}", shift left, from=2-2, to=2-3]
	 		\arrow["{\mathcal{O}}"', shift right, from=2-3, to=1-3]
	 		\arrow["{\mathbb{G}_m\times-}", shift left, from=2-3, to=2-2]
	 	\end{tikzcd},\]where one has a natural equivalence $\mathcal{O}^{\sgr}\circ\text{forget}\simeq \text{forget}\circ \mathcal{O}^{\epsilon-\sgr}$. Addtionally, given an affine $\mathbb{G}_m$-derived scheme $\text{Spec}(A)$, there is an equivalence $\mathcal{O}^{\sgr}(\text{Spec}(A))\simeq A$ of connective graded algebras.
	 \end{prop}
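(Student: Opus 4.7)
The whole argument hinges on the fact that $\spnc:\dalg_R^{op}\to\dst$ is right adjoint to global sections, and therefore preserves products. Since products in $\dst$ and $(\dalg_R)^{op}$ give the cartesian monoidal structures (the latter being the opposite of $\otimes_R$ on algebras), $\spnc$ upgrades canonically to a symmetric monoidal functor for the cartesian structures. Its oplax right adjoint $\mathcal{O}$, viewed as a lax symmetric monoidal functor $\dst\to(\dalg_R)^{op}$, then induces functors on every layer of internal algebraic structure: cocommutative comonoids in $(\dalg_R)^{op}$ (i.e.\ commutative Hopf derived algebras) land on commutative group stacks, and modules over such Hopf algebras land on equivariant stacks.

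I would carry out the proof one column at a time. \emph{Middle column:} write $\dalg(\gr\m_R)\simeq \mathrm{LComod}_{R[t,t^{-1}]}(\dalg_R)$ as in \cite{Mou}; $\spnc$ sends the cocommutative Hopf algebra $R[t,t^{-1}]$ to the commutative group stack $\mathbb{G}_m$, and coactions translate to actions, producing the symmetric monoidal equivalence target $(\mathbb{G}_m-\dst)^{op}$. The right adjoint to forgetful, which on the left is the cofree comodule $R[t,t^{-1}]\otimes_R(-)$, corresponds under $\spnc$ exactly to the cofree $\mathbb{G}_m$-action $\mathbb{G}_m\times(-)$, so the right-hand square commutes. \emph{Left column:} iterate this procedure inside $\mathbb{G}_m-\dst$. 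By Proposition \ref{prop5.1n}, $\dg_-\dalg_R\simeq \mathrm{LComod}_{\dD^\vee_-}(\dalg(\gr\m_R))$, where $\dD^\vee_-$ is a cocommutative graded Hopf derived algebra; applying $\spnc$ in the $\mathbb{G}_m$-equivariant world sends it to the graded group stack $[G_0/\mathbb{G}_m]$, and Lemma \ref{lemma5.1} together with Proposition \ref{prop5.2} identify $[G_0/\mathbb{G}_m]$-equivariant objects in $(\mathbb{G}_m-\dst)^{op}$ with $\cH$-equivariant stacks (in opposite form). The cofree comodule $\dD^\vee_-\otimes^{\sgr}_R(-)$ on the algebraic side then matches the cofree $\cH$-action $[\cH\times -/\mathbb{G}_m]$ on the geometric side, so the left square commutes as well.

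The identity $\mathcal{O}^{\sgr}\circ\mathrm{forget}\simeq \mathrm{forget}\circ\mathcal{O}^{\epsilon-\sgr}$ is now formal: passing to right adjoints in the (already established) commuting square of left adjoints yields the commuting square of the functors $\mathcal{O}^{\sgr}$, $\mathcal{O}^{\epsilon-\sgr}$ and the two forgets, which is exactly this claim. For the final assertion, if $\spec A$ is an affine $\mathbb{G}_m$-derived scheme then $A$ is a connective graded algebra, giving a natural map $A\to \mathcal{O}^{\sgr}(\spec A)$ by adjunction. Forgetting the grading, both sides become $\mathcal{O}(\spec A)\simeq A$ by the underlying commuting square and the well-known non-equivariant computation; since forgetting the grading is conservative, the map is an equivalence.

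The main subtle point will be justifying that the symmetric monoidality of $\spnc$ is strong enough to transport coalgebra/comodule structures (not merely algebras) so that the identification $\spnc(\dD^\vee_-)\simeq [G_0/\mathbb{G}_m]$ as graded Hopf stacks actually gives the purported equivalence of comodule categories on one side with $[G_0/\mathbb{G}_m]$-actions, i.e.\ $\cH$-equivariant stacks, on the other. This amounts to verifying that $\spnc$ preserves the relevant limit diagrams encoding the coaction axioms and that the comonadic reconstruction (Proposition \ref{prop5.2}) is compatible with the induced functor between derived algebra objects. The rest is organizing adjunctions.
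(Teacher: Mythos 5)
Your overall plan (pass $\spnc$ through comodule categories, identify $\spnc(R[t,t^{-1}])$ with $\mathbb{G}_m$, iterate inside $\mathbb{G}_m$-equivariant stacks, transport adjunctions) matches the paper's route for the commutativity of the left-adjoint square. But you misidentify where the work actually lies, and the two later steps contain genuine errors.

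\textbf{The Beck--Chevalley step is not formal.} You claim that $\mathcal{O}^{\sgr}\circ\mathrm{forget}\simeq \mathrm{forget}\circ\mathcal{O}^{\epsilon-\sgr}$ follows "by passing to right adjoints" in the commuting square of left adjoints. That is not what right-adjoint passage gives you. The square of left adjoints is $\spnc\circ\mathrm{forget}\simeq\mathrm{forget}\circ\spnc$; its total right adjoint square reads $\big(\dD^\vee_-\otimes^{\sgr}_R(-)\big)\circ\mathcal{O}^{\sgr}\simeq \mathcal{O}^{\epsilon-\sgr}\circ[\cH\times-/\mathbb{G}_m]$, i.e.\ an identity between the cofree functors and the $\mathcal{O}$'s. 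The identity you want is a genuine Beck--Chevalley condition between the vertical right adjoints and the horizontal left adjoints, and there is no formal reason it should hold. The paper's proof has to build a specific comparison transformation $\tau:\mathrm{forget}\circ\mathcal{O}^{\epsilon-\sgr}\to\mathcal{O}^{\sgr}\circ\mathrm{forget}$ (using the counit of $\rmfor\dashv[\cH\times-/\mathbb{G}_m]$), check it directly on standard objects $[\cH\times\spec(A)/\mathbb{G}_m]$ by unwinding the comultiplication of $\dD^\vee_-$, and then propagate to all $\cH$-stacks using that both sides send small colimits of stacks to limits of algebras (here one needs that the lower forget has a further left adjoint, namely $\mathcal{L}^{\sgr}_\pi(-/R)^{op}$, and that the upper forget is small-limit preserving). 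This is the actual content of the proposition, not an afterthought.

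\textbf{The reduction to the ungraded $\mathcal{O}$ does not work.} For the final assertion $\mathcal{O}^{\sgr}(\spec A)\simeq A$ you argue by forgetting the grading, identifying $\mathrm{forget}\circ\mathcal{O}^{\sgr}$ with $\mathcal{O}\circ\mathrm{forget}$, and invoking conservativity. This is exactly the Beck--Chevalley square (now for the middle and right columns) that fails in general: Remark \ref{rk5.9} exhibits a free $\mathbb{G}_m$-stack $\mathbb{G}_m\times\widehat{\mathbb{A}}^1_{R,0}$ on which $\mathrm{forget}\circ\mathcal{O}^{\sgr}$ and $\mathcal{O}\circ\mathrm{forget}$ disagree, precisely because $R[t,t^{-1}]$ is not of finite rank over $R$. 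Even if the commutation were true for honest affines, you would need to argue that separately — the ``underlying commuting square'' you appeal to is not available. The paper instead proves $\mathcal{O}^{\sgr}(\spnc(A))\simeq A$ entirely inside the graded world: it verifies the claim on free connective affine $\mathbb{G}_m$-schemes $\spec(B[t,t^{-1}])$ and then runs flat descent along the bar resolution $\br_\bullet(\mathbb{G}_m,\mathbb{G}_m,\spec(A))\to\spec(A)$.

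In short: the structural skeleton you propose is the right one for organizing the left-adjoint commutations, but the two claims you dismiss as formal are precisely where the mathematics lives, and your routes for both have gaps that cannot be patched as stated.
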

	 \begin{proof}
	 	It is clear by construction that the left adjoints commute. Then, by abstract non-sense, the adjoint map $[\cH\times-/\mathbb{G}_m]\to id_{\cH-\dst}$ induces a natural transformation
	 	\[\tau:\text{forget}\circ \mathcal{O}^{\epsilon-\sgr}\to\mathcal{O}^{\sgr}\circ\text{forget}.\]To show that it is an equivalence, we need the fact that $\mathcal{O}^{\sgr}(\spnc(A))\simeq A$ for a graded simplicial commutative ring $A$. The assertion holds obviously for free connective affine $\mathbb{G}_m$-schemes $\mathbb{G}_m\times \text{Spec}(B)\simeq \text{Spec}(B[t,t^{-1}])$. The general case is done by doing flat descent on $\br_\bullet(\mathbb{G}_m,\mathbb{G}_m,\text{Spec}(A))\to \text{Spec}(A)$.
	 	
	 	For each $\cH$-stack of the form $[\cH\times \text{Spec}(A)/\mathbb{G}_m]$, where $A$ is a graded simplicial commutative ring, the unit map of $\rmfor\dashv [\cH\times-/\mathbb{G}_m]$ is given by the multiplication of $G_0$ as graded group stack. Thus, after applying $\mathcal{O}^{\epsilon-\sgr}$, it becomes the comultiplication $\dD^\vee_-\otimes^{\sgr}_RA\to \dD^\vee_-\otimes^{\sgr}_R\dD^\vee_-\otimes^{\sgr}_RA$ which is adjoint to the $id_{A[\epsilon]}$.
	 	
	 	For general $\cH$-stack $X$, note that it can be written as a small colimit $\colim_i X_i$ such that each $X_i$ has the form of $[\cH\times \text{Spec}(A_i)/\mathbb{G}_m]$. Then observe that, the upper left forgetful functor is straightforwardly small-limit-preserving, and the left lower forgetful functor has a left adjoint, which is the opposite of $\mathcal{L}^{\sgr}_{\pi}(-/R)$. Therefore, $\tau$ is always an equivalence since both sides takes small colimits to small limits.
	 \end{proof}
	 
	 \begin{remark}\label{rk5.9}
	 	The functors $\mathcal{O}^{\sgr}$ and $\mathcal{O}$ do not commute with the forgetful functors, because $R[t,t^{-1}]$ is not of finite rank over $R$. For a counterexample, consider the free $\mathbb{G}_m$-stack $X:=\mathbb{G}_m\times \widehat{\mathbb{A}}^{1}_{R,0}$ generated by the formal completion of the affine line at its origin. Its graded global section (forgetting the grading) is $\mathcal{O}^{\sgr}(X)\simeq (R[[s]])[t,t^{-1}] $, but the ordinary global section is $\mathcal{O}(X)\simeq (R[t,t^{-1}])[[s]]$.
	 \end{remark}
	 
	 The discussion on $\infcohnewfunctor$ (\ref{c4.13}-\ref{rk4.19}) can be reproduced in the context of ordinary derived algebras, instead of pro-coherent derived algebras. There is a notion of \textit{ordinary} Hodge-filtered infinitesimal cohomology $\infcoho$ as the left adjoint of $\gr_0:\dalg(\fil_{\ge0}\m_R)\to \dalg_R$.
	 \begin{corollary}\label{corn5.10}
	 	Proposition \ref{propn5.8} induces a commuting diagram of adjunctions
	 	\[\begin{tikzcd}[column sep=1.5cm]
	 		{\dalg(\gr\m_R)} & {\dg_-\dalg_R} \\
	 		{(\mathbb{G}_m-\dst)^{op}} & {(\cH-\dst)^{op}}
	 		\arrow["\infcohgr", shift left, from=1-1, to=1-2]
	 		\arrow["\spnc"', shift right, from=1-1, to=2-1]
	 		\arrow["{\text{forget}}", shift left, from=1-2, to=1-1]
	 		\arrow["\spnc"', shift right, from=1-2, to=2-2]
	 		\arrow["{\mathcal{O}^{\sgr}}"', shift right, from=2-1, to=1-1]
	 		\arrow["{\mathcal{L}^{\sgr}_{\pi}(-/R)}", shift left, from=2-1, to=2-2]
	 		\arrow["{\mathcal{O}^{\epsilon-\sgr}}"', shift right, from=2-2, to=1-2]
	 		\arrow["{\text{forget}}", shift left, from=2-2, to=2-1]
	 	\end{tikzcd},\]where $\infcohgr$ satisfies that $\infcohgr\circ [-]_0\simeq (\infcoho)^\wedge$. In particular, $\mathcal{L}^{\sgr}_{\pi}(\text{Spec}(B)/R)$ admits a natural infinitesimal derived foliation structure $\spnc((\infcohRo)^\wedge)$, corepresented by the \textit{Hodge-completed} derived infinitesimal cohomology of $\spec(B)$.
	 \end{corollary}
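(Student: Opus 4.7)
The plan is to construct $\infcohgr$ as the left adjoint of the forgetful functor, derive the commuting square of adjunctions directly from Proposition~\ref{propn5.8} by passing to left adjoints, identify $\infcohgr\circ[-]_0$ with $(\infcoho)^\wedge$ via composition of adjunctions, and finally evaluate at $X=\spec(B)$ to recognise $\mathcal{L}^{\sgr}_\pi(\spec(B)/R)$ as the non-connective affine stack of the Hodge-completed derived infinitesimal cohomology.

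For the first step, under the equivalence $\dg_-\dalg_R\simeq \mathrm{LComod}_{\dD^\vee_-}(\dalg(\gr\m_R))$ of Proposition~\ref{prop5.1n}, the forgetful $\mathrm{forget}:\dg_-\dalg_R\to \dalg(\gr\m_R)$ is identified with the forgetful functor of $\dD^\vee_-$-comodules. Because the graded derived $R$-module $\dD^\vee_-$ is dualisable in $\gr\m_R$, its comodule category is equivalent, as a presentable $\infty$-category over $\dalg(\gr\m_R)$, to the category of modules over the dual graded derived algebra, and under this identification $\mathrm{forget}$ becomes the module-theoretic forgetful functor, which preserves all small limits. Both source and target are presentable, so the adjoint functor theorem produces the desired left adjoint $\infcohgr$.

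The commuting square of adjunctions in the statement is then a purely formal consequence of Proposition~\ref{propn5.8}: the natural equivalence $\mathcal{O}^{\sgr}\circ \mathrm{forget}\simeq \mathrm{forget}\circ \mathcal{O}^{\epsilon-\sgr}$ recorded there is exactly the commutativity of the square of right adjoints appearing in the diagram of the corollary, and by uniqueness of left adjoints this transports to the equivalence $\spnc\circ \infcohgr\simeq \mathcal{L}^{\sgr}_\pi(-/R)\circ \spnc$. To see the compatibility $\infcohgr\circ[-]_0\simeq (\infcoho)^\wedge$, observe that $[-]_0:\dalg_R\to \dalg(\gr\m_R)$ is left adjoint to the weight-zero evaluation $(-)_0$, so $\infcohgr\circ[-]_0$ is the left adjoint of the composite $(-)_0\circ \mathrm{forget}\simeq \gr_0:\dg_-\dalg_R\to \dalg_R$; on the other hand, $(\infcoho)^\wedge$ is by definition the left adjoint of this same $\gr_0$ after the identification $\dg_-\dalg_R\simeq \dalg(\fil^{\cpl}\m_R)$, and uniqueness of adjoints gives the equivalence.

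Finally, $\spec(B)$ equipped with the trivial $\mathbb{G}_m$-action corresponds to $\spnc([B]_0)$, and combining the previous two paragraphs yields
\[\mathcal{L}^{\sgr}_\pi(\spec(B)/R)\simeq \spnc(\infcohgr([B]_0))\simeq \spnc\bigl((\infcohRo)^\wedge\bigr),\]
exhibiting $\mathcal{L}^{\sgr}_\pi(\spec(B)/R)$ as an infinitesimal derived foliation on $\spec(B)$ corepresented by the Hodge-completed derived infinitesimal cohomology. The principal technical obstacle is the first step: showing that $\mathrm{forget}$ admits a left adjoint, which rests on the dualisability of $\dD^\vee_-$ in $\gr\m_R$ to present the comodule category as a module category. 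Everything afterwards is a matter of applying uniqueness of adjoints and a final substitution.
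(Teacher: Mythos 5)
Your overall strategy is exactly the one the paper intends (the corollary is stated without proof, but the intended argument is indeed: extract the other adjoints from Proposition~\ref{propn5.8}, transport the commuting square of right adjoints to a commuting square of left adjoints by uniqueness of adjoints, identify $\infcohgr\circ[-]_0$ with $(\infcoho)^\wedge$ by the universal property, and evaluate at $B$). Steps~2--4 of your write-up are correct and complete.

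The one place where the write-up is imprecise is the justification for the existence of $\infcohgr$ in step~1. You argue that dualisability of $\dD^\vee_-$ in $\gr\m_R$ makes the comodule category ``equivalent to the category of modules over the dual graded derived algebra.'' As phrased, this reads as the claim that $\mathrm{LComod}_{\dD^\vee_-}(\dalg(\gr\m_R))$ is a module category over $\dD_-:=(\dD^\vee_-)^\vee$ inside $\dalg(\gr\m_R)$; but $\dD^\vee_-$ is \emph{not} dualisable in $(\dalg(\gr\m_R),\otimes_R)$, since that monoidal structure is cocartesian and the only dualisable object there is the unit. What is actually true is the symmetric monoidal equivalence $\mathrm{LComod}_{\dD^\vee_-}(\gr\m_R)\simeq \mathrm{LMod}_{\dD_-}(\gr\m_R)$ at the \emph{module} level (here dualisability of $\dD^\vee_-$ as a graded module is the key), and the interchange $\dalg(\mathrm{LComod}_{\dD^\vee_-}(\gr\m_R))\simeq\mathrm{LComod}_{\dD^\vee_-}(\dalg(\gr\m_R))$. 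Composing, $\dg_-\dalg_R\simeq \dalg(\mathrm{LMod}_{\dD_-}(\gr\m_R))$, and the forgetful functor to $\dalg(\gr\m_R)$ is then induced by the limit-creating module-theoretic forgetful functor. This is what makes your conclusion correct. An even more direct route, which avoids the duality argument altogether, is to invoke Proposition~\ref{prop5.1n}: under the identification $\dg_-\dalg_R\simeq\dalg(\fil^{\cpl}\m_R)$ the forgetful functor is exactly $\gr$, which preserves all small limits because limits of derived algebras are computed on underlying modules and $\gr$ is exact and preserves products on modules; by presentability of both sides the adjoint functor theorem furnishes $\infcohgr$.

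With that clarification the proof is sound and coincides with the paper's implied argument.
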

	 For each $B\in\mathrm{SCR}_R$, we can define \textit{ordinary foliation-like algebras} like in Notation \ref{n4.14}, where the only difference is that they live in $\dalg(\fil_{\ge0}\m_R)_{/B}$ instead of pro-coherent derived algebras. Let $\dalg^{\fol,or}_{B/R,b}$ (resp. $\dalg^{\fol,or}_{B/R,\ap}$) be the full subcategory spanned by \textit{ordinary} foliation-like algebras $A$ with \textit{eventually connective (resp. almost perfect) cotangent complex} $\gr_1(A)[1]$. If $R$ is coherent and eventually coconnective, then the functor $\iota$ in Remark \ref{rk2.22} induces an equivalence $\dalg^{\fol,or}_{B/R,b}\simeq \dalg^{\fol}_{B/R,b}$ by Proposition \ref{prop2.38}.
	 \begin{theorem}\label{thmn5.11}
	 	Let $B$ be a simplicial commutative ring over $R$. Taking non-connective spectra induces a functor sending foliation-like algebras to infinitesimal derived foliations,  which respects the cotangent complexes
	 \[\begin{tikzcd}
	 	{(\dalg^{\fol,or}_{B/R,b})^{op}} & {\fol^{\pi}(\spec(B)/R)} \\
	 	& {\m^{op}_{B,+}}
	 	\arrow["\spnc", from=1-1, to=1-2]
	 	\arrow["{\dL_{-}}"', from=1-1, to=2-2]
	 	\arrow["{\dL_{-}}", from=1-2, to=2-2]
	 \end{tikzcd}.\]
	 	If $B$ is further assumed to be coherent,\ then for each $A\in \dalg^{\fol,or}_{B/R,\ap}$, the unit map $A\to \mathcal{O}^{\epsilon-\sgr}\circ \spnc(A)$ is an equivalence. Furthermore, there is a categorical equivalence\[ \spnc: {(\dalg^{\fol,or}_{B/R,\ap})^{op}} \xrightarrow{\simeq}{\fol^{\pi}_{\ap}(\spec(B)/R)}.\]
	 \end{theorem}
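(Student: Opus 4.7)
The plan is to factor $\spnc$ through the equivalence of Proposition~\ref{prop5.1n} identifying complete filtered derived algebras with $\dD^\vee_-$-comodules in graded derived algebras, and then apply the symmetric monoidal non-connective spectrum functor $\spnc:\dg_-\dalg_R\to(\cH-\dst)^{op}$ constructed in Proposition~\ref{propn5.8}. For $A\in\dalg^{\fol,or}_{B/R,b}$, Proposition~\ref{prop5.1n} presents $A$ as a $\dD^\vee_-$-comodule on $\gr A\in\dalg(\gr\m_R)$, and applying $\spnc$ yields an $\cH$-equivariant derived stack whose underlying $\mathbb{G}_m$-equivariant morphism to $\spnc(\gr_0 A)\simeq\spec(B)$ is, by the foliation-like identification $\gr A\simeq\lsym_B(\gr_1 A)$ and the definition $\mathbb{V}(E):=\spnc(\lsym_B E)$, exactly the linear stack projection $\mathbb{V}(\gr_1 A)\to\spec(B)$. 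Since $\gr_1 A$ is eventually connective by hypothesis, this exhibits $\spnc(A)$ as an object of $\fol^{\pi}(\spec(B)/R)$ with cotangent complex $\gr_1 A[1]\simeq\dL_A$, establishing the first assertion.

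\textbf{Unit equivalence.} Assume now $B$ is coherent and fix $A\in\dalg^{\fol,or}_{B/R,\ap}$. Consider the adjunction unit $\eta_A:A\to\mathcal{O}^{\epsilon-\sgr}\circ\spnc(A)$. By the compatibility $\mathcal{O}^{\sgr}\circ\mathrm{forget}\simeq \mathrm{forget}\circ\mathcal{O}^{\epsilon-\sgr}$ recorded in Proposition~\ref{propn5.8}, forgetting the $\dD^\vee_-$-comodule structure reduces $\eta_A$ on associated gradeds to the natural map
\[\lsym_B(\gr_1 A)\;\longrightarrow\;\mathcal{O}^{\sgr}\bigl(\mathbb{V}(\gr_1 A)\bigr).\]
Since $\gr_1 A$ is almost perfect, it is in particular eventually connective, so the full faithfulness of $\mathbb{V}$ on $\m_{B,+}^{op}$ proved in \cite[Theorem~2.5]{Monier}, combined with the identity $\mathcal{O}^{\sgr}(\spec(C))\simeq C$ for connective graded simplicial commutative rings $C$ (the last sentence of Proposition~\ref{propn5.8}), forces this map to be an equivalence. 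Hence $\gr(\eta_A)$ is an equivalence, and since both sides are complete filtered, $\eta_A$ itself is an equivalence.

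\textbf{Essential surjectivity and main obstacle.} For essential surjectivity onto $\fol^{\pi}_{\ap}(\spec(B)/R)$, given $\mathscr{F}$ with $\dL_{\mathscr{F}}$ almost perfect I set $A_{\mathscr{F}}:=\mathcal{O}^{\epsilon-\sgr}(\mathscr{F})$ and verify that $A_{\mathscr{F}}$ is an almost perfect foliation-like algebra with $\dL_{A_{\mathscr{F}}}\simeq\dL_{\mathscr{F}}$. The same graded reduction yields $\gr A_{\mathscr{F}}\simeq\mathcal{O}^{\sgr}(\mathbb{V}(\dL_{\mathscr{F}}[-1]))\simeq\lsym_B(\dL_{\mathscr{F}}[-1])$, so $A_{\mathscr{F}}$ is foliation-like with cotangent $\dL_{\mathscr{F}}$; completeness is automatic from landing in the image of Proposition~\ref{prop5.1n}. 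The counit $\spnc(A_{\mathscr{F}})\to\mathscr{F}$ is a $\mathbb{G}_m$-equivariant map over $\spec(B)$ whose underlying graded map is a morphism of linear stacks $\mathbb{V}(\dL_{\mathscr{F}}[-1])\to\mathbb{V}(\dL_{\mathscr{F}}[-1])$, again an equivalence by Monier's result; since the forgetful functor from $\cH$-equivariant stacks over $\spec(B)$ to $\mathbb{G}_m$-equivariant stacks is conservative, the counit itself is an equivalence. The main obstacle I anticipate is the failure of $\mathcal{O}$ to commute with forgetting the equivariant structure, flagged in Remark~\ref{rk5.9}: one must never try to recover $A_{\mathscr{F}}$ from the non-equivariant underlying stack of $\spnc(A_{\mathscr{F}})$. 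Fortunately, the weaker compatibility between $\mathcal{O}^{\sgr}$ and the $\dD^\vee_-$-comodule forgetful functor in Proposition~\ref{propn5.8} is exactly the one we need, and almost perfectness (hence eventual connectivity) of $\dL_{\mathscr{F}}$ is precisely what puts us inside Monier's fully faithful range.
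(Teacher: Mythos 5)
Your overall structure matches the paper's: factor through Proposition~\ref{prop5.1n}, use Proposition~\ref{propn5.8} for the equivariant $\spnc$, reduce the unit equivalence to the associated graded via the compatibility $\mathcal{O}^{\sgr}\circ\mathrm{forget}\simeq\mathrm{forget}\circ\mathcal{O}^{\epsilon-\sgr}$, and run essential surjectivity through the counit. The functoriality and the cotangent-complex compatibility are treated the same way.

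However, there is a genuine gap at the heart of the unit-equivalence step, and the same gap recurs in your essential-surjectivity argument. You assert that the map
\[\lsym_B[\gr_1 A]_1\longrightarrow\mathcal{O}^{\sgr}\bigl(\mathbb{V}(\gr_1 A)\bigr)\]
is forced to be an equivalence by combining (i) Monier's full faithfulness of $\mathbb{V}$ on $\m_{B,+}^{op}$ and (ii) the identity $\mathcal{O}^{\sgr}(\spec(C))\simeq C$ for \emph{connective} graded simplicial commutative rings from Proposition~\ref{propn5.8}. Neither fact applies directly. For (ii): here $\gr_1 A\simeq\dL_A[-1]$ is almost perfect but typically not connective, so $\lsym_B[\gr_1 A]_1$ is not a connective graded ring and $\mathbb{V}(\gr_1 A)$ is not an affine $\mathbb{G}_m$-scheme; the last sentence of Proposition~\ref{propn5.8} simply does not cover it. For (i): Monier's theorem is a statement about mapping spaces, namely that $\map_{\mathbb{G}_m-\mathbf{dSt}_B}(\mathbb{V}(E),\mathbb{V}(E'))\simeq\map_B(E',E)$. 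This does not by itself imply that the unit of the adjunction $\spnc\dashv\mathcal{O}^{\sgr}$ evaluated at $\lsym_B[E]_1$ is an equivalence, because a priori $\mathcal{O}^{\sgr}(\mathbb{V}(E))$ could be some graded algebra whose weight-one piece differs from $E$ while $\mathbb{V}$ still remembers $E$ through its functor of points. The needed statement is exactly the paper's Lemma~\ref{ff for gr stack}, which is proved separately by induction on the coconnectivity of $E$: one chooses a surjection $B[-n]^{\oplus m}\to E$ in $\pi_{-n}$, forms the cofibre $F$, takes the \v{C}ech conerve $F^\bullet$ of $E\to F$ (which lives in connectivity $\ge -(n-1)$), and uses Corollary~\ref{totalg} to get $\lsym_B[E]_1\simeq\tot\lsym_B[F^\bullet]_1$; on the geometric side, $|\mathbb{V}(F^\bullet)|\to\mathbb{V}(E)$ is an equivalence because it is $(-1)$-truncated and an epimorphism (this is where Monier's Lemma~2.7 enters, not Theorem~2.5). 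Your proof needs this induction or an equivalent; as written, the reduction to the connective case is silently assumed but never carried out.

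The same gap appears where you write $\gr A_{\mathscr{F}}\simeq\mathcal{O}^{\sgr}(\mathbb{V}(\dL_{\mathscr{F}}[-1]))\simeq\lsym_B(\dL_{\mathscr{F}}[-1])$ and again in the counit computation: both equivalences are instances of Lemma~\ref{ff for gr stack}. Once that lemma is supplied, your surrounding argument is sound.
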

	 
	 \begin{proof}
	 	
	 	Firstly, Corollary \ref{corn5.10} provides the following adjunctions of undercategories
	 	\[\begin{tikzcd}[column sep=1.5cm]
	 		{\dalg(\gr\m_R)_{B/}} & {(\dg_-\dalg_R)_{(\infcohRo)^\wedge/}} \\
	 		{(\mathbb{G}_m-\dst)^{op}_{\spec(B)/}} & {(\cH-\dst)^{op}_{\mathcal{L}^{\sgr}_{\pi}(\spec(B)/R)/}}
	 		\arrow["\infcohgr", shift left, from=1-1, to=1-2]
	 		\arrow["\spnc"', shift right, from=1-1, to=2-1]
	 		\arrow["{\text{forget}}", shift left, from=1-2, to=1-1]
	 		\arrow["\spnc"', shift right, from=1-2, to=2-2]
	 		\arrow["{\mathcal{O}^{\sgr}}"', shift right, from=2-1, to=1-1]
	 		\arrow["{\mathcal{L}^{\sgr}_{\pi}(-/R)}", shift left, from=2-1, to=2-2]
	 		\arrow["{\mathcal{O}^{\epsilon-\sgr}}"', shift right, from=2-2, to=1-2]
	 		\arrow["{\text{forget}}", shift left, from=2-2, to=2-1]
	 	\end{tikzcd},\]where the left vertical adjunction is equivalent to $\dalg(\gr\m_B)\rightleftarrows (\mathbb{G}_m-\mathbf{dSt}_{B})^{op}$.
	 	
	 	On the other hand, the two ends of the promised realization functor can be fully faithfully embedded into the right vertices: This holds  for $\fol^{\pi}(\spec(B)/R)$ by definition. As for $\dalg^{\fol,or}_{B/R,b}$, note that $(-)^\wedge\circ\infcoho\dashv \gr_0$ is a colocalization and use Remark \ref{rk4.19}. Besides, since the forgetful functors commutes with the $\spnc$'s, $\spnc|_{\dalg^{\fol,or}_{B/R,b}}$ takes value in $\fol^{\pi}(\spec(B)/R)$ and respects the cotangent complexes.
	 	
	 	Next, assuming that $B$ is coherent, we demonstrate that $A\simeq \mathcal{O}^{\epsilon-\sgr}\circ\spnc(A)$ for $A\in \dalg^{\fol,or}_{B/R,\ap}$. According to Corollary \ref{corn5.10}, it is sufficient to show that $\text{forget}(A)\simeq \mathcal{O}^{\sgr}\circ\spnc(\text{forget}(A))$, which is guaranteed by Lemma \ref{ff for gr stack} as below. Conversely, for each $\mathscr{F}\in \fol^{\pi}_{\ap}(\spec(B)/R)$, $\mathcal{O}^{\sgr}(\mathscr{F})$ is equivalent to $\lsym_B [\dL_{\mathscr{F}}[-1]]_{1}$, thus $\mathcal{O}^{\epsilon-\sgr}(\mathscr{F})$ is an almost perfect foliation-like algebra. The adjunction map $\mathscr{F}\to \spnc\circ \mathcal{O}^{\epsilon-\sgr}(\mathscr{F}) $ is then an equivalence, since the underlying map of graded stacks is, which implies that $\mathscr{F}$ lies in the essential image of $\dalg^{\fol,or}_{B/R,\ap}$ under $\spnc$.
	 	
	 \end{proof}
	 \begin{lemma}\label{ff for gr stack}
	 	Let $B$ be a coherent simplicial commutative ring, and $E$ be an almost perfect $B$-module. Set $C:=\lsym_B [E]_1$ the freely generated graded algebra, then the unit map $\eta:C\xrightarrow{\simeq}\mathcal{O}^{\sgr}\circ \spnc(C)$ is an equivalence. 
	 \end{lemma}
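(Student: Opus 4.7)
My plan is to verify that the unit map $\eta\colon C\to \mathcal{O}^{\sgr}(\spnc(C))$ is an equivalence by combining the functor-of-points description of $\spnc(C)$ with Monier's fully faithfulness theorem \cite[Theorem 2.5]{Monier} and Proposition \ref{propn5.8}, which handles the connective case.

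First I would identify $\spnc(C)$ explicitly. By the universal property of the free derived symmetric algebra, for any connective graded $B$-algebra $A$ there is a natural equivalence
\[
\spnc(C)(A) \simeq \mathrm{Map}_B(E, A_1),
\]
which is precisely the functor of points of the $\mathbb{G}_m$-equivariant linear stack $\mathbb{V}(E)$ with $E$ placed in weight one. Since $E$ is almost perfect it is in particular eventually connective, so $\mathbb{V}(E)$ lies in the range where Monier's theorem applies. Via the adjunction $\spnc\dashv\mathcal{O}^{\sgr}$, the claim that $\eta$ is an equivalence is reformulated as the full faithfulness of $\spnc$ at $C$, i.e.\ that for every graded derived $B$-algebra $A$ the natural map $\mathrm{Map}_{\dalg(\gr\m_B)}(C,A)\to\mathrm{Map}_{\mathbb{G}_m-\mathbf{dSt}_B}(\spnc(A),\mathbb{V}(E))$ is an equivalence; for connective $A$ both sides are $\mathrm{Map}_B(E,A_1)$ by the above.

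Next I would reduce the general case to the connective case through a resolution of $E$. Since $E$ is almost perfect over the coherent ring $B$, we may present $E$ as the sifted colimit of finitely generated free $B$-modules from its canonical simplicial resolution $P_{\bullet}\to E$; applying $\lsym_B[-]_1$ yields a compatible simplicial presentation of $C$ by connective free graded algebras $\lsym_B[P_\bullet]_1$. On the stack side, $\spnc$ converts this colimit of algebras into the totalisation of the cosimplicial diagram $\mathbb{V}(P_\bullet)$, and $\mathcal{O}^{\sgr}$ of each term $\mathbb{V}(P_n)$ recovers $\lsym_B[P_n]_1$ by Proposition \ref{propn5.8}. Checking weight by weight, the almost-perfect hypothesis ensures that each component $\lsym^w_B E$ is almost perfect over the coherent ring $B$, hence bounded below, which will control the connectivity of the terms entering the limit.

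The main obstacle is the limit--colimit exchange at the end: $\lsym_B$ is compatible with colimits of $E$, while $\mathcal{O}^{\sgr}$ converts them into limits on the stack side and need not commute with the resulting totalisations in general. The crucial point will be that, restricted to each fixed weight $w$, the cosimplicial object computing $\mathcal{O}^{\sgr}(\spnc(C))_w$ becomes, up to a bounded homotopical range, a diagram whose Milnor-type obstructions degenerate, so that the totalisation can be identified with the colimit $\lsym^w_B E = C_w$. The almost perfectness of $E$ and coherence of $B$ are precisely what is needed to ensure this finite control at each weight; assembling the pieces then yields the desired equivalence $C\simeq \mathcal{O}^{\sgr}(\spnc(C))$.
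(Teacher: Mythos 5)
Your overall plan—reduce to the connective case handled in Proposition \ref{propn5.8}, using a resolution of $E$—is in the right spirit, but the specific decomposition you choose is oriented the wrong way and creates an exchange-of-limits problem that you identify but do not resolve.

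The issue is structural. You resolve $E$ by a \emph{simplicial} free resolution $P_\bullet\to E$, so that $C\simeq|\lsym_B[P_\bullet]_1|$ is a geometric realization on the algebra side. Applying $\spnc$ then produces a \emph{totalization} $\spnc(C)\simeq\tot\,\mathbb{V}(P_\bullet)$ on the stack side. But $\mathcal{O}^{\sgr}$ is the \emph{left} adjoint of $\spnc$ (equivalently, $\mathcal{O}$ is a left adjoint out of $\dst$), so it preserves colimits of stacks (i.e.\ geometric realizations), not limits. Your totalization is on the wrong side of the adjunction, and the ``Milnor-type obstructions degenerate'' step is precisely the content that is missing. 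There is also a secondary point: an almost perfect module is only bounded below, not connective, so a simplicial free resolution $P_\bullet\to E$ does not exist in general; you would at best have to start your induction from a shift.

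The paper's proof avoids both problems by resolving $E$ in the opposite direction. It does a descending induction on the coconnectivity bound $n$ with $E\in\aperf_{B,\ge -n}$: pick $s:B[-n]^{\oplus m}\to E$ hitting $\pi_{-n}$, set $F:=\mathrm{cofib}(s)\in\aperf_{B,\ge -(n-1)}$, and take the \v{C}ech \emph{conerve} $F^\bullet$ of $E\to F$ (so $F^k\simeq F\oplus P^{\oplus k}$, all more coconnective). On the stack side $\mathbb{V}(F^\bullet)$ is now the \v{C}ech \emph{nerve} of $\mathbb{V}(F)\to\mathbb{V}(E)$, and $|\mathbb{V}(F^\bullet)|\simeq\mathbb{V}(E)$ because the realization map is $(-1)$-truncated automatically and an epimorphism by Monier's Lemma 2.7 (note: the relevant input from \cite{Monier} is Lemma 2.7, not the full faithfulness Theorem 2.5 you cite). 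This puts a geometric realization on the stack side, which $\mathcal{O}^{\sgr}$ does preserve. On the algebra side the matching identification $\lsym_B[E]_1\simeq\tot\,\lsym_B[F^\bullet]_1$ is supplied by Corollary \ref{totalg}, i.e.\ by the filtered PD Koszul duality—this is the genuinely non-formal convergence statement your sketch was hoping to extract from bounded connectivity alone. Without that input, the inductive step does not close.
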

	 \begin{proof}
	 	This lemma is inspired by \cite{Mou}. Observe that there exists some $n\in\mathbb{N}$ such that $E\in \aperf_{B,\ge -n}$. We do induction on $n$. The case $n=0$ is done in the proof of Proposition \ref{propn5.8}. When $n>0$, choose a map $s:B[-n]^{\oplus m}\to E$ such that $\pi_{-n}(s)$ is surjective. Write $B[-n]^{\oplus m}$ as $P[-1]$ and $\text{cofib}(s)$ as $F$. Now we consider the \v{C}ech conerve $F^\bullet$ of $E\to F$, where $F^k\simeq F\oplus P^{\oplus k}$ belongs to $\aperf_{B,\ge -(n-1)}$.
	 	
	 	Since $\spnc$ takes small colimits to limits, the simplicial object $\mathbb{V}(F^{\bullet})$ is identified with the \v{C}ech nerve of $\mathbb{V}(F)\to \mathbb{V}(E)$. Then $q:|\mathbb{V}(F^{\bullet})|\to \mathbb{V}(E)$ is $(-1)$-truncated\cite[Proposition 6.2.3.4]{HTT}. Lemma 2.7 in \cite{Monier} shows that $q$ is also an epimorphism and then an equivalence. Let's look at the following commutative square
	 	\[\begin{tikzcd}
	 		C & {\tot(\lsym_B F^\bullet)} \\
	 		{\mathcal{O}^{\sgr}\circ \spnc(C)} & {\tot(\mathcal{O}^{\sgr}\circ \mathbb{V}(F^\bullet))}
	 		\arrow["\simeq", from=1-1, to=1-2]
	 		\arrow["\eta"', from=1-1, to=2-1]
	 		\arrow["\simeq", from=1-2, to=2-2]
	 		\arrow["{\mathcal{O}^{\sgr}(q)}"', from=2-1, to=2-2]
	 	\end{tikzcd},\]where the top arrow is an equivalence by Corollary \ref{totalg}, the right one is an equivalence by induction hypothesis. In conclusion, $\eta$ is an equivalence as well.
	 \end{proof}
	 Combining Theorems \ref{thm4.25} and \ref{thmn5.11}, we obtain the main comparison theorem between infinitesimal derived foliations and partition Lie algebroids on derived schemes. For a global definition of partition Lie algebroids, see \cite[\S4]{BMN}, or refer to Remark \ref{rk4.12} for a brief recollection.
	 \begin{theorem}[\textbf{Main theorem}]\label{thmn5.13}
	 	Let $X$ be a locally coherent qcqs derived scheme over an eventually coconnective coherent simplicial commutative ring $R$, and assume that $\dL_{X/R}$ is an almost perfect quasi-coherent sheaf. There is a categorical equivalence 
	 	\[\aplagdX\xrightarrow{\simeq}\fol^{\pi}_{\ap}(X/R)\]bewteen \textnormal{dually almost perfect} partition Lie algebroids and \textnormal{almost perfect} infinitesimal derived foliations.
	 \end{theorem}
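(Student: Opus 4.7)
The plan is to deduce the global equivalence from the affine version by Zariski descent on both sides. More precisely, I will verify that each of $\aplagdX$ and $\fol^{\pi}_{\ap}(X/R)$ is the limit, over affine opens $\spec(B) \hookrightarrow X$ in the small Zariski site, of its affine counterparts, and that the composition $\spnc \circ \tc$ from Theorems~\ref{thm4.25} and \ref{thmn5.11} upgrades to a natural transformation of Zariski sheaves of \infcats, which is an equivalence on each affine chart.

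First I would recall from \cite[Proposition 4.18]{BMN} (see also Remark~\ref{rk4.12}) that the assignment $B \mapsto \aplagd$ has \'etale descent on $\mathrm{SCR}^{\scoh,\laft,op}_R$, and that by definition
\[\aplagdX \simeq \lim_{\spec(B) \subset X} \aplagd,\]
so our hypotheses (locally coherent, qcqs, $\dL_{X/R}$ almost perfect) let us reduce to finitely many affine charts via a Mayer--Vietoris argument. Next I would verify the corresponding descent statement for $\fol^{\pi}_{\ap}(-/R)$: unwinding Definition~\ref{df5.5}, an almost perfect infinitesimal derived foliation on $X$ is equivalent data to an almost perfect quasi-coherent sheaf $E$ on $X$ together with an $\cH$-equivariant structure on $\mathbb{V}(E)$ compatible with the scaling. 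Both $\aperf(-)$ (as a subcategory of $\QC(-)$) and the relative mapping stack $\mathcal{L}^{\sgr}_\pi(-/R)$ satisfy Zariski descent on the small Zariski site of $X$, so the pullback square defining $\fol^\pi_{\ap}(-/R)$ does as well.

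With descent in hand on both sides, I would construct a global functor $\Phi_X : \aplagdX \to \fol^\pi_{\ap}(X/R)$ by exhibiting the affine composition $\spnc \circ \tc$ as a natural transformation between the two Zariski presheaves $U \mapsto \aplagd[U]$ and $U \mapsto \fol^\pi_{\ap}(U/R)$ restricted to the small Zariski site of $X$. Naturality under a Zariski open immersion $\spec(B') \hookrightarrow \spec(B)$ amounts to compatibility of the Hodge-filtered Chevalley--Eilenberg functor $\tc$ with the base change $B' \otimes_B -$, which is immediate because $\tc$ was built from the relative cotangent fibre (\ref{c4.2}, \ref{c4.20}) and the latter is itself natural in the target algebra, together with the naturality of $\spnc$ (Proposition~\ref{propn5.8}).

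On each affine chart the composite is an equivalence by Theorems~\ref{thm4.25} and \ref{thmn5.11} (using the eventual coconnectivity of $R$ to pass between $\dalg^{\fol,or}_{B/R,\ap}$ and $\dalg^{\fol}_{B/R,\ap}$ via Proposition~\ref{prop2.38} and Remark~\ref{rk.3.29n}), so taking limits along the diagram of affine charts produces an equivalence $\Phi_X$ as desired. The main obstacle I anticipate is the descent of $\fol^{\pi}_{\ap}$: although linear stacks $\mathbb{V}(E)$ behave well under Zariski localization when $E$ is eventually connective, one must be careful that the $\cH$-equivariance and the comparison with $\mathcal{L}^{\sgr}_\pi(\spec(B)/R)$ assemble into genuine sheaf data rather than only pointwise equivalences; the cleanest way to handle this is to exhibit both $\aplagd$ and $\fol^\pi_{\ap}(\spec(B)/R)$ as cartesian sections of explicit Cartesian fibrations over the affine Zariski site of $X$ and then compare them chart-by-chart.
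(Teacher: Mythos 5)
Your overall strategy---deduce the global statement from the affine case (Theorems~\ref{thm4.25} and~\ref{thmn5.11}) by showing both sides are Zariski sheaves of $\infty$-categories and that the affine equivalence $\spnc\circ\tc$ is compatible with restriction---is exactly the strategy of the paper. The identification of $\aplagdX$ as $\lim_{\spec(B)\subset X}\aplagd$ via \cite[Proposition~4.18]{BMN}, and the observation that $\fol^{\pi}_{\ap}(-/R)$ satisfies descent because $\aperf(-)$ and $\mathcal{L}^{\sgr}_{\pi}(-/R)$ do, are both fine and match the paper.

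The gap is in the sentence where you assert that naturality of $\spnc\circ\tc$ under a Zariski open immersion ``is immediate because $\tc$ was built from the relative cotangent fibre.'' This is precisely where the paper spends almost all of its effort on the global case, and it is not immediate. The issue is that $\tc$ is not defined by a formula that visibly carries a functor-of-$B$ structure: it is obtained by \emph{left Kan extension} from its restriction to dually almost perfect objects (Lemma~\ref{lemma4.10}), and the Chevalley--Eilenberg adjunction $C^*\dashv\fD$ of Proposition~\ref{prop4.7} was itself produced by a Kan extension out of the weakly almost finitely presented subcategory. A functor defined one fibre at a time, via such universal constructions, does not automatically assemble to a cartesian morphism of fibrations; one must exhibit a relative version a priori. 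To do this the paper constructs, following \cite{BMN}: a relative adjunction $\cot\dashv\sqz$ over $\dalg_R$ (equation~(\ref{rel adjunction})); a fibrewise comonadic adjunction $\mathcal{A}\rightleftarrows\mathcal{M}$ over $\scr^{\scoh,\laft}_R$ (equation~(\ref{com adjunction})); a relative version of $\infcohnewfunctor$; and then invokes the functoriality of the right-left extension (Lemma~\ref{lemma4.10}) to extend $F^0$ to $\tc$ on the full fibration $\mathrm{LieAlgd}^{\pi}_{/R,\Delta}$. Only after that is it checked that the resulting functor preserves cartesian edges. Similarly for $\spnc$, the paper builds $\mathcal{S}t^{\sgr}$ and $\mathcal{S}t^{\epsilon\text{-}\sgr}$ as fibre products over $\scr^{\scoh,\laft,op}_R$ and lifts $\spnc$ to a functor between fibrations. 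Your last paragraph does gesture at the right remedy (``exhibit both $\ldots$ as cartesian sections of explicit Cartesian fibrations''), but you present it as a clean-up measure rather than as the central construction; as written, the proof replaces the technical heart of the argument with an appeal to naturality that has not been established.

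A smaller point: you mention reducing to finitely many affine charts by a Mayer--Vietoris argument. This is harmless but unnecessary once descent is established on both sides; the paper simply takes the limit over the affine Zariski site.
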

	 \begin{proof}
	 	When $X=\spec(B)$, the equivalence is given by $\spnc\circ \tc$ (Theorem \ref{thm4.25} and \ref{thmn5.11}).
	 	
	 	In the global case, we need to show that $\spnc \circ\tc$ is compatible with base change on both sides along finitely presented open immersions $\spec (B')\subset \spec (B)$. Here, we follow the construction in \cite{BMN}. 
	 	
	 	\textbf{Relative version of $\tc$:}
	 	Set $\dalg^{\Delta^1}_R:=\mathrm{Fun}(\Delta^1,\dalg_R)$, regarded as an \infcat\ over $\dalg_R$ by the projection $\mathrm{ev}_1:(A_0\to A_1)\mapsto A_1$. Let $\QC$ be the \infcat\ of all the pairs $(A,M)$, where $A$ is a derived $R$-algebra and $M$ is an $A$-module (a non-connective version of \cite[Notation 25.2.1.1]{SAG}). The apparent projection $\QC\to \dalg_R$ has a section $\dL[-1]$ given by $A\mapsto (A,\dL_{A/R}[-1])$, inducing an \infcat\ $\QC_{\dL[1]/}$ of pairs $(A,\dL_{A/R}[-1]\to M)$. Then, there is an adjunction
	 	\begin{equation}\label{rel adjunction}
	 		\cot:\dalg^{\Delta^1}_R\rightleftarrows\QC_{\dL[1]/}:\sqz
	 	\end{equation}relative to $\dalg_R$ in the sense of \cite[\S7.3.2]{HA}, where $\cot:(A_0\to A_1)\mapsto\big(A_1,\dL_{A_1/R}[-1]\to \dL_{A_1/A_0}[-1]\big)$, and $\sqz(A,\alpha:\dL_{A/R}[-1]\to M)$ has the underlying arrow in $R$-modules as $A\oplus_{\alpha}M\to A$\footnote{Recall that $d_{DR}:A\to \dL_{A/R}$ is the universal differential. The extension module $A\oplus_{\alpha}M$ is $\mathrm{cofib}(\alpha\circ d_{DR}:A[-1]\to M)$.}.\ Theorem 3.29 in \cite{BMN} states that $\dalg_{R//B}\rightleftarrows(\m_{B})_{\dL_{B/R}[-1]/}$, the fibre of (\ref{rel adjunction}) over $B\in \scr_R$, restricts to a comonadic adjunction of full subcategories
	 	\[\cot:\dalg^{\wedge\afp}_{R//B,\ge0}\rightleftarrows(\aperf_{B,\ge0})_{\dL_{B/R}[-1]/}:\sqz\]where $\dalg^{\wedge\afp}_{R//B,\ge0}$ is defined in \cite[Notation 3.28]{BMN}(but we only need to know that it lies in $\scr^{\Delta^1}_R\subset \dalg^{\Delta^1}_R$).
	 	
	 	Now, let $\mathcal{M}$ be the \infcat\ of all the pairs $(B,M)$ with $B\in \mathrm{SCR}^{\scoh,\laft}_R$ (Remark \ref{rk4.12}) and $M\in (\aperf_{B,\ge0})_{\dL_{B/R}[-1]}$, and $\mathcal{A}\subset\dalg^{\Delta^1}_R$ denote the full subcategory of all $A\to B$ such that $B\in \mathrm{SCR}^{\scoh,\laft}_R$ and $A\in \dalg^{\wedge\afp}_{R//B,\ge0}$. Then (\ref{rel adjunction}) restricts to a fibrewise comonadic adjoint pair
	 	\begin{equation}\label{com adjunction}
	 		\cot:\mathcal{A}\rightleftarrows\mathcal{M}:\sqz
	 	\end{equation}relative to $\scr^{\scoh,\laft}_R$.
	 	
	 	The restriction of the relative monad $\mathrm{LieAlgd}^{\pi}_{\Delta}$ (Remark \ref{rk4.12}) on $\mathcal{M}^{op}(\subset\QC^\vee_{/\dT[1]})$ is given by $(\cot\circ\sqz(-)^\vee)^\vee$. Furthermore, the fibrewise comonadicity of (\ref{com adjunction}) induces an equivalence $C^*:\mathrm{LieAlgd}^{\pi,\dap}_{/R,\Delta,\eqslantless0}\simeq\mathcal{A}^{op}$ via the Chevalley-Eilenberg complex on each fibre, where $\mathrm{LieAlgd}^{\pi,\dap}_{/R,\Delta,\eqslantless0}\subset \mathrm{LieAlgd}^{\pi}_{/R,\Delta}$ is spanned by all $(B,L)$ such that the underlying module of $L$ lies in $\aperf^\vee_{B,\eqslantless0}$.

	 	Now, consider $\dalg^{\fil}_R:=\dalg(\fil_{\ge0}\m_R)$, which projects to $\dalg_R$ by $\gr_0$.
	 	There is a functor $\infcohnewfunctor^{or}:\dalg^{\Delta^1}_R\to\dalg^{\fil}_R$ relative to $\scr_R$ parallel to Construction \ref{c4.13} but in the ordinary algebraic context. Then, the reasoning of Lemma \ref{lemma4.16} induces a functor relative to $\scr^{\scoh,\laft}$ \[(\infcohnewfunctor)^{\wedge}:\mathcal{A}\to\dalg^{\fol,or}_{/R}\times_{\scr_R}\scr^{\scoh,\laft}\hookrightarrow \dalg^{\fol}_{/R}\times_{\scr^{\scoh}_R}\scr^{\scoh,\laft}_R,\]and the right-hand side will be abbreviated as $\dalg^{\fol}_{\laft/R}$. It induces a functor relative to $\scr^{\scoh,\laft,op}_R$
	 	\begin{equation*}
	 		F^0:\mathrm{LieAlgd}^{\pi,\dap}_{/R,\Delta,\eqslantless0}\xrightarrow{C^*,\simeq}\mathcal{A}^{op}\xrightarrow{(\infcohnewfunctor)^{\wedge}}\dalg^{\fol,op}_{\laft/R}
	 	\end{equation*}whose restriction to the fibre over $B$ agrees with $\tc|_{\mathrm{LieAlgd}^{\pi,\dap}_{B/R,\Delta,\eqslantless0}}$ (Definition \ref{df4.24}) by Proposition \ref{prop4.20old}. As Lemma \ref{lemma4.10} is functorial in $\mathscr{A}$, $F^0$ extends to a fibrewise sifted-colimit-preserving functor
	 	\[\tc:\mathrm{LieAlgd}^{\pi}_{/R,\Delta}\to \dalg^{\fol,op}_{\laft/R},\]which agrees with $\tc$ on each fibre. Moreover, Theorem \ref{thm4.25} shows that it restricts to an equivalence $\mathrm{LieAlgd}^{\pi,\dap}_{/R,\Delta}\simeq \dalg^{\fol,op}_{\laft/R,\ap}$.
	 	
	 	\textbf{Relative version of $\spnc$:}
	 	Following Proposition \ref{propn5.8}, there is
	 	\[\cH\text{-}\mathbf{dSt}_R\xrightarrow{\rmfor} \mathbb{G}_m\text{-}\mathbf{dSt}_R\xrightarrow{\mathcal{O}^{\sgr}}\dalg(\gr\m_R)^{op}\xrightarrow{\gr_0}\dalg^{op}_R.\]Then, we take the fibre product of \infcats
	 	\[\mathcal{S}t^{\sgr}:=\mathbb{G}_m\text{-}\mathbf{dSt}_R\times_{\dalg^{op}_R}\scr^{\scoh,\laft,op}_R,\ \ \ \ \mathcal{S}t^{\epsilon\textit{-}\sgr}:=\cH\text{-}\mathbf{dSt}_R\times_{\dalg^{op}_R}\scr^{\scoh,\laft,op}_R.\]Corollary \ref{corn5.10} gives rise to categorical sections $\spec(-):\scr^{\scoh,\laft,op}_R\to\mathcal{S}t^{\sgr}$ and $\mathcal{L}^{\pi}(-/R):\scr^{\scoh,\laft,op}_R\to\mathcal{S}t^{\epsilon\textit{-}\sgr}$ as well as an adjunction
	 	$(\mathcal{S}t^{\epsilon\textit{-}\sgr})_{/\mathcal{L}^{\pi}}\rightleftarrows (\mathcal{S}t^{\sgr})_{/\spec}$ relative to $\scr^{\scoh,\laft,op}_R$. Then, we can consider those $(B,\mathcal{F}\to \mathcal{L}^{\pi}(\spec(B)/R))\in(\mathcal{S}t^{\epsilon\textit{-}\sgr})_{/\mathcal{L}^{\pi}}$ such that $\mathcal{F}$ is forgotten to a linear stack over $\spec(B)$, which forms a full subcategory $\fol^{\pi}\subset\mathcal{S}t^{\epsilon\textit{-}\sgr}$. The fibre of $\fol^{\pi}$ over some $B\in \scr^{\scoh,\laft,op}$ is exactly $\fol^{\pi}(\spec(B)/R)$.
	 	
	 	Theorem \ref{thmn5.11} shows that
	 	\[\dalg^{\fol,op}_{\laft/R,\ap}\xrightarrow{\spnc}\cH\text{-}\mathbf{dSt}_R\xrightarrow{\rmfor} \mathbb{G}_m\text{-}\mathbf{dSt}_R\xrightarrow{\mathcal{O}^{\sgr}}\dalg(\gr\m_R)^{op}\] is equivalent to $\gr$, so $\rmfor\circ\spnc$ lifts to a relative functor $\dalg^{\fol,op}_{\laft/R,\ap}\to(\mathcal{S}t^{\sgr})_{/\spec}$ whose image contains only linear stacks. Meanwhile, every fibre of $\dalg^{\fol,op}_{\laft/R,\ap}$ has a final object in the form of $(B,(\infcohR)^\wedge)$. Therefore, $\spnc$ lifts to a functor $\spnc:\dalg^{\fol,op}_{\laft/R,\ap}\to \fol^{\pi}$ relative to $\scr^{\scoh,\laft,op}_R$ fibrewise agreeing with Theorem \ref{thmn5.11}, and its essential image $\fol^{\pi}_{\ap}$ consists of all $(B,\mathcal{F})$ such that $\dL_{\mathcal{F}}$ is an almost perfect $B$-module.
	 	
	 	\textbf{Base change structure of $\spnc\circ\tc$:} In summary, we have constructed an equivalence
	 	\[\spnc\circ\tc:\mathrm{LieAlgd}^{\pi,\dap}_{/R,\Delta}\simeq \fol^{\pi}_{\ap}\]of \infcats\ over $\scr^{\scoh,\laft,op}_R$. The left-hand side is a cartesian fibration, and $B\mapsto \aplagd$ has \'etale descent \cite[Proposition 4.18]{BMN}. In particular, the cartesian arrows are in the form of $(B',\dT_{B'/R}[1]\times_{\dT_{B/R}[1]\otimes_B B'}L\otimes_B B')\to (B,L)$. The right-hand side also has sufficient cartesian edges. For each $\spec(B')\to \spec(B)$, the pullback of $\fol^{\pi}_{\ap}$ sending $\mathcal{F}\in \fol^{\pi}_{\ap}(\spec(B)/R)$ to the fibre product of $\cH$-stacks $\mathcal{F}\times_{\mathcal{L}^\pi(\spec(B)/R)}\mathcal{L}^\pi(\spec(B')/R)$. Now, we see that $\spnc\circ \tc$ respects the cartesian edges, i.e. $\spnc\circ\tc$ is compatible with base change. Additionally, $B\mapsto \fol^{\pi}_{\ap}(\spec(B)/R)$ has \'etale descent by considering the cotangent complex. Thus, for each locally coherent qcqs derived scheme $X$, there is an equivalence $\aplagdX\simeq \fol^{\pi}(X/R)$ by gluing up the equivalences on the affine charts.

	 \end{proof}

	\bibliographystyle{alpha}
	\bibliography{theme}
	\vspace{2em}
	\begin{flushleft}
	\Large\textsc{Université Paul Sabatier\\
	Institut de Mathématiques de Toulouse\\
	118, route de Narbonne\\
	F-31062 Toulouse Cedex 9\vspace{1em}\\}
	\Large\textit{E-mail}: jiaqi.fu@univ-tlse3.fr\\
	\Large\textit{Homepage}: \url{https://jiaqifumath.github.io/DAG/}
	\end{flushleft}

\end{document}